\documentclass[11pt]{amsart}
\usepackage{mathrsfs}
\usepackage[mathcal]{euscript}
\usepackage{graphicx}
\usepackage{amsthm,amscd}
\usepackage{calc}
\usepackage{mathrsfs,dsfont}
\usepackage{fancyhdr,amscd}
\usepackage{anysize}
\usepackage{amsmath,amssymb,amsfonts}
\usepackage{epsfig,enumerate}
\usepackage{CJK,CJKnumb,fancyhdr,amscd}
\usepackage{latexsym,lineno}
\usepackage{indentfirst, latexsym}
\usepackage{graphics}
\usepackage[all,poly,knot]{xy}
\usepackage[pagebackref]{hyperref}

\providecommand{\U}[1]{\protect\rule{.1in}{.1in}}

\numberwithin{equation}{section}

\def\Div{{\rm Div}}
\def\Bl{{\rm Bl}}
\def\rk{{\rm rk}}

\def\n{\nabla}

\theoremstyle{plain}

\newtheorem{thm}{Theorem}[section]
\newtheorem{mthm}[thm]{Main Theorem}
\newtheorem{lemma}[thm]{Lemma}

\newtheorem{prop}[thm]{Proposition}
\newtheorem{cor}[thm]{Corollary}
\newtheorem{quest}[thm]{Question}
\newtheorem{con}[thm]{Conjecture}

\theoremstyle{definition}
\newtheorem{defn}[thm]{Definition}
\newtheorem{ex}[thm]{Example}
\newtheorem{rem}[thm]{Remark}

\newtheorem{expe}[thm]{Expectation}



\newcommand{\comment}[1]{}

\usepackage{fancyhdr}
\pagestyle{fancy}
\pagestyle{myheadings}

\setlength{\hoffset}{0in} \setlength{\voffset}{0in}
\setlength{\oddsidemargin}{0in} \setlength{\evensidemargin}{0in}
\setlength{\marginparsep}{0in} \setlength{\topmargin}{0in}
\setlength{\headheight}{0in} \setlength{\headsep}{0.3in}
\setlength{\footskip}{0.375in} \setlength{\textwidth}{6.45in}
\setlength{\textheight}{9.4in}

\begin{document}
\begin{CJK*}{GBK}{kai}
\CJKtilde
\CJKindent

\title[Invariance of plurigenera and Chow-type lemma]{Invariance of plurigenera and Chow-type lemma}


\author[Sheng Rao]{Sheng Rao}
\author[I-Hsun Tsai]{I-Hsun Tsai}

\address{Sheng Rao, School of Mathematics and Statistics, Wuhan  University,
Wuhan 430072, People's Republic of China;
Universit\'{e} de Grenoble-Alpes, Institut Fourier (Math\'{e}matiques)
UMR 5582 du C.N.R.S., 100 rue des Maths, 38610 Gi\`{e}res, France}
\email{likeanyone@whu.edu.cn, sheng.rao@univ-grenoble-alpes.fr}

\address{I-Hsun Tsai, Department of Mathematics, National Taiwan University, Taipei 10617,
Taiwan}
\email{ihtsai@math.ntu.edu.tw}
\dedicatory{In Memory of Jean-Pierre Demailly (1957-2022)}

\thanks{Rao is partially supported by NSFC (Grant No. 11671305, 11771339, 11922115) and the Fundamental Research Funds for the Central Universities  (Grant No. 2042020kf1065).}
\date{\today}

\subjclass[2010]{Primary 32G05; Secondary 32S45, 18G40, 32C35}
\keywords{Deformations of complex structures; Modifications; resolution of singularities, Spectral sequences, hypercohomology, Analytic sheaves and cohomology groups}

\begin{abstract}
This paper  answers a question of Demailly whether a smooth family of nonsingular projective varieties admits the deformation invariance of plurigenera affirmatively, and proves this more generally for a flat family of varieties with only canonical singularities and uncountable ones therein being of general type and also two Chow-type lemmata on the structure of a family of projective complex analytic spaces.
 \end{abstract}
\maketitle

\setcounter{tocdepth}{1}
\tableofcontents

\section{Introduction: main results and background}\label{Introduction}
The plurigenera are fundamental discrete invariants for the classification of complex varieties and deformation invariance of plurigenera is a central topic in deformation theory.
One main motivation of this paper is to answer one question of J.-P. Demailly \cite{Dem17} affirmatively:
\begin{quest}[Demailly]\label{quest}
Let $\pi: \mathcal{X}\rightarrow \Delta$ be a holomorphic family of compact complex manifolds over a unit disk in $\mathbb{C}$ such that each fiber $X_t:=\pi^{-1}(t)$ is projective for any $t\in \Delta$. Then for each positive integer $m$, is the $m$-genus $\dim_{\mathbb{C}} H^0(X_t, K_{X_t}^{\otimes m})$ independent of $t\in \Delta$? Here $K_{X_t}$ is the canonical bundle of ${X_t}$.
\end{quest}

In Question \ref{quest} note that there is no assumption on the existence
of an ample line bundle on $\mathcal{X}$. Concerning the difference between ``family of projective manifolds" and ``projective family of smooth manifolds", while fiberwise K\"ahler condition cannot  lead to locally K\"ahler condition \cite[Example 3.9 credited to P. Deligne]{bin}, J. Koll\'ar gives an
example of a smooth family of projective surfaces (over
an affine line) which is however not a projective family in his recent work
\cite[Example 4]{k21a}; see also our Lemma \ref{Chow-sm} and Remark \ref{4.26} for some necessary conditions.  Here, we are concerned with a generalized version: a smooth family of Moishezon manifolds; see Theorem \ref{main-thm} below.

Throughout this paper, $\pi:\mathcal{X}\rightarrow B$ will denote a proper surjective morphism from a complex variety to a complex manifold with connected (but possibly reducible) complex analytic spaces as its fibers, which is called a \emph{family}. For $t\in B\setminus \{b\}$ with some $b\in B$, $$X_t:=\pi^{-1}(t)$$  will be a \emph{general fiber} of this family and $X_b:=\pi^{-1}(b)$ is the reference fiber. Thus, by a \emph{holomorphic (or smooth) family} $\pi:\mathcal{X}\rightarrow B$ of compact complex manifolds, we mean that $\pi$ is a proper surjective holomorphic submersion between complex manifolds as in
\cite[Definition 2.8]{k}; a \emph{flat family} $\pi:\mathcal{X}\rightarrow B$ refers to a proper surjective flat morphism with connected (but possibly reducible) fibers, where \lq \emph{flat}' means for every point $x\in \mathcal{X}$ the stalk $\mathcal{O}_{\mathcal{X},x}$ is a flat $\mathcal{O}_{B,\pi(x)}$-module via the natural map $\mathcal{O}_{B,\pi(x)}\rightarrow \mathcal{O}_{\mathcal{X},x}$. The flat family can be interpreted as the extension of the notion of holomorphic (or smooth) family of complex manifolds to the singular case.

Unless otherwise explicitly mentioned, an irreducible and reduced complex  analytic space is called a \emph{complex variety} and $B$ will always be taken as an open unit disk $\Delta$ in $\mathbb{C}$ so that the family $\pi:\mathcal{X}\rightarrow \Delta$ becomes flat automatically by criterion for flatness over a regular curve in \cite[Theorem 2.13 of Chapter V]{bs},   \cite[Paragraph 1 of Introduction]{Hi} or \cite[24.4.J]{va} for example.

\subsection{Summary of the main results}
By definition, a compact complex variety (or manifold) $M$ is called a \emph{Moishezon variety (or manifold)} if $a(M)=\dim_{\mathbb{C}} M$, where the {algebraic dimension} $a(M)$ is defined as the transcendence degree of the field of meromorphic functions on $M$. A general compact complex space is called \emph{Moishezon} if all its irreducible components are. Recall that the {$m$-genus} $P_m(X)$ of a compact complex manifold $X$ is defined by
$$P_m(X):=\dim_{\mathbb{C}} H^0(X,K_X^{\otimes m})$$
 with the canonical bundle $K_X$ of $X$ for every positive integer $m$.
 It is a bimeromorphic invariant and the \emph{$m$-genus of an arbitrary compact complex variety} can be defined as the $m$-genus of its arbitrary non-singular model.
Similarly, a compact complex variety $M$ is \emph{of general type} if the Kodaira--Iitaka dimension of the canonical bundle of its arbitrary non-singular model is just the complex dimension of $M$, while a compact complex space is {of general type} if any of its irreducible components is so. Notice that $\dim_{\mathbb{C}} H^0(X,{K_X^*}^{\otimes m})$ for a compact complex manifold $X$ is \emph{not} a bimeromorphic invariant in general.
In algebraic geometry, a normal complex variety $M$ is said to have
\emph{only canonical singularities} if the canonical divisor $K_M$
is a {$\mathbb{Q}$-Cartier divisor}, i.e., some integer multiple of
$K_M$ is a Cartier divisor, and if the discrepancy divisor
$K_{\tilde{M}}-\mu^*K_M$ is effective for a (or every) resolution of singularities
$\mu:\tilde{M}\rightarrow M$. However, in complex analytic geometry, we have to adopt the analytic definition of canonical singularities in \cite[Definition 1]{sv}, where
the canonical sheaf will replace the role of the canonical divisor.

Now, we can state our answer to Demailly's Question \ref{quest} in greater generality.
\begin{mthm}\label{main-thm}
Let $\pi:\mathcal{X}\rightarrow \Delta$ be a family such that any of the following holds:
\begin{enumerate}[(i)]
    \item \label{main-thm-i}
the family $\pi$ is holomorphic and each fiber $X_t$ for $t\in \Delta$ is a Moishezon manifold;
    \item \label{main-thm-ii}
each (not necessarily projective) compact variety $X_t$ at $t\in \Delta$ has only canonical singularities and uncountable fibers therein are of general type.
\end{enumerate}
Then for each positive integer $m$, the $m$-genus $P_m({X_t})$ is independent of $t\in \Delta$.
\end{mthm}
%
All these are
proved in Subsection \ref{pf-mt}. For the big $\mathbb{Q}$-Cartier anticanonical divisor case, see Remark \ref{anti} and Theorem \ref{singular} where both $\mathcal{X}$ and $\mathcal{X}\rightarrow \Delta$ need not be smooth, for the vanishing of all plurigenera. The latter (possibly degeneration cases) seem non-trivial due to K. Nishiguchi's counter-examples \cite{n83} to the lower semi-continuity of Kodaira dimension under degeneration of surfaces; the bigness assumptions in Theorem \ref{main-thm}.\eqref{main-thm-ii}, Remark \ref{anti} and Theorem \ref{singular} are thus essential. See Remarks \ref{3.19}-\ref{3.24} for more information.

Shortly after we posted our first version on arXiv, Koll\'{a}r also can prove in the recent \cite{k21a} that small deformations of a (single) projective variety of general
type with canonical singularities are also projective varieties of general type, with the same plurigenera. Inspired by these works, Koll\'{a}r also discusses a series of questions about Moishezon morphisms, and gives partial solutions to some of them in the more recent \cite{k21b}.

Recall that a smooth positive-definite $(1, 1)$-form $\omega$ on  a complex $n$-dimensional manifold $X$ is said to be a \emph{strongly
Gauduchon metric} if the $(n, n-1)$-form $\partial\omega^{n-1}$ is $\bar\partial$-exact on $X$. This notion was introduced by D. Popovici.
As a direct corollary of Theorem \ref{main-thm}.\eqref{main-thm-i} and Theorem \ref{thm-moishezon}.\eqref{thm-moishezon-i}, one has the following result:
\begin{cor}[{}]\label{main-thm-sm}
Let $\pi: \mathcal{X}\rightarrow \Delta$ be a holomorphic family of compact complex manifolds such that uncountable fibers
therein are Moishezon (or projective in particular) and all the fibers satisfy the local deformation invariance for Hodge number of type $(0,1)$ or admit strongly Gauduchon metrics. Then for each positive integer $m$, the $m$-genus $P_m({X_t})$ is independent of $t\in \Delta$. In particular, the direct image sheaf $\pi_*K_{\mathcal{X}}^{\otimes m}$ is locally free of rank $P_m({X_0})$.
 \end{cor}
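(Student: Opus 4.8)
The plan is to reduce the corollary to two results already established: the Moishezon-propagation statement of Theorem~\ref{thm-moishezon}.\eqref{thm-moishezon-i} and the invariance Theorem~\ref{main-thm}.\eqref{main-thm-i}, and then to extract the assertion on $\pi_*K_{\mathcal{X}}^{\otimes m}$ from Grauert's semicontinuity and base-change package. No new analysis is needed; the work is in quoting the right statements with the right hypotheses.

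First I would show that \emph{every} fiber $X_t$, $t\in\Delta$, is a Moishezon manifold. Let $S\subseteq\Delta$ be the locus of $t$ for which $X_t$ is Moishezon; by hypothesis $S$ is uncountable. Under either standing assumption --- local deformation invariance of the Hodge number $h^{0,1}$ along $\pi$, or the existence of a strongly Gauduchon metric on each fiber --- Theorem~\ref{thm-moishezon}.\eqref{thm-moishezon-i} forbids the Moishezon locus of a holomorphic family from being a proper uncountable subset: once uncountably many fibers are Moishezon, all of them are. (The point of the $h^{0,1}$-invariance hypothesis, respectively the strongly Gauduchon hypothesis, is precisely to rule out a drop of algebraic dimension in the limit, so that $S=\Delta$.) Hence $\pi$ is now a holomorphic family all of whose fibers are Moishezon.

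With this in place, Theorem~\ref{main-thm}.\eqref{main-thm-i} applies verbatim and gives that $P_m(X_t)=\dim_{\mathbb{C}}H^0(X_t,K_{X_t}^{\otimes m})$ is independent of $t\in\Delta$, for every positive integer $m$. For the ``in particular'' clause, note that since $\Delta$ is a disk the canonical bundle $K_\Delta$ is holomorphically trivial, so $K_{\mathcal{X}}^{\otimes m}|_{X_t}\cong K_{X_t}^{\otimes m}$ for each $t$, and $\pi_*K_{\mathcal{X}}^{\otimes m}$ is a coherent sheaf on $\Delta$ by Grauert's direct image theorem. Because $t\mapsto h^0(X_t,K_{X_t}^{\otimes m})$ is the constant $P_m(X_0)$, Grauert's base-change theorem (upper semicontinuity together with the constancy of this dimension over a reduced base) forces $\pi_*K_{\mathcal{X}}^{\otimes m}$ to be locally free of rank $P_m(X_0)$, with the natural map $(\pi_*K_{\mathcal{X}}^{\otimes m})\otimes\mathbb{C}(t)\to H^0(X_t,K_{X_t}^{\otimes m})$ an isomorphism at every $t$.

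The entire content sits in the two cited theorems, so the only genuine obstacle is a matter of matching hypotheses: one must check that each of the two alternative conditions is literally what Theorem~\ref{thm-moishezon}.\eqref{thm-moishezon-i} needs. It is worth emphasizing that the target class produced in the first step is ``Moishezon'', not ``projective'': even if uncountably many $X_t$ happen to be projective, the remaining fibers are only guaranteed to be Moishezon manifolds, which is exactly why Theorem~\ref{main-thm}.\eqref{main-thm-i} --- rather than the classical projective invariance of plurigenera --- is the instrument that finishes the proof.
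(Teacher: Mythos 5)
Your proposal is correct and follows exactly the route the paper intends: Theorem \ref{thm-moishezon}.\eqref{thm-moishezon-i} upgrades "uncountably many Moishezon fibers" to "all fibers Moishezon" under either of the two standing hypotheses, Theorem \ref{main-thm}.\eqref{main-thm-i} then gives the constancy of $P_m(X_t)$, and Grauert's continuity/base-change (Lemma \ref{gct}) converts the constant fiber dimension into local freeness of $\pi_*K_{\mathcal{X}}^{\otimes m}$. This is precisely the paper's stated derivation ("a direct corollary of Theorem \ref{main-thm}.\eqref{main-thm-i} and Theorem \ref{thm-moishezon}.\eqref{thm-moishezon-i}"), so no further comment is needed.
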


As a more direct corollary of Theorem \ref{main-thm}.\eqref{main-thm-i}, we confirm the problems considered in \cite{as} and \cite{ls} more generally, and also \cite[Conjecture on p. 85]{Ue} partially which predicts deformation upper semi-continuity of Kodaira dimension of complex manifolds:
\begin{cor}
The Kodaira dimension of Moishezon and in particular projective  manifolds is invariant in a smooth family.
\end{cor}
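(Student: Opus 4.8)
The plan is to deduce the statement immediately from the Main Theorem, exploiting that the Kodaira dimension of a compact complex manifold is, by definition, a function of its sequence of plurigenera alone. Recall that for a compact complex manifold $X$ one sets $\kappa(X)=-\infty$ when $P_m(X)=0$ for all positive integers $m$, and otherwise
$$\kappa(X)=\limsup_{m\to\infty}\frac{\log P_m(X)}{\log m},$$
which is an integer in $\{0,1,\dots,\dim_{\mathbb{C}}X\}$ by Iitaka's growth theorem; for a Moishezon (in particular projective) manifold $X$, being bimeromorphic to a projective manifold, this coincides with the usual Iitaka dimension $\kappa(K_X)$ because the $P_m$ are bimeromorphic invariants, as recalled in Section \ref{Introduction}. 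Hence $\kappa(X_t)$ depends on $t\in\Delta$ only through the integers $\{P_m(X_t)\}_{m\ge 1}$.

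First I would apply Theorem \ref{main-thm}.\eqref{main-thm-i}: since $\pi:\mathcal{X}\to\Delta$ is holomorphic with each fiber $X_t$ Moishezon, for every fixed positive integer $m$ the $m$-genus $P_m(X_t)$ is independent of $t$. Plugging this into the formula above, both the dichotomy ``$P_m(X_t)=0$ for all $m$'' versus ``$P_m(X_t)>0$ for some $m$'' and, in the latter case, the value of the $\limsup$, remain constant along $\Delta$; therefore $\kappa(X_t)$ is constant in $t$. In particular this upgrades the deformation upper semi-continuity of the Kodaira dimension predicted in \cite[Conjecture on p.~85]{Ue} to full invariance in the Moishezon, hence projective, range.

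There is essentially no analytic obstacle remaining, since all the substance is carried by the Main Theorem; the only points needing a word of care are that the $\limsup$-characterization of $\kappa$ used above is indeed the correct definition and that it is unchanged under passage to a nonsingular projective model --- both of which amount to the bimeromorphic invariance of plurigenera. By contrast, one cannot conclude from semi-continuity of $h^0$ alone: as $\mathcal{X}$ need not be K\"ahler, Grauert's theorem only gives upper semi-continuity of $P_m$, and the reverse inequality is exactly the content of Theorem \ref{main-thm}.\eqref{main-thm-i}.
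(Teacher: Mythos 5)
Your proposal is correct and is exactly the argument the paper intends: the corollary follows immediately from Theorem \ref{main-thm}.\eqref{main-thm-i} because the Kodaira dimension of a Moishezon manifold is determined by the growth of its plurigenera (Theorem \ref{asymp}), all of which are constant in $t$. The paper offers no further detail, so there is nothing to compare beyond noting that your justification of the $\limsup$-characterization and its bimeromorphic invariance is a correct filling-in of the routine step.
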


Much inspired by Demailly's Question \ref{quest} and (the proof of) Theorem \ref{main-thm}, we also study the structure of the family with projective fibers in two Chow-type lemmata of Section \ref{mtii}. An application of them to the invariance of plurigenera and the structure of families is given in Remark \ref{3.21}; see also Subsection \ref{subs-Chow} for other applications.
\begin{lemma}[= Lemma \ref{sing-chow}]\label{0sing-chow}
If $\pi:\mathcal{X}\rightarrow \Delta$ is a one-parameter degeneration with projective fibers and uncountable fibers are all of general type or all have big anticanonical bundles, then $\pi$ is a \emph{pseudo-projective family}, i.e., there exists a projective family
$\pi_\mathcal{P}:\mathcal{P} \rightarrow \Delta$ from a complex manifold and a bimeromorphic morphism from $\mathcal{P}$ to $\mathcal{X}$ over $\Delta$ such that it induces a biholomorphism $\pi_\mathcal{P}^{-1}(U)\rightarrow \pi^{-1}(U)$ for some $U \subset \Delta$ with $\Delta\setminus U$ being a proper analytic subset of $\Delta$.
\end{lemma}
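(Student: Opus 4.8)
The plan is to produce the projective family $\mathcal P\to\Delta$ by a two-stage construction: first extract an algebraic structure over a Zariski-open subset of $\Delta$ using the general-type (or big anticanonical) hypothesis, then extend this structure across the remaining finitely many special points by a relative compactification/flattening argument. Concretely, let $b\in\Delta$ be the reference point and recall that by hypothesis uncountably many fibers $X_t$ are of general type (resp.\ have big anticanonical bundle), hence so are uncountably many fibers in any punctured neighborhood; by an argument of the type already used for Theorem \ref{main-thm}.\eqref{main-thm-ii} (or Theorem \ref{singular}) the relevant pluricanonical (resp.\ plurianticanonical) direct image sheaf $\pi_*\mathcal L^{\otimes m}$ for $\mathcal L=K_{\mathcal X/\Delta}$ (resp.\ $-K_{\mathcal X/\Delta}$) is locally free of the expected rank after shrinking, and the relative linear system it defines gives, for $m$ sufficiently divisible, a meromorphic map $\Phi\colon\mathcal X\dashrightarrow \mathbb P(\pi_*\mathcal L^{\otimes m})$ over $\Delta$ which is a bimeromorphic embedding on the general fiber (the relative Iitaka/anticanonical model). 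Taking the closure of the image of $\Phi$ over a Zariski-open $U\subset\Delta$ over which everything behaves well, we obtain a projective-over-$U$ family birational to $\pi^{-1}(U)$.

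Second, I would upgrade this to a family over all of $\Delta$ that is smooth in the total space. Start from the closure $\mathcal Y$ of the graph of $\Phi$ inside $\mathcal X\times_\Delta \mathbb P^N$; this is a complex variety proper over $\Delta$, projective over $U$, and bimeromorphic to $\mathcal X$ over $\Delta$. Over the finitely many special points $\Delta\setminus U$ one only knows properness, so one resolves: apply resolution of singularities to $\mathcal Y$ (functorially, so as not to disturb the already-smooth locus over $U$ where $\mathcal X\to\Delta$ is nice) to get a smooth $\mathcal P$ with a morphism $\mathcal P\to\mathcal X$ over $\Delta$; then one needs $\mathcal P\to\Delta$ to be a genuine family in the paper's sense, for which flatness over the regular curve $\Delta$ is automatic once the total space is irreducible and the map is surjective, and projectivity is the real content. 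Here the Chow-type input (the mechanism behind Lemma \ref{Chow-sm}/Lemma \ref{sing-chow}, whence the name) enters: a proper morphism to a disk whose generic fiber is projective and whose total space is smooth and bimeromorphic to something carrying a relatively ample line bundle over $U$ can be arranged, after further blow-ups supported over $\Delta\setminus U$, to be globally projective over $\Delta$ — one produces the relatively ample class by extending the pullback of $\mathcal O(1)$ from $\mathbb P^N$, correcting it by a $\pi$-nef (exceptional) divisor supported over the bad points, à la the relative Nakai--Moishezon / Moishezon-to-projective argument.

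The key steps in order: (1) Using the general-type (resp.\ big-anticanonical) hypothesis and the direct-image results already established, show $\pi_*\mathcal L^{\otimes m}$ is locally free of the right rank after shrinking and that its relative linear system realizes the relative canonical (resp.\ anticanonical) model, giving a bimeromorphic map $\Phi\colon \mathcal X\dashrightarrow \mathbb P(\pi_*\mathcal L^{\otimes m})$ over $\Delta$ which is a bimeromorphic isomorphism on each fiber in a Zariski-open $U$; (2) take the closure $\mathcal Y$ of the graph and note it is proper over $\Delta$, projective over $U$, and bimeromorphic to $\mathcal X$ over $\Delta$; (3) resolve singularities of $\mathcal Y$ functorially to obtain a smooth total space $\mathcal P$ with a bimeromorphic morphism $\mathcal P\to\mathcal X$ over $\Delta$ inducing a biholomorphism over a suitable $U$ (possibly shrinking $U$ to a complement of a proper analytic subset); (4) extend the relatively ample polarization from $\mathcal P|_U$ across $\Delta\setminus U$, using that the extra divisors are $\pi$-exceptional, to conclude $\pi_{\mathcal P}\colon\mathcal P\to\Delta$ is a projective family; (5) observe flatness of $\pi_{\mathcal P}$ is automatic over the regular curve $\Delta$ and that connectedness of fibers is inherited, so $\pi_{\mathcal P}$ is a family in the stated sense.

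The main obstacle I expect is step (4): getting an \emph{honest} relatively ample line bundle on $\mathcal P$ over the \emph{whole} disk, not merely over $U$. Over $\Delta\setminus U$ one has no a priori control, and a naive extension of $\mathcal O(1)$ only yields a line bundle that is relatively nef and big but perhaps not ample on the special fibers; one must show that after finitely many further blow-ups along analytic centers lying over $\Delta\setminus U$ the obstruction disappears — this is exactly where a Chow-lemma-type statement for families over a disk (the companion Lemma \ref{Chow-sm}, or its singular analogue here) does the work, and verifying its hypotheses (in particular that the relevant cohomological obstructions to projectivity, e.g.\ a class in $H^2$ or the failure of the Kleiman criterion, can be killed by blow-ups supported over the bad fibers) is the technical heart of the argument. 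A secondary subtlety is bookkeeping the various shrinkings of $U$ so that the final $U$ still has $\Delta\setminus U$ a proper analytic (hence finite) subset, which is harmless but must be stated carefully.
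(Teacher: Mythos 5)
There is a genuine gap, and it is at the very first step. Your polarization is the relative pluricanonical (resp.\ plurianticanonical) system, i.e.\ a line bundle that is only \emph{big} on the fibers. The Iitaka map of a big-but-not-ample bundle is merely a bimeromorphic map of each fiber onto its (anti)canonical model, so the closure of the graph of $\Phi$ and any resolution $\mathcal P$ of it will only be fiberwise \emph{birational} to $X_t$, for every $t$, not biholomorphic. Since $K_{X_t}$ need not be ample on any fiber, the non-isomorphism locus of $\Phi$ can surject onto $\Delta$, and no shrinking of $U$ to the complement of a proper analytic subset can repair this. What your construction proves is essentially that $\pi$ is a Moishezon (algebraic) morphism — which is the content of Theorem \ref{0thm-moishezon} and is already used for Theorem \ref{main-thm}.\eqref{main-thm-ii} — not that it is pseudo-projective. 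Tellingly, your argument never uses the hypothesis that the fibers are projective, whereas the paper stresses that this hypothesis is indispensable for the Chow-type lemmata precisely because one needs fiberwise \emph{very ample} line bundles.

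The actual technical heart, which is absent from your proposal, is the construction of a single global line bundle $\mathcal L$ on $\mathcal X$ whose restriction to the very generic fiber is very ample. After semi-stable reduction (Lemmata \ref{bu-proj} and \ref{Prop. B}), the fiberwise ample bundles supplied by projectivity can be glued over simply-connected pieces of $\Delta^*$, but their Chern classes are permuted by the monodromy $T$. The general-type hypothesis enters not through $K_{\mathcal X/\Delta}$ as a polarization but through Proposition \ref{finite}: a uniform bound $mK_{X_t}=D_i+E_i$ (via Theorem \ref{takthm} and Matsusaka-type estimates) bounds the degrees of the monodromy translates $D_i$, Chow-variety bounds then force $\{c_1(D_i)\}$ to be finite, one extracts a $T$-invariant ample class, lifts it to $H^2(\mathcal X,\mathbb Z)$ by the local invariant cycle Theorem \ref{lict}, and converts it into a line bundle via the $h^{0,1}/h^{0,2}$-invariance and torsion-freeness of $R^2\pi_*\mathcal O_{\mathcal X}$ (Propositions \ref{global-B}, \ref{fgn}). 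Only then does Theorem \ref{0thm-moishezon}, applied to a fiberwise very ample $\mathcal L$, yield an embedding $\mathcal W\subset\mathbb P^N\times\Delta$ that is \emph{biholomorphic} on the generic fibers; your step (4) worry about extending the relative polarization across $\Delta\setminus U$ then dissolves, since $\mathcal W\to\Delta$ is already globally projective and the subsequent modifications (Remmert, Hironaka's Chow lemma, desingularization) are projective morphisms supported over the bad fibers.
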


The smooth analogue is also studied in the other Lemma \ref{Chow-sm}:
Let $\pi:\mathcal{X}\rightarrow \Delta$ be a holomorphic family of projective manifolds. Then there exists a projective morphism $\pi_\mathcal{Y}:\mathcal{Y}\rightarrow \Delta$ from a complex manifold and a bimeromorphic morphism $f: \mathcal{Y}\rightarrow \mathcal{X}$ over $\Delta$ such that  possibly after shrinking $\Delta$, $f$ is a biholomorphism between $\mathcal{Y}\setminus{\pi_\mathcal{Y}^{-1}(0)}$ and $\mathcal{X}\setminus{\pi^{-1}(0)}$.
In particular, there exists a global line bundle $\mathcal{A}$ on $\mathcal{X}$ such that $\mathcal{A}|_{X_t}$ is very ample for any $t\ne 0$ possibly after shrinking $\Delta$.

Notice that the projectiveness assumption in the Chow-type Lemmata \ref{0sing-chow} and \ref{Chow-sm} is indispensable (but not in  Theorem \ref{main-thm}) since our  proofs of them rely on the very ampleness from some line bundles on the very generic fibers and also the applications of the local invariant cycle theorem \cite{cm}. Remark that certain partial results can be obtained by \cite[Theorem 1.6]{bin} which implies, among others, that there exists a {\it locally closed} analytic subset $U$  of $\Delta$ with $\Delta\setminus U$ at most a countable set of points, such that the subfamily $\mathcal{X}_U\rightarrow U$  is {\it locally projective}.
\subsection{Strategy of proofs for the main results}\label{intro-2}
Recall that a morphism between complex analytic spaces is called a \emph{Moishezon
 morphism}, if it is bimeromorphically equivalent to a  projective
morphism, and a \emph{Moishezon family} if further it is a family. Moishezon morphism here is often called \emph{algebraic morphism} in many literatures such as \cite{[Ta],[K1]} and we will use them interchangeably. Notice that each fiber of a Moishezon morphism is Moishezon, but the converse is \emph{not} necessarily true (cf. the elliptic fibration in \cite[Remark 2.25, Definition 3.5.(2)]{cp} as a global counterexample to this).
See the definition of projective morphism of the analytic category in \cite[\S\ 1.(d) of Chapter IV]{bs} and the family here is not a priori assumed projective.

With the above notions, we first sketch a general strategy for the deformation of plurigenera for the flat family of Moishezon varieties, which is divided into three steps, and then describe how to use it to prove Lemma \ref{0sing-chow}. However, the full strength of this strategy is not applied in the proof of Theorem \ref{main-thm} but in the Chow-type Lemma \ref{0sing-chow}.

The \emph{first} step is to follow the reduction in S. Takayama's proof of the lower semi-continuity in Theorem \ref{main2T}, where no canonical singularities assumption for the fibers is needed. Actually, we assume that the total space $\mathcal{X}$ is smooth upon replacing it by a proper modification, and that $\pi: \mathcal{X}\rightarrow \Delta$ is
smooth outside $0\in \Delta$
according to the generic smoothness \cite[Corollary 10.7 of Chapter III]{Ht} by replacing $\Delta$
by a smaller disk. By the semi-stable reduction theorem in
\cite[pp. 53-54]{[KKMSD73]}, one further assumes  that the reference fiber $X_0$ has only simple normal crossings on $\mathcal{X}$ all of whose components are reduced, and obtains a semi-stable degeneration.
\begin{thm}[{\cite[Theorem 1.2]{[Ta]}}]\label{main2T}
Let $\pi:\mathcal{X}\rightarrow \Delta$ be a Moishezon family, i.e., an algebraic morphism
and a family. Let $X_0$ be a reference fiber with support
$(X_0)_{red}=\sum_{i\in I} X_i$ and $X_t$ a general fiber.
Then $\sum_{i\in I} P_m(X_i)\leq P_m(X_t)$ holds for any positive
integer $m$.
\end{thm}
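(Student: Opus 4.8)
\emph{Proof plan.} The plan is to reduce, via the algebraic hypothesis, to a semistable degeneration, and then to produce enough pluricanonical sections on a general fibre one component at a time by a pluricanonical extension theorem.

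\emph{Step 1: reduction to a semistable degeneration.} One makes the following successive reductions: normalize $\mathcal{X}$; resolve its singularities (Hironaka), so that $\mathcal{X}$ is smooth; since $\pi$ is an algebraic morphism and $\mathcal{X}$ is now smooth, a further modification makes $\pi$ projective without contracting any codimension-one subset, hence without altering the set of components of the special fibre up to bimeromorphy; by generic smoothness and shrinking $\Delta$, make $\pi$ smooth over $\Delta^{*}:=\Delta\setminus\{0\}$; finally, by the semistable reduction theorem \cite{[KKMSD73]}, after a finite base change $t\mapsto t^{N}$ and blow-ups with smooth centres over $0$, make $X_{0}=\pi^{-1}(0)=\sum_{i\in I}X_{i}$ a reduced simple normal crossing divisor. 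At each stage the quantity $\sum_{i}P_{m}(X_{i})$ does not decrease: $P_{m}$ of the general fibre is a bimeromorphic invariant and is insensitive to the base change, while for the special fibre $P_{m}$ is a bimeromorphic invariant of each component, a generically finite surjection does not decrease plurigenera (since $K_{Y}\ge f^{*}K_{W}$ for a finite morphism $f:Y\to W$), extra exceptional components contribute non-negatively, and the total transform of the central fibre always surjects onto $X_{0}$, so each original $X_{i}$ is dominated, through a distinct component of the new special fibre, by a variety whose $m$-genus is at least $P_{m}(X_{i})$. Hence it suffices to treat the case: $\mathcal{X}$ smooth, $\pi$ projective and smooth over $\Delta^{*}$, and $X_{0}=\sum_{i\in I}X_{i}$ reduced SNC.

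\emph{Step 2: geometry of the special fibre.} As $X_{0}=\pi^{*}(0)$ is linearly equivalent to $0$ (trivialize $\mathcal{O}_{\Delta}(0)$ by the coordinate $t$), adjunction gives $K_{\mathcal{X}}|_{X_{t}}\cong K_{X_{t}}$ for $t\ne0$ and $K_{\mathcal{X}}|_{X_{i}}\cong K_{X_{i}}(D_{i})$, where $D_{i}:=\sum_{j\ne i}(X_{i}\cap X_{j})$. Set $L_{i}:=\mathcal{O}_{\mathcal{X}}(mK_{\mathcal{X}}-m\sum_{j\ne i}X_{j})$. Since $mK_{\mathcal{X}}-m\sum_{j\ne i}X_{j}=m(K_{\mathcal{X}}+X_{i})-m\pi^{*}(0)$ and $\mathcal{O}_{\Delta}(m\cdot 0)$ is trivial, $L_{i}\cong\mathcal{O}_{\mathcal{X}}(m(K_{\mathcal{X}}+X_{i}))$; restricting and using adjunction, $L_{i}|_{X_{i}}\cong\mathcal{O}_{X_{i}}(mK_{X_{i}})$. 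Moreover a section of $L_{i}$, regarded as a section of $mK_{\mathcal{X}}$ vanishing to order $\ge m$ along every $X_{j}$ with $j\ne i$, restricts to $0$ on each such $X_{j}$ and, on $X_{i}$, to a section of $mK_{X_{i}}$.

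\emph{Step 3: pluricanonical extension — the heart of the proof.} The key assertion is that, after possibly shrinking $\Delta$, every $s_{i}\in H^{0}(X_{i},mK_{X_{i}})$ admits an extension $\hat s_{i}\in H^{0}(\mathcal{X},L_{i})$ with $\hat s_{i}|_{X_{i}}=s_{i}$; equivalently, every pluri-log-canonical section on the component $X_{i}$ of the log-smooth pair $(\mathcal{X},X_{i})$ extends to $\mathcal{X}$. This is proved by the analytic pluricanonical extension technique initiated by Siu and refined by Paun, Takayama, and others (see \cite{[Ta]} and the references there): one argues by induction on $m$, twists by an auxiliary $\pi$-ample line bundle to acquire positivity, extends via the Ohsawa--Takegoshi $L^{2}$-extension theorem with weights furnished by multiplier ideal sheaves built from the already extended sections of smaller multiples, and then removes the auxiliary bundle; the $\pi$-ampleness provided by the projectivity of $\pi$ over the (relatively compact) base replaces the global projectivity used in the compact case. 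I expect this step to be the main obstacle: Step 1 is standard reduction theory and Step 4 is linear-algebra bookkeeping, whereas Step 3 needs the full strength of the $L^{2}$/multiplier-ideal machinery in the relative Moishezon setting.

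\emph{Step 4: conclusion.} Choose a basis of each $H^{0}(X_{i},mK_{X_{i}})$ and extend its elements as in Step 3. For $t\ne0$ define $\Phi:\bigoplus_{i\in I}H^{0}(X_{i},mK_{X_{i}})\to H^{0}(X_{t},mK_{X_{t}})$ by $\Phi(\sum_{i}s_{i})=\sum_{i}\hat s_{i}|_{X_{t}}$, using $K_{\mathcal{X}}|_{X_{t}}\cong K_{X_{t}}$. If $\Phi(\sum_{i}s_{i})=0$, then $\sum_{i}\hat s_{i}$ vanishes on the dense open set $\mathcal{X}\setminus X_{0}$, hence $\sum_{i}\hat s_{i}=0$ on $\mathcal{X}$; restricting to each $X_{j}$ and invoking Step 2 ($\hat s_{i}|_{X_{j}}=0$ for $i\ne j$ and $\hat s_{j}|_{X_{j}}=s_{j}$) forces $s_{j}=0$ for every $j$. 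Thus $\Phi$ is injective, and therefore $\sum_{i\in I}P_{m}(X_{i})\le P_{m}(X_{t})$, which is the assertion.
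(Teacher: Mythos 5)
This theorem is not proved in the paper; it is cited verbatim as Takayama's result \cite[Theorem 1.2]{[Ta]}, and your Step 1 reduction matches the description of Takayama's first step given in Subsection \ref{intro-2}. The overall plan — semi-stable reduction, adjunction on each component $X_i$ of the SNC special fibre, extension of pluricanonical sections from $X_i$, then a counting argument — is precisely Takayama's, so as far as the paper is concerned there is no alternative route here to compare.

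There are two concrete gaps. First, Step 3 is where the entire weight of the theorem lies, and you only assert it: extending every element of $H^0(X_i,mK_{X_i})$ to a section of $L_i\cong\mathcal{O}_\mathcal{X}(m(K_\mathcal{X}+X_i))$ on $\mathcal{X}$ is exactly the Ohsawa--Takegoshi/multiplier-ideal induction that constitutes Takayama's proof. You flag this honestly as the main obstacle, but without carrying it out the proposal reduces the theorem to the hard point rather than proving it. Second, Step 4 is wrong as written: from $\sum_i\hat s_i|_{X_t}=0$ on a \emph{single} fibre $X_t$ you conclude that $\sum_i\hat s_i$ vanishes on the dense open set $\mathcal{X}\setminus X_0$, but $X_t$ is a divisor, not a dense open set, and a section of $mK_\mathcal{X}$ can vanish along one fibre without vanishing identically — it is merely divisible by $\pi^*(w-t)$, where $w$ is the coordinate on $\Delta$. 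The correct way to finish is to show that the extensions $\hat s_i^{(k)}$ (running over bases of each $H^0(X_i,mK_{X_i})$) are linearly independent over $\mathcal{O}_{\Delta^*}$ as sections of $\pi_*\mathcal{O}_\mathcal{X}(mK_\mathcal{X})$; this follows by clearing the pole/zero order of the would-be relation coefficients and then restricting to each $X_j$ exactly as you do, and since after shrinking $\Delta$ the sheaf $\pi_*\mathcal{O}_\mathcal{X}(mK_\mathcal{X})|_{\Delta^*}$ is locally free of rank $P_m(X_t)$, one gets $\sum_i P_m(X_i)\le P_m(X_t)$. The claim in Step 1 that $\sum_i P_m(X_i)$ cannot drop through the finite base change plus normalization of semi-stable reduction also deserves more justification (one must check that each original component is dominated, generically finitely and with the right bookkeeping, by a component of the new reduced special fibre), though this part is standard.
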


Then we try to construct a global line bundle on the total space of the semi-stable (or more generally one-parameter) degeneration such that its restrictions to the general fibers are big, which is our \emph{second} step. This is to be completed in Theorem \ref{0thm-moishezon-update}, while the key difficulty is to verify the \lq surjectivity' assumption \eqref{0surj-chern} therein.
\begin{thm}[{= Theorem \ref{thm-moishezon-update}}] \label{0thm-moishezon-update}
Let $\pi: \mathcal{X}\rightarrow \Delta$ be a one-parameter degeneration such that uncountable fibers $X_t$ therein admit big
line bundles $L_t$ such that
\begin{equation}\label{0surj-chern}
\text{the first Chern class $c_1(L_t)$ of $L_t$ comes from the restriction of $H^2(\mathcal{X}, \mathbb{Z})$ on $X_t$.}
\end{equation}
Assume further that all the fibers satisfy the local deformation invariance for Hodge number of type $(0,1)$. Then there exists a (global) line bundle $L$ over $\mathcal{X}$ such that for all $t\in\Delta^*:=\Delta\setminus \{0\}$,
$L|_{X_t}$ are big and thus $X_t$ are Moishezon.
\end{thm}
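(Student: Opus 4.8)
The plan is to isolate one integral class on the total space $\mathcal{X}$, promote it to an honest line bundle $L$ on $\mathcal{X}$, and then propagate bigness from the ``rich'' fibers to the whole of $\Delta^{*}$. Since $\pi$ is proper and $\Delta$ is contractible, $\mathcal{X}$ deformation retracts onto the compact fiber $X_{0}$, so $H^{2}(\mathcal{X},\mathbb{Z})$ is finitely generated, in particular countable. By the hypothesis \eqref{0surj-chern} there is an uncountable set of parameters $t$, which we may take to lie in $\Delta^{*}$, for which $c_{1}(L_{t})$ lies in the image of the restriction $H^{2}(\mathcal{X},\mathbb{Z})\to H^{2}(X_{t},\mathbb{Z})$; since this source is countable, the pigeonhole principle yields one fixed class $\alpha\in H^{2}(\mathcal{X},\mathbb{Z})$ and an uncountable subset $S\subset\Delta^{*}$ with $\alpha|_{X_{t}}=c_{1}(L_{t})$ for every $t\in S$.

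\emph{Realizing $\alpha$ by a line bundle on $\mathcal{X}$.} By the exponential sequence on $\mathcal{X}$ it suffices to prove that the image $j(\alpha)$ of $\alpha$ in $H^{2}(\mathcal{X},\mathcal{O}_{\mathcal{X}})$ vanishes. Each $R^{q}\pi_{*}\mathcal{O}_{\mathcal{X}}$ is coherent by Grauert's direct image theorem and $\Delta$ is Stein, so the Leray spectral sequence degenerates and $H^{2}(\mathcal{X},\mathcal{O}_{\mathcal{X}})\cong H^{0}(\Delta,R^{2}\pi_{*}\mathcal{O}_{\mathcal{X}})$; thus $j(\alpha)$ becomes a global section $\sigma$ of the coherent sheaf $\mathcal{G}:=R^{2}\pi_{*}\mathcal{O}_{\mathcal{X}}$ on the disk. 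For each $t\in\Delta$ and each $i$, the base change exact sequence over the discrete valuation ring $\mathcal{O}_{\Delta,t}$ reads
\[
0 \longrightarrow R^{i}\pi_{*}\mathcal{O}_{\mathcal{X}}\otimes\mathbb{C}(t) \xrightarrow{\ \phi^{i}_{t}\ } H^{i}(X_{t},\mathcal{O}_{X_{t}}) \longrightarrow \bigl(R^{i+1}\pi_{*}\mathcal{O}_{\mathcal{X}}\bigr)_{t}[\mathfrak{m}_{t}] \longrightarrow 0 ,
\]
where $(\,\cdot\,)_{t}[\mathfrak{m}_{t}]$ denotes the $\mathfrak{m}_{t}$-torsion of the stalk; in particular every $\phi^{i}_{t}$ is injective, and $R^{i+1}\pi_{*}\mathcal{O}_{\mathcal{X}}$ is torsion free at $t$ precisely when $\phi^{i}_{t}$ is surjective. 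The assumed local deformation invariance of the Hodge number $h^{0,1}$ means $\dim_{\mathbb{C}}H^{1}(X_{t},\mathcal{O}_{X_{t}})$ is constant on $\Delta$, so by Grauert's base change theorem $R^{1}\pi_{*}\mathcal{O}_{\mathcal{X}}$ is locally free and $\phi^{1}_{t}$ is an isomorphism for every $t$; by the case $i=1$ above, $\mathcal{G}$ is then torsion free, hence locally free on the smooth curve $\Delta$. On the other hand, for $t\in S$ the image of $\sigma$ under the injective map $\phi^{2}_{t}$ equals, by functoriality of restriction, the image of $c_{1}(L_{t})=\alpha|_{X_{t}}$ under the map $H^{2}(X_{t},\mathbb{Z})\to H^{2}(X_{t},\mathcal{O}_{X_{t}})$ induced by $\mathbb{Z}\hookrightarrow\mathcal{O}_{X_{t}}$, which vanishes because $c_{1}(L_{t})$ comes from $\mathrm{Pic}(X_{t})$; hence $\sigma$ vanishes at every point of the uncountable, hence non-discrete, set $S$. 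A section of a locally free sheaf on the connected curve $\Delta$ that vanishes on a non-discrete set is identically $0$, so $j(\alpha)=0$ and $\alpha=c_{1}(L)$ for some $L\in\mathrm{Pic}(\mathcal{X})$.

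\emph{Bigness of $L|_{X_{t}}$ for all $t\in\Delta^{*}$.} Set $n:=\dim_{\mathbb{C}}X_{t}$ for $t\in\Delta^{*}$. For $t\in S$ the manifold $X_{t}$ is Moishezon (it carries the big bundle $L_{t}$) and $c_{1}(L|_{X_{t}})=\alpha|_{X_{t}}=c_{1}(L_{t})$; passing to a smooth projective modification of $X_{t}$ and using that the volume of a line bundle on a smooth projective variety is a numerical invariant, together with the bimeromorphic invariance of the volume, we obtain $\mathrm{vol}(X_{t},L|_{X_{t}})=\mathrm{vol}(X_{t},L_{t})>0$, so $L|_{X_{t}}$ is big. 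For an arbitrary parameter, fix $m\geq1$; by Grauert's semicontinuity theorem the function $t\mapsto\dim_{\mathbb{C}}H^{0}(X_{t},L^{\otimes m}|_{X_{t}})$ is upper semicontinuous on $\Delta^{*}$, so it attains its minimum $c_{m}$ off a proper analytic, hence discrete, subset $Z_{m}\subset\Delta^{*}$. Choosing $t_{\star}\in S\setminus\bigcup_{m\geq1}Z_{m}$ (possible since $S$ is uncountable while $\bigcup_{m}Z_{m}$ is countable), we get $\mathrm{vol}(X_{t_{\star}},L|_{X_{t_{\star}}})=\limsup_{m}\frac{n!}{m^{n}}c_{m}$, which is positive because $L|_{X_{t_{\star}}}$ is big; since $\dim_{\mathbb{C}}H^{0}(X_{t},L^{\otimes m}|_{X_{t}})\geq c_{m}$ for every $t\in\Delta^{*}$, it follows that $\mathrm{vol}(X_{t},L|_{X_{t}})\geq\limsup_{m}\frac{n!}{m^{n}}c_{m}>0$ for all $t\in\Delta^{*}$, so every $L|_{X_{t}}$ is big and every $X_{t}$ is Moishezon.

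\emph{Main obstacle.} The heart of the argument is the vanishing of $j(\alpha)$: a priori $R^{2}\pi_{*}\mathcal{O}_{\mathcal{X}}$ could carry torsion supported over $0\in\Delta$, in which case a section of it constrained only at the points of $S\subset\Delta^{*}$ need not vanish. This is exactly where the hypothesis on $h^{0,1}$ is indispensable: it forces the surjectivity of $\phi^{1}_{0}$ and hence the torsion freeness of $R^{2}\pi_{*}\mathcal{O}_{\mathcal{X}}$. Making this precise — the $\Delta$-flatness of $\mathcal{O}_{\mathcal{X}}$ supplied by $\pi$ being a family over a regular curve, the exact form of the base change sequence over $\mathcal{O}_{\Delta,t}$ in the analytic category, and the compatibility of all the restriction and connecting maps involved — is the step I expect to require the most care.
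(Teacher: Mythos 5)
Your proposal is correct and follows essentially the same route as the paper's proof (which runs through Propositions \ref{global-B}, \ref{Hodge-torsionfree}, \ref{s-torsion}, \ref{propa} and Corollary \ref{unc-big}): pigeonhole to a single class $\alpha\in H^2(\mathcal{X},\mathbb{Z})$, kill its image in $H^2(\mathcal{X},\mathcal{O}_{\mathcal{X}})\cong\Gamma(\Delta,R^2\pi_*\mathcal{O}_{\mathcal{X}})$ by combining the torsion-freeness of $R^2\pi_*\mathcal{O}_{\mathcal{X}}$ forced by the $h^{0,1}$-invariance with the identity theorem for a section vanishing on an uncountable set, and then spread bigness to all of $\Delta^*$ via countably many upper-semicontinuity conditions. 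Your two repackagings --- observing that a torsion-free coherent sheaf on the smooth curve $\Delta$ is already locally free (in place of the paper's two-step argument on $U$ and on $\Delta\setminus U$), and using volumes on a fixed good point $t_\star$ in place of the $T_{p,q}$-sets of Proposition \ref{ki-dim-limit} --- are cosmetic variants of the same argument.
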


Finally, the \emph{third} step is to prove that the reduced semi-stable (or more generally one-parameter) degeneration (and thus the original family $\pi: \mathcal{X}\rightarrow \Delta$) is actually a Moishezon family by the global line bundle constructed in the second step and Theorem \ref{0thm-moishezon}.
Theorem \ref{0thm-moishezon} is a modified version of Theorem \ref{thm-moishezon}.\eqref{thm-moishezon-ii} in Appendix \ref{bim-em}. Though this result can be formulated for
certain flat family case (not necessarily smooth as a family or
as a total space) since it is quickly reduced to the special case
after a process of proper modification which is harmless because
of the desired bimeromorphic property, the following (special)
version is more convenient to us.
\begin{thm}[{}]\label{0thm-moishezon}
Let $\pi: \mathcal{X}\rightarrow \Delta$ be a one-parameter degeneration of Moishezon manifolds and assume that there exists a (global) line bundle $L$ over $\mathcal{X}$ such that for all $t\in \Delta^*$,
$L|_{X_t}$ are big. Then
for some $N\in \mathbb{N}$, there exists
a bimeromorphic map (over $\Delta$)
$$\Phi:\mathcal{X}\dashrightarrow\mathcal{Y}$$
to a subvariety $\mathcal{Y}$ of $\mathbb{P}^N\times\Delta$
with every fiber $Y_t\subset\mathbb{P}^N\times\{t\}$ being a projective variety.
\end{thm}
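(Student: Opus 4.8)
The plan is to realise, on a very general fibre, the bimeromorphic map onto a projective model that the bigness of $L_t$ provides, using \emph{global} holomorphic sections of a suitable power $L^{\otimes m_{0}}$ on $\mathcal X$ (a relative form of Moishezon's theorem), and then to verify that the resulting meromorphic map over $\Delta$ is itself bimeromorphic. I would begin with the harmless reduction that $\mathcal X$ is a complex manifold and $\pi$ is a submersion over $\Delta^{*}$: a resolution of $\mathcal X$ changes nothing over $\Delta^{*}$ (where $\mathcal X$ is already smooth) and is compatible with the conclusion up to bimeromorphism. Write $n=\dim X_{t}$.

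\textbf{Step 1: choosing $m_0$ and lifting sections.} Each $\pi_{*}L^{\otimes m}$ is coherent on $\Delta$ by Grauert's direct image theorem. Let $S\subset\Delta^{*}$ consist of those $t$ at which, for some $m$, the sheaf $\pi_{*}L^{\otimes m}$ is not locally free or the base change map $(\pi_{*}L^{\otimes m})\otimes k(t)\to H^{0}(X_{t},L_{t}^{\otimes m})$ is not an isomorphism; for each fixed $m$ this is a proper analytic, hence discrete, subset of $\Delta^{*}$, so $S$ is countable. For every $t\in\Delta^{*}\setminus S$ the bundle $L_{t}$ is big, so some $m(t)L_{t}$ defines a bimeromorphism of $X_{t}$ onto its image; since $\Delta^{*}\setminus S$ is uncountable, the pigeonhole principle yields one integer $m_{0}$ and an \emph{uncountable} set $T\subset\Delta^{*}\setminus S$ with $m(t)=m_{0}$ for all $t\in T$. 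I then set $\mathcal F=\pi_{*}L^{\otimes m_{0}}$, let $\mathcal G=\mathcal F/(\mathrm{torsion})$ — torsion-free coherent on the one-dimensional manifold $\Delta$, hence locally free of some rank $N+1$ and so trivial on the disk — and use $H^{1}(\Delta,\mathrm{torsion})=0$ (Cartan B) to lift a basis of $H^{0}(\Delta,\mathcal G)\cong\mathbb C^{N+1}$ to sections $s_{0},\dots,s_{N}\in H^{0}(\mathcal X,L^{\otimes m_{0}})=H^{0}(\Delta,\mathcal F)$. For every $t$, evaluation identifies $H^{0}(\Delta,\mathcal G)$ with $\mathcal G\otimes k(t)$, and for $t\in T$ one has $\mathcal G\otimes k(t)=\mathcal F\otimes k(t)\cong H^{0}(X_{t},L_{t}^{\otimes m_{0}})$ by the choice of $S$, so $s_{0}|_{X_{t}},\dots,s_{N}|_{X_{t}}$ form a basis of $H^{0}(X_{t},L_{t}^{\otimes m_{0}})$. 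Hence the meromorphic map
$$\Phi:\mathcal X\dashrightarrow\mathbb P^{N}\times\Delta,\qquad x\mapsto\bigl([\,s_{0}(x):\cdots:s_{N}(x)\,],\ \pi(x)\bigr)$$
is defined over $\Delta$ and restricts, for every $t\in T$, to a bimeromorphism of $X_{t}$ onto its image $Y_{t}^{0}\subset\mathbb P^{N}\times\{t\}$, a projective variety of dimension $n$.

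\textbf{Step 2: the variety $\mathcal Y$ and bimeromorphy of $\Phi$.} I would resolve the indeterminacy of $\Phi$ by a composition of blow-ups with smooth centres, $p:\Gamma\to\mathcal X$ with $\Gamma$ smooth, so that $\widetilde\Phi:=\Phi\circ p$ is a morphism, proper over $\Delta$; then $\mathcal Y:=\widetilde\Phi(\Gamma)$ is an analytic subvariety of $\mathbb P^{N}\times\Delta$ by Remmert's proper mapping theorem (irreducible, reduced), and each fibre $Y_{t}=\mathcal Y\cap(\mathbb P^{N}\times\{t\})$ is a closed — hence projective — subvariety of $\mathbb P^{N}$. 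To see $\widetilde\Phi$ is bimeromorphic, I use a very general $t_{0}\in T$: since $Z:=\mathrm{Ind}(\Phi)$ has codimension $\ge 2$ ($\Phi$ being given by sections of a line bundle on the smooth $\mathcal X$), the strict transform $\Gamma_{t_{0}}^{0}:=\overline{p^{-1}(X_{t_{0}}\setminus Z)}$ maps bimeromorphically onto $X_{t_{0}}$, while $\Gamma_{t_{0}}\setminus\Gamma_{t_{0}}^{0}\subseteq E\cap\Gamma_{t_{0}}$ with $E$ the exceptional divisor of $p$, an intersection of dimension $\le n-1$ for $t_{0}$ outside a countable set. As $\widetilde\Phi|_{\Gamma_{t_{0}}^{0}}$ is bimeromorphic onto the $n$-dimensional $Y_{t_{0}}^{0}$ and $\dim(E\cap\Gamma_{t_{0}})<n$, a general point of $Y_{t_{0}}^{0}$ has singleton $\widetilde\Phi$-fibre; this forces $\dim\mathcal Y=n+1$, so $\widetilde\Phi$ is generically finite, of some degree $d$. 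Finally let $B\subsetneq\mathcal Y$ be the proper analytic locus over which $\widetilde\Phi$ is not $d$ reduced points; since $t\mapsto Y_{t}^{0}$ is injective and $B$ has only finitely many $n$-dimensional components, I may choose $t_{0}\in T$ with $Y_{t_{0}}^{0}\not\subset B$, so that a general $y\in Y_{t_{0}}^{0}$ satisfies $|\widetilde\Phi^{-1}(y)|=d$, whereas $\widetilde\Phi^{-1}(y)\subset\Gamma_{t_{0}}$ meets $\Gamma_{t_{0}}^{0}$ in one point and avoids $\widetilde\Phi(E\cap\Gamma_{t_{0}})$ by dimension; hence $d=1$. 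Thus $\widetilde\Phi$, and so $\Phi$, is bimeromorphic over $\Delta$, and composing back through the resolution of $\mathcal X$ gives the asserted map $\mathcal X\dashrightarrow\mathcal Y\subset\mathbb P^{N}\times\Delta$.

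\textbf{Expected main obstacle.} The delicate point is the propagation of bimeromorphy in Step 2: fibrewise bimeromorphy onto the image for $t\in T$ controls only a proper, $t$-dependent locus in each fibre, and a priori $\widetilde\Phi$ could acquire extra sheets while still being bimeromorphic on every very general fibre. Ruling this out uses essentially that $T$ is \emph{uncountable} — obtained by pigeonhole from the hypothesis that $L_{t}$ is big for all $t\ne 0$ — so that the $n$-dimensional fibre image $Y_{t_{0}}^{0}$ can be kept off the degeneracy locus $B$, together with the fact that the exceptional locus of the resolution meets a very general $X_{t_{0}}$ in dimension $<n$ and so cannot dominate $Y_{t_{0}}^{0}$. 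The remaining ingredients — coherence of the direct images, local freeness of torsion-free coherent sheaves over the disk, Cartan's Theorems A and B on the Stein base $\Delta$, and cohomology-and-base-change — are standard, and no use of the local invariant cycle theorem (needed elsewhere in the paper) is required here.
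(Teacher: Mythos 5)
Your proposal is correct in substance and reaches the theorem by a route that agrees with the paper's in its first half but diverges in the second. The first half is the same idea: pick a uniform power $m_{0}$ by pigeonhole on an uncountable set $T\subset\Delta^{*}$ of fibres where $L_{t}^{\otimes m_0}$ is bimeromorphically embedding and base change holds, then produce global sections of $L^{\otimes m_{0}}$ inducing the fibrewise Kodaira maps. The paper does this by applying Cartan's Theorem A at a single well-chosen point $b$ (so the sections only generate the stalk there), deferring the pigeonhole to its Step (III); your variant via the torsion-free quotient of $\pi_{*}L^{\otimes m_{0}}$ (free on the disk) plus Cartan B is tidier and gives bases on all of $T$ at once. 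The second half is genuinely different: the paper desingularizes the image $\mathcal{Y}$, shows $\Psi=\mathcal{R}_{\mathcal{Y}}^{-1}\circ\Phi$ restricts to a finite surjective local biholomorphism $\mathcal{X}'\to\tilde{\mathcal{Y}}'$, proves constancy of the fibre cardinality by a curve-lifting argument, and then needs its Step (III) — smoothness of $\mathcal{Y}$ at some point of $\Phi(X_{o})$, obtained because the vertical part of $\mathrm{Sing}\,\mathcal{Y}$ meets only countably many fibres — to see that the distinguished fibre actually enters the good locus. You instead resolve the indeterminacy upstairs, obtain a proper generically finite $\widetilde\Phi:\Gamma\to\mathcal{Y}$, and kill the degree by choosing $t_{0}\in T$ with $Y_{t_{0}}^{0}\not\subset B$; your choice of $t_{0}$ plays exactly the role of the paper's Step (III), and both hinge on $T$ being uncountable while the vertical bad locus hits only countably many fibres. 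What the paper's route buys is self-containedness in the analytic category (it proves from scratch the properness of the graph projections over the non-compact base — its Lemma on $q_{2}$ and the Segr\'e-embedding device — and the covering-space constancy of fibre cardinality); your route is shorter but silently invokes two facts of comparable depth, namely that the relative Kodaira map is meromorphic with proper graph projection onto $\mathbb{P}^{N}\times\Delta$, and that a proper generically finite surjection from a manifold onto an irreducible variety has constant reduced fibre cardinality off a proper analytic subset $B$ (one must include $\mathcal{Y}_{\mathrm{sing}}$, the image of the positive-dimensional-fibre locus, and the closure of the branch locus in $B$). Both are provable — the paper essentially proves versions of each — so these are omissions of detail rather than gaps; also note that the countably many vertical components of $B$ need not be finitely many, though countability already suffices against the uncountable $T$.
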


For the degeneration case, although each of the above Moishezon manifolds
admits a big line bundle depending on fibers (cf. Lemma \ref{sm-big-moi} below),
the above assumption on the existence of a global $L$
is crucial; see Remark \ref{3.19} and Example \ref{3.22} for counterexamples
and further information.  In contrast, for the smooth family case, the existence
of such  a global $L$ can be derived; see the proof of Theorem \ref{main-thm}.\eqref{main-thm-i} in Subsection \ref{pf-mt}. For certain
semi-stable reduction cases, the deduction of the existence of such a global $L$
is possible and is discussed in Section \ref{mtii}.

Hence, one is able to obtain the deformation invariance of plurigenera of $\pi$ by its Moishezonness property and Theorem \ref{0mainT}.
Moreover, the canonical singularities assumption for the fibers is (only) used in
Nakayama--Takayama's proof of upper semi-continuity in Theorem \ref{0mainT} for the flat family.
\begin{thm}[{\cite[Theorem 1.1]{[Ta]}}]\label{0mainT}
 Let $\pi:\mathcal{X}\rightarrow \Delta$ be a Moishezon family.
Assume that every fiber $X_t$ has only canonical
singularities. Then the $m$-genus $P_m(X_t)$ is independent of $t\in
\Delta$ for any positive integer $m$.
\end{thm}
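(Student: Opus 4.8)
The plan is to show, for every $t_0\in\Delta$, that $P_m(X_t)$ is constant for $t$ in a neighbourhood of $t_0$; then $P_m$ is locally constant, hence constant since $\Delta$ is connected. Fix $t_0$. Since canonical singularities are in particular normal, every fibre is an irreducible reduced normal variety, and for a resolution $\mu\colon\widetilde{X_{t_0}}\to X_{t_0}$ the discrepancies are effective, so $\mu_{*}\omega_{\widetilde{X_{t_0}}}^{\otimes m}=\omega_{X_{t_0}}^{[m]}$, the reflexive $m$-th power of the dualizing sheaf; thus $P_m(X_{t})=h^0(X_{t},\omega_{X_{t}}^{[m]})$ for every $t$. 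By generic smoothness I would replace $\Delta$ by a small disk around $t_0$ over which $\pi$ is smooth away from $t_0$; then $\mathcal{X}$ is normal (it is smooth over the punctured disk and normal along the normal Cartier divisor $X_{t_0}=\pi^{-1}(t_0)$ by Serre's criterion). It then suffices to establish
\[
P_m(X_{t_0})\le P_m(X_t)\le P_m(X_{t_0})\qquad\text{for }t\text{ near }t_0 .
\]

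The left-hand inequality is exactly Takayama's extension theorem, Theorem \ref{main2T}, applied with $X_{t_0}$ as reference fibre: since $X_{t_0}$ is irreducible and reduced, the index set there is a singleton and the inequality reads $P_m(X_{t_0})\le P_m(X_t)$ for every $t\ne t_0$; I would invoke it as a black box (its proof runs through a semistable model and the $L^{2}$ extension techniques of Siu, Kawamata and Takayama, and is where the Moishezon hypothesis enters). For the right-hand inequality the point is that $X_{t_0}$ having canonical singularities forces the relative reflexive pluricanonical sheaf of the degeneration to be well behaved under restriction to the fibre: by the theory of deformations of canonical singularities (Kawamata, Nakayama, Takayama), $\mathcal{X}$ has canonical, in particular $\mathbb{Q}$-Gorenstein, singularities near $X_{t_0}$, the sheaf $\mathcal{F}:=\omega_{\mathcal{X}/\Delta}^{[m]}$ is coherent and flat over $\Delta$ (being torsion-free on the integral space $\mathcal{X}$) and agrees with $\omega_{\mathcal{X}/\Delta}^{\otimes m}$ away from $X_{t_0}$, and the base-change identity $\mathcal{F}\otimes\mathcal{O}_{X_t}\cong\omega_{X_t}^{[m]}$ holds for every $t$. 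Granting this, $t\mapsto h^0(X_t,\omega_{X_t}^{[m]})=h^0(X_t,\mathcal{F}\otimes\mathcal{O}_{X_t})$ is upper semi-continuous on $\Delta$ by the semicontinuity theorem for the proper morphism $\pi$, which gives $P_m(X_t)\le P_m(X_{t_0})$ for $t$ near $t_0$. Combined with the left-hand inequality this yields the constancy near $t_0$, and hence the theorem.

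The main obstacle is the base-change identity $\omega_{\mathcal{X}/\Delta}^{[m]}\otimes\mathcal{O}_{X_{t_0}}\cong\omega_{X_{t_0}}^{[m]}$: over a singular central fibre the restriction of the reflexive pluricanonical sheaf of the total space can, a priori, pick up torsion or extra global sections, so that $h^0$ of the special fibre would fail to dominate the generic value — consistent with Nishiguchi's examples showing that Kodaira dimension is not lower semi-continuous under arbitrary degenerations. Ruling this out is precisely the role of the canonical-singularities hypothesis, and is the content of Nakayama--Takayama's (and, at its heart, Kawamata's) analysis of the relative canonical sheaf; this, together with the deep but citable extension theorem behind the left-hand inequality, is where essentially all the difficulty lies, while the remaining items — normality of $\mathcal{X}$, flatness of $\omega_{\mathcal{X}/\Delta}^{[m]}$ over $\Delta$, and the reduction to a local comparison of $h^0$'s — are routine.
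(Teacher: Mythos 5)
This theorem is quoted in the paper as \cite[Theorem 1.1]{[Ta]} without proof, and your sketch reproduces exactly the decomposition of the cited argument: lower semi-continuity from Theorem \ref{main2T} together with Nakayama--Takayama's upper semi-continuity via the behaviour of the reflexive relative pluricanonical sheaf under restriction to the fibres, with the canonical-singularities hypothesis entering only in the second half — precisely as the paper itself remarks in Subsection \ref{intro-2}. The one slip is the appeal to generic smoothness to make $\pi$ smooth away from $t_0$: the nearby fibres also have canonical singularities and need not be smooth, and generic smoothness presupposes a smooth total space; but this step is inessential, since normality of $\mathcal{X}$ already follows from flatness over the smooth curve with normal fibres, so the overall argument is the standard one and stands.
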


Based on the above argument and \cite[Conjecture L]{n87} on the lower semi-continuity of plurigenera under degenerations, it is reasonable to propose, without the algebraic
morphism/Moishezon family assumption on $\mathcal{X}\rightarrow \Delta$,
the following:
\begin{con}[]\label{conj-uncountable}
Let $\pi:\mathcal{X}\rightarrow \Delta$ be a flat family such that all its fibers have only canonical singularities and are Moishezon varieties or even in Fujiki's class $\mathcal{C}$ (i.e., bimeromorphic to compact K\"ahler manifolds). Then for each positive integer $m$, the $m$-genus $P_m({X_t})$ is independent of $t\in \Delta$.
\end{con}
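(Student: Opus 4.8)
The plan is to run the three-step strategy sketched above, whose object is to upgrade $\pi$ to a Moishezon family and then invoke Theorem \ref{0mainT}; this route sidesteps Nakayama's lower-semicontinuity Conjecture L of \cite{n87}, since Theorem \ref{0mainT} delivers full invariance at once. \emph{First}, by generic smoothness and the semi-stable reduction theorem I would pass --- after a finite base change $s\mapsto s^k=t$, a proper modification of the total space, and shrinking $\Delta$ --- to a reduced semi-stable degeneration $\pi':\mathcal{X}'\to\Delta$ with $\mathcal{X}'$ smooth, $\pi'$ smooth over $\Delta^*$, and central fibre a reduced simple normal crossing divisor. As in the third step of the strategy, it then suffices to prove that $\pi'$ is a Moishezon family: this property passes back to $\pi$, and since the fibres of $\pi$ have only canonical singularities by hypothesis, Theorem \ref{0mainT} applied to $\pi$ itself yields the asserted independence of $P_m(X_t)$. (One cannot quote Theorem \ref{0mainT} for $\pi'$, whose simple normal crossing central fibre is not canonical.)

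\emph{Second}, to exhibit $\pi'$ as a Moishezon family it is enough, by Theorem \ref{0thm-moishezon}, to construct a global line bundle $L$ on $\mathcal{X}'$ that is big on every fibre $X'_t$ with $t\neq0$; such an $L$ produces a bimeromorphic model $\mathcal{Y}\subset\mathbb{P}^N\times\Delta$ projective over $\Delta$. For the construction I would apply Theorem \ref{0thm-moishezon-update}, which requires three inputs. (a) \emph{Big line bundles on uncountably many fibres}: automatic when the fibres are Moishezon, since a Moishezon manifold is bimeromorphic to a projective one and the pullback of an ample bundle, carried back through a resolution by the projection formula, is big. (b) \emph{The surjectivity condition \eqref{0surj-chern}}: one needs $c_1(L_t)$ in the image of $H^2(\mathcal{X}',\mathbb{Z})\to H^2(X'_t,\mathbb{Z})$, and here I would use the local invariant cycle theorem \cite{cm}, as in the proof of the Chow-type Lemma \ref{0sing-chow}, by which --- after shrinking $\Delta$ --- the monodromy-invariant part of $H^2(X'_t,\mathbb{Q})$ is exactly the image of $H^2(\mathcal{X}',\mathbb{Q})$; it thus suffices to arrange that the big classes used are monodromy-invariant. (c) \emph{Constancy of $h^{0,1}$ along the family}: over $\Delta^*$ this follows from the $\partial\bar\partial$-lemma on Moishezon (more generally, class-$\mathcal{C}$) manifolds, which gives $h^{0,1}=b_1/2$ and hence local constancy, the central fibre being handled by a Hodge-theoretic semicontinuity argument; alternatively one may use the strongly Gauduchon option in Corollary \ref{main-thm-sm}.

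The \textbf{main obstacle} is step (b). A priori the N\'{e}ron--Severi group of $X'_t$ both grows and is rotated by the monodromy, so a given big $L_t$ need not have monodromy-invariant Chern class. The uncountability hypothesis is meant to defeat this: the monodromy-invariant sublattice of $R^2\pi'_*\mathbb{Z}$ is a fixed finitely generated group, and for each of its classes $\alpha$ the locus of $t\in\Delta^*$ where $\alpha|_{X'_t}$ is of type $(1,1)$ is analytic, so if uncountably many fibres are Moishezon, a single integral class $\alpha$ should be of type $(1,1)$ on all fibres (an analytic subset of $\Delta$ with uncountably many points being everything after shrinking) and big on a dense open subset --- which is precisely \eqref{0surj-chern}. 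Making this dichotomy precise, and keeping it compatible with the semi-stable base change that can inflate the relevant counts, is the crux, and is exactly the gap flagged after Theorem \ref{0thm-moishezon-update}. For the genuinely non-Moishezon Fujiki-class-$\mathcal{C}$ case the obstruction is of a different order: such a fibre may carry no big --- indeed no nontrivial --- line bundle at all (a very general complex torus of dimension $\geq 2$ has Picard number $0$), so the line-bundle route collapses, and one would instead have to re-run the argument with the relative canonical sheaf $K_{\mathcal{X}'/\Delta}$ itself, or with Bott--Chern / strongly Gauduchon classes, upgrading ``Moishezon family'' to ``K\"ahler family'' and invoking invariance-of-plurigenera statements for K\"ahler morphisms. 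This is where Conjecture \ref{conj-uncountable} exceeds the methods developed here, and why it is stated as a conjecture.
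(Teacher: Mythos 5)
The statement you are addressing is Conjecture \ref{conj-uncountable}: the paper does not prove it, states it explicitly as open, and immediately cautions that the conclusion can fail outside Fujiki's class $\mathcal{C}$ (Example \ref{3.22}). So there is no proof in the paper to compare against, and your proposal---which you yourself concede cannot be closed---is a strategy sketch rather than a proof. That said, your diagnosis of where the difficulty sits coincides with the paper's: after semi-stable reduction everything hinges on producing a global line bundle on the total space that is big on the general fibres, i.e., on verifying the surjectivity condition \eqref{0surj-chern}, and this is exactly the step the authors can only carry out under much stronger hypotheses (general type or big anticanonical bundle, together with projectivity of the fibres, in Lemma \ref{sing-chow}, via the boundedness result of Proposition \ref{finite} and the local invariant cycle theorem). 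The descent of the Moishezon property from the semi-stable model back to $\pi$, which you assert in passing, is also nontrivial because semi-stable reduction involves a finite base change rather than a bimeromorphic morphism over $\Delta$; the paper handles the analogous descent in Lemma \ref{Prop. B} and Remark \ref{Prop B'} by pushing divisors forward along the $m$-to-$1$ map.

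The concrete gap in your step (b) is this: your countability dichotomy shows at best that some monodromy-invariant integral class $\alpha$ restricts to a $(1,1)$-class (indeed to $c_1$ of some line bundle) on every fibre---this is Proposition \ref{global-B}---but it does not show that $\alpha$ can be taken to agree with the first Chern class of a \emph{big} line bundle on uncountably many fibres, which is what Proposition \ref{propa} and Corollary \ref{unc-big} need. For that you must first know that the big classes you start from are themselves monodromy-invariant, and this is precisely what fails in general: Nishiguchi's degenerations (Remark \ref{3.19}, Example \ref{3.22}) have projective, hence Moishezon, general fibres yet are not algebraic morphisms, so no global $L$ big on the general fibres can exist there. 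The paper defeats the monodromy only by using bigness of $\pm K_{X_t}$ to bound the orbit $\{T^k(c_1(L_\tau))\}_k$ (Lemma \ref{cup-p}, Proposition \ref{finite}, Theorem \ref{inv-cyc}); the Moishezon or class-$\mathcal{C}$ hypothesis of the conjecture supplies no such bound (and Example \ref{k3} shows the orbit of a very ample class under an isometry of the N\'eron--Severi lattice can genuinely be unbounded as a family of divisors). Your final paragraph correctly identifies the additional collapse of the line-bundle route in the class-$\mathcal{C}$ case. In short, the proposal reproduces the intended strategy and locates the open gap accurately, but it is not a proof, and the paper's own counterexamples show that the intermediate claim it rests on cannot be established by these methods without further hypotheses.
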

The above conclusion may fail for cases where the assumption
on the Fujiki's class $\mathcal{C}$ is not satisfied; see Example \ref{3.22} for more.
See also Conjecture \ref{di-conj} below for a smooth version.

Let us describe how to use the strategy of deformation invariance of plurigenera to prove Lemma \ref{0sing-chow}. Upon the semi-stable reduction  in its first step and by Proposition \ref{ki-dim-limit} and Lemma \ref{Prop. B}, one assumes that the family $\pi$ is a semi-stable degeneration of projective manifolds ${X_t}$ such that $K_{X_t}$ (resp. $K_{X_t}^*$) are big for all $t \in \Delta^*$. Then by Proposition \ref{fgn} and Remark \ref{biganti}, we construct the (global) line bundles with fiberwise (very) ampleness on $\mathcal{X}$ to induce the bimeromorphic map in Theorem \ref{0thm-moishezon}, the inverse of which are modified as the desired bimeromorphic morphisms for Lemma \ref{0sing-chow}. In particular, the local invariant cycle theorem \cite{cm} and effective algebro-geometric results and techniques under the projectivity and bigness assumptions are additionally applied to verify the \lq surjectivity' assumption \eqref{0surj-chern} in Theorem \ref{0thm-moishezon-update}.
\subsection{Background and related works} We present a brief, possibly incomplete history of deformation of plurigenera, and many parts here are taken from the nice historic statement in \cite{[S98]} and \cite[Introduction]{[Ta]}.

S. Iitaka \cite{ii}
proved the smooth surface case of the deformation invariance of the plurigenera. His method works only for surfaces because it uses much
of the information from the classification of surfaces. P. Wilson \cite{w78} obtained the lower semi-continuity of the plurigenera of projective surfaces under degeneration of surfaces having only isolated
Gorenstein singularities, while Y. Kawamata \cite{k80} proved the deformation invariance of the arithmetic genera $\dim_{\mathbb{C}} H^0(X_s,K_s)$ for the dualizing sheaf $K_s$ of the reduced algebraic surfaces $X_s$ with at most simple elliptic or simple quasi-elliptic singularities. See more on the deformation behavior of arithmetic and geometric plurigenera and of Kodaira dimension on surfaces in \cite{w79}.  M. Levine \cite{Le1,Le2}
proved that for every positive integer $m$ every element of $H^0(X_0,K_{X_0}^{\otimes m})$
can be extended to the fiber of $\mathcal{X}$ over the \emph{double point} of $\Delta$  at $t=0$. So
far there is no way to continue the process to get an extension to the
fiber of $\mathcal{X}$  at $t=0$ of any finite order. N. Nakayama \cite{n86} pointed out that if the relative case of the minimal model
program can be carried out for a certain dimension, the conjecture of
the deformation invariance of the plurigenera for that dimension would follow
directly from it. Thus the deformation invariance of the plurigenera for threefolds is
a consequence of the relative case of the minimal
model program (for threefolds) by Kawamata \cite{k92} and Koll\'{a}r--S. Mori \cite{km92}.

Y.-T. Siu \cite{[S98],[S00]} proved the deformation invariance of plurigenera in case  $\pi:\mathcal{X}\rightarrow \Delta$  is a smooth projective family of complex manifolds, and in \cite{s04}
 Siu also considered, with deep insights, the case when the reference fiber is singular
and even non-reduced. See also the closely related work by H. Tsuji \cite{[Ts]}.  The works of Kawamata
\cite{[K2],[K1]}, Nakayama \cite[Chapter VI]{n04} and Takayama \cite{[Ta]} are strongly influenced by Siu.
Kawamata \cite[Theorem 6]{[K2]} obtained Theorem \ref{0mainT} with all fibers being of general type. Nakayama \cite[Chapter VI]{n04}  obtained Theorem \ref{0mainT} by assuming the so-called abundance conjecture is true
for the generic fiber, and Theorem \ref{main2T} when the abundance conjecture is
true for the generic fiber and a somewhat artificial assumption holds on a reference fiber. Nakayama \cite[Theorem 11]{n86}
 has also shown that Theorem \ref{main2T} follows from the existence of
a minimal model for the family and the abundance conjecture for the generic
fiber. Takayama \cite{[Ta]} gave a direct
proof of the above mentioned results of Nakayama without assuming the existence
of minimal models or abundance, and completed the
picture by Kawamata and Nakayama. His approach is
analytic, relying crucially on the uniform estimates provided by the
Ohsawa--Takegoshi extension theorem \cite{[OT]}. Of course, complete algebraic proofs
are certainly desirable in the case of varieties of nonnegative Kodaira dimension which are not of general type. More recently, M. P$\breve{\textrm{a}}$un \cite{[P]} discovered a very short and elegant proof of Siu's theorem on the deformation invariance of plurigenera without use of the effective global generation (see also \cite[Chapter 16]{dem} for a nice interpretation and \cite{cl}). For more recent progress, we refer the reader to \cite{bp,Hmx,cf,br,fs} as a possibly incomplete list.

 Compared with Theorem \ref{0mainT}, our main Theorem \ref{main-thm} seems to  admit more essential deformation significance since the local stability type of results often relies more on the assumption on the reference fiber rather than on the total space, such as Kodaira--Spencer's local stability theorem of K\"ahler structure \cite{KS}, its generalizations to $p$-K\"ahler structures \cite{RwZ,rwz18}, and also deformation invariance of Hodge numbers and particularly $1$-genus \cite{RZ15}, etc.
It is worth calling the reader's attention to the generalizations of Theorem
\ref{main-thm}. A classical example of Wilson \cite[Example 4.1, Remark 4.2]{w81} exhibits a family
of normal Gorenstein $3$-folds $\{V_t\}_{t\in \Delta}$, in which almost all members constructed in \cite[Example 4.1]{w81} do not have canonical singularities, and the $m$-genus
$\dim_{\mathbb{C}} H^0(V_t,mK_{V_t})$ is not constant. Kawamata \cite[Example 4.3]{[K1]} gave an example
to show that Theorem \ref{main-thm} does not hold in the larger class of
Kawamata log-terminal singularities.

In contrast, I. Nakamura \cite{N} provided a counterexample for the generalization
of the conjecture in the case where the reference fiber $X_0$ is a quotient of a $3$-dimensional simply-connected
solvable Lie group by a discrete subgroup. We note that $X_0$ is not Moishezon or even $\partial\bar\partial$-manifold
which has non-closed holomorphic $1$-forms. So we only consider the family of Moishezon varieties in this paper.

Nowadays, the following conjecture due to Siu and Nakayama is still a big challenge, cf. \cite[Conjecture 0.4]{[S00]}, \cite[Conjecture 2.1]{[S02]}, \cite[Conjecture 1.1 of Chapter VI]{n04}:
\begin{con}[{}]\label{di-conj}
Let $\pi: \mathcal{X}\rightarrow \Delta$ be a holomorphic family of compact complex manifolds such that each fiber $X_t$ is K\"ahler or even in Fujiki's class $\mathcal{C}$ (i.e., bimeromorphic to a compact K\"ahler manifold) for any $t\in \Delta$. Then for each positive integer $m$, the $m$-genus $\dim_{\mathbb{C}} H^0(X_t, K_{X_t}^{\otimes m})$ is independent of $t\in \Delta$.
\end{con}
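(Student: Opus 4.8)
The plan is to run, in the K\"ahler (or more generally Fujiki's class $\mathcal{C}$) setting, the analytic extension strategy of Siu and P$\breve{\textrm{a}}$un underlying Theorem \ref{0mainT}, tracking precisely where projectivity enters. Since $\pi$ is a smooth holomorphic family, $\mathcal{X}$ is a complex manifold and the relative canonical bundle $K_{\mathcal{X}/\Delta}=K_{\mathcal{X}}\otimes\pi^{*}K_{\Delta}^{-1}$ is a line bundle with $K_{\mathcal{X}/\Delta}^{\otimes m}|_{X_t}=K_{X_t}^{\otimes m}$; by Grauert's coherence theorem $\pi_{*}K_{\mathcal{X}/\Delta}^{\otimes m}$ is coherent, and, being torsion free over the smooth curve $\Delta$, it is locally free of some rank $r$. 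Because $\mathcal{X}$ is smooth and each $X_t$ is a reduced principal divisor, the fiber of $\pi_{*}K_{\mathcal{X}/\Delta}^{\otimes m}$ at $t$ maps injectively, under base change, into $H^{0}(X_t,K_{X_t}^{\otimes m})$, so $P_m(X_t)\ge r$ for all $t$, with equality for generic $t$ and hence, after shrinking $\Delta$, for all $t\ne 0$. It therefore remains to prove $P_m(X_0)\le r$, i.e. that the base-change map is also \emph{surjective} at $t=0$: every $\sigma\in H^{0}(X_0,K_{X_0}^{\otimes m})$ must extend holomorphically to a neighborhood of $X_0$ in $\mathcal{X}$. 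Running the same argument at an arbitrary base point then yields local constancy, hence constancy, of $P_m$.

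For the extension I would imitate P$\breve{\textrm{a}}$un's argument. On each fiber $X_t$, $t\ne 0$, which lies in class $\mathcal{C}$, a basis of $H^{0}(X_t,K_{X_t}^{\otimes\ell})$ for $\ell$ divisible enough defines a singular hermitian metric on $K_{X_t}$ with semipositive curvature current and trivial multiplier ideal along the base locus of the sections; this uses only that $K_{X_t}$ is pseudo-effective on a compact complex manifold, not projectivity (and when $\kappa(X_t)=-\infty$ for generic $t$, the claim $P_m(X_0)=0$ still has to be produced, again as surjectivity at $t=0$). The crucial point is to assemble these fiberwise metrics, together with the metric attached to the prescribed $\sigma$ on $X_0$, into a single singular hermitian metric $h$ on $K_{\mathcal{X}/\Delta}$ over a neighborhood $\mathcal{U}$ of $X_0$ with semipositive curvature current, relative to which $\sigma$ is $L^{2}$. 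In the projective case one additionally twists by a small multiple of a fixed ample line bundle $A$ on $\mathcal{X}$: this both makes $\mathcal{U}$ a genuinely positive-curvature situation and, through Siu's doubly indexed iteration --- or the passage to $\ell$-th roots of the $L^{2}$ norms of $A^{1/\ell}$-twisted extensions --- is shed at the end without cost to the plurigenera.

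Granting such an $h$, one applies the Ohsawa--Takegoshi extension theorem on a pseudoconvex neighborhood of $X_0$ in $\mathcal{X}$ to lift $\sigma$ to $H^{0}(\mathcal{U},K_{\mathcal{X}/\Delta}^{\otimes m})$, with uniform $L^{2}$ control along the auxiliary indices, and extracts a holomorphic extension in the limit. This forces the base-change map at $t=0$ to be surjective, hence $P_m(X_0)=r$, which is the conjecture. When the fibers lie in class $\mathcal{C}$ but are not K\"ahler one interposes a bimeromorphic modification and invokes the bimeromorphic invariance of $P_m$; this does not alter the shape of the argument but must be performed compatibly with the metric construction.

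The decisive difficulty --- and the reason Conjecture \ref{di-conj} is still open --- is precisely the ingredient the projective proofs get for free. The Ohsawa--Takegoshi step requires the ambient neighborhood of $X_0$ to carry a (complete) K\"ahler metric and the twisted bundle a strictly positive curvature current; but in the K\"ahler/class $\mathcal{C}$ situation $\mathcal{X}$ need not be locally K\"ahler near $X_0$ (Deligne's example, \cite[Example 3.9]{bin}), so neither an ambient K\"ahler metric nor an ample $A$ is at hand, and without $A$ one also loses the $\ell$-th-root device that untwists the extension. Overcoming this appears to demand a new extension theorem requiring only a transcendental, non-K\"ahler positivity of $\mathcal{X}$ --- a strongly Gauduchon or $\partial\bar\partial$-type condition in the spirit of Theorem \ref{0thm-moishezon-update} and Corollary \ref{main-thm-sm} --- which is not presently available. (Alternatively one might try to extract the needed lower semicontinuity from a relative minimal model / abundance input in Nakayama's style, but those are equally open for K\"ahler fibers.) Short of such input one is confined to the weaker Corollary \ref{main-thm-sm}, with its extra hypotheses of uncountably many Moishezon fibers and either local invariance of $h^{0,1}$ or the existence of strongly Gauduchon metrics.
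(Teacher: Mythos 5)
The statement you are asked about is Conjecture \ref{di-conj}, which the paper explicitly presents as an \emph{open problem} due to Siu and Nakayama (``still a big challenge''); the paper offers no proof of it, and your text is likewise not a proof but a strategy sketch that you yourself concede cannot be completed. So there is nothing to compare against, and the honest verdict is that the proposal has a genuine, acknowledged gap: the entire argument hinges on assembling the fiberwise metrics into a single semipositively curved singular metric on $K_{\mathcal{X}/\Delta}$ near $X_0$ and then invoking Ohsawa--Takegoshi, and both halves of that step fail without projectivity. There is no ample line bundle $A$ on $\mathcal{X}$ to supply the strict positivity and the $\ell$-th-root untwisting device of Siu and P\u{a}un, and --- as you correctly note via Deligne's example in \cite{bin} --- fiberwise K\"ahlerness does not even give a K\"ahler (or locally K\"ahler) neighborhood of $X_0$ in $\mathcal{X}$, so the hypotheses of the extension theorem are simply not available. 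Everything before that point (local freeness of $\pi_*K_{\mathcal{X}/\Delta}^{\otimes m}$, generic constancy, reduction to surjectivity of the base-change map at $t=0$) is standard and correct, but it only re-derives upper semicontinuity of $P_m$; the lower bound at the special fiber is exactly the open content of the conjecture.

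Two further cautions if you pursue this. First, your claim that a basis of pluricanonical sections yields a metric with ``trivial multiplier ideal along the base locus'' is not something you get for free even in the projective case; controlling the multiplier ideals uniformly in $t$ is a substantial part of Siu's and P\u{a}un's arguments, not a one-line observation. Second, for fibers merely in class $\mathcal{C}$ the interposed bimeromorphic modifications cannot in general be chosen to vary holomorphically with $t$, so ``performing them compatibly with the metric construction'' is itself an unsolved problem, not a routine adjustment. The paper's own partial results in this direction (Theorem \ref{main-thm} and Corollary \ref{main-thm-sm}) substitute Moishezon/bigness hypotheses precisely so that the global line bundle and the bimeromorphic embedding of Theorem \ref{0thm-moishezon} replace the missing ample twist; absent such hypotheses, your proposal does not close the gap, and it should not be presented as a proof of the conjecture.
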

Remark that an approach to the above conjecture in the K\"ahlerian case has recently been proposed by Demailly \cite{Dem20} and a confirmation of an important special case in an analytic way is given by J. Cao--P$\breve{\textrm{a}}$un \cite{cp20}.

\noindent
\textbf{Acknowledgement}:
The authors  would like to express their gratitude to Professor J.-P. Demailly for pointing out Question \ref{quest} to them when the first named author visited him  in May 2017 and many useful suggestions and discussions on this paper (in particular Theorem \ref{singular}), and Professors Fusheng Deng, Ya Deng, Zhengyu Hu, Junchao Shentu, Xun Yu, Dr. Xiaojun Wu, Jian Chen for many useful discussions. We also thank Professors D. Abramovich for useful advice on Lemma \ref{bu-proj}, C. Peters for explaining the local invariant cycle Theorem \ref{lict} in \cite{ps}, S. J. Kov\'{a}cs, Linquan Ma, K. Schwede on singularities, Chenyang Xu, Lei Zhang for Remark \ref{xu}, F. Campana and C. Voisin on projective/Moishezon morphisms via emails and also Junyan Cao, M. P$\breve{\textrm{a}}$un, S. Takayama for their interest. We are grateful to J. Koll\'{a}r for sending us his interesting preprint \cite{k21b} on Moishezon morphisms.
Part of this work was done during the first named author's visit to Institut Fourier, Universit\'{e} Grenoble Alpes from March 2019.
He would like to express his gratitude to the institute for their hospitality and the wonderful work environment during his visit.

\section{Preliminaries: Moishezon varieties and sheaves}
We collect some basic knowledge and notations, to be used often in the later
sections.
\subsection{Moishezon varieties}
We are mostly concerned with deformations of Moishezon varieties.
Several nice references on Moishezon varieties (or manifolds) are \cite[\S\ 3]{Ue}, \cite[\S\ 6]{pe} and \cite[Chapter 2]{mm}.

To fix the notation, let $M$ be a compact (irreducible) complex variety of dimension $n$ and $L$ a holomorphic line bundle over $M$.
Set the linear system associated to $\mathcal{L}:=H^{0}(M,L)$ as $|\mathcal{L}|=\{\Div(s): s\in \mathcal{L}\}$ and its {base point locus} $\Bl_{|\mathcal{L}|}$.
Now consider $\mathcal{L}_p:=H^{0}(M,L^{\otimes p})$ for a positive integer $p$ and the Kodaira map  $\Phi_p:=\Phi_{\mathcal{L}_p}$ associated to $L^{\otimes p}$.
Set
$$
\varrho_p=
\begin{cases}
\max\{\rk\ \Phi_p: x\in M\setminus \Bl_{|\mathcal{L}_p|}\}, &\text{if $\mathcal{L}_p\neq\{0\}$},\\
-\infty, &\text{otherwise}.
\end{cases}
$$
The \emph{Kodaira--Iitaka dimension} of $L$ is
$\max\{\varrho_p:p\in \mathbb{N}^+\}$ and denoted by $\kappa(L)$. More generally, the \emph{Kodaira--Iitaka dimension} of a line bundle $L$ over a compact complex space $X$ is the minimum of the {Kodaira--Iitaka dimensions} of the restrictions of $L$ to the irreducible components of $X$.

Since we often need to keep track of the $p$ in $\Phi_p$, the following criterion, involving a positive integer $d$, is recorded here.
\begin{thm}[{\cite[Theorem $8.1$]{Ue}}]\label{asymp}
For a  line bundle $L$ on a compact complex variety $M$, there exist positive numbers $\alpha,\beta$ and a positive integer $m_0$ such that for any integer $m\geq m_0$, the inequalities hold
\begin{equation}\label{asymp-ineq}
 \alpha m^{\kappa(L)}\leq h^0(M, L^{\otimes md})=\dim_{\mathbb{C}}H^0(M, L^{\otimes md})\leq \beta m^{\kappa(L)},
\end{equation}
where $d$ is some positive integer depending on $L$. In particular, when $h^0(M, L)\neq 0$, one can take $d=1$ in \eqref{asymp-ineq}.
\end{thm}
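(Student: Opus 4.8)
The plan is to reconstruct the classical proof of this statement (it is \cite[Theorem $8.1$]{Ue}) in three steps: a reduction to the case $h^0(M,L)\neq 0$, which will also yield the last assertion; an elementary lower bound obtained by pulling sections back from the image of the Kodaira map; and the upper bound via the Iitaka fibration, where the real difficulty lies.

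First I would dispose of the case $\kappa(L)=-\infty$, in which $H^0(M,L^{\otimes p})=0$ for all $p\ge 1$ and there is nothing to prove; so assume $\kappa:=\kappa(L)\ge 0$. Then $N(L):=\{p\ge 1:h^0(M,L^{\otimes p})\neq 0\}$ is a nonempty subsemigroup of $(\mathbb{N},+)$, hence contains every sufficiently large multiple of $d_0:=\gcd N(L)$; choosing $d$ to be a large enough multiple of $d_0$, every positive multiple of $d$ lies in $N(L)$, so $h^0\!\bigl(M,(L^{\otimes d})^{\otimes m}\bigr)\neq 0$ for all $m\ge 1$, while $\kappa(L^{\otimes d})=\kappa$. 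Thus the theorem for $L$ (with this $d$) reduces to the case $h^0\neq 0$ applied to $L^{\otimes d}$, and in the case $h^0(M,L)\neq 0$ one may take $d=1$. So from now on I assume $h^0(M,L)\neq 0$; fixing $0\neq\sigma\in H^0(M,L)$ and multiplying by powers of $\sigma$ shows that $m\mapsto h^0(M,L^{\otimes m})$ is non-decreasing, so it suffices to prove bounds $\alpha' k^{\kappa}\le h^0(M,L^{\otimes m_1 k})\le\beta' k^{\kappa}$ for $k\gg 0$ along a single progression $m_1\mathbb{N}$, the case of general $m$ following by squeezing $m$ between $m_1\lfloor m/m_1\rfloor$ and $m_1\lceil m/m_1\rceil$. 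Here I fix $m_1$ with $\dim\overline{\Phi_{m_1}(M)}=\kappa$, which exists since $\kappa=\max_{p}\varrho_p$ and $\varrho_{m_1}=\dim\overline{\Phi_{m_1}(M)}$.

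For the lower bound, set $Z:=\overline{\Phi_{m_1}(M)}\subseteq\mathbb{P}^N$ with $N+1=h^0(M,L^{\otimes m_1})$; it is a reduced irreducible projective variety of dimension $\kappa$. If $s_0,\dots,s_N$ is the basis of $H^0(M,L^{\otimes m_1})$ defining $\Phi_{m_1}$, the substitution $F\mapsto F(s_0,\dots,s_N)$ gives a linear map $H^0(\mathbb{P}^N,\mathcal{O}(k))\to H^0(M,L^{\otimes m_1 k})$ whose kernel is precisely the degree-$k$ part $I(Z)_k$ of the homogeneous ideal of $Z$ (a form vanishing on the dense subset $\Phi_{m_1}\!\bigl(M\setminus\Bl_{|\mathcal{L}_{m_1}|}\bigr)$ of $Z$ vanishes on $Z$, since $M$ is reduced). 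Hence $h^0(M,L^{\otimes m_1 k})\ge\dim\!\bigl(\mathbb{C}[x_0,\dots,x_N]_k/I(Z)_k\bigr)$, which for $k\gg 0$ equals the Hilbert polynomial $\tfrac{\deg Z}{\kappa!}k^{\kappa}+\cdots$ of $Z\subseteq\mathbb{P}^N$; this gives the lower bound.

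The upper bound is the part I expect to be hard, since a priori one knows only $\dim\overline{\Phi_m(M)}\le\kappa$ for every $m$. Using Hironaka's resolution and elimination of indeterminacies, I would replace $M$ by a modification $\mu\colon\widetilde M\to M$ carrying a surjective morphism $f\colon\widetilde M\to W$ onto a normal projective variety $W$ with $\dim W=\kappa$ and connected general fibres --- the Iitaka fibration of $L$ (see \cite[Chapter I]{Ue}) --- together with an ample line bundle $A$ on $W$ and an effective divisor $E$ on $\widetilde M$ such that, after enlarging $m_1$ to a multiple, $\mu^*L^{\otimes m_1}\cong f^*A\otimes\mathcal{O}_{\widetilde M}(E)$. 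Since $\mu$ is dominant, $H^0(M,L^{\otimes m})\hookrightarrow H^0(\widetilde M,\mu^*L^{\otimes m})$, and the projection formula gives $H^0(\widetilde M,\mu^*L^{\otimes m_1 k})=H^0(W,A^{\otimes k}\otimes\mathcal{G}_k)$ with $\mathcal{G}_k:=f_*\mathcal{O}_{\widetilde M}(kE)$, torsion-free of generic rank $h^0(F,\mathcal{O}_F(kE|_F))$ on a general fibre $F$. The crux is that $\mathcal{O}_F(E|_F)\cong\mu^*L^{\otimes m_1}|_F$ has Kodaira--Iitaka dimension $0$ --- the defining property of the Iitaka fibration, and precisely where $\dim\overline{\Phi_m(M)}\le\kappa$ enters --- so these ranks are bounded by a constant independent of $k$; one must then upgrade this to $h^0(W,A^{\otimes k}\otimes\mathcal{G}_k)=O(k^{\kappa})$ on the projective base of dimension $\kappa$, the genuine difficulty being that the ``positivity'' of $\mathcal{G}_k$ must not grow with $k$ --- which Ueno arranges by a further reduction eliminating the fixed part $E$, after which $h^0(\widetilde M,\mu^*L^{\otimes m_1 k})=h^0(W,A^{\otimes k})=\Theta(k^{\kappa})$ by Serre vanishing. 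Combined with the lower bound and the monotonicity above, this finishes the proof. I therefore expect the entire obstacle to sit in this last step --- the construction and fine analysis of the Iitaka fibration, including its passage through the possibly non-Moishezon analytic category --- and in practice I would simply invoke \cite[Chapter I]{Ue} for it.
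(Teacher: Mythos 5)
The paper does not prove this statement at all; it is quoted verbatim as \cite[Theorem 8.1]{Ue} and used as a black box, so there is no ``paper's own proof'' to compare against. Your reconstruction follows the classical route in Ueno's book: the semigroup reduction to the case $h^{0}(M,L)\neq 0$ (which simultaneously produces the integer $d$ and the last sentence of the statement), the Hilbert-polynomial lower bound via pulling forms back through the Kodaira map, and the Iitaka-fibration upper bound. The first two steps are complete and correct as written --- in particular, identifying the kernel of the substitution map with $I(Z)_{k}$ (using that $M$ is reduced and irreducible and that $\Phi_{m_{1}}(M\setminus\Bl)$ is dense in $Z$) is exactly the right way to see the lower bound, and the monotonicity/squeezing trick correctly passes from the progression $m_{1}\mathbb{N}$ to all large $m$. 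The upper bound is, as you say yourself, only sketched: the assertion that $\mu^{*}L^{\otimes m_{1}}\cong f^{*}A\otimes\mathcal{O}_{\widetilde M}(E)$ with $A$ ample and $E$ effective requires resolving the indeterminacy of $\Phi_{m_{1}}$ and taking a Stein factorization, and the step from ``rank of $f_{*}\mathcal{O}(kE)$ is bounded'' to ``$h^{0}(W,A^{\otimes k}\otimes f_{*}\mathcal{O}(kE))=O(k^{\kappa})$'' is where the real content lies (bounded generic rank alone does not control $h^{0}$ without some boundedness of the positivity of $\mathcal{G}_{k}$). You flag this honestly and defer to Ueno, which is exactly what the paper itself does; so the proposal is faithful to the cited source and contains no errors, only an acknowledged gap at the point where Ueno's own proof is genuinely hard.
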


Recall that a line bundle on a compact complex variety $M$ is called \emph{big} if $\kappa(L)=\dim_{\mathbb{C}} M$. Then one has the characterization of a Moishezon manifold or variety.
\begin{lemma}[{\cite[Theorem 2.2.15]{mm}, \cite[Proposition 6.16]{pe}}]\label{sm-big-moi}
A compact complex manifold is Moishezon if and only if it admits a big holomorphic line bundle.
More generally, the same result still holds for
a compact complex variety.
\end{lemma}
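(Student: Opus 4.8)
The plan is to prove the two implications separately, with the ``only if'' direction for manifolds as the heart and the variety case obtained by descent from a resolution.

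\textbf{Big line bundle $\Rightarrow$ Moishezon.} Suppose the compact complex variety $M$ of dimension $n$ carries a big line bundle $L$, i.e. $\kappa(L)=n$. By the definition of $\kappa(L)$ there is $p\in\mathbb{N}^+$ with $\varrho_p=n$. Since $\varrho_p$ equals the dimension of the image $W:=\overline{\Phi_p(M)}\subset\mathbb{P}^N$ of the Kodaira map $\Phi_p$ of $L^{\otimes p}$, this $W$ is an $n$-dimensional projective subvariety. The field of meromorphic functions of $W$ then has transcendence degree $n$ over $\mathbb{C}$, and pulling back along the dominant meromorphic map $\Phi_p$ embeds this field into $\mathcal{M}(M)$, so $a(M)\ge n$; equivalently, $n$ of the ratios $s_j/s_0$ of sections defining $\Phi_p$ are already algebraically independent. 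Combined with Siegel's theorem $a(M)\le\dim_{\mathbb{C}}M=n$, this gives $a(M)=n$, so $M$ is Moishezon. (If one prefers to stay with the $h^0$ language, the growth estimate of Theorem~\ref{asymp} is an equally good starting point.)

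\textbf{Moishezon $\Rightarrow$ big line bundle, $M$ a manifold.} Here I would start from algebraically independent $f_1,\dots,f_n\in\mathcal{M}(M)$, which exist since $a(M)=n$. On a manifold the effective polar divisor $(f_i)_\infty$ is Cartier; set $L_i:=\mathcal{O}_M((f_i)_\infty)$, take $t_i\in H^0(M,L_i)$ with $\Div(t_i)=(f_i)_\infty$, and note that $s_i:=f_it_i\in H^0(M,L_i)$ has divisor $(f_i)_0\ge 0$, so $f_i=s_i/t_i$. Put $L:=L_1\otimes\cdots\otimes L_n$. For every $m\ge 1$ the $(m+1)^n$ sections $\sigma_{\mathbf{a}}:=\prod_{i=1}^n s_i^{a_i}t_i^{m-a_i}\in H^0(M,L^{\otimes m})$, indexed by $\mathbf{a}=(a_1,\dots,a_n)\in\{0,\dots,m\}^n$, are linearly independent: dividing a relation $\sum_{\mathbf{a}}c_{\mathbf{a}}\sigma_{\mathbf{a}}=0$ by the nonzero section $\prod_i t_i^m$ yields $\sum_{\mathbf{a}}c_{\mathbf{a}}\prod_i f_i^{a_i}=0$ in $\mathcal{M}(M)$, forcing all $c_{\mathbf{a}}=0$ by algebraic independence. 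Hence $h^0(M,L^{\otimes m})\ge(m+1)^n$ for all $m$, and feeding this into Theorem~\ref{asymp} (with the integer $d$ there) gives $\beta m^{\kappa(L)}\ge h^0(M,L^{\otimes md})\ge(md)^n$ for $m$ large, which is impossible unless $\kappa(L)\ge n$; since always $\kappa(L)\le\dim_{\mathbb{C}}M=n$, the bundle $L$ is big.

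\textbf{The singular case and the main obstacle.} For a compact complex variety $M$ with $a(M)=n$ I would take a resolution $\mu\colon\widetilde M\to M$; since $\mathcal{M}(\widetilde M)\cong\mathcal{M}(M)$, the manifold $\widetilde M$ is Moishezon and carries a big line bundle by the previous step, so everything reduces to descending bigness along $\mu$ to $M$. This descent is the step I expect to be the real obstacle: a big (even an ample) line bundle on $\widetilde M$ only pushes forward to a torsion-free rank-one sheaf on $M$, and Weil divisors on a singular $M$ need not be $\mathbb{Q}$-Cartier, so one cannot directly ``read off'' a big line bundle on $M$ from $\widetilde M$. Following \cite[Theorem~2.2.15]{mm} and \cite[Proposition~6.16]{pe}, the way around it is to first arrange $\widetilde M$ to be a \emph{projective} modification of $M$ (combining Chow's lemma for morphisms with resolution of singularities) and then to extract from the associated section rings a big \emph{Cartier} class on $M$, using the bimeromorphic stability of $\kappa$ and of $a(\,\cdot\,)$ to match the invariants. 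Once Theorem~\ref{asymp} and Siegel's theorem are granted, the rest of the argument is routine.
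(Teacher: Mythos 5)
The paper does not prove Lemma \ref{sm-big-moi} at all: it is quoted from \cite{mm} and \cite{pe}. Measured against the standard proofs in those references, your first two implications are correct and are essentially the classical arguments. For ``big $\Rightarrow$ Moishezon'' on a (possibly singular) variety, the Kodaira map of a suitable power of $L$ dominates an $n$-dimensional projective image, pulling back $n$ algebraically independent meromorphic functions, and Siegel's bound $a(M)\le n$ closes the argument. For ``Moishezon $\Rightarrow$ big'' on a \emph{manifold}, the polar divisors $(f_i)_\infty$ are genuinely Cartier, your monomial sections $\sigma_{\mathbf a}$ are linearly independent by algebraic independence of the $f_i$, and $h^0(M,L^{\otimes m})\ge (m+1)^n$ combined with the upper bound of Theorem \ref{asymp} forces $\kappa(L)=n$. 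These two parts I would accept as written.

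The genuine gap is exactly where you locate it yourself: the implication ``Moishezon $\Rightarrow$ big line bundle'' for a \emph{singular} compact complex variety is asserted in the lemma but not proved by your text. What you offer in its place --- resolve, obtain a big line bundle on $\widetilde M$, then ``extract from the associated section rings a big Cartier class on $M$'' --- is a description of the difficulty, not an argument. A big (even ample) line bundle on a modification $\widetilde M\to M$ does not descend: its direct image is only a torsion-free rank-one coherent sheaf on $M$; the polar-divisor construction also fails since $(f_i)_\infty$ is in general only a Weil divisor (already for $f=x/z$ on the quadric cone $\{xy=z^2\}$ the denominator ideal at the vertex is the non-principal ideal $(y,z)$); and the ``bimeromorphic invariance of $\kappa$'' runs in the wrong direction --- it evaluates $\kappa$ of a bundle already given on $M$ by pulling back, it manufactures nothing on $M$. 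So one of the four implications of the stated lemma is missing, and it is precisely the content of \cite[Proposition 6.16]{pe} that would have to be reproduced or cited as a black box. Two remarks soften this: the direction the paper actually uses for singular fibers (Corollary \ref{unc-big}) is ``big $\Rightarrow$ Moishezon'', which you do prove, and ``Moishezon $\Rightarrow$ big'' is invoked in the paper only for manifolds, which you also prove; but as a proof of the lemma as stated, your proposal is incomplete.
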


\subsection{Some basics on sheaves}
 We have three basic properties on direct image sheaves:
\begin{lemma}[{cf. \cite[Theorem 6.2 of Chapter III, p. 176]{Iv}}]\label{2-5}
Let $f:X\rightarrow Y$ be a proper map between locally compact spaces and $\mathcal{F}$ a sheaf of abelian groups on $X$. For any point $y\in Y$ and for all $q$,
$$
(R^qf_*\mathcal{F})_y\simeq H^q(f^{-1}(y),\mathcal{F}).
$$
\end{lemma}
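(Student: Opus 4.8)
The plan is to identify both sides by combining the standard description of the stalk of a higher direct image with a "continuity of cohomology" argument along a fundamental system of neighbourhoods of the fibre. By definition $R^qf_*\mathcal{F}$ is the sheaf associated to the presheaf $V\mapsto H^q(f^{-1}(V),\mathcal{F})$, so its stalk at $y$ is the filtered colimit
\[
(R^qf_*\mathcal{F})_y\;\cong\;\varinjlim_{V\ni y}H^q(f^{-1}(V),\mathcal{F}),
\]
over open neighbourhoods $V$ of $y$ ordered by reverse inclusion, and there is an evident restriction map from this colimit to $H^q(f^{-1}(y),\mathcal{F}|_{f^{-1}(y)})$; it suffices to show this map is an isomorphism.

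Next I would use properness to simplify the indexing. Since $f$ is proper it is closed, so for any open $U\supseteq Z:=f^{-1}(y)$ the set $V:=Y\setminus f(X\setminus U)$ is open, contains $y$, and satisfies $f^{-1}(V)\subseteq U$. Hence the family $\{f^{-1}(V):y\in V\text{ open}\}$ is cofinal among all open neighbourhoods of the fibre $Z$ (and every member contains $Z$), and moreover $Z$ is compact, again by properness. Therefore
\[
\varinjlim_{V\ni y}H^q(f^{-1}(V),\mathcal{F})\;\cong\;\varinjlim_{U\supseteq Z}H^q(U,\mathcal{F}),
\]
the colimit now running over all open $U\supseteq Z$ in $X$.

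The analytic/topological heart of the matter is then to prove that the natural map $\varinjlim_{U\supseteq Z}H^q(U,\mathcal{F})\to H^q(Z,\mathcal{F}|_Z)$ is an isomorphism whenever $Z$ is a compact subset of the locally compact (Hausdorff) space $X$. Take a flabby (e.g. Godement) resolution $\mathcal{F}\to\mathcal{I}^\bullet$; its restriction to any open $U$, and to $Z$, is again a flabby, hence acyclic, resolution, so $H^q(U,\mathcal{F})=H^q(\Gamma(U,\mathcal{I}^\bullet))$ and $H^q(Z,\mathcal{F}|_Z)=H^q(\Gamma(Z,\mathcal{I}^\bullet|_Z))$. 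Since filtered colimits are exact, it is enough to show that for each flabby sheaf $\mathcal{G}=\mathcal{I}^j$ the canonical map $\varinjlim_{U\supseteq Z}\Gamma(U,\mathcal{G})\to\Gamma(Z,\mathcal{G}|_Z)$ is an isomorphism, i.e. that $Z$ is taut in $X$. For surjectivity, a section over $Z$ is locally on $Z$ the restriction of a section over an open subset of $X$; covering the compact set $Z$ by finitely many such opens, gluing the finitely many local extensions by flabbiness of $\mathcal{G}$, and shrinking to a neighbourhood of $Z$ using local compactness yields a genuine extension. For injectivity, two sections over neighbourhoods of $Z$ agreeing on $Z$ already agree on a smaller neighbourhood, by the same finite-covering/local-compactness argument applied to the open set where they coincide. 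Assembling the three steps gives $(R^qf_*\mathcal{F})_y\cong H^q(f^{-1}(y),\mathcal{F})$.

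I expect the third step — the tautness/continuity statement — to be the main obstacle, since it is the only place where the hypotheses \emph{proper} and \emph{locally compact} are genuinely used, and it rests on the point-set fact that sections of a flabby sheaf over a compact subset propagate to a neighbourhood; the first two steps are purely formal. If desired, this step may simply be quoted from the cited reference of Iversen, but I would keep the flabby-resolution reduction explicit so that the role of each hypothesis is transparent.
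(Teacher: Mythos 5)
The paper does not prove this lemma at all; it is stated with a citation to Iversen's \emph{Cohomology of Sheaves} (Theorem III.6.2) and used as a black box. So there is no ``paper's proof'' to compare against. Your reconstruction is the standard argument, and indeed it is essentially the route Iversen himself takes: stalk of $R^qf_*$ as a colimit, cofinality of the family $\{f^{-1}(V)\}$ among opens containing the fibre via closedness of a proper map, and then the tautness of a compact subset of a locally compact Hausdorff space. The first two steps are clean and correct.

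On the third step, one small but real inaccuracy: ``gluing the finitely many local extensions by flabbiness of $\mathcal{G}$'' is not quite what happens. Flabbiness lets you extend a section from an open subset to a larger open, not glue sections that disagree; and the finitely many local extensions $s_i \in \Gamma(U_i,\mathcal{G})$ will in general disagree on $U_i\cap U_j$ away from $Z$. The correct mechanism is the \emph{injectivity half} you already described, applied to the differences: $s_i - s_j$ vanishes on the compact set $U_i\cap U_j\cap Z$, hence on an open neighbourhood $W_{ij}$ of it, and one then shrinks the $U_i$ (using local compactness to choose compact-closure refinements) so that the shrunken opens only meet inside $\bigcap W_{ij}$, after which genuine gluing is possible. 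In particular the tautness statement $\varinjlim_{U\supseteq Z}\Gamma(U,\mathcal{G})\cong\Gamma(Z,\mathcal{G}|_Z)$ holds for \emph{arbitrary} sheaves $\mathcal{G}$, with no flabbiness needed there; flabbiness enters only (and correctly) where you invoke acyclicity of $\mathcal{I}^\bullet$ and $\mathcal{I}^\bullet|_Z$ to compute $H^q$ by global sections, noting that $Z$ is closed so restrictions of flabby sheaves to $Z$ are flabby. With that repair the argument is complete and matches the cited source.
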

We can use the Leray spectral sequence to obtain an isomorphism
\begin{equation}\label{identif}\Gamma(\Delta, R^i\pi_*\mathcal{O}_{\mathcal{X}})\cong H^i(\mathcal{X},\mathcal{O}_{\mathcal{X}}),\qquad \text{ for any $i\ge 0$}.\end{equation}
\begin{thm}[]\label{leray}
Let $X,Y$ be topological spaces, $f:X\rightarrow Y$ a continuous map and $\mathcal{S}$ a sheaf of abelian groups on $X$. Then there exists the \emph{Leray spectral sequence} $(E_r)$ such that
\begin{enumerate}[$(i)$]
    \item \label{}
$E_2^{p,q}\simeq H^p(Y,R^qf_*\mathcal{S})$;
    \item \label{}
$(E_r)\Rightarrow H^*(X,\mathcal{S})$.
\end{enumerate}
So $H^k(X,\mathcal{S})=\bigoplus_{p+q=k}E_\infty^{p,q}.$
In particular, if $H^p(Y, R^qf_*\mathcal{S})=0$ for $p > 0$, there is an isomorphism
$$H^0(Y,R^qf_*\mathcal{S})\cong H^q(X,\mathcal{S}).$$
\end{thm}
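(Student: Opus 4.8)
The plan is to obtain the Leray spectral sequence as the Grothendieck spectral sequence associated to the composition of left-exact functors $\Gamma(X,-)=\Gamma(Y,-)\circ f_*$, the essential input being that $f_*$ carries injective (or merely flasque) sheaves on $X$ to $\Gamma(Y,-)$-acyclic sheaves on $Y$. First I would choose an injective resolution $0\to\mathcal S\to I^0\to I^1\to\cdots$ in the category of sheaves of abelian groups on $X$, so that by definition $H^q(X,\mathcal S)=H^q\bigl(\Gamma(X,I^\bullet)\bigr)$ and $R^qf_*\mathcal S=\mathcal H^q\bigl(f_*I^\bullet\bigr)$. Applying $f_*$ produces a cochain complex $f_*I^\bullet$ of sheaves on $Y$ in which each $f_*I^j$ is again injective — because $f_*$ is right adjoint to the exact functor $f^{-1}$ — hence in particular acyclic for the global-section functor $\Gamma(Y,-)$.

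Next I would take a Cartan--Eilenberg injective resolution $J^{\bullet\bullet}$ of the complex $f_*I^\bullet$ on $Y$ and form the first-quadrant double complex $\Gamma(Y,J^{\bullet\bullet})$; its two standard filtrations give two spectral sequences, both abutting to the hypercohomology $\mathbb H^\ast\bigl(Y,f_*I^\bullet\bigr)$. Computing cohomology first along the resolution direction, the $\Gamma(Y,-)$-acyclicity of the $f_*I^j$ forces one of them to collapse, yielding $\mathbb H^n\bigl(Y,f_*I^\bullet\bigr)\cong H^n\bigl(\Gamma(Y,f_*I^\bullet)\bigr)=H^n\bigl(\Gamma(X,I^\bullet)\bigr)=H^n(X,\mathcal S)$. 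Computing cohomology first along the $f_*I^\bullet$ direction gives the other spectral sequence with $E_2^{p,q}\cong H^p\bigl(Y,\mathcal H^q(f_*I^\bullet)\bigr)=H^p\bigl(Y,R^qf_*\mathcal S\bigr)$, which establishes $(i)$, $(ii)$ and the displayed identity $H^k(X,\mathcal S)=\bigoplus_{p+q=k}E_\infty^{p,q}$ (with the usual understanding that the $E_\infty^{p,q}$ are the graded pieces of a finite filtration on $H^k(X,\mathcal S)$).

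For the ``in particular'' assertion I would feed in the hypothesis $H^p(Y,R^qf_*\mathcal S)=0$ for all $p>0$: then $E_2^{p,q}=0$ whenever $p\ne 0$, so every differential into or out of the bottom row $p=0$ vanishes, the spectral sequence degenerates at $E_2$, and the edge homomorphism gives $H^q(X,\mathcal S)\cong E_2^{0,q}=H^0(Y,R^qf_*\mathcal S)$. I expect the only step needing genuine attention to be the verification that $f_*$ preserves injectivity (equivalently, sends flasque sheaves to flasque sheaves, hence to $\Gamma(Y,-)$-acyclic ones) — this is precisely what makes the composite-functor machinery run; everything after that is the formal bookkeeping of the two spectral sequences of a double complex. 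Alternatively one could invoke the Grothendieck composite-functor spectral sequence as a black box, but writing out the double complex keeps the argument self-contained.
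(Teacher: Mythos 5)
Your argument is correct, and it is the standard derivation of the Leray spectral sequence as the Grothendieck spectral sequence for the composition $\Gamma(X,-)=\Gamma(Y,-)\circ f_*$, carried out via a Cartan--Eilenberg resolution of $f_*I^\bullet$. The paper, however, offers no proof at all: it simply cites Demailly's \emph{Complex Analytic and Differential Geometry}, Chapter IV, (13.8) Theorem together with the (10.12) degeneration special case. So the comparison is not between two arguments but between your argument and a pointer. Two small remarks on your write-up. First, you are right that the key technical point is that $f_*$ sends injectives to injectives (being right adjoint to the exact $f^{-1}$); Demailly's own development in the cited chapter works instead with Godement (flasque) resolutions and the fact that $f_*$ preserves flasqueness, which yields $\Gamma(Y,-)$-acyclicity directly — either route is fine, and yours is the cleaner one in the abelian-category setting. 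Second, you correctly flag that the displayed equality $H^k(X,\mathcal S)=\bigoplus_{p+q=k}E_\infty^{p,q}$ should be read as ``the $E_\infty^{p,q}$ are the successive quotients of a finite filtration on $H^k$''; the splitting is not canonical and need not exist in general, so your parenthetical caveat is actually more careful than the statement as printed.
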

\begin{proof}
This just follows from \cite[(13.8) Theorem and (10.12) Special case of Chapter IV]{Dem12}.
\end{proof}

What follows is easily available in the algebraic
category.  But since we will mostly be in the analytic category, for reader's convenience
we give some details with precise references.
\begin{thm}[{Grauert's direct image theorem \cite{Gt} or \cite[\S\ 2 of Chapter III]{bs}}]\label{gdit}
Let $f:X\rightarrow Y$ be a proper morphism of complex spaces and $\mathcal{F}$ a coherent analytic sheaf on $X$. Then for all $q\geq 0$,
the analytic sheaves $R^qf_*\mathcal{F}$ are coherent.
\end{thm}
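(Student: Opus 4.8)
The plan is to reduce the statement to a purely local finiteness problem over the base and then run the classical Grauert comparison (``bump'') argument.

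Since coherence of $R^qf_*\mathcal F$ is a local property on $Y$, it suffices to prove it in a neighbourhood of an arbitrary point $y_0\in Y$; we may thus assume $Y$ is isomorphic to a closed analytic subspace of a polydisc, so that $\mathcal O_{Y,y_0}$ is Noetherian. As $f$ is proper, $f^{-1}(y_0)$ is compact, and I would choose finitely many open sets $V_1,\dots,V_N\subset X$, each isomorphic to a closed analytic subspace of a polydisc (hence Stein), with $f^{-1}(y_0)\subset\bigcup_i V_i$ and, after shrinking to a small polydisc $U\ni y_0$, with $\bigcup_i V_i\supset f^{-1}(\overline U)$ and each $f(V_i)\subset U$. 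Every finite intersection $V_J=V_{j_0}\cap\cdots\cap V_{j_p}$ is again Stein, as is each fibre $f^{-1}(y)\cap V_J$ (a closed analytic subspace of $V_J$), so by Cartan's Theorem B the sheaf $\mathcal F$ has vanishing higher cohomology there. Hence, by Leray's theorem on Stein (acyclic) coverings together with the Leray spectral sequence of Theorem \ref{leray} and Lemma \ref{2-5}, $R^qf_*\mathcal F|_U$ is the $q$-th cohomology sheaf of the relative \v{C}ech complex
\[
 C^\bullet:\quad C^p=\bigoplus_{|J|=p+1} f_*\big(\mathcal F|_{V_J}\big),
\]
a complex of $\mathcal O_U$-modules.

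The key point is that $C^\bullet$ carries an analytic-topological structure: over a Stein open $U'\subset U$ the group $C^p(U')=\bigoplus_{|J|=p+1}\Gamma(f^{-1}(U')\cap V_J,\mathcal F)$ is a nuclear Fr\'{e}chet space and a topological module over the Fr\'{e}chet algebra $\mathcal O_Y(U')$, and the \v{C}ech differentials are continuous and $\mathcal O_Y(U')$-linear. Moreover, for $y\in U$ the reduction $C^\bullet\otimes_{\mathcal O_Y}(\mathcal O_{Y,y}/\mathfrak m_y)$ has cohomology $H^\bullet(f^{-1}(y),\mathcal F)\cong (R^\bullet f_*\mathcal F)_y$ (Lemma \ref{2-5}), which is finite-dimensional by the Cartan--Serre finiteness theorem since $f^{-1}(y)$ is compact and $\mathcal F$ is coherent. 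The problem is thereby reduced to an abstract assertion about complexes of sheaves of nuclear Fr\'{e}chet $\mathcal O_Y$-modules of this particular shape: \emph{if $C^\bullet$ is such a complex with everywhere finite-dimensional fibrewise cohomology, then each cohomology sheaf $\mathcal H^q(C^\bullet)$ is coherent.} I would establish this \emph{Grauert coherence criterion} by induction on $\dim_{y_0}Y$, the base case $\dim_{y_0}Y=0$ being exactly the Cartan--Serre finiteness statement.

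The inductive step is the heart of the matter and the main obstacle. Using Grauert's division (Weierstrass) theorem and the open mapping theorem in Fr\'{e}chet spaces, I would construct, degree by degree, finitely many local sections of $C^p$ near $y_0$ whose classes span the fibrewise cohomology at $y_0$; then, by a Mittag-Leffler / successive-approximation scheme against the filtration by powers of $\mathfrak m_{y_0}$ --- the step that forces the uniform a priori estimates, exploits the Montel (compactness) property of nuclear spaces, and uses restriction to a hyperplane section $\{z_1=0\}$ to feed in the induction hypothesis and control the ``tail'' --- I would show that their $\mathcal O_{Y,y_0}$-span surjects onto the cohomology after $\mathfrak m_{y_0}$-adic completion. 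The Artin--Rees lemma and the Krull intersection theorem then upgrade this formal surjectivity to an honest one over a neighbourhood of $y_0$, exhibiting $\mathcal H^q(C^\bullet)$ locally as a quotient of a free $\mathcal O_U$-module of finite rank; a symmetric argument applied to the relation sheaf (using Oka's coherence theorem for $\mathcal O_Y$) then shows $\mathcal H^q(C^\bullet)$ is itself coherent. Patching these neighbourhoods yields coherence of $R^qf_*\mathcal F$ on all of $Y$ for every $q\ge 0$. I expect essentially all the difficulty --- the delicate estimates, the bump/approximation constructions, and the bookkeeping of the $\mathfrak m$-adic induction --- to lie in this final step; the reductions above are formal. (This is the route of Grauert's original proof; see also \cite{Gt,bs}.)
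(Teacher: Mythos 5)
The paper does not prove Theorem \ref{gdit}: it is a classical theorem cited directly from Grauert \cite{Gt} and B\u{a}nic\u{a}--St\u{a}n\u{a}\c{s}il\u{a} \cite{bs}, with no argument given in the text. Your sketch follows the route of Grauert's original proof (one of the two cited sources), so at the level of strategy it is the ``same'' approach as the paper's reference; the shape you describe---localize on $Y$, compare with a relative Stein \v{C}ech complex of nuclear Fr\'{e}chet $\mathcal O_Y$-modules, reduce to a finiteness criterion proved by induction on $\dim Y$ via division-theorem/open-mapping estimates and an $\mathfrak m$-adic approximation scheme---is indeed the classical skeleton.

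There is, however, one concrete error in the reductions you label ``formal,'' and it is worth flagging because it hides the real content. You write that the reduction $C^\bullet\otimes_{\mathcal O_Y}(\mathcal O_{Y,y}/\mathfrak m_y)$ has cohomology $H^\bullet(f^{-1}(y),\mathcal F)\cong (R^\bullet f_*\mathcal F)_y$ by Lemma \ref{2-5}, ``which is finite-dimensional by Cartan--Serre.'' This conflates three distinct objects. Lemma \ref{2-5} identifies the stalk $(R^qf_*\mathcal F)_y$ with $H^q\bigl(f^{-1}(y),\,i^{-1}\mathcal F\bigr)$, the cohomology of the \emph{topological} restriction $i^{-1}\mathcal F$; this is an $\mathcal O_{Y,y}$-module and is typically \emph{infinite}-dimensional over $\mathbb C$ (already for $f=\mathrm{id}$ and $\mathcal F=\mathcal O$, where the stalk at $y$ is the full local ring). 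Cartan--Serre finiteness applies instead to $H^q\bigl(X_y,\mathcal F(y)\bigr)$, the cohomology of the \emph{analytic} restriction on the compact fibre. And the cohomology of $C^\bullet\otimes\mathbb C(y)$ is a priori equal to neither: comparing it with $H^q(X_y,\mathcal F(y))$ is the base-change/semicontinuity problem (cf.\ Lemma \ref{ccs-bc} and Theorem \ref{Upper semi-continuity} in this paper), which is usually established \emph{together with}, not prior to, coherence. So the finiteness input you need at this stage is the Cartan--Serre statement $\dim_{\mathbb C}H^q(X_y,\mathcal F(y))<\infty$, not anything about $(R^qf_*\mathcal F)_y$, and the passage from the reduced \v{C}ech complex to the fibre cohomology is itself a nontrivial comparison that must be built into the ``Grauert coherence criterion'' rather than assumed. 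Keeping these three objects separate is essential before the inductive/estimative machinery can even be set up correctly.
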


An $\mathcal{O}_X$-sheaf $\mathcal{F}$ on a complex space $X$ is called \emph{locally free} at $x\in X$ of rank $p\geq 1$ if there is a neighborhood $U$ of $x$ such that $\mathcal{F}(U)\cong \mathcal{O}_U^p$. Such sheaves are coherent. Using Oka's theorem we get a converse: If a coherent sheaf $\mathcal{F}$ is \emph{free at $x\in X$}, i.e., if the stalk $\mathcal{F}_x$ is isomorphic to $\mathcal{O}_x^p$, then $\mathcal{F}$ is locally free at $x$ of rank $p$. In particular, the set of all points where $\mathcal{F}$ is free is open in $X$.

 For a closer study, one introduces the rank function of an $\mathcal{O}_X$-coherent sheaf $\mathcal{F}$. All $\mathbb{C}$-vector space $\bar{\mathcal{F}}_x:=\mathcal{F}_x/{\mathfrak{m}}_x\mathcal{F}_x$, $x\in X$, are of finite dimension. Here $\mathfrak{m}_x$ is the maximal ideal of $\mathcal{O}_{X,x}$.  The integer $\rk\ \mathcal{F}_x:=\dim_{\mathbb{C}} \bar{\mathcal{F}}_x\in \mathbb{N}$ is called the \emph{rank} of $\mathcal{F}$ at $x$; clearly, $\rk\ \mathcal{O}_x^p=p$. The rank function of a locally free sheaf is locally constant on a complex space $X$. Conversely, if $X$ is reduced and a sheaf  $\mathcal{F}$ is $\mathcal{O}_X$-coherent such that $\rk\ \mathcal{F}_x$ is locally constant on $X$, then $\mathcal{F}$ is a locally free sheaf on $X$.
 The set $S(\mathcal{F})$ of all points in $X$ where a coherent sheaf $\mathcal{F}$ is not free is called the \emph{singular locus} of $\mathcal{F}$. Then:
 \begin{prop}[{\cite[Proposition 7.17]{Rem}}]\label{slia}
 The singular locus $S(\mathcal{S})$, as defined precedingly, of any given $\mathcal{O}_X$-coherent sheaf $\mathcal{S}$ on a complex space $X$ is analytic in $X$. If $X$ is reduced, this set is thin in $X$.
 \end{prop}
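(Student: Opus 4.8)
The plan is to describe $S(\mathcal{S})$ locally through Fitting ideals and then read off both analyticity and, in the reduced case, thinness. First, as noted just before the statement, if $\mathcal{S}$ is free at $x$ then it is locally free in a neighbourhood, so $X\setminus S(\mathcal{S})$ is open and $S(\mathcal{S})$ is closed; since analyticity is local it suffices to work on a neighbourhood $U$ of an arbitrary point $x_0$. By coherence, after shrinking $U$ there is a finite free presentation
\[
\mathcal{O}_U^{\,q}\ \xrightarrow{\ A\ }\ \mathcal{O}_U^{\,p}\ \longrightarrow\ \mathcal{S}|_U\ \longrightarrow\ 0,
\]
with $A$ a $p\times q$ matrix of holomorphic functions on $U$. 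For $k\ge 0$ let $\mathrm{Fitt}_k(\mathcal{S})\subseteq\mathcal{O}_U$ be the ideal sheaf generated by the $(p-k)\times(p-k)$ minors of $A$, with the standard conventions $\mathrm{Fitt}_k(\mathcal{S})=\mathcal{O}_U$ for $k\ge p$ and $\mathrm{Fitt}_k(\mathcal{S})=0$ for $k<p-q$. These are coherent ideal sheaves (generated by finitely many sections of $\mathcal{O}_U$), they are independent of the presentation, and $0\subseteq\mathrm{Fitt}_0(\mathcal{S})\subseteq\cdots\subseteq\mathrm{Fitt}_p(\mathcal{S})=\mathcal{O}_U$.

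Next I would invoke Fitting's freeness criterion over the Noetherian local rings $\mathcal{O}_{X,x}$: the finitely presented module $\mathcal{S}_x$ is free of rank $k$ if and only if $\mathrm{Fitt}_{k-1}(\mathcal{S})_x=0$ and $\mathrm{Fitt}_k(\mathcal{S})_x=\mathcal{O}_{X,x}$. Since a free quotient of $\mathcal{O}_x^{\,p}$ has rank at most $p$, only $k\in\{0,\dots,p\}$ can occur; writing $\mathrm{supp}(\cdot)$ for the support of a coherent sheaf and $V(\cdot)$ for the zero-set of a coherent ideal sheaf, the condition $x\notin S(\mathcal{S})$ becomes: there exists $k\in\{0,\dots,p\}$ with $x\notin \mathrm{supp}(\mathrm{Fitt}_{k-1}(\mathcal{S}))$ and $x\notin V(\mathrm{Fitt}_k(\mathcal{S}))$. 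Negating,
\[
S(\mathcal{S})\cap U\ =\ \bigcap_{k=0}^{p}\Big(\ \mathrm{supp}\big(\mathrm{Fitt}_{k-1}(\mathcal{S})\big)\ \cup\ V\big(\mathrm{Fitt}_k(\mathcal{S})\big)\ \Big),
\]
with the conventions $\mathrm{supp}(\mathrm{Fitt}_{-1}(\mathcal{S}))=\emptyset$ and $V(\mathrm{Fitt}_p(\mathcal{S}))=\emptyset$. Each $V(\mathrm{Fitt}_k(\mathcal{S}))$ is analytic since $\mathrm{Fitt}_k(\mathcal{S})$ is a coherent ideal; each $\mathrm{supp}(\mathrm{Fitt}_{k-1}(\mathcal{S}))$ is analytic since a coherent ideal sheaf is in particular a coherent $\mathcal{O}_U$-module, its annihilator is coherent (a finite intersection of kernels of maps of coherent sheaves), and the support of a coherent sheaf is $V$ of its annihilator, hence a closed analytic set. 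A finite intersection of finite unions of analytic sets is analytic, so $S(\mathcal{S})$ is analytic in $X$.

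Finally, for thinness when $X$ is reduced I would show $S(\mathcal{S})$ has empty interior. On $U$ the rank function $x\mapsto \mathrm{rk}\,\mathcal{S}_x$ equals $p-\mathrm{rk}_{\mathbb{C}}A(x)$, hence is upper semicontinuous, $\mathrm{rk}_{\mathbb{C}}A(x)$ being lower semicontinuous; so on any nonempty open $V\subseteq X$ it is constant on a nonempty open subset $V'\subseteq V$ (where it attains its minimum on $V$). On a reduced complex space a coherent sheaf with locally constant rank is locally free, as recalled above, so $\mathcal{S}$ is locally free on $V'$ and $V\not\subseteq S(\mathcal{S})$. Thus $S(\mathcal{S})$ is a closed analytic subset with empty interior, i.e. thin in $X$. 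The only non-formal ingredient is the Fitting description of the free locus, and the point one must get right there is that the analytic set attached to $\mathrm{Fitt}_{k-1}(\mathcal{S})$ in the displayed formula is its \emph{support} $V(\mathrm{Ann}\,\mathrm{Fitt}_{k-1}(\mathcal{S}))$ rather than its zero-set $V(\mathrm{Fitt}_{k-1}(\mathcal{S}))$ — equivalently, that $\{x:\mathrm{Fitt}_{k-1}(\mathcal{S})_x=0\}$ is open — which is what keeps the description of $S(\mathcal{S})$ analytic rather than merely set-theoretic.
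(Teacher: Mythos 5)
The paper does not prove this proposition at all: it is quoted verbatim from Remmert's survey (\cite[Proposition 7.17]{Rem}) and used as a black box, so there is no in-text argument to compare yours against. Your proof is correct and complete as a standalone argument, and it follows the standard route (which is also essentially Remmert's: everything is read off from a local finite free presentation $\mathcal{O}_U^q\xrightarrow{A}\mathcal{O}_U^p\to\mathcal{S}|_U\to 0$). The two points that actually carry weight are handled properly: (1) the Fitting criterion over the Noetherian local rings $\mathcal{O}_{X,x}$ — free of rank $k$ iff $\mathrm{Fitt}_{k-1}=0$ and $\mathrm{Fitt}_k=\mathcal{O}_{X,x}$ — together with the observation that only $k\le p$ can occur, which turns the non-free locus into a finite intersection of finite unions of closed analytic sets; and (2) the distinction between the zero set $V(\mathrm{Fitt}_{k-1})$ and the support $V(\mathrm{Ann}\,\mathrm{Fitt}_{k-1})$, i.e.\ the openness of $\{x:\mathrm{Fitt}_{k-1}(\mathcal{S})_x=0\}$, without which the displayed description of $S(\mathcal{S})$ would fail. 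The thinness argument via upper semicontinuity of $x\mapsto\mathrm{rk}\,\mathcal{S}_x=p-\mathrm{rk}_{\mathbb{C}}A(x)$, plus the fact (recalled in the paper just before the statement) that locally constant rank on a reduced space forces local freeness, is also sound: a closed analytic subset with empty interior of a reduced complex space is thin in the intended sense. No gaps.
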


Moreover, one has the remarkable Grauert's semi-continuity theorem:
\begin{thm}[{\cite[Theorem 4.12.(i) of Chapter III]{bs}}]\label{Upper semi-continuity}
Let $f:X\rightarrow Y$ be a proper morphism of complex spaces and $\mathcal{F}$ a coherent analytic sheaf on $X$ which is \emph{flat with respect to $Y$ (or $f$)}, which means that the $\mathcal{O}_{f(x)}$-modules $\mathcal{F}_x$ are flat for all $x\in X$. Set $\mathcal{F}(y)$ as the analytic inverse image with respect to the embedding $X_y$ of in $X$.  Then for any integers $i,d\geq 0$, the set
$$\{y\in Y: h^i(X_y,\mathcal{F}(y))\geq d\},\quad$$
is an analytic subset of $Y$.
\end{thm}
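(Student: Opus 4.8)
\emph{Proof proposal.} My plan is: \emph{(1)} reduce, locally on $Y$, to the linear algebra of a bounded complex of finite free $\mathcal{O}_Y$-modules that computes $Rf_{*}\mathcal{F}$ and is compatible with base change; \emph{(2)} specialise this complex to a point to express $H^{i}(X_{y},\mathcal{F}(y))$ as the cohomology of a complex of vector spaces with holomorphically varying differentials; \emph{(3)} conclude from the analytic lower semicontinuity of the rank of a holomorphic matrix. Since the assertion is local on $Y$, I may shrink $Y$ and assume it is a small Stein complex space; then, by the complex-of-free-sheaves refinement of Grauert's direct image Theorem \ref{gdit} (due in this form to Forster--Knorr and Kiehl--Verdier; see also \cite[Chapter III]{bs}), after shrinking $Y$ there is a bounded complex
\[
L^{\bullet}:\quad 0\longrightarrow L^{0}\xrightarrow{\delta^{0}}L^{1}\xrightarrow{\delta^{1}}\cdots\xrightarrow{\delta^{N-1}}L^{N}\longrightarrow 0
\]
of free $\mathcal{O}_Y$-modules of finite rank such that, for \emph{every} morphism $g\colon S\to Y$ of complex spaces, there are natural isomorphisms $R^{q}f'_{*}(g^{*}\mathcal{F})\cong\mathcal{H}^{q}(g^{*}L^{\bullet})$, where $f'\colon X\times_{Y}S\to S$ is the base change of $f$. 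I would get $L^{\bullet}$ as in the standard proof of Grauert's finiteness theorem: over a finite covering of $X$ adapted to $f$, the higher direct images are represented by a complex of Fr\'echet $\mathcal{O}_Y$-modules, which one replaces, locally on $Y$, by a quasi-isomorphic bounded complex of finite free $\mathcal{O}_Y$-modules through a compactness (Schwartz-type) perturbation argument; the hypothesis that $\mathcal{F}$ be flat over $Y$ is exactly what upgrades this to compatibility with \emph{arbitrary} base change, not merely with $g=\mathrm{id}_{Y}$. I expect this to be the one substantial point; once $L^{\bullet}$ is in hand, the rest is formal.

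For step \emph{(2)}, I fix $y\in Y$ and apply the base-change property to the inclusion $g\colon\{y\}\hookrightarrow Y$ of the reduced point. As $f$ is proper, the fibre $X_{y}=f^{-1}(y)$ is compact and $R^{q}$ of the structural map $X_{y}\to\{y\}$ is just $H^{q}(X_{y},-)$; hence
\[
H^{q}\!\big(X_{y},\mathcal{F}(y)\big)\;\cong\;\mathcal{H}^{q}\!\big(L^{\bullet}(y)\big),\qquad L^{\bullet}(y):=L^{\bullet}\otimes_{\mathcal{O}_Y}\mathbb{C}(y),
\]
a complex of finite-dimensional $\mathbb{C}$-vector spaces whose differentials $\delta^{q}(y)$ are the matrices of holomorphic functions representing $\delta^{q}$, evaluated at $y$. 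Writing $\ell^{q}:=\rk\,L^{q}$ and letting $\rho^{q}(y):=\rk\,\delta^{q}(y)$ be the $\mathbb{C}$-linear rank (with $\rho^{-1}\equiv 0$), the rank--nullity theorem gives $h^{i}(X_{y},\mathcal{F}(y))=\dim_{\mathbb{C}}\ker\delta^{i}(y)-\rho^{i-1}(y)=\ell^{i}-\rho^{i}(y)-\rho^{i-1}(y)$.

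For step \emph{(3)}, for each $q$ and each integer $r\ge 0$ the set $A^{q}_{r}:=\{y\in Y:\rho^{q}(y)\le r\}$ is the common zero locus of all $(r+1)\times(r+1)$ minors of the holomorphic matrix representing $\delta^{q}$, hence an analytic subset of $Y$; equivalently, $y\mapsto\rho^{q}(y)$ is lower semicontinuous in the analytic sense. By the formula of step \emph{(2)}, $h^{i}(X_{y},\mathcal{F}(y))\ge d$ holds exactly when $\rho^{i}(y)+\rho^{i-1}(y)\le\ell^{i}-d$, so
\[
\big\{y\in Y:h^{i}(X_{y},\mathcal{F}(y))\ge d\big\}\;=\;\bigcup_{\substack{r,s\in\mathbb{Z}_{\ge 0}\\ r+s\le\ell^{i}-d}}\big(A^{i}_{r}\cap A^{i-1}_{s}\big),
\]
a finite union of finite intersections of analytic subsets of $Y$, hence itself analytic. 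Since analyticity is a local property, these subsets glue over an arbitrary base $Y$, proving the theorem; in particular $y\mapsto h^{i}(X_{y},\mathcal{F}(y))$ is upper semicontinuous on $Y$.
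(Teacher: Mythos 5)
Your argument is correct, and it is the standard proof of Grauert's semicontinuity theorem; note, though, that the paper does not reprove this statement but simply cites it from B\u{a}nic\u{a}--St\u{a}n\u{a}\c{s}il\u{a} \cite[Theorem 4.12.(i) of Chapter III]{bs}, where the proof proceeds essentially as you describe (via a bounded complex of finite free $\mathcal{O}_Y$-modules computing $Rf_*\mathcal{F}$ compatibly with arbitrary base change, the flatness hypothesis entering exactly to secure that base-change compatibility). Your reduction $h^i(X_y,\mathcal{F}(y))=\ell^i-\rho^i(y)-\rho^{i-1}(y)$ and the expression of the jumping locus as a finite union of intersections of minor-vanishing loci are exactly right, so there is nothing to add.
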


The topology in $Y$ whose closed sets are all analytic sets is called the \emph{analytic Zariski topology}. The statement of Theorem \ref{Upper semi-continuity} means an upper semi-continuity of $h^i(X_y,\mathcal{F}(y))$ with respect to this analytic Zariski topology.

\section{Deformation invariance of plurigenera}\label{mf}
We will formulate the techniques for the strategy of Subsection \ref{intro-2} in the first three subsections and then give the proof of Theorem \ref{main-thm} in Subsection \ref{pf-mt}. Remark that the full strength of these techniques is not applied in the proof of Theorem \ref{main-thm}, but in the Chow-type Lemma \ref{sing-chow}.
\subsection{Deformation invariance of $(0,q)$-type Hodge numbers}\label{di0q}
Let $\mathcal{X}$ be a complex manifold, $\Delta\subseteq \mathbb{C}$ the unit disk and $f:\mathcal{X}\rightarrow \Delta$ a flat family, smooth over the punctured disk $\Delta^*$. We say that $f$ is a \emph{one-parameter degeneration}. In general $X_0:=f^{-1}(0)$ can have arbitrarily bad singularities, but after a base change and suitable blow-ups, $X_0$ can be assumed to have only simple normal crossings on $\mathcal{X}$.
\begin{prop}[{\cite[Theorem 11.11]{ps}}]\label{reduction}
Let $f:\mathcal{X}\rightarrow \Delta$ be as above. Then there exists $m\in \mathbb{N}$ such that for the $m$-th root $f': \mathcal{X}'\rightarrow  \Delta'$ of $f$ the reference fiber has simple normal crossing divisor all of whose components are reduced.
\end{prop}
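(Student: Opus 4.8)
The statement is the \emph{semistable reduction theorem} of Kempf--Knudsen--Mumford--Saint-Donat \cite{[KKMSD73]}, recorded in the form we use in \cite[Theorem 11.11]{ps}; the plan is to reprove it along the classical lines, combining an embedded resolution of singularities with the purely combinatorial (toric) reduction of \cite[pp.~53--54]{[KKMSD73]}. Since $\mathcal{X}$ and $\Delta$ are only assumed complex analytic, I would use only tools available in that category: Hironaka's embedded resolution, which works for complex analytic spaces, and the combinatorial heart of the KKMSD argument, which is local and hence insensitive to the algebraic/analytic distinction. Recall $f$ is already a submersion over $\Delta^{*}$, so every modification below will be chosen to be an isomorphism there, and the ``$m$-th root'' of $f$ will be the family obtained by pulling $f$ back along $t\mapsto t^{m}$, normalizing, and taking a suitable further modification.

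\textbf{Step 1 (normal crossing model).} Apply Hironaka's embedded resolution to the divisor $X_{0}\subset\mathcal{X}$: after finitely many blow-ups with smooth centres supported over $0\in\Delta$ we may assume the scheme-theoretic fibre is a normal crossing divisor $X_{0}=\sum_{i\in I}a_{i}E_{i}$, with the $E_{i}$ smooth and meeting transversally and $a_{i}\geq 1$; locally $f=u\,z_{1}^{a_{1}}\cdots z_{k}^{a_{k}}$ for a unit $u$ and part $z_{1},\dots,z_{n}$ of a holomorphic coordinate system. \textbf{Step 2 (ramified base change and normalization).} Fix a common multiple $m$ of $\{a_{i}\}_{i\in I}$ (for instance $m=\operatorname{lcm}_{i\in I}a_{i}$), let $\Delta'\to\Delta$, $t\mapsto t^{m}$, be the cyclic cover, and let $\mathcal{Z}$ be the normalization of $\mathcal{X}\times_{\Delta}\Delta'$. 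The local model $\{t^{m}=u\,z_{1}^{a_{1}}\cdots z_{k}^{a_{k}}\}$ is toric, and its normalization is a finite disjoint union of abelian quotients of smooth spaces; thus $(\mathcal{Z},\mathcal{Z}_{0})$ is a \emph{toroidal embedding} with at worst abelian quotient singularities, and the combinatorics of $\mathcal{Z}_{0}$ near each stratum --- encoded in the polyhedral cone complex of the toroidal embedding together with the lattice functional determined by $f$ --- is completely explicit in terms of the $a_{i}$ and $m$.

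\textbf{Step 3 (combinatorial reduction) and conclusion.} Invoke the toric semistable reduction of \cite[pp.~53--54]{[KKMSD73]}: after enlarging $m$ if necessary, there is a projective subdivision of the cone complex of $(\mathcal{Z},\mathcal{Z}_{0})$ into regular (simplicial, unimodular) cones on which the functional attached to $f$ takes the value $1$ at every primitive generator lying over $0$. The corresponding toroidal modification $\mathcal{X}'\to\mathcal{Z}$ is an isomorphism over $\Delta'\setminus\{0\}$, and $\mathcal{X}'$ is smooth with $\mathcal{X}'_{0}$ a reduced normal crossing divisor; a last blow-up of the (finitely many) non-simple intersection loci, still an isomorphism over $\Delta'\setminus\{0\}$, upgrades this to a \emph{simple} normal crossing divisor all of whose components are reduced. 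Composing Steps 1--3 gives the required $f'\colon\mathcal{X}'\to\Delta'$, so that $m$ is the sought integer.

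I expect \textbf{Step 3} to be the real obstacle. Reducing one multiplicity of $\mathcal{Z}_{0}$ by a star-subdivision typically introduces new strata along which the multiplicity is again larger than $1$, so one must organize the subdivisions (and, a priori, the choice of $m$) so that \emph{all} multiplicities reach $1$ simultaneously while the cones also become regular --- this balancing act is precisely the KKMSD argument, and for a base curve it reduces to a finite combinatorial process on the cone complex. The local normalization computation in Step 2 is routine but needs some bookkeeping to read off the toroidal structure, while the compatibility of Steps 1 and 3 with the analytic category is harmless since each is local over $\mathcal{X}$, respectively $\mathcal{Z}$.
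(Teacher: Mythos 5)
Your proposal is correct and follows essentially the same route as the paper: construct the $\mu$-th root fibration by cyclic base change $t\mapsto t^{\mu}$ and normalization of the fiber product, and then defer the genuine combinatorial content to the semistable reduction theorem of Kempf--Knudsen--Mumford--Saint-Donat \cite[pp. 53-54]{[KKMSD73]}. The only cosmetic difference is that the paper, following Peters--Steenbrink, phrases the conclusion as obtained by iterating the base-change-and-resolve procedure finitely many times, whereas your Step 3 carries out the KKMSD cone subdivision in a single pass once $m$ is fixed; both are standard formulations of the same theorem, and your explicit Step 1 (an initial embedded resolution making $X_{0}$ a normal crossing divisor) also makes precise what the paper leaves implicit before the lcm of multiplicities can be taken.
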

In fact,  let $\mu$ be the least common multiple of the multiplicities of the components of the divisor $X_0$ and consider the map $m: t\mapsto t^{\mu}$ sending $\Delta$ to itself. For the moment, let us denote by $\Delta'$ the source of the map and let $W$ be the normalization of the fiber product $\mathcal{X}\times_{\Delta} \Delta'$. In general, $W$ is a $V$-manifold. Blowing up the singularities we obtain a manifold $\mathcal{X}'$ and a morphism $f': \mathcal{X}'\rightarrow  \Delta'$. We call $f': \mathcal{X}'\rightarrow  \Delta'$ the \emph{$\mu$-th root fibration} of $f$. The fiber $X_0'$ has simple normal crossings, but unless $\dim_{\mathbb{C}} \mathcal{X}=3$, the components introduced in the last blow-up might not be reduced. The  semi-stable reduction theorem \cite[pp. 53-54]{[KKMSD73]} states that repeating the above procedure a finite number of times one can achieve this.
\begin{prop}[{}]\label{01-invariance}
Let $f:\mathcal{X}\rightarrow \Delta$ be a one-parameter degeneration. Suppose that $X_0$ is a reduced divisor with simple normal crossings on $\mathcal{X}$ and the irreducible components of $X_0$ are Moishezon, and that for $t\in \Delta^*$, each $X_t$ is also Moishezon.
Then for any nonnegative integers $p,q$,
$$h^{p,q}(t):=\dim_{\mathbb{C}} H^q(X_t, \Omega_{\mathcal{X}/\Delta}^p(\log X_0)\otimes \mathcal{O}_{X_t})$$ is independent of $t\in \Delta$.
In particular, $h^{0,q}(t)=\dim_{\mathbb{C}} H^q(X_t, \mathcal{O}_{X_t})$ is independent of $t\in \Delta$.
\end{prop}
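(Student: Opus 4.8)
The plan is to identify $h^{p,q}(t)$ with the fibrewise dimensions of the coherent direct images $R^q f_*\bigl(\Omega^p_{\mathcal{X}/\Delta}(\log X_0)\bigr)$ and to prove that, over the whole disk $\Delta$, these sheaves are locally free and compatible with base change, whence the local constancy of $t\mapsto h^{p,q}(t)$. The starting point is that $f$ is here a semi-stable degeneration: $\mathcal{X}$ is smooth and $X_0$ is reduced with simple normal crossings, so the relative log cotangent sheaf $\Omega^1_{\mathcal{X}/\Delta}(\log X_0)$ is locally free of rank $n=\dim X_t$ (locally it is freely generated by the transverse $dz_j$ together with the $dz_i/z_i$ subject to the single relation $\sum_i dz_i/z_i=0$ coming from $t=z_1\cdots z_r$ up to a unit). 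Hence every $\Omega^p_{\mathcal{X}/\Delta}(\log X_0)=\wedge^p\Omega^1_{\mathcal{X}/\Delta}(\log X_0)$ is locally free, in particular flat over $\Delta$, so the alternating sums $\sum_q(-1)^q h^{p,q}(t)$ are already independent of $t$, and by Grauert's semicontinuity (Theorem \ref{Upper semi-continuity}) each $h^{p,q}$ is upper semicontinuous on $\Delta$. What must be added is the finer fibrewise rigidity, and for that I would bring in the relative log de Rham complex $\Omega^\bullet_{\mathcal{X}/\Delta}(\log X_0)$ and the Hodge--de Rham spectral sequence on each fibre,
\[
E_1^{p,q}(t)=H^q\bigl(X_t,\Omega^p_{\mathcal{X}/\Delta}(\log X_0)\otimes\mathcal{O}_{X_t}\bigr)\ \Longrightarrow\ \mathbb{H}^{p+q}\bigl(X_t,\Omega^\bullet_{\mathcal{X}/\Delta}(\log X_0)\otimes\mathcal{O}_{X_t}\bigr).
\]

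For $t\ne 0$ the fibre complex is simply the holomorphic de Rham complex of the compact manifold $X_t$ (the divisor $X_0$ being disjoint from $X_t$, the logarithmic poles are invisible), so its hypercohomology is $H^k(X_t,\mathbb{C})$; and since $X_t$ is Moishezon it lies in Fujiki's class $\mathcal{C}$, hence satisfies the $\partial\bar\partial$-lemma, so its Fr\"olicher spectral sequence degenerates at $E_1$ and $\sum_{p+q=k}h^{p,q}(t)=b_k(X_t):=\dim_{\mathbb{C}}H^k(X_t,\mathbb{C})$. As $f$ is smooth and proper over $\Delta^*$, the sheaf $R^kf_*\mathbb{C}$ is a local system there, so $b_k(X_t)$ is constant on $\Delta^*$. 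For $t=0$ one is in the framework of Steenbrink's limit mixed Hodge structure: the fibre complex $\Omega^\bullet_{\mathcal{X}/\Delta}(\log X_0)\otimes\mathcal{O}_{X_0}$, built from the de Rham complexes of the closed strata $X_I=\bigcap_{i\in I}X_i$ by Poincar\'e residues, underlies a cohomological mixed Hodge complex whose hypercohomology has dimension $b_k(X_t)$ and whose Hodge filtration spectral sequence --- which is the spectral sequence above at $t=0$ --- degenerates at $E_1$. The only ingredient of the classical statement that uses projectivity is the Hodge-theoretic positivity on the strata $X_I$ (the canonical $(p,q)$-decomposition and the Hard Lefschetz property entering the degeneration of the weight spectral sequence); here the hypothesis that the components $X_i$ of $X_0$ are Moishezon is what must supply this, either via a bimeromorphic modification reducing the strata to projective models --- harmless for the invariants at hand --- or via Hodge theory in Fujiki's class $\mathcal{C}$.

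Granting this, the proof finishes by counting. By $E_1$-degeneration at every $t\in\Delta$ one has $\sum_{p+q=k}h^{p,q}(t)=\dim\mathbb{H}^k\bigl(X_t,\Omega^\bullet_{\mathcal{X}/\Delta}(\log X_0)\otimes\mathcal{O}_{X_t}\bigr)$, and this dimension equals $b_k(X_t)$, which is constant on all of $\Delta$ (a local system on $\Delta^*$, and equal to the same value at $0$ by the limit comparison). Fix $t_0\in\Delta^*$: upper semicontinuity gives $h^{p,q}(t_0)\ge h^{p,q}(t)$ for $t$ near $t_0$, and since each weight-$k$ sum $\sum_{p+q=k}h^{p,q}$ is locally constant there, every one of these inequalities is an equality, so $h^{p,q}$ is locally constant, hence constant, on the connected set $\Delta^*$; write $h^{p,q}\equiv c_{p,q}$ on $\Delta^*$. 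Finally, upper semicontinuity at $0$ gives $h^{p,q}(0)\ge c_{p,q}$, while comparing the constant weight-$k$ sums gives $\sum_{p+q=k}h^{p,q}(0)=\sum_{p+q=k}c_{p,q}$; as all the numbers $h^{p,q}(0)-c_{p,q}$ are non-negative and sum to zero in each weight, $h^{p,q}(0)=c_{p,q}$. Thus $h^{p,q}$ is constant on $\Delta$, and the last assertion is the case $p=0$, where $\Omega^0_{\mathcal{X}/\Delta}(\log X_0)\otimes\mathcal{O}_{X_t}=\mathcal{O}_{X_t}$.

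I expect the real obstacle to be the middle step: carrying over Steenbrink's $E_1$-degeneration and the equality $\dim\mathbb{H}^k(X_0,\,\Omega^\bullet_{\mathcal{X}/\Delta}(\log X_0)\otimes\mathcal{O}_{X_0})=b_k(X_t)$ to the situation where the strata of $X_0$ are only Moishezon rather than projective (or K\"ahler). The part over $\Delta^*$ and the final bookkeeping are routine once the class-$\mathcal{C}$ input ($\partial\bar\partial$-lemma $\Rightarrow$ $E_1$-degeneration of the Fr\"olicher spectral sequence) is in place; it is the behaviour across the singular fibre --- in particular the Hard-Lefschetz-type positivity needed on the strata $X_I$ --- that makes the Moishezon hypotheses essential and will require a careful bimeromorphic reduction.
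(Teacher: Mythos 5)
Your proposal follows essentially the same route as the paper's proof: local freeness of the relative log sheaves, $\partial\bar\partial$-lemma (Moishezon $\Rightarrow$ Fujiki class $\mathcal{C}$) giving $E_1$-degeneration on $\Delta^*$, the Steenbrink-type identity $\dim H^k(X_\infty,\mathbb{C})=\sum_{p+q=k}h^{p,q}(0)$ at the singular fibre, and the upper-semicontinuity squeeze on the weight-$k$ sums. The one point you flag as the "real obstacle" --- carrying the $E_1$-degeneration and the $b_k$-comparison across $t=0$ when the strata are only Moishezon --- is exactly what the paper handles by citing Nakayama's Fujiki-class-$\mathcal{C}$ version of Steenbrink's result (\cite[Theorem 3.13]{n87}) alongside \cite[Corollaries 11.23, 11.24]{ps}, so your diagnosis of where the hypotheses are used is on target.
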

\begin{proof} One owes this proposition to J. Steenbrink \cite{st76}, cf. also \cite[Corollaries 11.23, 11.24]{ps} in the K\"ahler case and \cite[Theorem 3.13]{n87} in the Fujiki class $\mathcal{C}$ case. For reader's convenience, we present a proof along the line of thought in \cite[$\S$ 11.2.7]{ps}.

The deformation invariance of Hodge numbers for the Moishezon manifolds is a standard result since a Moishezon manifold satisfies the degeneration of Fr\"olicher spectral sequence at $E_1$ or even the $\partial\bar\partial$-lemma and then we can apply \cite[Proposition 9.20]{[V]} or \cite[Theorem 1.3 or 3.1]{RZ15}.

Now the desired result is contained in the proof of the claim:
If $\varepsilon>0$ is sufficiently small, then for all $t\in \Delta$ with $|t|<\varepsilon$, the Hodge spectral sequence
$$E^{p,q}_1=H^q(X_t, \Omega_{\mathcal{X}/\Delta}^p(\log X_0)\otimes \mathcal{O}_{X_t})\Longrightarrow H^{p+q}(X_t,\mathbb{C})$$
degenerates at $E_1$. In fact, take $\varepsilon>0$ so small that $h^{p,q}(t)\leq h^{p,q}(0)$ for all $t\in \Delta^*$ with $|t|<\varepsilon$ and all $p,q\leq 0$. For $t\neq 0$, one has $\Omega_{\mathcal{X}/\Delta}^p(\log X_0)\otimes \mathcal{O}_{X_t}\cong \Omega_{X_t}^p$ and thus by the Hodge spectral sequence we have
$$\sum_{p,q}h^{p,q}(t)\geq \sum_k \dim H^k(X_t,\mathbb{C}),\quad t\neq 0,$$
with equality if and only if the Hodge spectral sequence for $X_t$ degenerates at $E_1$. One has
$$\dim H^k(X_\infty,\mathbb{C})=\sum_{p,q}h^{p,q}(0)\geq \sum_{p,q}h^{p,q}(t)\geq \sum_k \dim H^k(X_t,\mathbb{C})$$
and so equality must hold everywhere as $H^k(X_\infty,\mathbb{C})=H^k(X_t,\mathbb{C})$ and the first equality above follows from \cite[Corollary 11.23]{ps}. Here we write $H^k(X_\infty,\mathbb{C})$ for the mixed Hodge structure which the following complex puts on the hypercohomology group $\mathbb{H}^k(X_0, \psi_f\underline{\mathbb{Z}}_\mathcal{X})$:
$$\psi_f^{Hdg}:=(\psi_f\underline{\mathbb{Z}}_\mathcal{X}, (s(C^{\bullet,\bullet}),W(M)),(s(A^{\bullet,\bullet}),W(M),F))$$
with the data in \cite[(XI-22), (XI-23) and (XI-28)]{ps}.
\end{proof}

\begin{rem}\label{dubios}
The deformation invariance of Hodge numbers $h^{0,q}$ in Proposition \ref{01-invariance} holds more generally as long as the fibers in a flat family with  projective fibers have at most Du Bois singularities, cf. \cite[Example 2.6]{kv} and \cite[4.6. Th\'{e}or\`{e}me]{[DB81]}.
\end{rem}

\subsection{Deformation density of Kodaira--Iitaka dimension}\label{ddkid}
We first describe the deformation behavior of Kodaira--Iitaka dimension under the flat deformation.
\begin{prop}\label{ki-dim-limit}
Let $\pi: \mathcal{X}\rightarrow Y$ be a flat family from a complex manifold over a one-dimensional connected complex manifold $Y$ with possibly reducible fibers. If there exists a holomorphic line bundle $L$ on $\mathcal{X}$ such that the Kodaira--Iitaka dimension $\kappa(L_t)=\kappa$ for each $t$ in an uncountable set $B$ of $ Y$, then any fiber $X_t$ in $\pi$ has at least one irreducible component $C_t$ with  $\kappa(L|_{C_t})\geq \kappa$. In particular, if any fiber $X_t$ for $t\in Y$ is irreducible, then $\kappa(L_{t})\geq \kappa$. Here $L_{t}:=L|_{X_t}$ and similarly for others.
\end{prop}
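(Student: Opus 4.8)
The plan is to play Grauert's coherence and semi‑continuity theorems off against the asymptotic estimate of Theorem \ref{asymp}, after arranging that the irreducible components of the fibres can be followed globally over $Y$. I would first dispose of the case in which every $X_t$ is irreducible. For each $p\in\mathbb{N}^+$ the sheaf $\pi_*(L^{\otimes p})$ is coherent by Theorem \ref{gdit}, and $L^{\otimes p}$ is flat over $Y$ (it is locally free on $\mathcal{X}$, which is flat over $Y$), so by Theorem \ref{Upper semi-continuity} the function $t\mapsto h^0(X_t,L_t^{\otimes p})$ is upper semi‑continuous for the analytic Zariski topology; as $Y$ is a connected complex curve, it therefore equals a generic value $g_p$ off a countable set $Z_p$ and satisfies $h^0(X_t,L_t^{\otimes p})\ge g_p$ for every $t$. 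Since $B$ is uncountable and the $t$ with $X_t$ non‑reduced form a countable subset of the curve $Y$, I may pick $t_0\in B\setminus\bigcup_p Z_p$ with $X_{t_0}$ reduced; then $h^0(X_{t_0},L_{t_0}^{\otimes p})=g_p$ for all $p$. Now $X_{t_0}$ is an irreducible compact variety with $\kappa(L_{t_0})=\kappa$, so Theorem \ref{asymp} produces $d\in\mathbb{N}^+$ and $\alpha>0$ with $g_{md}\ge\alpha m^{\kappa}$ for $m\gg0$; hence $h^0(X_t,L_t^{\otimes md})\ge\alpha m^{\kappa}$ for every $t$ and $m\gg0$, and comparing this with the reverse bound of Theorem \ref{asymp} applied to $L_t$ (along a common arithmetic progression of exponents) forces $\kappa(L_t)\ge\kappa$. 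This settles the final (``in particular'') clause.

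For reducible fibres one must first make the components global. The irreducible components of the fibres of $\pi$ are parametrized by an analytic space finite over $Y$, so after a finite connected base change $\nu\colon Y'\to Y$ one may assume that $\mathcal{X}':=(\mathcal{X}\times_Y Y')_{\mathrm{red}}$ decomposes into irreducible components $\mathcal{C}_1,\dots,\mathcal{C}_r$, each of which is proper and surjective over $Y'$, is flat over the regular curve $Y'$ (because $\mathcal{C}_i$ is reduced and dominates $Y'$), has irreducible generic fibre, and has the property that every component of every fibre $X_{\nu(t')}=\mathcal{X}'_{t'}$ occurs among the $\mathcal{C}_i|_{t'}$; moreover $B':=\nu^{-1}(B)$ is still uncountable. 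For a very general $t'\in B'$ all the $\mathcal{C}_i|_{t'}$ are irreducible and form exactly the components of $X_{\nu(t')}$, so $\kappa=\kappa(L|_{\mathcal{X}'_{t'}})=\min_i\kappa(L|_{\mathcal{C}_i|_{t'}})$; since there are finitely many indices, one fixed $i_0$ realizes this minimum for an uncountable set of very general $t'$, i.e. $\kappa(L|_{\mathcal{C}_{i_0}|_{t'}})=\kappa$ there with $\mathcal{C}_{i_0}|_{t'}$ irreducible.

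Next I would repeat the argument of the first paragraph for the flat proper family $\mathcal{C}_{i_0}\to Y'$ with the line bundle $L|_{\mathcal{C}_{i_0}}$: Grauert gives a generic value $g_{i_0,p}$ with $h^0(\mathcal{C}_{i_0}|_{t'},L^{\otimes p})\ge g_{i_0,p}$ for all $t'$, and a suitably very general $t'_0$ in the uncountable set just found makes $\mathcal{C}_{i_0}|_{t'_0}$ an irreducible reduced compact variety with $\kappa(L|_{\mathcal{C}_{i_0}|_{t'_0}})=\kappa$ realizing this generic value for every $p$; thus Theorem \ref{asymp} gives $g_{i_0,md}\ge\alpha m^{\kappa}$, whence $h^0(\mathcal{C}_{i_0}|_{t'},L^{\otimes md})\ge\alpha m^{\kappa}$ for \emph{every} $t'$ and $m\gg0$. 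On the other hand, for any $t'$ the finitely many components $C_\sigma$ of the reduced fibre satisfy $h^0(\mathcal{C}_{i_0}|_{t'},L^{\otimes q})\le\sum_\sigma h^0(C_\sigma,L^{\otimes q})\le C\,q^{\max_\sigma\kappa(L|_{C_\sigma})}$ for $q\gg0$ by Theorem \ref{asymp}. Comparing the two estimates forces $\max_\sigma\kappa(L|_{C_\sigma})\ge\kappa$, so some component $C_\sigma$ of $\mathcal{C}_{i_0}|_{t'}$ --- hence of $X_{\nu(t')}$ --- has $\kappa(L|_{C_\sigma})\ge\kappa$. Finally, for an arbitrary $t\in Y$ one picks $t'\in\nu^{-1}(t)$ (non‑empty, $\nu$ being surjective): the component of $\mathcal{C}_{i_0}|_{t'}\subseteq X_t$ just produced is the desired $C_t$.

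The genuinely delicate step is the base change: organizing the irreducible components of the fibres into globally defined, mutually distinct relative components $\mathcal{C}_i$ (i.e. killing the monodromy that permutes them) and verifying that these stay flat over $Y'$. I expect this to be the main obstacle; everything else is a formal interplay of Grauert's theorems with the two‑sided asymptotic bound of Theorem \ref{asymp}. This detour really is necessary, because for a reducible fibre the quantity $h^0(X_t,L_t^{\otimes p})$ --- hence the generic value $g_p$ --- can stay bounded even when $\kappa(L_t)>0$: the compatibility conditions along the intersections of the components can annihilate every section coming from the components of larger Kodaira dimension, so no argument working directly with $h^0(X_t,L_t^{\otimes p})$ can succeed without the component‑wise bookkeeping above.
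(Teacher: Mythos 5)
Your first paragraph is, in substance, the paper's argument: the interplay of Grauert's semicontinuity with Theorem \ref{asymp}, together with the observation that an uncountable $B\subset Y$ cannot be covered by countably many proper analytic (hence discrete) subsets of the one-dimensional $Y$. The paper packages this via the sets $T_{p,q}(L)=\{t: h^0(X_t,L_t^{\otimes p})\ge p^{\kappa}/q\}$ rather than via your generic value $g_p$, but this is only a cosmetic difference. Where the paper is cleaner is precisely the point you gloss over with ``along a common arithmetic progression of exponents'': it first extracts a single power $\tilde L=L^{\otimes\tilde p}$ with $h^0(X_t,\tilde L_t)\neq0$ for \emph{every} $t$, so that Theorem \ref{asymp} can then be applied with $d=1$ to $\tilde L$ on the reference fibre (the two-stage $T_{p,q}/S_{p,q}$ device), and the resulting estimate \eqref{16-3} holds for a full final segment of exponents rather than a progression that might depend on $t$. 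You should incorporate this normalization; otherwise the comparison between your lower bound for $L_{t_0}$ and the upper bound for $L_t$ is not watertight.

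The genuine misstep is your diagnosis that ``this detour really is necessary,'' and the resulting base-change construction in the second paragraph. It overlooks the hypothesis that $\mathcal{X}$ is a complex manifold. Since $\mathcal{X}$ is smooth and connected and $Y$ is a one-dimensional manifold, $\pi$ is a submersion off a proper analytic (hence discrete) subset of $Y$ by generic smoothness/Sard, so for all but countably many $t$ the fibre $X_t$ is a smooth connected, hence \emph{irreducible}, compact manifold. In particular there is no monodromy of fibre components to kill: your $\mathcal{C}_1,\dots,\mathcal{C}_r$ is just $\mathcal{X}$ itself, the base change $\nu$ is the identity, and the entire second paragraph is vacuous. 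Consequently the generic value $g_p$ \emph{is} computed on an irreducible fibre and already grows like $p^{\kappa}$, so there is no danger of $g_p$ remaining bounded; your closing claim to the contrary is wrong under the stated hypotheses. What actually remains of your argument is exactly the correct (and sufficient) skeleton: the lower bound $h^0(X_t,\tilde L_t^{\otimes m})\gtrsim m^{\kappa}$ for \emph{every} $t\in Y$, obtained by evaluating $g_{p}$ at an irreducible fibre over $B$, followed by the componentwise comparison $h^0(X_t,\tilde L_t^{\otimes m})\le\sum_\sigma h^0(C_\sigma,\tilde L^{\otimes m}|_{C_\sigma})$ at an arbitrary (possibly reducible) fibre to extract one component with $\kappa(L|_{C_\sigma})\ge\kappa$. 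That two-step comparison is what the paper's terse final sentence is invoking. So the proof you want is the union of your first and third paragraphs, applied directly to $\pi:\mathcal{X}\to Y$; the base change is both unnecessary and, as you yourself note, not fully justified (number of fibre components can jump, and the flatness and globality of the putative $\mathcal{C}_i$ would require real work).
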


\begin{proof} Compare \cite[Proposition 4.1]{rt} and we include a proof due to its difference and importance.

  The case $\kappa=-\infty$ is trivial and so one assumes that $\kappa\geq 0$.
For any two positive integers $p,q$, set
$$\label{tpql}
T_{p,q}(L)=\{t\in  Y: h^0(X_t,L_t^{\otimes p})\geq \frac{1}{q}p^\kappa\}.
$$
By the upper semi-continuity Theorem \ref{Upper semi-continuity}, it is known that $T_{p,q}(L)$ is a proper analytic subset of $Y$ or equal to $Y$. From the assumption that $\kappa(L_t)=\kappa$ for any $t$ in an uncountable subset of $B$ (still denoted by $B$) and Theorem \ref{asymp}, it holds that
\begin{equation}\label{utpq}
  \bigcup_{p,q\in \mathbb{N}^+}T_{p,q}(L)\supseteq B
\end{equation}
despite that the $d$ in Theorem \ref{asymp} is not necessarily one here.
Then since a proper analytic subset of a one-dimensional manifold is at most countable, there exists some analytic subset in the union, denoted by $T_{\tilde{p},\tilde{q}}(L)$, such that $T_{\tilde{p},\tilde{q}}(L)\supseteq B$ and so $T_{\tilde{p},\tilde{q}}(L)= Y$. That is, for any $t\in  Y$, it holds
$$h^0(X_t,L_t^{\otimes \tilde{p}})\geq \frac{1}{\tilde{q}}\tilde{p}^\kappa.$$

Now take $\tilde{L}=L^{\otimes \tilde{p}}$ and thus for any $t\in Y$,
$$h^0(X_t,\tilde{L}_t)\neq 0.$$ For any two positive integers $p,q$, we write
$$S_{p,q}=T_{p,q}(\tilde{L})\cap T_{p+1,q}(\tilde{L})\cap T_{p+2,q}(\tilde{L})\cap\cdots,$$
where
$$\label{tpqh}
T_{p,q}(\tilde{L}):=\{t\in  Y: h^0(X_t,\tilde{L}_t^{\otimes p})\geq \frac{1}{q}p^\kappa\}
$$
and similarly for $T_{p+1, q}(\tilde{L}), T_{p+2, q}(\tilde{L}),\ldots$.  By the upper semi-continuity again, $S_{p,q}$ is a proper analytic subset of $ Y$ or equal to $ Y$ (\cite[(5.5) Theorem of Chapter II]{Dem12}). From the assumption that $\kappa(\tilde{L}_t)=\kappa(L_t)=\kappa$ for each $t\in B$ and Theorem \ref{asymp} with $d=1$, one sees that
$$\bigcup_{p,q\in \mathbb{N}^+}S_{p,q}\supseteq B.$$
By the argument similar to \eqref{utpq}, there exists some $S_{i,j}=Y$ such that for any $t\in  Y$, it holds that for all $m\geq i$,
\begin{equation}\label{16-3}
h^0(X_t,\tilde{L}_t^{\otimes m})\geq \frac{1}{j}m^\kappa.
\end{equation}
This proves the proposition by Theorem \ref{asymp}.
\end{proof}
\begin{rem}
It is impossible to claim that all the irreducible components $C_t$ of any fiber $X_t$ in $\pi$ satisfy $\kappa(L|_{C_t})\geq \kappa$, thanks to \cite[Example 1]{n83}. In that example, Nishiguchi studied a family of $K3$ surfaces (with Kodaira dimension $0$) which degenerates to a surface (with Kodaira dimension $1$) and a Hopf surface (with Kodaira dimension $-\infty$).
\end{rem}

\begin{rem} Our inequality \eqref{16-3} proves more than what is asserted in the proposition. Indeed, \eqref{16-3} plays crucial roles
in many places of \cite{rt}. Moreover, Proposition \ref{ki-dim-limit} in  the irreducible fibers case is equivalent to Lieberman--Sernesi's main result \cite[Theorem on p. $77$]{ls} and \cite[Lemma in Section 5]{ti} in some sense.
See \cite[Remark 4.2]{rt} for more details.
\end{rem}

As a direct application of Proposition \ref{ki-dim-limit} and Lemma \ref{sm-big-moi}, one has the extension property of bigness:
\begin{cor}\label{unc-big}
With the notations of Proposition \ref{ki-dim-limit},  assume that there exists a holomorphic line bundle $L$ on $\mathcal{X}$ such that for each $t$ in an uncountable set of $ Y$, $L|_{X_t}$ is big. Then
any fiber $X_t$ in $\pi$ has at least one irreducible Moishezon component $\Sigma_t$ with $L|_{\Sigma_t}$ big.
\end{cor}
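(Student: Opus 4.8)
The plan is to read off Corollary~\ref{unc-big} directly from Proposition~\ref{ki-dim-limit} together with the variety form of Lemma~\ref{sm-big-moi}, the only real work being to convert the bigness hypothesis into the numerical input that Proposition~\ref{ki-dim-limit} expects, namely that $\kappa(L_t)$ takes one \emph{fixed} value on an uncountable set. First I would record the elementary geometric fact that, since $\pi\colon\mathcal{X}\to Y$ is flat over a one--dimensional complex manifold and $\mathcal{X}$ is itself a complex manifold, each fibre $X_t=\pi^{-1}(t)$ is locally the zero set of a single holomorphic function on $\mathcal{X}$ (a local coordinate on $Y$ centred at $t$, pulled back by $\pi$) and is therefore a hypersurface in $\mathcal{X}$, hence pure--dimensional of dimension $n:=\dim_{\mathbb{C}}\mathcal{X}-1$. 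In particular every irreducible component of every fibre has dimension exactly $n$.

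With this in hand, for each $t$ in the uncountable set $B$ of the hypothesis the bigness of $L|_{X_t}$ means $\kappa(L|_{X_t})=\dim_{\mathbb{C}}X_t=n$, where $\kappa$ of a line bundle on a possibly reducible compact complex space is, as recalled before Theorem~\ref{asymp}, the minimum of the Kodaira--Iitaka dimensions of its restrictions to the irreducible components. Thus $\kappa(L_t)=n$ for every $t\in B$, which is precisely the assumption of Proposition~\ref{ki-dim-limit} with $\kappa=n$. Applying that proposition yields, for \emph{every} fibre $X_t$ of $\pi$, at least one irreducible component $\Sigma_t$ with $\kappa(L|_{\Sigma_t})\ge n$; since $\dim_{\mathbb{C}}\Sigma_t=n$ and the Kodaira--Iitaka dimension never exceeds the dimension of the underlying variety, this forces $\kappa(L|_{\Sigma_t})=\dim_{\mathbb{C}}\Sigma_t$, i.e. $L|_{\Sigma_t}$ is big. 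Finally, Lemma~\ref{sm-big-moi} (in its stated extension to compact complex varieties) says that $\Sigma_t$, carrying the big holomorphic line bundle $L|_{\Sigma_t}$, is Moishezon, which is the assertion.

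There is no substantial obstacle here: the statement is a formal consequence of the two quoted results, as indicated by the phrase ``a direct application''. The one point I would be careful to spell out is the equidimensionality bookkeeping of the fibres and of their components. It is what allows us to upgrade ``$L|_{X_t}$ is big'' to ``$\kappa(L_t)=n$ with $n$ independent of $t\in B$'', exactly the shape of input Proposition~\ref{ki-dim-limit} requires, and it is also what converts the inequality $\kappa(L|_{\Sigma_t})\ge n$ produced by that proposition back into genuine bigness on $\Sigma_t$. If one were worried about a non--reduced fibre structure, it suffices to note that $\kappa(L|_{\Sigma_t})$ is computed on the reduced irreducible component, which is a complex variety in the sense of this paper, so Lemma~\ref{sm-big-moi} applies verbatim.
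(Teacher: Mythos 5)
Your proof is correct and follows exactly the intended route: the paper itself states the corollary as ``a direct application of Proposition~\ref{ki-dim-limit} and Lemma~\ref{sm-big-moi}'' without spelling out details, and your argument fills in precisely those details. The care you take with the equidimensionality of fibers and their components --- using that fibers are hypersurfaces in the smooth total space $\mathcal{X}$ to identify $\kappa(L_t)$ with a fixed $n$ on the uncountable set, and then to upgrade $\kappa(L|_{\Sigma_t})\ge n$ back to genuine bigness --- is exactly the bookkeeping the paper leaves implicit.
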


\subsection{Existence of line bundle over total space: Hodge number} \label{ext-lb-h}
The goal of this subsection is to prove:
\begin{thm}\label{thm-moishezon-update}
Let $\pi: \mathcal{X}\rightarrow \Delta$ be a one-parameter degeneration  such that uncountable fibers $X_t$ therein admit big
line bundles $L_t$ with the property that
\begin{equation}\label{surj-chern}
\text{$c_1(L_t)$ comes from the restriction of some cohomology class in $H^2(\mathcal{X}, \mathbb{Z})$ to $X_t$.}
\end{equation}
Assume further that all the fibers satisfy the local deformation invariance for Hodge number of type $(0,1)$. Then there exists a (global) line bundle $L$ over $\mathcal{X}$ such that for all $t\in \Delta^*$,
$L|_{X_t}$ are big and thus $X_t$ are Moishezon.
\end{thm}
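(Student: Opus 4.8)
\textbf{Proof proposal for Theorem \ref{thm-moishezon-update}.}

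The plan is to build the global line bundle $L$ on $\mathcal{X}$ from the fibrewise classes $c_1(L_t)$ by first producing a single global cohomology class in $H^2(\mathcal{X},\mathbb{Z})$ that restricts to $c_1(L_t)$ on a Zariski-dense set of fibres, and then realizing that class by an honest holomorphic line bundle. First I would exploit the hypothesis \eqref{surj-chern}: for each $t$ in the uncountable set, pick $\alpha_t\in H^2(\mathcal{X},\mathbb{Z})$ with $\alpha_t|_{X_t}=c_1(L_t)$. Since $H^2(\mathcal{X},\mathbb{Z})$ is a finitely generated abelian group (the total space of a one-parameter degeneration over a disk is homotopy equivalent to a finite CW complex, indeed retracts onto $X_0$), and the uncountable index set is partitioned into countably many fibres of the ``restrict to $X_t$'' maps only if\ldots — more carefully, the map $t\mapsto (\text{restriction data})$ takes values in a countable set once we fix a class, so by a pigeonhole/Baire-type argument there is a \emph{single} class $\alpha\in H^2(\mathcal{X},\mathbb{Z})$ with $\alpha|_{X_t}=c_1(L_t)$ for all $t$ in an uncountable (hence, after shrinking, still Zariski-dense) subset $B'\subseteq\Delta$. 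This is the step where I expect to have to be careful: one must use properness and the local structure of the degeneration to see that $H^2(\mathcal{X},\mathbb{Z})$ is finitely generated and that ``having a given restriction to $X_t$'' is a countable condition; the cleanest route is that $\mathcal{X}$ deformation retracts onto the reference fibre $X_0$, so $H^2(\mathcal{X},\mathbb{Z})\cong H^2(X_0,\mathbb{Z})$ is finitely generated.

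Next I would promote $\alpha$ to a holomorphic line bundle. Consider the exponential sequence on $\mathcal{X}$: a class in $H^2(\mathcal{X},\mathbb{Z})$ lifts to $H^1(\mathcal{X},\mathcal{O}_\mathcal{X}^*)$ precisely when its image in $H^2(\mathcal{X},\mathcal{O}_\mathcal{X})$ vanishes. Here the local deformation invariance of the $(0,1)$-Hodge number enters, via \eqref{identif} and the Leray spectral sequence for $\pi$: invariance of $h^{0,1}(X_t)$ forces $R^1\pi_*\mathcal{O}_\mathcal{X}$ to be locally free, and a standard argument (base change, together with the fact that $\Delta$ is Stein so $H^p(\Delta,R^q\pi_*\mathcal{O}_\mathcal{X})=0$ for $p>0$) gives $H^2(\mathcal{X},\mathcal{O}_\mathcal{X})\cong H^0(\Delta,R^2\pi_*\mathcal{O}_\mathcal{X})$ while also pinning down $H^1$. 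The point is to show the obstruction class $o(\alpha)\in H^2(\mathcal{X},\mathcal{O}_\mathcal{X})$ is zero: its restriction to each $X_t$ with $t\in B'$ is the obstruction to lifting $c_1(L_t)$, which vanishes since $L_t$ exists; and the restriction map $H^2(\mathcal{X},\mathcal{O}_\mathcal{X})\to H^2(X_t,\mathcal{O}_{X_t})$ is, after the Hodge-number invariance and Grauert base change, injective for generic $t$ (the relevant direct image being locally free and compatible with restriction). Hence $o(\alpha)=0$ and there is $L\in\mathrm{Pic}(\mathcal{X})$ with $c_1(L)=\alpha$.

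Finally I would check bigness of $L|_{X_t}$ for \emph{all} $t\in\Delta^*$, not just $t\in B'$. For $t\in B'$ we have $c_1(L|_{X_t})=\alpha|_{X_t}=c_1(L_t)$; since on a compact complex manifold a line bundle is big iff its first Chern class is (bigness of $[L]\in H^{1,1}$ is a numerical/cohomological condition, e.g.\ via the characterization $\int (c_1(L)+\text{small})^{n}>0$ or Demailly's holomorphic Morse inequalities), $L|_{X_t}$ is big for $t\in B'$. Now invoke Corollary \ref{unc-big} (equivalently Proposition \ref{ki-dim-limit}) applied to the holomorphic line bundle $L$ on $\mathcal{X}$ over $\Delta^*$: bigness of $L|_{X_t}$ on the uncountable set $B'$ forces $\kappa(L|_{X_t})=\dim X_t$ for \emph{every} $t\in\Delta^*$, because the fibres $X_t$ with $t\ne 0$ are smooth hence irreducible, so the ``one irreducible component'' in the conclusion is all of $X_t$. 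Then Lemma \ref{sm-big-moi} gives that each $X_t$, $t\in\Delta^*$, is Moishezon. The main obstacle, as flagged above, is the passage from fibrewise topological classes to a single global class and then killing the $\mathcal{O}_\mathcal{X}$-obstruction; once the Hodge-number hypothesis is correctly leveraged to control $R^1\pi_*\mathcal{O}_\mathcal{X}$ and $R^2\pi_*\mathcal{O}_\mathcal{X}$ and their base-change behaviour, the rest is the semicontinuity machinery already set up in Propositions \ref{ki-dim-limit} and \ref{01-invariance}.
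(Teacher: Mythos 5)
Your overall strategy matches the paper's (pigeonhole to a single integral class, then kill the exponential-sequence obstruction, then propagate bigness via Proposition \ref{propa} and Corollary \ref{unc-big}), and your first and third steps are correct. However, there is a genuine gap in the middle step, where you claim that "the restriction map $H^2(\mathcal{X},\mathcal{O}_\mathcal{X})\to H^2(X_t,\mathcal{O}_{X_t})$ is, after the Hodge-number invariance and Grauert base change, injective for generic $t$." That statement is false: under the Leray/Stein identification $H^2(\mathcal{X},\mathcal{O}_\mathcal{X})\cong H^0(\Delta,R^2\pi_*\mathcal{O}_{\mathcal{X}})$, the restriction to $X_t$ is evaluation of a holomorphic section of $R^2\pi_*\mathcal{O}_{\mathcal{X}}$ at the point $t$, which is essentially never injective. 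The correct mechanism has two parts. First, on the open subset $U\subset\Delta$ where $R^2\pi_*\mathcal{O}_{\mathcal{X}}$ is a vector bundle and $\mathcal{O}_{\mathcal{X}}$ is cohomologically flat in degree $2$, the section $o(\alpha)$ vanishes at uncountably many points and hence has a limit point of zeros, so it vanishes identically on $U$ by the identity theorem for holomorphic sections (Lemma \ref{section-zero}); note that bigness is also used here to ensure the uncountable set can be chosen inside $U$. Second, to pass from vanishing on $U$ to vanishing on all of $\Delta$ one needs $R^2\pi_*\mathcal{O}_{\mathcal{X}}$ to have no torsion supported on the discrete set $\Delta\setminus U$; this is exactly where the local invariance of $h^{0,1}$ enters, via the equivalence of Proposition \ref{Hodge-torsionfree} ($h^{0,1}$-invariance $\Longleftrightarrow$ $R^2\pi_*\mathcal{O}_{\mathcal{X}}$ torsion-free) combined with Proposition \ref{s-torsion}.

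Relatedly, your statement that $h^{0,1}$-invariance "forces $R^1\pi_*\mathcal{O}_\mathcal{X}$ to be locally free" is true (Grauert continuity) but irrelevant to killing $o(\alpha)$: the sheaf you must control is $R^2\pi_*\mathcal{O}_{\mathcal{X}}$, and $h^{0,1}$-invariance does \emph{not} give its local freeness (that would require $h^{0,2}$-invariance by Lemma \ref{gct}), only its torsion-freeness. Without the torsion-freeness step you would only obtain a line bundle over $\pi^{-1}(U)$ rather than over all of $\mathcal{X}$, and since the theorem explicitly demands a global $L$ on $\mathcal{X}$ with $L|_{X_t}$ big for every $t\in\Delta^*$ (including any $t\in\Delta^*\setminus U$), this is not a cosmetic point. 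Once you replace the false injectivity claim by the identity-theorem-plus-torsion-freeness argument (i.e.\ by the content of Proposition \ref{global-B}), the remainder of your proof — first Chern class determines bigness (Proposition \ref{propa}), then Corollary \ref{unc-big} over $\Delta^*$ with its irreducible fibres — goes through exactly as in the paper.
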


First, as background material we introduce the torsion sheaf as in \cite[\S\ 7.5]{Rem}.
Let $X$ be a reduced complex space. For every $\mathcal{O}_X$-module $\mathcal{F}$ one defines
$T(\mathcal{F}):=\bigcup_{x\in X}T(\mathcal{F}_x),$
where $T(\mathcal{F}_x):=\{s_x\in \mathcal{F}_x: g_xs_x=0,\ \text{for a suitable $g_x\in \mathcal{A}_x$}\}$
with the multiplicative stalk $\mathcal{A}_x$ of the subsheaf $\mathcal{A}$ of all non-zero divisors in $\mathcal{O}_X$.
For our cases below we have $\mathcal{A}=\mathcal{O}_X$.  We obtain an  $\mathcal{O}_X$-module in $\mathcal{F}$; obviously $T(\mathcal{F})_x=T(\mathcal{F}_x)$. $T(\mathcal{F})$ is called the \emph{torsion sheaf} of $\mathcal{F}$. We call $\mathcal{F}$ \emph{torsion free at $x$} if $T(\mathcal{F})_x=0$. The sheaf $\mathcal{F}/T(\mathcal{F})$ is torsion free everywhere. Subsheaves of locally free sheaves are torsion free.

 \begin{prop}[{\cite[Proposition 4.5]{rt}}]\label{s-torsion}
For a proper morphism $\pi: \mathcal{X}\rightarrow Y$ from complex spaces to a connected one-dimensional manifold with $0\in Y$,
suppose that $R^2\pi_*\mathcal{O}_{\mathcal{X}}$ is locally free on $Y^*:=Y\setminus \{0\}$.  Let $s\in \Gamma( Y, R^2\pi_*\mathcal{O}_{\mathcal{X}})$.  Then $s|_{ Y^*}=0$ is equivalent to the germ $s_0\in T(R^2\pi_*\mathcal{O}_{\mathcal{X}})_0$.
As a consequence, if $T(R^2\pi_*\mathcal{O}_{\mathcal{X}})_0=0$ and $s|_{ Y^*}=0$, then $s=0$ in $\Gamma( Y, R^2\pi_*\mathcal{O}_{\mathcal{X}})$.
\end{prop}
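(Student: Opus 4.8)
The plan is to exploit the base change / semi-continuity machinery over a one-dimensional base $Y$, where proper analytic subsets are discrete. First I would recall that since $R^2\pi_*\mathcal{O}_{\mathcal{X}}$ is assumed locally free on $Y^* = Y\setminus\{0\}$, for any section $s \in \Gamma(Y, R^2\pi_*\mathcal{O}_{\mathcal{X}})$ the restriction $s|_{Y^*}$ is a section of a locally free sheaf on the connected punctured disk; hence $s|_{Y^*} = 0$ if and only if $s$ vanishes in a single stalk $(R^2\pi_*\mathcal{O}_{\mathcal{X}})_t$ for some (equivalently, every) $t \in Y^*$, by the identity theorem for sections of locally free sheaves on a connected manifold. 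This reduces the first equivalence to a purely local statement about the stalk at $0$: I must show that $s|_{Y^*}=0$ is equivalent to $s_0$ being a torsion element of $(R^2\pi_*\mathcal{O}_{\mathcal{X}})_0$ over the discrete valuation ring $\mathcal{O}_{Y,0}$.

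For the forward direction, suppose $s|_{Y^*} = 0$. Since $Y$ is one-dimensional, $Y^* = Y \setminus \{0\}$ is the complement of a single point, so the support of $s$ as a section of the coherent sheaf $R^2\pi_*\mathcal{O}_{\mathcal{X}}$ (coherent by Grauert's Theorem \ref{gdit}) is contained in $\{0\}$. Thus $s_0$ is supported at the closed point of $\mathrm{Spec}\,\mathcal{O}_{Y,0}$, i.e. is annihilated by some power of the maximal ideal $\mathfrak{m}_0$; since $\mathcal{O}_{Y,0}$ is a DVR, $\mathfrak{m}_0$ is generated by the uniformizer $t$, so $t^k s_0 = 0$ for some $k$, and as $t$ is a non-zero divisor in $\mathcal{O}_{Y,0} = \mathcal{A}_0$, this says precisely $s_0 \in T((R^2\pi_*\mathcal{O}_{\mathcal{X}})_0)$. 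Conversely, if $s_0 \in T((R^2\pi_*\mathcal{O}_{\mathcal{X}})_0)$, then $g_0 s_0 = 0$ for some non-zero-divisor $g_0 \in \mathcal{O}_{Y,0}$; spreading out, $g s = 0$ on a neighborhood $V$ of $0$ where $g$ is a holomorphic function vanishing only at $0$ (a non-zero divisor on reduced $Y$), so $s$ vanishes on $V \setminus \{0\}$, hence on all of $Y^* \cap V$, and therefore on all of $Y^*$ by the connectedness/identity argument above. This establishes the equivalence.

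For the final consequence, if $T((R^2\pi_*\mathcal{O}_{\mathcal{X}})_0) = 0$ and $s|_{Y^*} = 0$, then by the equivalence just proved $s_0 \in T((R^2\pi_*\mathcal{O}_{\mathcal{X}})_0) = 0$, so $s_0 = 0$; combined with $s|_{Y^*} = 0$ this means every stalk of $s$ vanishes, hence $s = 0$ in $\Gamma(Y, R^2\pi_*\mathcal{O}_{\mathcal{X}})$. The main point to be careful about — the only place where any real content enters — is the passage between "$s$ vanishes on the punctured disk" and "$s_0$ is a torsion germ": one needs both that $Y^*$ is the complement of a single point (so local vanishing at $0$ forces $\mathfrak{m}_0$-power annihilation) and that $\mathcal{O}_{Y,0}$ is a discrete valuation ring with the non-zero divisors being exactly the nonzero elements, so that the two notions of torsion coincide. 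Everything else is bookkeeping with coherence and the identity theorem on a connected one-dimensional base.
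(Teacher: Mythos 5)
Your proof is correct, and the argument is the natural one for this statement: reduce to the stalk at $0$ via the identity theorem for sections of the locally free restriction over the connected punctured base $Y^*$, then in the forward direction use coherence and the analytic Nullstellensatz to get $\mathfrak m_0^k s_0 = 0$, and in the backward direction spread out a torsion relation $g_0 s_0 = 0$ to a neighborhood and use that $g$ is a unit off its discrete zero set. The paper does not reprove this proposition but cites it directly from the authors' earlier paper [RT19, Proposition 4.5], so there is no internal proof to compare against; your route is the expected one given the setup (coherence from Grauert's Theorem \ref{gdit}, $\mathcal O_{Y,0}$ a DVR, $Y^*$ connected). Two tiny cosmetic points: in the converse you should allow for $g_0$ to be a unit (then $s_0 = 0$ outright), so the clause ``$g$ vanishing only at $0$'' should be read as ``the zero set of $g$ in $V$ is contained in $\{0\}$''; and $\mathcal A_0 = \mathcal O_{Y,0} \setminus \{0\}$ rather than all of $\mathcal O_{Y,0}$, though the meaning is clear since $\mathcal O_{Y,0}$ is an integral domain.
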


In this part one always considers the proper morphism $f:X\rightarrow Y$ of complex spaces and a coherent analytic sheaf $\mathcal{F}$ on $X$, \emph{flat with respect to $f$ (or $Y$)}, which means that the $\mathcal{O}_{f(x)}$-modules $\mathcal{F}_x$ are flat for all $x\in X$.
One says that \emph{$\mathcal{F}$ is cohomologically flat in dimension $q$ at a  point $y\in Y$} if for an arbitrary $\mathcal{O}_y$-module $M$ of finite type and an integer $q$, the functor $M\mapsto R^qf_*(\mathcal{F}\otimes_{\mathcal{O}_Y} M)_y$ is exact;
 $\mathcal{F}$ is \emph{cohomologically flat in dimension $q$ over $Y$} if $\mathcal{F}$ is cohomologically flat  in dimension $q$ at any point of $Y$.
\begin{lemma}[Grauert's continuity theorem, {\cite[Theorem 4.12.(ii) of Chapter III]{bs}}]\label{gct}
If $\mathcal{F}$ is cohomologically flat in dimension $q$ over $Y$, then the function
$$y\mapsto \dim_{\mathbb{C}} H^q(X_y, \mathcal{F}(y))$$
is locally constant. Conversely, if this function is locally constant and $Y$ is a reduced space, then $\mathcal{F}$ is cohomologically flat in dimension $q$ over $Y$; in particular, the sheaf $R^qf_*\mathcal{F}$ is locally free.
\end{lemma}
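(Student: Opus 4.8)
The plan is to deduce both implications from the analytic \emph{Grauert comparison theorem} (the Grothendieck complex), which is the real content behind the cited \cite[Ch.\ III, \S\,3]{bs}: since $f$ is proper and $\mathcal{F}$ is coherent and flat over $Y$, every $y_{0}\in Y$ has a neighbourhood $V$ carrying a bounded complex
$$\mathcal{L}^{\bullet}\colon\ 0\to\mathcal{L}^{0}\xrightarrow{d^{0}}\mathcal{L}^{1}\to\cdots\to\mathcal{L}^{n}\to 0$$
of \emph{finite free} $\mathcal{O}_{V}$-modules, together with isomorphisms $R^{q}f_{*}(\mathcal{F}\otimes_{\mathcal{O}_{Y}}\mathcal{M})\cong\mathcal{H}^{q}(\mathcal{L}^{\bullet}\otimes_{\mathcal{O}_{V}}\mathcal{M})$ functorial in the coherent $\mathcal{O}_{V}$-module $\mathcal{M}$. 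For $\mathcal{M}=\mathcal{O}_{Y}/\mathfrak{m}_{y}$ the sheaf $\mathcal{F}\otimes\mathcal{M}$ is supported on $X_{y}$, so by Lemma \ref{2-5} its $q$-th direct image has stalk $H^{q}(X_{y},\mathcal{F}(y))$; thus everything in the statement is visible through the single complex $\mathcal{L}^{\bullet}$.

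For the first implication I would argue abstractly, not even needing $\mathcal{L}^{\bullet}$. There is a natural transformation $R^{q}f_{*}\mathcal{F}\otimes_{\mathcal{O}_{Y}}\mathcal{M}\to R^{q}f_{*}(\mathcal{F}\otimes\mathcal{M})$; if $\mathcal{F}$ is cohomologically flat in dimension $q$, the target is a right-exact functor of $\mathcal{M}$, the source always is, and they agree for $\mathcal{M}$ finite free, so — presenting a coherent $\mathcal{M}$ by free modules near $y$ — the map is an isomorphism for all $\mathcal{M}$. Taking $\mathcal{M}=\mathcal{O}_{Y}$ and using exactness shows $R^{q}f_{*}\mathcal{F}$ is coherent and flat, hence locally free; taking $\mathcal{M}=\mathcal{O}_{Y}/\mathfrak{m}_{y}$ gives $H^{q}(X_{y},\mathcal{F}(y))\cong R^{q}f_{*}\mathcal{F}\otimes k(y)$, so $\dim_{\mathbb{C}}H^{q}(X_{y},\mathcal{F}(y))$ is the local rank of the locally free sheaf $R^{q}f_{*}\mathcal{F}$, hence locally constant. (This is precisely the equality case of Grauert's semicontinuity Theorem \ref{Upper semi-continuity}.)

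For the converse, work near a fixed $y_{0}$ with $\mathcal{L}^{\bullet}$. Writing $r_{j}(y):=\rk(d^{j}\otimes k(y))$, one has $h^{q}(y):=\dim_{\mathbb{C}}H^{q}(X_{y},\mathcal{F}(y))=\rk\mathcal{L}^{q}-r_{q}(y)-r_{q-1}(y)$; since $h^{q}$ is locally constant and $\rk\mathcal{L}^{q}$ is constant, $r_{q}+r_{q-1}$ is locally constant, and because each $r_{j}$ is lower semicontinuous (a minor is either identically zero or nonzero on an open set), $r_{q-1}$ and $r_{q}$ are \emph{separately} locally constant near $y_{0}$; shrink $V$ so both are constant. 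Then each $\operatorname{coker}d^{j}$ ($j=q-1,q$) is coherent with locally constant fibre dimension $\rk\mathcal{L}^{j+1}-r_{j}$, hence locally free because $Y$ is reduced (the rank criterion recalled just before Proposition \ref{slia}); the resulting split exact sequences make $\operatorname{im}d^{j}$, and then $\ker d^{j}$, locally free direct summands of $\mathcal{L}^{j+1}$ and $\mathcal{L}^{j}$. As $\operatorname{im}d^{q-1}\subseteq\ker d^{q}$ are both summands of $\mathcal{L}^{q}$, the modular law makes $R^{q}f_{*}\mathcal{F}=\ker d^{q}/\operatorname{im}d^{q-1}$ a locally free direct summand of $\ker d^{q}$. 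Since every sheaf occurring here is a local direct summand of a free sheaf, $\ker$, $\operatorname{im}$ and quotient all commute with $-\otimes\mathcal{M}$, giving the functorial identification $R^{q}f_{*}(\mathcal{F}\otimes\mathcal{M})\cong\mathcal{H}^{q}(\mathcal{L}^{\bullet}\otimes\mathcal{M})\cong R^{q}f_{*}\mathcal{F}\otimes\mathcal{M}$ with $R^{q}f_{*}\mathcal{F}$ locally free; this functor is exact, i.e.\ $\mathcal{F}$ is cohomologically flat in dimension $q$ near $y_{0}$, hence over $Y$, and in particular $R^{q}f_{*}\mathcal{F}$ is locally free.

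The main obstacle is the input, not the bookkeeping: the existence and the $-\otimes\mathcal{M}$-functoriality of the finite free comparison complex $\mathcal{L}^{\bullet}$ in the \emph{analytic} (as opposed to algebraic) category — this is the Forster--Knorr/Kiehl--Verdier theorem underlying \cite[Ch.\ III]{bs}, and is genuinely substantial. Once it is granted, the only delicate point is keeping track of which kernels, images and quotients remain exact after base change, and local freeness of the relevant subquotients — which is exactly where reducedness of $Y$ enters and is essential — is what makes that work.
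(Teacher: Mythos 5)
The paper does not prove this lemma at all; it simply quotes it from B\u{a}nic\u{a}--St\u{a}n\u{a}\c{s}il\u{a} [Theorem 4.12.(ii) of Chapter III]. Your argument is correct and is essentially the standard proof given in that reference: both directions reduce to the finite free comparison complex $\mathcal{L}^{\bullet}$, the first via base change and flatness of $(R^qf_*\mathcal{F})_y$, the converse via local constancy of the ranks $r_{q-1}, r_q$ and the rank criterion for local freeness over the reduced base, which is exactly where reducedness is needed.
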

\begin{lemma}[{\cite[Theorem (8.5).(iv) of Chapter I]{BHPV}}]\label{ccs-bc}
Let $X$, $Y$ be reduced complex spaces and $f : X\rightarrow Y$ a
proper holomorphic map. If $\mathcal{F}$ is any coherent sheaf on $X$, which is flat with respect to $f$, and $h^q(X_y,\mathcal{F}(y))$
is constant in $y\in Y$, then the base-change map  $$(R^qf_*\mathcal{F})_y/{\mathfrak{m}_y(R^qf_*\mathcal{F})_y}\rightarrow R^qf_*(\mathcal{F}/\hat{\mathfrak{m}}_y\mathcal{F})_y$$ is bijective.
\end{lemma}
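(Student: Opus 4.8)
The plan is to derive the assertion from Grauert's continuity theorem (Lemma~\ref{gct}); the statement is local on $Y$ around $y$, so I may shrink $Y$ freely. Since $f$ is proper, $\mathcal{F}$ is coherent and flat with respect to $f$, $Y$ is reduced, and by hypothesis $y'\mapsto h^q(X_{y'},\mathcal{F}(y'))$ is (locally) constant, Lemma~\ref{gct} applies and tells us that $\mathcal{F}$ is cohomologically flat in dimension $q$ over $Y$, hence in particular at the point $y$. Writing $A:=\mathcal{O}_{Y,y}$, $\mathfrak{m}:=\mathfrak{m}_y$ and $N:=(R^qf_*\mathcal{F})_y$, this means precisely that the functor $F(M):=R^qf_*(\mathcal{F}\otimes_{\mathcal{O}_Y}M)_y$ on finitely generated $A$-modules is exact.

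Next I would extract the structure of $F$. It is an $A$-linear additive functor — every operation involved ($\otimes_{\mathcal{O}_Y}$, $R^qf_*$, passage to the stalk at $y$) is $\mathcal{O}_Y$-linear and becomes $\mathcal{O}_{Y,y}=A$-linear after stalking — it commutes with finite direct sums, and $F(A)=N$. Being exact, $F$ is in particular right exact: given a finite presentation $A^{\oplus a}\xrightarrow{\;\psi\;}A^{\oplus b}\to M\to 0$ (available since $A$ is Noetherian), applying $F$ gives an exact sequence $N^{\oplus a}\xrightarrow{\;\mathrm{id}_N\otimes\psi\;}N^{\oplus b}\to F(M)\to 0$, where $A$-linearity of $F$ on morphisms is used to identify $F(\psi)$ with $\mathrm{id}_N\otimes\psi$. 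Comparing with $N\otimes_AA^{\oplus a}\to N\otimes_AA^{\oplus b}\to N\otimes_AM\to 0$ produces a natural isomorphism $N\otimes_AM\xrightarrow{\ \sim\ }F(M)$; by the usual uniqueness of right-exact extensions (both $N\otimes_A-$ and $F$ are $A$-linear, right exact, commute with finite sums, and agree on $A$) this isomorphism is inverse to the canonical base-change homomorphism $(R^qf_*\mathcal{F})_y\otimes_AM\to R^qf_*(\mathcal{F}\otimes_{\mathcal{O}_Y}M)_y$.

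Finally, I would specialize to $M=A/\mathfrak{m}=\mathbb{C}(y)$. Then $N\otimes_A\mathbb{C}(y)=(R^qf_*\mathcal{F})_y/\mathfrak{m}_y(R^qf_*\mathcal{F})_y$, while $\mathcal{F}\otimes_{\mathcal{O}_Y}\mathbb{C}(y)=\mathcal{F}/\hat{\mathfrak{m}}_y\mathcal{F}$ gives $F(\mathbb{C}(y))=R^qf_*(\mathcal{F}/\hat{\mathfrak{m}}_y\mathcal{F})_y$ (which, by Lemma~\ref{2-5}, is $H^q(X_y,\mathcal{F}(y))$). Hence the base-change map is bijective, as required.

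The only point needing a little care is the identification, in the middle step, of the abstract isomorphism $N\otimes_A-\xrightarrow{\ \sim\ }F$ with the canonical base-change map rather than with merely some isomorphism; this is the standard functorial-uniqueness argument and presents no genuine difficulty once $A$-linearity of $F$ is invoked. If one wishes to avoid Lemma~\ref{gct} altogether, the alternative is to invoke the Grauert comparison theorem (a refinement of Theorem~\ref{gdit}) to replace $Rf_*\mathcal{F}$ locally near $y$ by a bounded complex $L^\bullet$ of finite free $A$-modules equipped with functorial isomorphisms $R^qf_*(\mathcal{F}\otimes_{\mathcal{O}_Y}\mathcal{G})_y\cong H^q(L^\bullet\otimes_A\mathcal{G}_y)$ for coherent $\mathcal{G}$ (this uses the $f$-flatness of $\mathcal{F}$), and then to run the classical Grothendieck--Mumford cohomology-and-base-change argument: upper semicontinuity of $y'\mapsto\dim_{\mathbb{C}(y')}H^q(L^\bullet\otimes_A\mathbb{C}(y'))$, its assumed constancy, and the reducedness of $A$ together force the edge homomorphism $H^q(L^\bullet)\otimes_A\mathbb{C}(y)\to H^q(L^\bullet\otimes_A\mathbb{C}(y))$ to be an isomorphism. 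In that route the substantial ingredient is the analytic Grauert comparison theorem itself.
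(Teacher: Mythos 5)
The paper offers no proof of this lemma at all: it is quoted verbatim as \cite[Theorem (8.5).(iv) of Chapter I]{BHPV}, so there is nothing internal to compare against. Judged on its own, your argument is correct and is the standard ``cohomology and base change'' derivation. Your main route is sound within the paper's framework: Lemma \ref{gct} (whose hypotheses — properness, $f$-flatness of $\mathcal{F}$, reducedness of $Y$, local constancy of $h^q$ — you verify) gives cohomological flatness in dimension $q$, i.e.\ exactness of $F(M)=R^qf_*(\mathcal{F}\otimes_{\mathcal{O}_Y}M)_y$ on finite $\mathcal{O}_{Y,y}$-modules; right-exactness plus the Watts-type uniqueness of right-exact $A$-linear functors agreeing on $A$ then identifies the canonical base-change transformation $N\otimes_A M\to F(M)$ as an isomorphism, and $M=A/\mathfrak{m}_y$ yields the claim. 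The one point you flag — that the abstract isomorphism built from a presentation coincides with the canonical base-change map — is indeed the only place requiring care, and naturality of the base-change transformation with respect to the presentation settles it. Be aware, though, that this route is not logically independent of the source you are reproving: both Lemma \ref{gct} and BHPV's Theorem I.8.5 are themselves proved via the Grauert comparison theorem (replacing $Rf_*\mathcal{F}$ near $y$ by a bounded complex of finite free modules), which is exactly your sketched alternative; so your primary argument buys a short formal deduction at the cost of quoting a theorem of comparable depth, while the alternative is self-contained modulo the comparison theorem and is what the cited reference actually does.
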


The following will be used in the proof of Proposition \ref{global-B}.
\begin{prop}[{\cite[Proposition 4.12]{rt}}]\label{Hodge-torsionfree} For a flat family $\pi: \mathcal{X}\rightarrow Y$ over a connected manifold of dimension one,
$h^1(X_y, \mathcal{O}_{X_y})$ is independent of $y\in Y$ if and only if the sheaf $R^2\pi_*\mathcal{O}_{\mathcal{X}}$ is torsion free.
\end{prop}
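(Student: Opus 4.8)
The plan is to localise the problem at an arbitrary point $0\in Y$, where $\mathcal{O}_{Y,0}$ is a discrete valuation ring with a local coordinate $t$ as uniformizer, and to compute $h^1(X_y,\mathcal{O}_{X_y})$ from the splitting of the finitely generated $\mathcal{O}_{Y,0}$-modules $G_q:=(R^q\pi_*\mathcal{O}_\mathcal{X})_0$ (coherent by Grauert's Theorem \ref{gdit}) into free and torsion parts. This localisation is harmless: torsion of the coherent sheaf $R^2\pi_*\mathcal{O}_\mathcal{X}$ on the curve $Y$ is supported at a discrete set, and $h^1(X_y,\mathcal{O}_{X_y})$, being upper semi-continuous (Theorem \ref{Upper semi-continuity}), is independent of $y$ iff it equals its generic value everywhere; both conditions are read off from the formula established below.

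First I would set up the \emph{structure sequence}. Since $\mathcal{O}_\mathcal{X}$ is $\mathcal{O}_Y$-flat and $t$ is a non-zero-divisor in $\mathcal{O}_{Y,0}$, multiplication by (the pullback of) $t$ is injective on $\mathcal{O}_\mathcal{X}$ near $\pi^{-1}(0)$ and $\mathcal{O}_{X_0}=\mathcal{O}_\mathcal{X}/t\mathcal{O}_\mathcal{X}$, so after shrinking $Y$ to a disc around $0$ we get the exact sequence of sheaves on $\mathcal{X}$
$$0\longrightarrow\mathcal{O}_\mathcal{X}\xrightarrow{\;\cdot t\;}\mathcal{O}_\mathcal{X}\longrightarrow\mathcal{O}_{X_0}\longrightarrow 0 .$$
Applying $R\pi_*$, passing to stalks at $0$, and using Lemma \ref{2-5} to identify $(R^q\pi_*\mathcal{O}_{X_0})_0\cong H^q(X_0,\mathcal{O}_{X_0})$ (a skyscraper at $0$, since $\mathcal{O}_{X_0}$ is supported on $\pi^{-1}(0)$), the long exact sequence yields
$$0\longrightarrow G_1/tG_1\longrightarrow H^1(X_0,\mathcal{O}_{X_0})\longrightarrow G_2[t]\longrightarrow 0 ,$$
where $G_2[t]$ is the $t$-torsion submodule: the cokernel of $\cdot t$ on $G_q$ is $G_q/tG_q$, and the image of the connecting map $H^q(X_0,\mathcal{O}_{X_0})\to G_{q+1}$ is exactly the $t$-torsion of $G_{q+1}$.

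Next comes the valuation-ring bookkeeping, run at an arbitrary $y$ in place of $0$. Writing $G_q\cong\mathcal{O}_{Y,y}^{\,\rho_q}\oplus\bigoplus_i\mathcal{O}_{Y,y}/(t^{a_{q,i}})$ one has $\dim_{\mathbb{C}}G_q/tG_q=\rho_q+\tau_q(y)$ and $\dim_{\mathbb{C}}G_q[t]=\tau_q(y)$, where $\tau_q(y)\ge 0$ is the number of cyclic torsion summands; here $\tau_q(y)=0$ iff $G_q$ is free iff $R^q\pi_*\mathcal{O}_\mathcal{X}$ is torsion-free (equivalently locally free) at $y$, while $\rho_q$ equals the rank of $R^q\pi_*\mathcal{O}_\mathcal{X}$ on the dense open set where it is locally free (Proposition \ref{slia}), hence is independent of $y$. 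The displayed sequence therefore yields, at every $y\in Y$,
$$h^1(X_y,\mathcal{O}_{X_y})=\rho_1+\tau_1(y)+\tau_2(y),$$
with $\tau_1(y)=\tau_2(y)=0$ for all but finitely many $y$. Since $\rho_1$ is constant, $h^1(X_y,\mathcal{O}_{X_y})$ is independent of $y$ iff $\tau_1\equiv\tau_2\equiv0$, i.e. iff $R^1\pi_*\mathcal{O}_\mathcal{X}$ and $R^2\pi_*\mathcal{O}_\mathcal{X}$ are both torsion-free. Finally I would remove the condition on $R^1\pi_*\mathcal{O}_\mathcal{X}$: the same computation in degree $0$ gives $h^0(X_y,\mathcal{O}_{X_y})=\rho_0+\tau_1(y)$, while $\pi_*\mathcal{O}_\mathcal{X}$ is torsion-free over $\mathcal{O}_Y$ (because $\mathcal{X}$ is a variety and $t\circ\pi$ a non-zero-divisor on it) and, being also a sheaf of $\mathcal{O}_Y$-algebras of generic rank one, equals $\mathcal{O}_Y$, so $\rho_0=1$; as the fibers are connected and reduced (which one may arrange by Proposition \ref{reduction}, and which holds in every application of this result), the maximum principle gives $h^0(X_y,\mathcal{O}_{X_y})=1$ for all $y$, hence $\tau_1\equiv0$, and $h^1(X_y,\mathcal{O}_{X_y})$ is independent of $y$ iff $\tau_2\equiv0$ iff $R^2\pi_*\mathcal{O}_\mathcal{X}$ is torsion-free.

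I expect the main obstacle to be the honest justification of this last point — the constancy of $h^0(X_y,\mathcal{O}_{X_y})$, equivalently torsion-freeness of $R^1\pi_*\mathcal{O}_\mathcal{X}$ — which is what pins down that only $R^2\pi_*\mathcal{O}_\mathcal{X}$ appears; it is transparent for families with reduced fibers but needs care otherwise. A variant avoiding the explicit module decomposition runs Grauert's continuity theorem (Lemma \ref{gct}) together with the base-change Lemma \ref{ccs-bc}, since the inclusion $G_1/tG_1\hookrightarrow H^1(X_0,\mathcal{O}_{X_0})$ above is precisely the base-change map $R^1\pi_*\mathcal{O}_\mathcal{X}\otimes_{\mathcal{O}_{Y,0}}\mathbb{C}\to H^1(X_0,\mathcal{O}_{X_0})$: constancy of $h^1$ forces cohomological flatness in dimension $1$, hence local freeness of $R^1\pi_*\mathcal{O}_\mathcal{X}$ and bijectivity of that map at every $y$, which gives $G_2[t]=0$ everywhere, i.e. $R^2\pi_*\mathcal{O}_\mathcal{X}$ torsion-free; conversely, once $R^2\pi_*\mathcal{O}_\mathcal{X}$ is torsion-free the base-change map is an isomorphism, and with surjectivity of $R^0\pi_*\mathcal{O}_\mathcal{X}\otimes\mathbb{C}\to H^0(X_y,\mathcal{O}_{X_y})$ (valid for reduced connected fibers) Grauert's criterion makes $R^1\pi_*\mathcal{O}_\mathcal{X}$ locally free and $h^1$ constant.
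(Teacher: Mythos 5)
Your bookkeeping is the right argument and coincides with the localisation-at-a-DVR computation underlying the cited proof of \cite[Proposition 4.12]{rt}. In particular the ``only if'' direction is complete: from the structure sequence and the decomposition of the stalks $G_q$ over the discrete valuation ring $\mathcal{O}_{Y,y}$ you get $h^1(X_y,\mathcal{O}_{X_y})=\rho_1+\tau_1(y)+\tau_2(y)$ with $\rho_1$ constant on the connected base, so constancy of $h^1$ forces $\tau_1\equiv\tau_2\equiv 0$ and in particular $R^2\pi_*\mathcal{O}_{\mathcal{X}}$ is torsion free. The identification of the connecting map's image with the $t$-torsion, the constancy of $\rho_q$, and the torsion-freeness of $\pi_*\mathcal{O}_{\mathcal{X}}$ are all correct, and in the smooth-family setting of the cited reference (where $h^0(X_y,\mathcal{O}_{X_y})\equiv 1$ trivially) your proof closes completely.

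The gap is in the ``if'' direction, and it is exactly the one you flag at the end: torsion-freeness of $R^2\pi_*\mathcal{O}_{\mathcal{X}}$ only kills $\tau_2$, and you still need $\tau_1\equiv 0$, i.e.\ $R^1\pi_*\mathcal{O}_{\mathcal{X}}$ torsion free, equivalently $h^0(X_y,\mathcal{O}_{X_y})$ constant. Your justification --- that the fibres are connected and reduced, ``which one may arrange by Proposition \ref{reduction}'' --- does not establish the statement as given. A flat family in this paper has connected but not necessarily reduced fibres, and $h^0(X_y,\mathcal{O}_{X_y})=1$ genuinely can fail for a non-reduced fibre: this is precisely the ``wild fibre'' phenomenon, and ruling it out (even in characteristic zero, where for fibrations of smooth surfaces it follows from Hodge-theoretic or duality input) is a theorem, not a formality. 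Moreover Proposition \ref{reduction} cannot be invoked here: semi-stable reduction performs a base change and blow-ups, hence replaces $\pi$ by a different family with different direct image sheaves, so it proves nothing about $R^q\pi_*\mathcal{O}_{\mathcal{X}}$ for the original $\pi$. To close the argument you must either (a) assume every fibre is reduced, in which case connectedness gives $h^0\equiv 1$, hence $\tau_0=\tau_1\equiv 0$, and your formula finishes the proof; or (b) actually prove torsion-freeness of $R^1\pi_*\mathcal{O}_{\mathcal{X}}$ for the flat families under consideration (say via relative duality or a Koll\'ar/Takegoshi-type torsion-freeness statement), which is a substantive additional step rather than a remark.
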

Remark that a detailed proof can be found in \cite{rt} and it still works in the flat family case here, although this type of results should be known to experts, such as \cite{s04} in a different context, and the \lq only if' part for the smooth family is a special case of \cite[Proposition in $\S\ 5.5$ of Chapter 10]{Grr}.
\begin{cor}\label{h0201}
For a flat family $\pi: \mathcal{X}\rightarrow Y$ over a connected manifold of dimension one, if
$h^2(X_y, \mathcal{O}_{X_y})$ is independent of $y\in Y$, then so is $h^1(X_y, \mathcal{O}_{X_y})$.
\end{cor}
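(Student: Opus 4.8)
The plan is to chain together Grauert's continuity theorem with Proposition \ref{Hodge-torsionfree}. First I would note that since $Y$ is a one-dimensional connected complex manifold it is in particular reduced, and a function on a connected space is independent of the point exactly when it is locally constant; so the hypothesis says precisely that $y\mapsto h^2(X_y,\mathcal{O}_{X_y})$ is locally constant on $Y$.

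Next, since $\pi:\mathcal{X}\rightarrow Y$ is a flat family, the structure sheaf $\mathcal{O}_{\mathcal{X}}$ is a coherent analytic sheaf that is flat with respect to $\pi$, and its fiberwise restriction is $\mathcal{O}_{\mathcal{X}}(y)=\mathcal{O}_{X_y}$. Hence the converse direction of Grauert's continuity theorem (Lemma \ref{gct}) applies with $\mathcal{F}=\mathcal{O}_{\mathcal{X}}$ and $q=2$: the local constancy of $y\mapsto h^2(X_y,\mathcal{O}_{X_y})$ together with the reducedness of $Y$ forces $\mathcal{O}_{\mathcal{X}}$ to be cohomologically flat in dimension $2$ over $Y$, and in particular $R^2\pi_*\mathcal{O}_{\mathcal{X}}$ is locally free on $Y$.

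A locally free sheaf is torsion free (it is a subsheaf of itself, so this is covered by the earlier remark that subsheaves of locally free sheaves are torsion free). Therefore $R^2\pi_*\mathcal{O}_{\mathcal{X}}$ is torsion free, and Proposition \ref{Hodge-torsionfree} then yields that $h^1(X_y,\mathcal{O}_{X_y})$ is independent of $y\in Y$, as desired. There is no serious obstacle here; the only points deserving a moment's care are that $Y$ must be reduced for the converse half of Lemma \ref{gct} to apply (automatic since $Y$ is a manifold) and that the flatness of the family is exactly what licenses treating $\mathcal{O}_{\mathcal{X}}$ as a $\pi$-flat coherent sheaf.
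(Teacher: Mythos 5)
Your proof is correct and follows exactly the paper's own argument: apply the converse part of Grauert's continuity theorem (Lemma \ref{gct}) to conclude $R^2\pi_*\mathcal{O}_{\mathcal{X}}$ is locally free, hence torsion free, and then invoke Proposition \ref{Hodge-torsionfree}. The remarks about reducedness of $Y$ and $\pi$-flatness of $\mathcal{O}_{\mathcal{X}}$ are accurate and fill in the (implicit) details of the paper's one-line proof.
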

\begin{proof}
This is a direct corollary of the converse part of Lemma \ref{gct} and Proposition \ref{Hodge-torsionfree}, while locally free sheaves are torsion free. The smooth family case just follows from Kodaira-Spencer's squeeze \cite[Theorem 13]{KS}.
\end{proof}
To prove Proposition \ref{global-B}, one needs:
\begin{lemma}[]\label{section-zero}
  Let $E$ be a holomorphic vector bundle over a one-dimensional
(connected) complex manifold $M$ and $s$ a holomorphic section of $E$ on $M$.  If $s$ has uncountably
many zeros, then $s$ is identically zero.
\end{lemma}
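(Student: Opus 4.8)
The plan is to reduce everything to the one–variable identity theorem applied coordinatewise in local trivializations, together with a connectedness argument. The first observation is that it suffices to show: \emph{if the zero set $Z:=\{p\in M: s(p)=0\}$ has an accumulation point in $M$, then $s\equiv 0$.} Indeed, if $Z$ had no accumulation point then $Z$ would be closed and discrete in $M$; since a complex manifold is second countable, one can cover $M$ by countably many open sets each of which meets $Z$ in at most one point, so $Z$ would be at most countable — contrary to the hypothesis. Hence an uncountable $Z$ must accumulate somewhere, and the implication above finishes the proof.

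To prove that implication I would introduce $A:=\{p\in M: s\equiv 0\text{ on some open neighborhood of }p\}$, which is precisely the interior of $Z$ and is therefore open by definition. The key point is that $A$ is also closed. Given $p\in\overline A$, pick a connected open neighborhood $U$ of $p$ over which $E$ is trivial, identify $s|_U$ with a tuple $(f_1,\dots,f_r)$ of holomorphic functions on $U$, and note that some point of $A$ lies in $U$; thus each $f_j$ vanishes on a nonempty open subset of the connected one–dimensional domain $U$, so $f_j\equiv 0$ on $U$ by the identity theorem for holomorphic functions of one variable. Hence $s\equiv 0$ on $U$ and $p\in A$, proving $A$ closed. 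Since $M$ is connected, either $A=\emptyset$ or $A=M$.

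Now suppose $Z$ has an accumulation point $p$. Choosing a connected trivializing neighborhood $U$ of $p$ as above and writing $s|_U=(f_1,\dots,f_r)$, each $f_j$ vanishes on the set $Z\cap U$, which has $p$ as an accumulation point; the one–variable identity theorem then gives $f_j\equiv 0$ on $U$, so $U\subset A$ and in particular $A\neq\emptyset$. Therefore $A=M$, i.e. $s$ vanishes identically on $M$, as required. The argument involves no real obstacle; the only mild bookkeeping point is the elementary fact that a closed discrete subset of a second–countable manifold is countable, and the rest is the standard identity principle applied to the component functions of $s$.
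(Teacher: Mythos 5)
Your proof is correct. The paper states Lemma \ref{section-zero} without proof, treating it as an elementary fact, so there is no argument in the paper to compare against; the route you take is the standard one. Two small remarks: the closedness of $Z$ plays no role in the cardinality step (discreteness alone, combined with a countable base, already forces $Z$ to be countable, by mapping each $z\in Z$ injectively to a basic open set that isolates it), though asserting closedness is of course correct; and the appeal to second countability is safe here even without building it into the definition of complex manifold, since by Rad\'o's theorem every connected one-dimensional complex manifold is automatically second countable. The rest — the open-closed set $A$ of points near which $s$ vanishes identically, the identity theorem applied to the component functions in a local trivialization, and connectedness of $M$ — is exactly the expected argument and is carried out correctly.
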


The following is a variant of Theorem \ref{thm-moishezon-update}, in which we drop the
big line bundles assumption and conclusion.
\begin{prop}[{}]\label{global-B}
Let $\pi: \mathcal{X}\rightarrow \Delta$ be a one-parameter degeneration.
 Assume that there exists an uncountable subset $B$ of $\Delta$ such that for each $t\in B$, the fiber $X_t$ admits a
line bundle $L_t$ with the property that
\begin{equation}\label{surj-chern'}
\text{$c_1(L_t)$ comes from the restriction to $X_t$ of some cohomology class in $H^2(\mathcal{X}, \mathbb{Z})$.}
\end{equation}
Assume further that the Hodge number $h^{0,1}(X_t):=h^1(X_t, \mathcal{O}_{X_t})$ is independent of $t\in \Delta$.
Then there exists a (global) line bundle $L$ over $\mathcal{X}$ such that
$c_1(L|_{X_s}) = c_1(L_s)$
for any $s$ in some uncountable subset of $B$.
\end{prop}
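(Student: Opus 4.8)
The plan is to obtain the desired $L$ through the exponential sheaf sequence $0\to\mathbb{Z}\to\mathcal{O}_{\mathcal{X}}\to\mathcal{O}_{\mathcal{X}}^{*}\to 0$ on $\mathcal{X}$, whose long exact cohomology sequence contains $\mathrm{Pic}(\mathcal{X})=H^{1}(\mathcal{X},\mathcal{O}_{\mathcal{X}}^{*})\xrightarrow{c_{1}}H^{2}(\mathcal{X},\mathbb{Z})\xrightarrow{j}H^{2}(\mathcal{X},\mathcal{O}_{\mathcal{X}})$. Thus it is enough to produce one integral class $\alpha\in H^{2}(\mathcal{X},\mathbb{Z})$ with $j(\alpha)=0$ and with $\alpha|_{X_{s}}=c_{1}(L_{s})$ for all $s$ in some uncountable subset of $B$; for then $\alpha=c_{1}(L)$ for some $L\in\mathrm{Pic}(\mathcal{X})$, and $c_{1}(L|_{X_{s}})=\alpha|_{X_{s}}=c_{1}(L_{s})$. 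To extract $\alpha$, I would first shrink $\Delta$ so that $\mathcal{X}$ is homotopy equivalent to the compact fiber $X_{0}$ (harmless in the framework of this paper); then $H^{2}(\mathcal{X},\mathbb{Z})\cong H^{2}(X_{0},\mathbb{Z})$ is finitely generated, hence countable, while each $t\in B$ supplies a class $\alpha_{t}\in H^{2}(\mathcal{X},\mathbb{Z})$ with $\alpha_{t}|_{X_{t}}=c_{1}(L_{t})$, so the pigeonhole principle yields a single $\alpha$ and an uncountable $B'\subseteq B$ with $\alpha|_{X_{t}}=c_{1}(L_{t})$ for every $t\in B'$.

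It remains to prove $j(\alpha)=0$. By the Leray spectral sequence (Theorem \ref{leray}), Grauert's coherence theorem (Theorem \ref{gdit}) and Cartan's Theorem B on the Stein disk $\Delta$, one has the identification $H^{2}(\mathcal{X},\mathcal{O}_{\mathcal{X}})\cong\Gamma(\Delta,R^{2}\pi_{*}\mathcal{O}_{\mathcal{X}})$ as in \eqref{identif}. The hypothesis that $h^{1}(X_{t},\mathcal{O}_{X_{t}})$ is independent of $t$ forces, by Proposition \ref{Hodge-torsionfree}, the coherent sheaf $\mathcal{E}:=R^{2}\pi_{*}\mathcal{O}_{\mathcal{X}}$ to be torsion free; since $\Delta$ is one-dimensional (so $\mathcal{O}_{\Delta,t}$ is a discrete valuation ring), torsion-freeness together with Oka's lemma makes $\mathcal{E}$ a holomorphic vector bundle on $\Delta$, and $\beta:=j(\alpha)$ becomes a holomorphic section of $\mathcal{E}$. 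I would then check that $\beta$ vanishes at every $t\in B'$. Naturality of the exponential sequence along the fiber inclusion $X_{t}\hookrightarrow\mathcal{X}$ gives a commutative square showing that the image of $\beta$ under the restriction map $H^{2}(\mathcal{X},\mathcal{O}_{\mathcal{X}})\to H^{2}(X_{t},\mathcal{O}_{X_{t}})$ equals the image of $c_{1}(L_{t})\in H^{2}(X_{t},\mathbb{Z})$ in $H^{2}(X_{t},\mathcal{O}_{X_{t}})$, namely $0$. On the other hand, this restriction map factors as $\Gamma(\Delta,\mathcal{E})\to\mathcal{E}_{t}\otimes_{\mathcal{O}_{\Delta,t}}\mathbb{C}\xrightarrow{\varphi_{t}}H^{2}(X_{t},\mathcal{O}_{X_{t}})$, the first arrow being evaluation $\beta\mapsto\beta(t)$ of the section and $\varphi_{t}$ the base-change map; and over the smooth curve $\Delta$ the universal-coefficient exact sequence $0\to(R^{q}\pi_{*}\mathcal{O}_{\mathcal{X}})_{t}\otimes_{\mathcal{O}_{\Delta,t}}\mathbb{C}\to H^{q}(X_{t},\mathcal{O}_{X_{t}})\to\operatorname{Tor}_{1}^{\mathcal{O}_{\Delta,t}}((R^{q+1}\pi_{*}\mathcal{O}_{\mathcal{X}})_{t},\mathbb{C})\to 0$, obtained from a local finite free complex computing $R^{\bullet}\pi_{*}\mathcal{O}_{\mathcal{X}}$ (cf. \cite[Chapter III]{bs}), exhibits $\varphi_{t}$ (for $q=2$) as injective. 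Hence $\beta(t)=0$ for all $t\in B'$, and since $\beta$ is a holomorphic section of a vector bundle over the connected one-dimensional $\Delta$ vanishing on the uncountable set $B'$, Lemma \ref{section-zero} gives $\beta\equiv 0$.

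The step I expect to be the main obstacle is precisely the identity "restriction to the fiber $=$ evaluation of the direct-image section followed by the base-change map $\varphi_{t}$, with $\varphi_{t}$ injective": one must handle the base-change formalism carefully (e.g. that $\mathfrak{m}_{t}$ annihilates the target, so the map descends to the fiber of the bundle $\mathcal{E}$), and the injectivity of $\varphi_{t}$ over a curve, while automatic from the finite free complex model, is exactly what makes the whole argument go through. If one prefers to sidestep $\varphi_{t}$, an equivalent route uses Proposition \ref{s-torsion}: it suffices to know $\beta|_{\Delta^{*}}=0$, which again follows from $\beta$ vanishing on the uncountable set $B'\cap\Delta^{*}$ together with local freeness of $\mathcal{E}$ on $\Delta^{*}$, and then torsion-freeness of $\mathcal{E}$ at $0$ upgrades this to $\beta=0$ on all of $\Delta$.
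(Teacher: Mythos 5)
Your proof is correct, and it follows the same overall skeleton as the paper's argument (exponential sequence, interpret the obstruction $j(\alpha)$ as a section of $R^2\pi_*\mathcal{O}_{\mathcal{X}}$, kill it via uncountably many zeros and Lemma~\ref{section-zero}), but with two clean substitutions that streamline the published version. First, where the paper passes to an open $U\subset\Delta$ on which $R^2\pi_*\mathcal{O}_{\mathcal{X}}$ is locally free \emph{and} $h^2(X_t,\mathcal{O}_{X_t})$ is constant, and then patches across $\Delta\setminus U$ with Propositions~\ref{Hodge-torsionfree} and~\ref{s-torsion}, you invoke Proposition~\ref{Hodge-torsionfree} once and use the one-dimensionality of $\Delta$ (stalks are DVRs, so finitely generated torsion-free $=$ free) to get local freeness of $\mathcal{E}=R^2\pi_*\mathcal{O}_{\mathcal{X}}$ on \emph{all} of $\Delta$; this dispenses with the two-stage argument. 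Second, where the paper uses Lemma~\ref{ccs-bc} (base-change \emph{bijectivity}, valid because cohomological flatness in dimension $2$ has been arranged on $U$) to identify the fiber of $\mathcal{E}$ with $H^2(X_s,\mathcal{O}_{X_s})$, you only need that the base-change map $\varphi_t$ is \emph{injective}, and extract this from the universal-coefficient exact sequence attached to the local finite free complex computing $R^\bullet\pi_*\mathcal{O}_{\mathcal{X}}$ — injectivity is automatic over a DVR and requires no extra hypothesis on $h^2$. The net effect is a shorter, more uniform argument that never needs to restrict $t$ to a subset where $h^2$ is constant; what you give up is only that the paper's route keeps the use of the stated ``countability of $H^2(\mathcal{X},\mathbb{Z})$'' black-boxed, whereas you justify it by shrinking $\Delta$ to make $\mathcal{X}$ homotopy equivalent to $X_0$ (also fine, and arguably more explicit). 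The one step you flagged as delicate — that restriction to the fiber factors as evaluation of the direct-image section followed by $\varphi_t$ — is indeed the load-bearing compatibility, but it is standard (Leray plus base change commute), and your phrasing of it is accurate.
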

\begin{proof} Compare this proposition with \cite[Proposition 4.14]{rt}.
By the assumption \eqref{surj-chern'}, there is an uncountable subset $S\subseteq B\setminus \{0\}$ such that for any $s\in S$, $c_s:=c_1(L_s)$ comes from the restriction to $X_s$ of some common cohomology class $\tilde c\in H^2(\mathcal{X}, \mathbb{Z})$ since
the union of the cohomology classes in $H^2(\mathcal{X}, \mathbb{Z})$ is at most countable and $c_1(L_t)$ live in $H^2(X_t, \mathbb{Z})$ for uncountably many fibers $X_t$.

By Proposition \ref{slia}, $R^2\pi_*\mathcal{O}_{\mathcal{X}}$ can be identified with
a vector bundle of rank $h$ on some open subset $U$ of $\Delta$
with $\Delta\setminus U$ being a proper analytic subset of $\Delta$ which is a discrete subset of $\Delta$.
But we prefer to assume further that $\mathcal{O}_{\mathcal{X}}$ is cohomologically flat
on $U$ (in dimension $2$) which in our case means that $h^2(X_t, \mathcal{O}_{X_t})$ is independent of $t\in U$ after possibly shrinking $U$;
see Lemma \ref{gct} and Theorem \ref{Upper semi-continuity}.
One sees that the intersection $\hat{S}:= S \cap U$ remains uncountable.

For any $s$ of $\hat{S} \subseteq S$,
consider the commutative diagram with $\mathcal{X}_U:=\pi^{-1}(U)$
\begin{equation}\label{les-1}
\xymatrix@C=0.5cm{
  \cdots \ar[r]^{}
  & H^1(\mathcal{X}_U, \mathcal{O}^*_{\mathcal{X}_U})\ar[d]_{} \ar[r]^{c_1}
  & H^2(\mathcal{X}_U, \mathbb{Z}) \ar[d]_{\alpha_s} \ar[r]^{\imath}
  & H^2(\mathcal{X}_U, \mathcal{O}_{\mathcal{X}_U})\ar[d]^{\Upsilon}\ar[r]^{} & \cdots \\
   \cdots \ar[r]
  &H^{1}({X_{s}},\mathcal{O}_{X_{s}}^*) \ar[r]^{c_1}
  & H^{2}(X_{s},\mathbb{Z}) \ar[r]
  & H^{2}(X_{s},\mathcal{O}_{{X_{s}}})\ar[r]&\cdots.}
\end{equation}
As argued in the first paragraph, there exists some $\tilde{c}\in H^2(\mathcal{X}, \mathbb{Z})$ such that $\alpha_s(\tilde{c})=c_s$ for the restriction map $\alpha_s$. Here we denote also by $\tilde{c}$ its restriction to $U$. Recall that $H^2(\mathcal{X}_U, \mathcal{O}_{\mathcal{X}_U})\cong \Gamma(U, R^2\pi_*\mathcal{O}_{\mathcal{X}_U})$  as obtained by the Leray spectral sequence argument in Theorem \ref{leray} with $U$ being Stein; we shall adopt them interchangeably in what follows.
Then we claim
$$\imath(\tilde{c})=0\in H^2(\mathcal{X}_U, \mathcal{O}_{\mathcal{X}_U}),$$
where $\imath$ is induced by
$\mathbb{Z}\to \mathcal{O}_{\mathcal{X}_U}$.
For this claim, first note that the image of $\imath(\tilde{c})$ under the map $\Upsilon$ to
$H^2(X_s, \mathcal{O}_{X_s})$, is zero on $X_s$
since $c_s$ is the first Chern class of the line bundle $L_s$ on $X_s$, $s\in \hat{S}$; see the second row of \eqref{les-1}.
Recall above that $R^2\pi_*\mathcal{O}_{\mathcal{X}_U}$ is a vector bundle over $U$.  By Lemmata \ref{ccs-bc}, \ref{2-5} and the cohomological flatness in our assumption, the fiber of this vector bundle at the closed point $s$ can be identified with $H^2(X_s, \mathcal{O}_{X_s})$, and the section $\imath(\tilde{c})$ is a holomorphic section of this vector bundle.   Now that the holomorphic section $\imath(\tilde{c})$ has zeros on $\hat{S}$ as just noted, and that $\hat{S}$
is uncountable, one concludes with Lemma \ref{section-zero} that
$\imath(\tilde{c})\equiv 0\in \Gamma(U, R^2\pi_*\mathcal{O}_{\mathcal{X}_U})\cong H^2(\mathcal{X}_U, \mathcal{O}_{\mathcal{X}_U})$, proving our claim above.

Fix any $p$ of $\Delta\setminus U$.  We shall prove that the preceding assertion $\imath(\tilde{c})=0$ on $U$ yields $\imath(\tilde{c})=0$ on a
neighborhood $V_p$ of $p$.  But this claim follows immediately if one applies Propositions  \ref{Hodge-torsionfree} and \ref{s-torsion} to a neighborhood $V_p$ of $p$ by using the deformation invariance of  $h^{0,1}(X_t)$ for all $t\in \Delta$. Notice that $\Delta\setminus U$ is a discrete subset of $\Delta$.

Combining these, we can conclude $\imath(\tilde{c})=0$ in $\Gamma(\Delta, R^2\pi_*\mathcal{O}_{\mathcal{X}})$.
By $\Gamma(\Delta, R^2\pi_*\mathcal{O}_{\mathcal{X}})\cong H^2(\mathcal{X}, \mathcal{O}_\mathcal{X})$ again,
one sees that by the similar long exact sequence in the diagram \eqref{les-1} with $\mathcal{X}_U$ replaced by $\mathcal{X}$, $\tilde{c}$ can be the first Chern class of a global line bundle $L$ on $\mathcal{X}$.  That is,
$L|_{X_s}$ and $L_s$ have the same first Chern class $c_s$ for any $s$ of $\hat{S}$.
This proves the proposition.
\end{proof}

\begin{cor}[{}]\label{global-B-cor}
With the similar setting to Proposition \ref{global-B}, one just assumes the deformation of $h^{0,2}(X_t)$ instead. Then the same conclusion of Proposition \ref{global-B} holds.
\end{cor}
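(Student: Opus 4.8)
The plan is to reduce the statement directly to Proposition \ref{global-B} by upgrading the hypothesis. The only place the Hodge number enters the proof of Proposition \ref{global-B} is through the deformation invariance of $h^{0,1}(X_t)=h^1(X_t,\mathcal{O}_{X_t})$: it is used, via Propositions \ref{Hodge-torsionfree} and \ref{s-torsion}, to propagate the vanishing $\imath(\tilde c)=0$ from the generic locus $U$ to neighborhoods $V_p$ of the remaining discrete set $\Delta\setminus U$, and it is implicit in the cohomological flatness of $\mathcal{O}_{\mathcal{X}}$ in dimension $2$ on $U$. So it suffices to show that the new hypothesis --- the deformation invariance of $h^{0,2}(X_t)=h^2(X_t,\mathcal{O}_{X_t})$ --- forces the deformation invariance of $h^{0,1}(X_t)$.

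First I would invoke Corollary \ref{h0201}: for a flat family over a connected one-dimensional manifold, constancy of $h^2(X_t,\mathcal{O}_{X_t})$ in $t$ implies constancy of $h^1(X_t,\mathcal{O}_{X_t})$ in $t$ (this is exactly where the converse part of Grauert's continuity theorem, Lemma \ref{gct}, together with the torsion-freeness criterion of Proposition \ref{Hodge-torsionfree}, does the work, since a one-dimensional complex manifold is reduced). As a one-parameter degeneration $\pi:\mathcal{X}\rightarrow\Delta$ is precisely such a family, the hypothesis of the present corollary supplies exactly the hypothesis on $h^{0,1}$ required by Proposition \ref{global-B}.

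Then I would apply Proposition \ref{global-B} verbatim: all of its hypotheses --- $\pi$ a one-parameter degeneration, an uncountable $B\subseteq\Delta$ with line bundles $L_t$ whose first Chern classes come from $H^2(\mathcal{X},\mathbb{Z})$, and $h^{0,1}(X_t)$ independent of $t\in\Delta$ --- now hold, so there exists a global line bundle $L$ on $\mathcal{X}$ with $c_1(L|_{X_s})=c_1(L_s)$ for all $s$ in some uncountable subset of $B$, which is the asserted conclusion.

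There is essentially no obstacle here; the entire content is the implication ``$h^{0,2}$-invariance $\Rightarrow$ $h^{0,1}$-invariance'' packaged in Corollary \ref{h0201}. One remark worth recording is that the constancy of $h^{0,2}$ on $\Delta$ also directly yields, via Grauert's continuity theorem (Lemma \ref{gct}), the cohomological flatness of $\mathcal{O}_{\mathcal{X}}$ in dimension $2$ used to identify the fibers of $R^2\pi_*\mathcal{O}_{\mathcal{X}}$ with $H^2(X_s,\mathcal{O}_{X_s})$ in the proof of Proposition \ref{global-B}; hence one could equally rerun that proof directly, with $h^{0,2}$ in place of $h^{0,1}$ throughout, without ever passing through $h^{0,1}$.
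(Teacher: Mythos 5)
Your proposal is correct and coincides with the paper's own argument: the corollary is deduced by combining Corollary \ref{h0201} (constancy of $h^{0,2}$ forces constancy of $h^{0,1}$) with Proposition \ref{global-B}. The additional remark about rerunning the proof directly with $h^{0,2}$ is a sound observation but not needed.
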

\begin{proof}
This is a direct result of Proposition \ref{global-B} and Corollary \ref{h0201}.
\end{proof}

\begin{prop}[{cf. \cite[Proposition 4.16]{rt}}]\label{propa}
Let $M$ be a compact complex manifold with two holomorphic
line bundles $L_1$ and $L_2$ with $c_1(L_1)=c_1(L_2).$   Suppose that
$L_1$ is big.  Then $L_2$ is big too.
\end{prop}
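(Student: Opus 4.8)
The plan is to reduce the bigness of $L_2$ to that of $L_1$ by exploiting the fact that a line bundle whose first Chern class is trivial in real (or rational) cohomology is topologically trivial, so that $L_2\otimes L_1^{-1}$ differs from a flat line bundle only by finite-order information; after passing to a suitable power this difference disappears, and bigness is detected by sufficiently divisible powers. Concretely, first I would note that $c_1(L_1)=c_1(L_2)$ in $H^2(M,\mathbb{Z})$ forces $L_2\otimes L_1^{-1}$ to have vanishing first Chern class, hence to lie in the kernel of $c_1\colon \mathrm{Pic}(M)\to H^2(M,\mathbb{Z})$, i.e. in $\mathrm{Pic}^0(M)=H^1(M,\mathcal{O}_M)/H^1(M,\mathbb{Z})$, which is a compact complex torus. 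The key point is then that any element of $\mathrm{Pic}^0(M)$ is a \emph{numerically trivial} line bundle, so $L_1$ and $L_2$ are numerically equivalent.

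The heart of the argument is that the Kodaira--Iitaka dimension, and in particular bigness, is a numerical invariant for line bundles on a compact complex manifold. Here one can argue directly: set $P:=L_2\otimes L_1^{-1}\in\mathrm{Pic}^0(M)$; for each positive integer $p$ one has
$$H^0(M,L_2^{\otimes p})=H^0(M,L_1^{\otimes p}\otimes P^{\otimes p}),$$
and I would estimate $h^0(M,L_1^{\otimes p}\otimes P^{\otimes p})$ against $h^0(M,L_1^{\otimes p}\otimes Q)$ for $Q$ ranging over $\mathrm{Pic}^0(M)$. The cleanest route is asymptotic: by Theorem \ref{asymp} applied to $L_1$ there are constants $\alpha,\beta>0$, a positive integer $d$, and $m_0$ with $\alpha m^{n}\le h^0(M,L_1^{\otimes md})\le \beta m^{n}$ for $m\ge m_0$, where $n=\dim_{\mathbb C}M$ and we have used $\kappa(L_1)=n$. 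Twisting a big line bundle by a fixed element of $\mathrm{Pic}^0$ does not change its Kodaira--Iitaka dimension — this is the standard fact that $\kappa$ depends only on the numerical class — so $\kappa(L_1\otimes P^{\otimes d})=n$ as well; replacing $L_1$ by $L_1\otimes P^{\otimes d}=L_2^{\otimes d}\otimes L_1^{\otimes (1-d)}\otimes\cdots$, one bootstraps to $\kappa(L_2)=n$. Alternatively, and perhaps more transparently, one uses that $P^{\otimes q}$ for suitable $q$ can be taken arbitrarily close to $\mathcal O_M$ in $\mathrm{Pic}^0(M)$ and that bigness is an open condition on the Néron--Severi space; but the cohomology-jumping / semicontinuity approach via Theorem \ref{Upper semi-continuity} over the torus $\mathrm{Pic}^0(M)$, combined with Theorem \ref{asymp}, gives a self-contained proof.

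I expect the main obstacle to be justifying rigorously, in the compact \emph{complex} (not necessarily projective or Kähler) category, that $\kappa$ is unchanged under twisting by a $\mathrm{Pic}^0$-element: in the projective case this is classical (e.g.\ via Zariski's or Kawamata's arguments on Iitaka fibrations), but here $M$ is only assumed compact complex. The way around it is to prove the inequality $\kappa(L_2)\ge \kappa(L_1)$ (which suffices, bigness being the maximal possible value $n$) by the following symmetric trick: apply the jumping-locus semicontinuity of $h^0$ over the base $\mathrm{Pic}^0(M)$ to the family $\{L_1^{\otimes p}\otimes Q\}_{Q\in\mathrm{Pic}^0(M)}$ — the locus where $h^0\ge \tfrac1q p^{n}$ is analytic, and it is all of $\mathrm{Pic}^0(M)$ once it contains $\mathcal O_M$ for infinitely many $p$ with $q$ fixed (using Theorem \ref{asymp} with $d=1$, legitimate since $h^0(M,L_1)\neq 0$ may be arranged after a twist, or handled with a fixed $d$ as in the proof of Proposition \ref{ki-dim-limit}). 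Hence $h^0(M,L_1^{\otimes p}\otimes Q)\ge\tfrac1q p^{n}$ for \emph{all} $Q$, in particular for $Q=P^{\otimes p}$, giving $h^0(M,L_2^{\otimes p})\ge \tfrac1q p^{n}$ for all large $p$ and so $\kappa(L_2)=n$ by Theorem \ref{asymp} again. This is exactly the mechanism already used in the proof of Proposition \ref{ki-dim-limit}, transplanted from the one-dimensional base $\Delta$ to the torus $\mathrm{Pic}^0(M)$, and it avoids any appeal to projectivity.
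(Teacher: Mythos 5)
Your reduction to $P := L_2\otimes L_1^{-1}$ with $c_1(P)=0$ is a natural starting point, and treating the problem as one of twisting by an element of $\ker(c_1)=\mathrm{Pic}^0(M)$ is reasonable. (Note that $\mathrm{Pic}^0(M)$ being a compact complex torus is not automatic for an arbitrary compact complex manifold; it holds here because $L_1$ big forces $M$ to be Moishezon, hence a $\partial\bar\partial$-manifold.) The self-contained argument in your last paragraph, however, has a genuine gap. You assert that the analytic jumping locus $\{Q\in\mathrm{Pic}^0(M): h^0(M,L_1^{\otimes p}\otimes Q)\ge \tfrac1q p^n\}$ equals all of $\mathrm{Pic}^0(M)$ ``once it contains $\mathcal O_M$''. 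But an analytic subset of a positive-dimensional compact torus can certainly be proper and still contain any prescribed point. The mechanism in the proof of Proposition \ref{ki-dim-limit} relies on two features you cannot transplant here: the $h^0$-estimate is known on an \emph{uncountable} subset of the base, and the base is \emph{one-dimensional}, so that a proper analytic subset is at most countable, which forces some jumping locus to exhaust the base. In your setting the estimate is known only at the single point $\mathcal O_M$, and $\mathrm{Pic}^0(M)$ is typically positive-dimensional with uncountable proper analytic subsets, so the argument does not close.

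A direct way to fill the gap is to pass to a projective model. Since $M$ is Moishezon, choose a proper modification $\mu:\tilde M\to M$ with $\tilde M$ projective. Then $\mu_*\mathcal O_{\tilde M}=\mathcal O_M$ and the projection formula give $h^0(\tilde M,\mu^*L_i^{\otimes p})=h^0(M,L_i^{\otimes p})$, so $\mu^*L_1$ is big on the projective $\tilde M$, and $c_1(\mu^*L_1)=c_1(\mu^*L_2)$. On a projective manifold bigness is a numerical property, seen directly from Kodaira's lemma: write $m\mu^*L_1=A+E$ with $A$ ample and $E$ effective; then $m\mu^*L_2=\bigl(A+m(\mu^*L_2-\mu^*L_1)\bigr)+E$, and $A$ remains ample after adding the numerically trivial divisor $m(\mu^*L_2-\mu^*L_1)$. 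Hence $\mu^*L_2$ is big, and therefore so is $L_2$. Alternatively, one can invoke the K\"ahler-current characterization of bigness together with the $\partial\bar\partial$-lemma on Moishezon $M$, under which $c_1(L_1)=c_1(L_2)$ in $H^2(M,\mathbb R)$ forces equality of Bott--Chern classes.
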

\begin{proof}[Proof of Theorem \ref{thm-moishezon-update}]
By applications of Propositions \ref{global-B}, \ref{propa} and Corollary  \ref{unc-big}, one proves Theorem \ref{thm-moishezon-update}.
\end{proof}

\subsection{Proof of Theorem \ref{main-thm}} \label{pf-mt}
Based on the techniques developed in the first three subsections and Takayama's Theorem \ref{0mainT}, we modify the strategy in Subsection \ref{intro-2} to give:
\begin{proof}[{Proof of  Theorem \ref{main-thm}.\eqref{main-thm-i}}]
 Note that the family in this case is smooth itself and thus we can skip the first step of the strategy, that is, one doesn't need the semi-stable reduction.
 Then, just as done in \cite[Proposition 4.14]{rt}, when the family in Proposition \ref{global-B} is smooth over $\Delta$, the map $\alpha_s$ in \eqref{les-1} is an isomorphism and thus the assumption \eqref{surj-chern'} can be dropped. The case \eqref{main-thm-i} of Theorem \ref{main-thm} satisfies the assumptions of Theorem \ref{thm-moishezon-update} since the desired deformation invariance of Hodge number is just a special case of Proposition \ref{01-invariance}. So by Theorems \ref{thm-moishezon-update}, \ref{0thm-moishezon} and \ref{0mainT} (or Theorems  \ref{thm-moishezon} and \ref{0mainT}), we have actually proved Theorem \ref{main-thm} under its assumption \eqref{main-thm-i}.
\end{proof}
\begin{proof}[{Proof of  Theorem \ref{main-thm}.\eqref{main-thm-ii}}]
One assumes that the total space $\mathcal{X}$ is smooth upon replacing it by a proper modification (which would not change
the general type condition on generic fibers, in contrast to the big anticanonical bundle
case in Remark \ref{anti}), and then that $\pi: \mathcal{X}\rightarrow \Delta$ is
smooth outside a proper analytic subset $B$ of $\Delta$
according to generic smoothness \cite[Corollary 10.7 of Chapter III]{Ht}. Thus,
the general type assumption and Proposition \ref{ki-dim-limit} imply that
the line bundle $K_\mathcal{X}|_{X_t}=K_{X_t}$ is big for any $t\in \Delta$ outside $B$, and one further assumes that $\pi$ is a one-parameter degeneration with the line bundle $K_\mathcal{X}|_{X_t}=K_{X_t}$ being big for any $t\in \Delta^*$, possibly after shrinking $\Delta$. Hence, Theorem \ref{0thm-moishezon} implies that the one-parameter degeneration (and also the original family) are Moishezon and Theorem \ref{0mainT} gives the deformation invariance of plurigenera of the original family.
\end{proof}

Recall that one defines the \emph{Kodaira--Iitaka dimension} $\kappa(D)$ of a \emph{$\mathbb{Q}$-Cartier divisor} $D$ on a compact complex variety $X$ as $\kappa(\mathcal{O}_X(m_0D))$, where $m_0$ is a positive integer such that $m_0D$ is Cartier.

\begin{rem}\label{anti}
Here we consider the big anticanonical bundle analogue of Theorem \ref{main-thm}.\eqref{main-thm-ii}: Let $\pi:\mathcal{X}\rightarrow \Delta$ be a family such that any of the following holds:
\begin{enumerate}[(i)]
    \item \label{main-thm-iii}
each compact variety $X_t$ at $t\in \Delta$ has only canonical singularities and satisfies the Kodaira--Iitaka dimension $\kappa(K^*_{X_t})=\dim_{\mathbb{C}}{X_t}$, while the total space $\mathcal{X}$ is smooth;
    \item \label{main-thm-iii'}
each (not necessarily projective) compact variety $X_t$ at $t\in \Delta$ has only canonical singularities and uncountable fibers therein satisfy  $\kappa(K^*_{{X_t}})=\dim_{\mathbb{C}}{{X_t}}$, while $\mathcal{X}$ is smooth.
\end{enumerate}
Then for each positive integer $m$ and any $t\in \Delta$, the $m$-genus $P_m({X_t})$ vanishes.
Let us first prove this under the assumption \eqref{main-thm-iii}. One quick way is that $P_m(X_t)=\dim_{\mathbb{C}}H^0(X_t, K_{X_t}^{\otimes m})=0$ by the canonical singularities assumption and $\kappa(K^*_{X_t})=\dim_{\mathbb{C}}{X_t}$, while we also present a more general argument. Both will be used for a more degenerate case in the second paragraph of this remark. As the total space $\mathcal{X}$ is smooth, the generic smoothness  \cite[Corollary 10.7 of Chapter III]{Ht} implies that the family $\pi$ is smooth outside a proper analytic subset of $\Delta$ and thus one can assume that $\pi$ is a one-parameter degeneration. Then the line bundle $K^*_\mathcal{X}|_{X_t}=K^*_{X_t}$ is big for any $t\in \Delta$ and the family $\pi$ is Moishezon by Theorem \ref{0thm-moishezon}. Here the canonical singularities assumption for each $X_t$ is applied to  $K^*_\mathcal{X}|_{X_t}=K^*_{X_t}$ (as a special case of the {\it base change property}, cf. Remark \ref{3.21}  and Theorem \ref{singular} below).  Hence, Theorem \ref{0mainT} gives the deformation invariance of plurigenera of $\pi$.
The proof of deformation invariance of plurigenera under \eqref{main-thm-iii'} is quite similar. The only difference is an additional application of Proposition \ref{ki-dim-limit} to conclude that for all $t\in \Delta$, the line bundles $K^*_\mathcal{X}|_{X_t}=K^*_{X_t}$ are big, before the application of generic smoothness. Actually, we can even remove the smoothness of the total spaces in Remark \ref{3.21} and Theorem \ref{singular}.

Now let us state the more degenerate case mentioned in the first paragraph of this remark. Let $\pi: \mathcal{X}\rightarrow \Delta$ be a one-parameter degeneration and $X_t$ admit big anticanonical bundles for uncountably many $t\in \Delta^*$. Then for each positive integer $m$ and any $t\in \Delta$, the $m$-genus $P_m({X_t})$ vanishes. Here $P_m(X_0)$ for the reference fiber $X_0$ with support
$(X_0)_{red}=\sum_{i\in I} X_i$ is considered as $\sum_{i\in I} P_m(X_i)$. In fact, since $\mathcal{X}$ is smooth, the bigness assumption and Proposition \ref{ki-dim-limit} imply that for any $t\in \Delta^*$ the line bundle $K^*_\mathcal{X}|_{X_t}=K^*_{X_t}$ is  big and thus $P_m(X_t) =0$.
Notice that $X_0$ could be very singular and we need not know anything about $K_{X_0}$ for our argument. Then Theorem \ref{0thm-moishezon} gives that $\pi$ is an algebraic morphism and Theorem \ref{main2T} implies the lower-semi continuity of plurigenera at $t =0$. As any plurigenus is always nonnegative and the lower-semi continuity holds as above, all $P_m(X_0) =0.$

This type of degeneration cases seems non-trivial according to the counterexample \cite[Example 2]{n83} to Kodaira dimension under degeneration which only assumes the general fiber $X_t$ to be a rational surface. A more general case of this result is discussed in Theorem \ref{singular}.
\end{rem}

\begin{rem}\label{3.21}
Under an additional assumption that each compact variety $X_t$ at $t \in \Delta$ in  \eqref{main-thm-iii} and \eqref{main-thm-iii'} of Remark \ref{anti} is projective and using the results in Subsection \ref{subs-Chow}, one may make an attempt to relax the smoothness assumption on the total space $\mathcal X$
of Remark \ref{anti}.\eqref{main-thm-iii} and \eqref{main-thm-iii'}. Indeed, as the pluricanonical sheaf ${\omega^{[r]}_\mathcal{X}}=j_*(\Omega^n_{\mathcal{X}_{reg}})^{\otimes r}$ for some positive integer $r$ with $j: \mathcal{X}_{reg}\rightarrow \mathcal{X}$ being the inclusion of the smooth part of the normal $\mathcal{X}$
of dimension $n$ is invertible by \cite[Definition 1 and Proposition 7]{sv} with the canonical singularities assumption on each fiber, an application of the base change properties of relative
(anti)canonical sheaf \cite[Theorem 3.6.1]{Cn00}, \cite{kk} and Proposition \ref{ki-dim-limit} with the bigness and canonical singularities assumptions to some negative power of the relative canonical sheaf of the family gives the desired big line bundle on each fiber. In fact, \cite[Theorem 7.9.3]{kk} requires the assumption that the family $\mathcal{X}\rightarrow \Delta$ be projective\footnote{\label{ft}We thank Professor Linquan Ma for bringing to our attention that by \cite[Corollary 1.5 and Remark 5.6]{kk18} the projective morphism condition can be relaxed.  In our canonical singularity case, all the fibers and the total space are Cohen-Macaulay and normal (cf. \cite[proof of Theorem 1.1 on p. 16]{[Ta]}),
which thus fits into \cite[Remark 5.6]{kk18} so that the base change property above can be satisfied.  Though the works above are intricately discussed in the algebraic setting, it is expected that the similar can be said in the analytic setting.} in order for base change properties of the relative canonical sheaf to hold (see also \cite[Introduction and  and (1.0.b)]{pat} for a less technical explanation, referred to as {\it Koll\'ar's condition} there).
Using Lemma \ref{sing-chow} of Subsection \ref{subs-Chow}, this needed assumption is expected to be satisfied outside some proper analytic subset of $\Delta$ (see Remark \ref{Remark 4.30}); if so, this is sufficient for our purpose of obtaining the desired big line bundles on uncountably many fibers out of the (global) relative canonical sheaf on $\mathcal{X}$. While the above leads us to some topics for further study,
an alternative, more direct approach that allows us to remove the smoothness of $\mathcal{X}$ can
be given as follows. Observe that in Theorem \ref{singular} below, we do not need the extra assumption made here that the fibers of $\mathcal{X}$ in Remark \ref{anti}.\eqref{main-thm-iii} and \eqref{main-thm-iii'} are projective.
\end{rem}

\begin{thm}[{}]\label{singular}
Let $\pi:\mathcal{X}\rightarrow \Delta$ be a family such that each compact complex variety $X_t$ at $t\in \Delta^*$ has only canonical singularities and uncountable fibers therein satisfy $\kappa(K^*_{{X_t}})=\dim_{\mathbb{C}}{{X_t}}$. Then for each positive integer $m$ and any $t\in \Delta$, the $m$-genus $P_m({X_t})$ vanishes. The main concern here is that for the fiber $X_0$ which is possibly reducible, $P_m(X_0)$
(as defined in the last paragraph of Remark \ref{anti}) vanishes.
\end{thm}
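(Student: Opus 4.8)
The plan is to carry out the three-step strategy of Subsection~\ref{intro-2} exactly as in the last two paragraphs of Remark~\ref{anti}, which treat the case of smooth $\mathcal{X}$. The one genuinely new point is that, since $\mathcal{X}$ need not be smooth, one cannot use $K_{\mathcal{X}}^{*}$ itself as the required global line bundle --- on a resolution of $\mathcal{X}$ the anticanonical restriction to the fibers may well cease to be big --- so the global line bundle must instead be built from the relative (anti)canonical sheaf over $\Delta^{*}$. We write $n:=\dim_{\mathbb{C}}X_t$ and tacitly assume $n\ge 1$.

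Step~1: \emph{bigness over $\Delta^{*}$ and vanishing there.} Put $\mathcal{X}^{*}:=\pi^{-1}(\Delta^{*})$. Since every fiber $X_t$ with $t\in\Delta^{*}$ has only canonical singularities, $\mathcal{X}^{*}$ is normal, Cohen--Macaulay and $\mathbb{Q}$-Gorenstein (cf.\ the proof of \cite[Theorem~1.1]{[Ta]} and the footnote to Remark~\ref{3.21}), so for a suitable $r\in\mathbb{N}^{+}$ the reflexive power $\omega_{\mathcal{X}^{*}/\Delta^{*}}^{[r]}$ is an invertible sheaf whose formation commutes with restriction to the fibers, $\omega_{\mathcal{X}^{*}/\Delta^{*}}^{[r]}|_{X_t}\cong\omega_{X_t}^{[r]}$ (base change for the reflexive power of the relative canonical sheaf in the Cohen--Macaulay, normal case, cf.\ \cite[Definition~1, Proposition~7]{sv}, \cite{kk} and \ref{ft}; note that no projectivity is used). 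Now take a resolution $\mu:\wt{\mathcal{X}}\to\mathcal{X}$ of the possibly singular total space; over $\Delta^{*}$ the fiber $\wt\pi^{-1}(t)$ (where $\wt\pi:=\pi\circ\mu$) has a unique component $\wt{X}_t$ dominating $X_t$, and $\mu_t:\wt{X}_t\to X_t$ is bimeromorphic. Set $\mathcal{M}:=(\omega_{\mathcal{X}^{*}/\Delta^{*}}^{[r]})^{\vee}$, a line bundle on $\mathcal{X}^{*}$ with $\mathcal{M}|_{X_t}\cong\mathcal{O}_{X_t}(-rK_{X_t})$, and $L:=\mu^{*}\mathcal{M}$, so that $L|_{\wt{X}_t}\cong\mu_t^{*}\mathcal{O}_{X_t}(-rK_{X_t})$, which is big precisely when $\mathcal{O}_{X_t}(-rK_{X_t})$ is, as pullback along the bimeromorphic $\mu_t$ preserves Kodaira--Iitaka dimension; on the remaining (exceptional) components of $\wt\pi^{-1}(t)$ the restriction of $L$ can never be big, being pulled back from a lower-dimensional subset of $X_t$. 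By hypothesis $\mathcal{O}_{X_t}(-rK_{X_t})$ is big for uncountably many $t$, so Corollary~\ref{unc-big} applied to the resolution over $\Delta^{*}$ forces $L|_{\wt{X}_t}$, hence $\mathcal{O}_{X_t}(-rK_{X_t})$, to be big for \emph{every} $t\in\Delta^{*}$; i.e.\ $\kappa(K^{*}_{X_t})=n$ for all $t\in\Delta^{*}$. As $-rK_{X_t}$ is then a big line bundle, $\kappa(K_{X_t})=-\infty$ (a $\mathbb{Q}$-Cartier divisor and its opposite cannot both have nonnegative Iitaka dimension unless torsion, which is impossible here since $n\ge 1$), so $H^{0}(X_t,\omega_{X_t}^{[m]})=0$ and, by the canonical singularity assumption, $P_m(X_t)=\dim_{\mathbb{C}}H^{0}(X_t,\omega_{X_t}^{[m]})=0$ for all $t\in\Delta^{*}$ and all $m\ge 1$.

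Step~2 and Step~3: \emph{the family is Moishezon, and the central fiber.} After shrinking $\Delta$ we may assume $\wt\pi$ is a one-parameter degeneration; on the smooth $\wt{\mathcal{X}}$ the line bundle $L$ (defined a priori only over $\Delta^{*}$) extends to a line bundle on all of $\wt{\mathcal{X}}$, still denoted $L$ (closure of its associated Cartier divisor, Weil and Cartier divisors coinciding on the smooth $\wt{\mathcal{X}}$), with $L|_{\wt{X}_t}$ big for all $t\in\Delta^{*}$. Thus every such $\wt{X}_t$ is a Moishezon manifold by Lemma~\ref{sm-big-moi}, and Theorem~\ref{0thm-moishezon} produces a bimeromorphic map over $\Delta$ from $\wt{\mathcal{X}}$ onto a subvariety $\mathcal{Y}\subset\mathbb{P}^{N}\times\Delta$ with projective fibers, so that $\wt\pi$ --- and hence $\pi$, via the bimeromorphic $\mu$ --- is a Moishezon family. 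Takayama's lower semi-continuity Theorem~\ref{main2T} then applies and gives $\sum_{i\in I}P_m(X_i)\le P_m(X_t)$ for the general fiber $X_t$, where $(X_0)_{red}=\sum_{i\in I}X_i$; the right-hand side vanishes by Step~1 and each $P_m(X_i)\ge 0$, so $P_m(X_0):=\sum_{i\in I}P_m(X_i)=0$. Together with Step~1 this proves $P_m(X_t)=0$ for all $t\in\Delta$ and all $m\ge 1$. Note that Takayama's upper semi-continuity Theorem~\ref{0mainT} cannot be invoked at $t=0$, since $X_0$ is neither assumed irreducible nor assumed to have canonical singularities; it is exactly Theorem~\ref{main2T} that circumvents this.

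The expected main obstacle is the base-change assertion used in Step~1: that over the punctured disk some reflexive power of the relative canonical sheaf is invertible and compatible with restriction to the fibers, \emph{in the analytic category and with no projectivity hypothesis}. This amounts to transplanting the algebraic base-change results for relative (log-)canonical sheaves (\cite{sv,kk} and the Cohen--Macaulay, normal version indicated in \ref{ft}) to complex-analytic families, which is legitimate here because the total space and all fibers over $\Delta^{*}$ are normal and Cohen--Macaulay. Once this is granted, the bigness propagation (Corollary~\ref{unc-big}) and the applications of Theorems~\ref{0thm-moishezon} and \ref{main2T} are routine; in particular one needs neither the $h^{0,1}$-invariance hypothesis of Theorem~\ref{0thm-moishezon-update} nor any projectivity of the fibers, which is precisely the improvement over the route discussed in Remark~\ref{3.21}.
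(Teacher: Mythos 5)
Your overall strategy coincides with the paper's: propagate bigness of the anticanonical over $\Delta^{*}$ via Proposition~\ref{ki-dim-limit}/Corollary~\ref{unc-big}, deduce $P_m(X_t)=0$ for $t\neq 0$ from canonical singularities, pass to a resolution $\mu:\wt{\mathcal{X}}\to\mathcal{X}$ and manufacture a global line bundle whose fiberwise restrictions are big, invoke Theorem~\ref{0thm-moishezon} to conclude the family is Moishezon, and finish with the lower semi-continuity of Theorem~\ref{main2T}. The choice of the relative canonical sheaf $\omega^{[r]}_{\mathcal{X}^{*}/\Delta^{*}}$ rather than the absolute one is immaterial over a disk, and your observation that $\mathcal{X}^{*}$ is normal so no separate normalization step is needed there is consistent with what the paper does.

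However, there is a genuine gap in Step~2 at the crucial point of extending the line bundle $L$ from $\wt{\mathcal{X}}^{*}=\wt\pi^{-1}(\Delta^{*})$ across the central fiber $\wt\pi^{-1}(0)$ to all of $\wt{\mathcal{X}}$. Your parenthetical justification (``closure of its associated Cartier divisor, Weil and Cartier divisors coinciding on the smooth $\wt{\mathcal{X}}$'') is the algebraic-category argument that $\mathrm{Cl}(\wt{\mathcal{X}})\twoheadrightarrow\mathrm{Cl}(\wt{\mathcal{X}}^{*})$. In the analytic category this is not automatic: one needs a meromorphic section of $L$ (obtainable by Grauert plus Cartan's Theorem A over the Stein base $\Delta^{*}$, granted), and, more seriously, one needs the \emph{closure} of its zero divisor $D\subset\wt{\mathcal{X}}^{*}$ in $\wt{\mathcal{X}}$ to be analytic. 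Remmert--Stein does not apply here, because the set being crossed, $\wt\pi^{-1}(0)$, has the same dimension $n=\dim_{\mathbb{C}}D$, whereas Remmert--Stein requires the exceptional set to have strictly smaller dimension; closures of $n$-dimensional analytic subsets across an $n$-dimensional hypersurface are in general \emph{not} analytic. The paper avoids this precisely by never producing a sheaf defined only over $\Delta^{*}$: it starts from the coherent reflexive sheaf $(\omega^{[r]}_{\mathcal{X}})^{*}$ defined on all of $\mathcal{X}$, takes a global section $s\in H^{0}(\mathcal{X},(\omega^{[r]}_{\mathcal{X}})^{*})$ (using that $\Delta$, not merely $\Delta^{*}$, is Stein, via Grauert's direct image theorem and Cartan~A), takes its zero set on the smooth locus $\mathcal{X}_{reg}$, closes it up in $\mathcal{X}$ across the codimension-$\ge 2$ singular locus --- where Remmert--Stein \emph{is} applicable --- and only then pulls back to $\wt{\mathcal{X}}$, where the codimension-one part of the complex inverse image is automatically Cartier. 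To repair your proof you should work with this globally-defined coherent sheaf on $\mathcal{X}$ rather than with $\omega^{[r]}_{\mathcal{X}^{*}/\Delta^{*}}$ on $\mathcal{X}^{*}$ alone (alternatively, one can take the reflexive hull $(\mu^{*}(\omega^{[r]}_{\mathcal{X}})^{*})^{\vee\vee}$ on the smooth $\wt{\mathcal{X}}$, which is invertible and agrees with your $L$ over $\wt{\mathcal{X}}^{*}$).
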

This is obviously an adaption of Remark \ref{anti}.\eqref{main-thm-iii'}, without assuming the smoothness of the total space or the canonical singularities on the reference fiber. See Remark \ref{3.19} and Example \ref{3.22} below for nonvanishing results under certain circumstances.

\begin{proof}[Proof of Theorem \ref{singular}]
We first assume $\mathcal{X}$ to be normal. By the analytic definition of canonical singularities in \cite[Definition 1]{sv}, one knows that the pluricanonical sheaf ${\omega^{[q]}_\mathcal{X}}$ for some positive integer $q$ is actually invertible on $\pi^{-1}(\Delta\setminus\{0\})$ by the canonical singularities assumption (cf. Remark \ref{3.21} and Footnote \ref{ft}). Then one resorts to the base change
property by A. Corti\footnote{See \cite[Proposition (16.4.2)]{Ct92} for Corti's result whose proof
can be adapted to the analytic setting here by noting that the dualizing sheaf
(i.e., $Ext^1(\mathcal{O}_S, \omega_{X})$ in the notation there) in the analytic context can be found in, e.g. \cite[Proposition 2]{rr70} and \cite[Corollary 5.3 of Chapter VII]{bs}.} and the canonical singularities assumption for the general fibers
to conclude that for any $t\in \Delta^*$, the associated line bundles
\begin{equation}\label{qk}
({\omega^{[q]}_\mathcal{X}})^*|_{X_t}=({\omega^{[q]}_{X_t}})^*.
\end{equation}
Here $\mathcal{F}^* = \mathcal{H}om_{\mathcal{O}_\mathcal{X}}(\mathcal{F}, \mathcal{O}_\mathcal{X})$ for a coherent sheaf $\mathcal{F}$ on $\mathcal{X}$ (e.g. \cite[Definition 5.28]{pr}).
Inserting now the associated global line bundle to  $({\omega^{[q]}_\mathcal{X}})^*$ over $\pi^{-1}(\Delta\setminus \{0\})$ in \eqref{qk} into Proposition \ref{ki-dim-limit} over the base $\Delta^*$ and using the assumption   $\kappa(K^*_{X_t})=\dim_{\mathbb{C}}{X_t}$ for uncountable fibers, we have for any $t\in \Delta^*$,
\begin{equation}\label{big}
\kappa(K^*_{X_t})=\dim_{\mathbb{C}}{X_t}
\end{equation}
and further the canonical singularities assumption for the general fibers gives (cf. \cite[L. 1-2 on p. 17]{[Ta]})
\begin{equation}\label{pg0}
P_m(X_t)=\dim_{\mathbb{C}}H^0(X_t, K_{X_t}^{\otimes m}),
\end{equation}
which vanish by using the bigness assumption.

Take a suitable proper modification $\mu: \mathcal{Y}\rightarrow \mathcal{X}$ from a smooth manifold $\mathcal{Y}$ to $\mathcal{X}$.
Then the Weil divisor $\mu^*(rK^*_\mathcal{X})$ given by the codimension one part of the complex inverse image space\footnote{See \cite[$\S$ 2.6 of Chapter 1]{Grr} and \cite[Caution 7.12.2 of Chapter II]{Ht}. Being in the analytic setting, to avoid the possible difficulty  for the existence of a canonical divisor one may consider the dual $({\omega^{[r]}_\mathcal{X}})^*$ of the pluricanonical sheaf, where $\omega_\mathcal{X}={\omega^{[1]}_\mathcal{X}}$ is the dualizing
sheaf (\cite[Lemma and Definition 5.33]{pr})
and $r \in q\mathbb{N}$.  Take $s \in H^0(\mathcal{X}, ({\omega^{[r]}_\mathcal{X}})^*)$, which is nontrivial on the general fibers for $r\gg 1$ by using \eqref{qk} and \eqref{big} and Grauert's direct image Theorem \ref{gdit}.
Let the open subset $\mathcal{X}_{reg}$ be the smooth part of $\mathcal{X}$.
Note that the complement of $\mathcal{X}_{reg}$ in $\mathcal{X}$ is of
codimension at least two since $\mathcal{X}$ is
assumed to be normal.
Set $S'$ as the zero set of $s|_{\mathcal{X}_{reg}}$ on $\mathcal{X}_{reg}$.
Then $S'$ is an analytic subset of $\mathcal{X}_{reg}$
and its local dimension is exactly $\dim_\mathbb{C}\mathcal{X}_{reg} -1$ since the pluricanonical sheaf is
necessarily locally free on the smooth part.
Define $rK^*_\mathcal{X}$ to be the closure of $S'$ in $\mathcal{X}$ (in the sense
of complex topology).  By using the extension
theorem of Remmert--Stein \cite{rs}, $rK^*_\mathcal{X}$ is an analytic subset
of $\mathcal{X}$ and is of local dimension $\dim_\mathbb{C}\mathcal{X}-1$.
Now we define $\mu^*(rK^*_\mathcal{X})$ to be the codimension one part of the complex inverse image
 of $rK^*_\mathcal{X}$. We are not sure whether the zero set of $s$ on $\mathcal{X}$ in this case is
necessarily a closed analytic subset (as $s$ is only a section
of a coherent reflexive sheaf on $\mathcal{X}$ which is not locally free) or coincides with $rK^*_\mathcal{X}$.} by $\mu$ (by abuse of notation) is a Cartier divisor since $\mathcal{Y}$ is smooth. However, the notation $\mu^*(rK^*_\mathcal{X})$ is not to be mixed up with the usual concept
of pull-back unless $rK_\mathcal{X}$ is assumed to be Cartier.
For any $t\in \Delta^*$
the invertible sheaves by the analytic inverse image satisfy
\begin{equation}\label{qmuk}
(\mu^* {\omega^{[r]}_\mathcal{X}})|_{Y_t}=\mu^* ({\omega^{[r]}_\mathcal{X}}|_{X_t}),
\end{equation}
where $Y_t:=\mu^{-1}(X_t)$.
Since $\mathcal{Y}\rightarrow \Delta$ is
smooth outside a proper analytic subset $B$ of $\Delta$
according to generic smoothness \cite[Corollary 10.7 of Chapter III]{Ht}, the equalities \eqref{qmuk}, \eqref{qk} and \eqref{big} imply that for any $t\in \Delta$ outside $B\cup \{0\}$, $\mathcal{O}_{\mathcal{Y}}(\mu^*(rK^*_\mathcal{X}))|_{Y_t}$ is big.
One further assumes that $\mathcal{Y}\rightarrow \Delta$ is a one-parameter degeneration with the associated global line bundle to   $\mathcal{O}_{\mathcal{Y}}(\mu^*(rK^*_\mathcal{X}))$ over $\mathcal{Y}$ and $\mathcal{O}_{\mathcal{Y}}(\mu^*(rK^*_\mathcal{X}))|_{Y_t}$ being big for any $t\in \Delta^*$, possibly after shrinking $\Delta$. Theorem \ref{0thm-moishezon} implies that $\mathcal{Y}\rightarrow \Delta$  is an algebraic morphism and in turn so is $\mathcal{X}\rightarrow \Delta$. Thus
Theorem \ref{main2T} gives rise to the lower semi-continuity of plurigenera of $\mathcal{X}\rightarrow \Delta$ at $t=0$.
Hence, all $P_m(X_i)=0$ for every component $X_i$ of $X_0$ by the nonnegativity of plurigenera, \eqref{pg0} and the lower semi-continuity as above.

Next, we consider the case that $\mathcal{X}$ is not normal. Take the normalization $\nu: \mathcal{N}\rightarrow \mathcal{X}$. Since  $\mathcal{X}^*:=\pi^{-1}(\Delta^*)$ is already normal (cf. Footnote \ref{ft}), $\mathcal{N}^*:=\nu^{-1}(\mathcal{X}^*)$ is biholomorphic to $\mathcal{X}^*$ by a basic property of normalization (e.g. \cite[2.26.c)]{fis}).
So this normalization maintains all the properties of the general fibers $X_t$ involved in the preceding paragraph, up to biholomorphism, and also the Moishezonness (i.e., algebraic morphism) property of $\mathcal{X}\rightarrow \Delta$ since a normalization is already a bimeromorphic morphism. Hence, one can proceed the argument in the preceding paragraph just with the total space $\mathcal{X}$ replaced by $\mathcal{N}$.
\end{proof}

\begin{rem}\label{xu}
C. Xu provided us an algebraic counterpart for Theorem \ref{singular}:
 For a discrete valuation ring $R$ with quotient field $K$ and residue field $k$,
 let $f: \mathcal{X}\rightarrow T$ be a morphism, where $T = Spec\ R$ and $\mathcal{X}$ is normal and irreducible. If
$X_K$ is uniruled, then $X_k$ has uniruled components. See \cite[1.5.1 Corollary of Chapter IV]{k96} or \cite[Sublemma on p. 451]{sa} which is valid on $\mathbb{C}$, for a degeneration argument. Recall that if a smooth proper variety in characteristic $0$ is uniruled, then all its plurigenera vanish, while the converse is a famous conjecture (e.g. \cite[1.11 Corollary and 1.12 Conjecture of Chapter IV]{k96}). Notice that this algebraic counterpart is different from the analytic one. In Nishiguchi's (analytic) Example \ref{3.22}, there is a one-parameter degeneration whose general fibers have all vanishing plurigenera and degenerate fiber has one component with nonvanishing plurigenera.
\end{rem}
\begin{rem}\label{3.19}
The `algebraic morphism' assumption is indispensable in Theorem \ref{main2T} (= \cite[Theorem 1.2]{[Ta]}), even if the general fiber $X_t$ is projective for $t \ne 0$.
In other words, there exists a flat family $\mathcal{X}\rightarrow \Delta$ such that $X_t$ is projective for $t\ne 0$ while $\mathcal{X}\rightarrow \Delta$
is not an algebraic morphism (even after shrinking $\Delta$). This, in turn, shows the indispensability of the bigness assumption at least up to our proof of Theorems \ref{main-thm}.\eqref{main-thm-ii} and \ref{singular}. The following are two examples.
Note that the elliptic fibration in \cite[Remark 2.25, Definition 3.5.(2)]{cp} is global and is not applicable to this.
\begin{ex}\label{3.22} The following examples violate the conclusion of Theorem \ref{main2T}, hence cannot be algebraic
morphisms by the same theorem.
 One example is \cite[Example 2]{n83}, where the general fiber $X_t$ is a rational surface (with all vanishing plurigenera) for $t\ne 0$ and the singular fiber contains an Enriques surface (with even-genus $1$ and odd-genus $0$) and a Hopf surface as components. Note that a rational surface is Moishezon, K\"ahler and thus projective. The other example is also Nishiguchi's, which was announced in \cite[Note on p. 383]{n83} and explicitly presented in \cite[Example 5 in $\S$ 6]{n83a} by logarithmic transformations. In this example, the general fiber $X_t$ for $t\ne 0$ is an abelian surface  and the singular fiber contains a component of a surface with Kodaira dimension $1$.
 See also two generalizations of this example to the degenerations with the general two dimensional (possibly) non-K\"ahler fibers of non-positive Kodaira dimensions and one component of the special fiber of Kodaira dimension $1$ in \cite{u83} and \cite[Example 3.4.(1)]{Fj88}.
\end{ex}
\end{rem}
\begin{rem}\label{3.23}
 Theorem \ref{main2T} shows that the degenerations in Nishiguchi's counter-examples \cite{n83} cannot induce the bimeromorphic embedding in Theorem \ref{0thm-moishezon}, unless one makes the assumptions on high Kodaira dimensions of the fibers, such as the bigness assumption on the fibers in Theorems \ref{main-thm}.\eqref{main-thm-ii} and \ref{singular}.
\end{rem}
\begin{rem}\label{3.24}
According to the above remarks (in particular on Theorem \ref{main2T} and Remark \ref{anti}), there exist at most countable general fibers of the semi-stable degenerations in \cite[Example 2]{n83} which have big anticanonical bundles although all the general fibers (rational surfaces) have algebraic dimension $2$ and Kodaira dimension $-\infty$. This seems a non-trivial fact.
\end{rem}

\section{Family structure of projective complex spaces}\label{mtii}
Much inspired by Demailly's Question \ref{quest} and (the proof of) Theorem \ref{main-thm}, we further study the structure of the family of projective complex analytic spaces in Chow-type Lemmata \ref{Chow-sm} and \ref{sing-chow} for smooth and flat families, respectively. Besides the techniques developed in the first three subsections of Section \ref{mf}, we still resort to the local invariant cycle theorem \cite{cm} and effective algebro-geometric results and techniques.

From now on, we assume in the first three subsections of this section that \textbf{the family $\pi: \mathcal{X}\rightarrow \Delta$ is a semi-stable degeneration} as explained in Subsection \ref{intro-2} or \ref{di0q}, unless mentioned otherwise.
Furthermore, we assume that all the fibers of this reduced family are projective by Proposition \ref{reduction} and
\begin{lemma}\label{bu-proj}
Let $\pi: \mathcal{X}\rightarrow \Delta$ be a family with projective fibers  $X_t$ and  $f: \mathcal{X}' \rightarrow \mathcal{X}$ a blow-up of $\mathcal{X}$ along some closed complex subspace of $\mathcal{X}$.
Then the new fiber $f^{-1}(X_t)$ for each $t \in \Delta$ is still projective.
\end{lemma}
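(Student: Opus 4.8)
The plan is to reduce everything to the local structure of the blow-up over a point $t \in \Delta$ and to use the fact that blowing up a projective variety along a closed subscheme yields a projective variety, together with compatibility of blow-ups with restriction to the fibre. First I would recall that $f : \mathcal{X}' \to \mathcal{X}$ is the blow-up $\Bl_{\mathcal{Z}}\mathcal{X}$ of $\mathcal{X}$ along a closed complex subspace $\mathcal{Z} \subset \mathcal{X}$ defined by a coherent ideal sheaf $\mathcal{I} \subset \mathcal{O}_{\mathcal{X}}$, so that $\mathcal{X}' = \mathrm{Proj}\bigl(\bigoplus_{d\ge 0}\mathcal{I}^d\bigr)$ and $f$ is a projective morphism. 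Fix $t \in \Delta$ and write $X_t = \pi^{-1}(t)$, a projective variety by hypothesis, with closed immersion $\iota_t : X_t \hookrightarrow \mathcal{X}$. The key point is the base-change property of blow-ups: the scheme-theoretic (analytic) fibre $f^{-1}(X_t) = \mathcal{X}' \times_{\mathcal{X}} X_t$ maps onto the blow-up $\Bl_{\mathcal{Z}\cap X_t} X_t = \mathrm{Proj}\bigl(\bigoplus_{d\ge 0}(\mathcal{I}|_{X_t})^d\bigr)$ of $X_t$ along the pullback ideal $\mathcal{I}\cdot\mathcal{O}_{X_t}$; more precisely, the strict transform of $X_t$ in $\mathcal{X}'$ is exactly this blow-up of $X_t$, and $f^{-1}(X_t)$ as an analytic subspace of $\mathcal{X}'$ is a union of this strict transform together with components of the exceptional divisor $E = f^{-1}(\mathcal{Z})$ lying over $\mathcal{Z}\cap X_t$.

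The next step is to produce an ample (indeed very ample) line bundle on $f^{-1}(X_t)$. Since $X_t$ is projective, choose a very ample line bundle $\mathcal{A}_t$ on $X_t$. The blow-up $\Bl_{\mathcal{Z}\cap X_t}X_t$ carries the tautological bundle $\mathcal{O}(1)$ relative to the $\mathrm{Proj}$ construction, whose dual is the ideal sheaf of the exceptional divisor; a standard argument (Hartshorne II.7, or \cite[\S 1.(d), Ch. IV]{bs} in the analytic category) shows that for $k \gg 0$ the line bundle $\mathcal{O}(1)\otimes g^*\mathcal{A}_t^{\otimes k}$ is very ample on $\Bl_{\mathcal{Z}\cap X_t}X_t$, where $g$ denotes the blow-down to $X_t$. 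Hence $\Bl_{\mathcal{Z}\cap X_t}X_t$ is projective. To handle the full analytic fibre $f^{-1}(X_t)$ (which may be non-reduced or may carry extra exceptional components when $X_t \subset \mathcal{Z}$), I would note that $f^{-1}(X_t) \hookrightarrow E \times_{\mathcal{X}} X_t$ embeds as a closed analytic subspace of the fibre of the exceptional projective bundle-type space $E = \mathbb{P}(\mathcal{I}/\mathcal{I}^2)$-ish object over $\mathcal{Z}\cap X_t \subset \mathcal{Z}$, together with the strict transform; restricting the relatively very ample $\mathcal{O}_E(1)$ (twisted by a pullback of an ample bundle from the projective base $\mathcal{Z}\cap X_t$, which is projective as a closed subspace of $X_t$) gives an ample line bundle there as well. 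Gluing these, or more cleanly restricting a single line bundle $\mathcal{O}_{\mathcal{X}'}(1)\otimes f^*\mathcal{L}$ (with $\mathcal{L}$ chosen $\pi$-ample-ish near $t$) to $f^{-1}(X_t)$, yields an ample line bundle on the compact analytic space $f^{-1}(X_t)$, so by the analytic Kodaira embedding / Grauert–Remmert criterion $f^{-1}(X_t)$ is projective.

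The main obstacle I expect is not the projectivity of a single blow-up of a projective variety — that is classical — but rather the careful analytic bookkeeping of what $f^{-1}(X_t)$ is as an analytic space when $\mathcal{Z}$ is not transverse to $X_t$, in particular when $X_t$ meets or is contained in the centre $\mathcal{Z}$: then $f^{-1}(X_t)$ acquires exceptional components and possible non-reduced structure, and one must check that an ample line bundle restricts to an ample line bundle on every such component and on the whole (possibly reducible, possibly non-reduced) fibre. I would resolve this by working with the relatively very ample sheaf $\mathcal{O}_{\mathcal{X}'}(1)$ for the projective morphism $f$ — whose very ampleness relative to $f$ is intrinsic and survives base change along $\iota_t$ — and twisting by the pullback of a line bundle on $\mathcal{X}$ that restricts to a very ample bundle on the projective fibre $X_t$ (such a bundle exists on a neighbourhood of $X_t$ after shrinking, or one uses that restriction of a relatively ample sheaf plus an ample sheaf from the base is ample on the total fibre). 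The upshot is that $f^{-1}(X_t)$ carries an ample line bundle and is therefore a projective complex space, which is the assertion.
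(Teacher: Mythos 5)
Your proposal is correct and ultimately takes the same route as the paper: observe that $f$ is a projective morphism, that its restriction $f_t : f^{-1}(X_t)\to X_t$ remains projective, and that combining this with the projectivity of $X_t$ yields projectivity of $f^{-1}(X_t)$. The paper phrases this in three lines using only the abstract properties that projective morphisms are stable under base change and under composition (with $X_t\to\{t\}$ being projective since $X_t$ is), which makes the whole discussion of strict transforms, exceptional components, and non-reduced structure in your middle paragraph unnecessary — your own last paragraph, where you restrict $\mathcal{O}_{\mathcal{X}'}(1)$ and twist by $f_t^*$ of an ample bundle on $X_t$, is the hands-on version of exactly that argument and is all you need; there is no need to produce a line bundle on $\mathcal{X}$ itself restricting nicely to $X_t$, only one on $X_t$, which exists by hypothesis.
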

\begin{proof}
Notice that $f$ is a projective morphism (e.g. \cite[\S\ 2.7]{pe}) and its restriction to $X'_t:=f^{-1}(X_t)$ induces a projective
morphism $f_t: X'_t\rightarrow X_t$ by the definition of projective morphism.
So the composition of $f_t$ with $X_t\rightarrow \{t\}$ is still projective since a composition
of two projective morphisms is still projective and $X_t\rightarrow \{t\}$ is obviously projective.
Thus, $X'_t\rightarrow \{t\}$ being projective gives that $X'_t$ is projective.
 \end{proof}
\subsection{Monodromy and local invariant cycle theorem}\label{mon-lict}
 Recall that a \emph{degeneration} is a flat family $\pi: \mathcal{X}\rightarrow \Delta$ of relative dimension $n$ from the complex manifold $\mathcal{X}$ such that $X_t:=\pi^{-1}(t)$ is a smooth complex variety for $t\neq 0$.
 Let $\pi: \mathcal{X}\rightarrow \Delta$ be a degeneration, and $\pi^*: \mathcal{X}^{*}:=\pi^{-1}(\Delta^*)\rightarrow \Delta^*$ a smooth family
of complex projective manifolds over the punctured disk. It is well known
that for each $t_1, t_2 \in \Delta^*$, the fiber $X_{t_1}$ is diffeomorphic to the fiber $X_{t_2}$ by Ehresmann's theorem \cite{E}. In particular, the fibers are all homeomorphic, and
the cohomology groups $H^{\bullet}(X_t,\mathbb{C})$ are isomorphic for all $t \in \Delta^*$. Fix a base
point $\star\in\Delta^*$ and consider a path $\gamma: [0, 1]\rightarrow \Delta^*$ that generates $\pi_1(\Delta^*,\star)$.
The family of groups $H^{\bullet}(X_{\gamma(\tau)},\mathbb{C}), \tau\in [0,1]$, determines an automorphism
of $H^{\bullet}(X_\star,\mathbb{C})$. The induced homomorphism
$$\pi_1(\Delta^*, \star)\rightarrow \textrm{Aut}\ H^{\bullet}(X_\star,\mathbb{C})$$
is called the (analytic) \emph{monodromy} representation of the family.

For a family that extends to a smooth family over $\Delta$, the monodromy
representation is trivial. In this sense, monodromy is an invariant that is
meant to detect something about the singularities of the central fiber of a
degeneration. Moreover, we note that it is not the case that trivial
monodromy implies that a family can be extended to a smooth family over
the disk. See a more interesting example due to Friedman \cite{fr83}.

For the monodromy (or Picard--Lefschetz) transformation $T$ (or $T^{-1}$) that generates
the monodromy representation, we have:
\begin{thm}[Monodromy theorem by Landman \cite{lan}]\
\begin{enumerate}[$(i)$]
\item $T$ is quasi-unipotent, with index of unipotency at most $m$. In
other words, there is some $k$ such that
$(T^k-I)^{m+1} = 0$.
\item If $\pi: \mathcal{X}\rightarrow \Delta$  is semi-stable, then $T$ is unipotent ($k =1$).
\end{enumerate}
\end{thm}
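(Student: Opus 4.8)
The plan is to deduce $(i)$ from $(ii)$ via the semi-stable reduction theorem, so that the real content is the unipotence statement $(ii)$, together with the sharp bound $(T^k-I)^{m+1}=0$, which I would obtain from Steenbrink's limiting mixed Hodge structure.

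I would prove $(ii)$ working degree by degree on $H^m(X_\star,\mathbb{C})$, using the $T$-equivariant identification $H^m(X_t,\mathbb{C})\cong\mathbb{H}^m(X_0,\psi_f\mathbb{C})$ with the nearby cycle complex (as already invoked in Proposition~\ref{01-invariance}); this reduces the computation of the semisimple part $T_s$ in the Jordan decomposition $T=T_sT_u$ to the stalks of $\psi_f\mathbb{C}$ along $X_0$, a purely local matter. Near a point where exactly $k$ local branches of $X_0$ meet, semi-stability gives coordinates with $f=x_1\cdots x_k$, all multiplicities $1$; the Milnor fibre $\{x_1\cdots x_k=\delta\}$ in a small polydisc deformation-retracts onto a real $(k-1)$-torus, on whose cohomology the local monodromy is unipotent by a direct computation. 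Equivalently, in the language of \cite[\S 11]{ps}, $\psi_f\mathbb{C}$ is identified with the relative logarithmic de Rham complex $\Omega^{\bullet}_{\mathcal{X}/\Delta}(\log X_0)\otimes\mathcal{O}_{X_0}$ and $T=\exp(-2\pi i\,\mathrm{Res})$ for a residue endomorphism whose semisimple eigenvalues on the stratum where components of multiplicities $m_1,\dots,m_e$ meet lie in $\tfrac{1}{m_1}\mathbb{Z}+\cdots+\tfrac{1}{m_e}\mathbb{Z}$ modulo $\mathbb{Z}$, hence all vanish precisely when every $m_i=1$; in the reduced case $\mathrm{Res}$ is then nilpotent and $T$ is unipotent. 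Propagating this through the Steenbrink weight spectral sequence gives $T_s=\mathrm{id}$, i.e.\ $T=T_u$ is unipotent on $\mathbb{H}^m(X_0,\psi_f\mathbb{C})$; alternatively one may invoke Clemens' explicit monodromy homeomorphism of a semi-stable degeneration. Finally, writing $N=\log T_u$, the limiting space $H^m(X_\infty,\mathbb{C})$ carries the monodromy weight filtration $W_\bullet$ centred at $m$, with $W_{-1}=0$, $W_{2m}=H^m(X_\infty,\mathbb{C})$ and $N^j\colon\mathrm{Gr}^W_{m+j}\xrightarrow{\ \sim\ }\mathrm{Gr}^W_{m-j}$; since $N$ lowers weights by $2$, $N^{m+1}(H^m(X_\infty,\mathbb{C}))\subseteq W_{-2}=0$, so $(T_u-I)^{m+1}=0$, which is the bound in $(ii)$ (and by the hard Lefschetz theorem it holds on all of $H^\bullet$ with $m$ replaced by $\dim_{\mathbb{C}}X_\star$).

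For $(i)$, let $\pi\colon\mathcal{X}\rightarrow\Delta$ be an arbitrary degeneration. The semi-stable reduction theorem \cite[pp.~53--54]{[KKMSD73]} (cf.\ Proposition~\ref{reduction}) produces $k\ge1$ such that, after the base change $\rho_k\colon s\mapsto s^k$ of $\Delta$ onto itself, the normalized pull-back admits a modification $\pi'\colon\mathcal{X}'\rightarrow\Delta$ that is semi-stable. Over $\Delta^*$ the normalization and the blow-ups (with centres in the central fibre) are isomorphisms, so $\mathcal{X}'|_{\Delta^*}$ is the fibre product of $\mathcal{X}|_{\Delta^*}$ with $\rho_k$; hence the general fibre of $\pi'$ over $s$ is $X_{s^k}$ and its monodromy transformation is $T^k$. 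By $(ii)$, $T^k$ is unipotent with $(T^k-I)^{m+1}=0$ on $H^m$, so $T$ is quasi-unipotent with index of unipotency at most $m$, which is $(i)$.

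The step I expect to be the main obstacle is $(ii)$: carrying out the local monodromy analysis of $f=x_1\cdots x_k$ cleanly — through the explicit Milnor-fibre/torus picture or through the residue presentation of $\psi_f$ — and then tracking it through the Steenbrink weight spectral sequence so as to get simultaneously that $T_s=\mathrm{id}$ and that the monodromy weight filtration is short enough for the $m+1$ bound. By contrast, for $(i)$ one may treat semi-stable reduction as a black box, and the identity ``monodromy of the base-changed and modified family $=T^k$'' is routine once one notes that modifications centred in $X_0$ leave the generic fibre unchanged.
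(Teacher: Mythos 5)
The paper gives no proof of this theorem: it is stated as a classical result and attributed to Landman \cite{lan}, so there is no argument in the text to compare yours against. Your sketch is the standard Hodge-theoretic proof (Clemens/Steenbrink) rather than Landman's original one, and its overall architecture is sound: part $(ii)$ via the $T$-equivariant identification of $H^m(X_t,\mathbb{C})$ with $\mathbb{H}^m(X_0,\psi_f\mathbb{C})$, triviality of the semisimple part $T_s$ from the local Milnor-fibre (torus) computation for $f=x_1\cdots x_k$ with reduced branches, and the bound $(T-I)^{m+1}=0$ from the weight filtration; part $(i)$ by semi-stable reduction plus the observation that base change by $t\mapsto t^{k}$ replaces $T$ by $T^{k}$ while normalization and blow-ups centred in the special fibre do not change the family over $\Delta^*$. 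That reduction of $(i)$ to $(ii)$ is correct and routine, as you say.

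Two caveats on $(ii)$. First, as written, the final step is circular: for a nilpotent $N$, the statement that the monodromy weight filtration centred at $m$ satisfies $W_{-1}=0$ and $W_{2m}=H^m(X_\infty,\mathbb{C})$ is \emph{equivalent} to $N^{m+1}=0$, so you cannot quote those two equalities to deduce the bound. The non-circular route is to take $W$ to be Steenbrink's weight filtration, whose $E_1$-terms are cohomologies of the smooth projective strata $X_I$ and hence have weights confined to $[0,2m]$ independently of any knowledge of $N$, and then use only that $N$ is a morphism of mixed Hodge structures of type $(-1,-1)$, so $N^{m+1}W_{2m}\subseteq W_{-2}=0$. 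Second, this sharp bound genuinely uses the K\"ahler/projectivity of the fibres (degeneration and purity in the weight spectral sequence); only the unipotence of $T$ in $(ii)$, and hence the bare quasi-unipotence in $(i)$, follow from the purely topological local analysis. In the paper's setting (projective fibres over $\Delta^*$) this hypothesis is available, but it should be said explicitly where it enters.
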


Thanks to this theorem, we may define the logarithm of $T$ in the
semi-stable case by the finite sum
$$N:=\log T = (T-I)- \frac{1}{2}(T-I)^2 + \frac{1}{3}(T-I)^3- \cdots.$$
Then $N$ is nilpotent, and the index of unipotency of $T$ coincides with the
index of nilpotency of $N$; in particular, $T = I$ if and only if $N = 0$.
In Theorem \ref{inv-cyc} below, we will construct an element in the kernel of $N$.

As is evident in the previous remark, if $\pi: \mathcal{X}\rightarrow \Delta$ is a generically
smooth family of complex projective varieties, the topology of $X_0$
is related to the monodromy of the family. The Clemens--Schmid exact
sequence makes this precise.
If the components of $X_0$ are
K\"ahler and in addition there exists a class in
$H^2(X_0,\mathbb{R})$ which restricts to a K\"ahler class on each component of $X_0$, then we call $\pi: \mathcal{X}\rightarrow \Delta$  a \emph{one-parameter K\"ahler degeneration}.
The Clemens--Schmid exact sequence studies the homomorphism
$N: H^m(X_t)\rightarrow H^m(X_t)$. The first
piece of the sequence is the

\begin{thm}[{Local invariant cycle theorem, \cite{cm} or \cite[Theorem $11.43$]{ps}}]\label{lict}
Let $\pi: \mathcal{X}\rightarrow \Delta$ be a one-parameter
K\"ahler semi-stable degeneration with a fixed smooth fiber $X_t$
and the inclusion $i: X_t\subset \mathcal{X}$. Then
 the sequence
$$H^m(\mathcal{X})\xrightarrow{i^*} H^m(X_t)\xrightarrow{N} H^m(X_t)$$
is exact. In other words, all cohomology classes which are invariant under the
monodromy action come from cohomology classes in $\mathcal{X}$.
 \end{thm}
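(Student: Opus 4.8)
The plan is to deduce the statement from the \emph{Clemens--Schmid exact sequence}, following \cite{cm} and \cite[\S\ 11.2]{ps}; an alternative route in the projective case goes through the decomposition theorem, but the one-parameter Clemens--Schmid argument is the most self-contained here. Since the assertion is local near $0\in\Delta$, I would first pass to a smaller disk so that $\mathcal{X}$ deformation retracts onto the central fiber $X_0$; such a retraction exists because $\pi$ is semi-stable and $X_0$ is a reduced simple normal crossing divisor, and it is compatible with the various inclusions. Under the induced identification $H^m(\mathcal{X})\cong H^m(X_0)$, the restriction $i^*\colon H^m(\mathcal{X})\to H^m(X_t)$ becomes the \emph{specialization map} $\mathrm{sp}\colon H^m(X_0)\to H^m(X_\infty,\mathbb{C})$, where $H^m(X_\infty,\mathbb{C})$ carries Steenbrink's limit mixed Hodge structure (already used in the proof of Proposition \ref{01-invariance}) and is canonically isomorphic, as a group, to $H^m(X_t,\mathbb{C})$ for every $t\in\Delta^*$. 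The hypothesis that $\pi$ is a \emph{K\"ahler} (in our applications, projective) semi-stable degeneration is exactly what makes this limit mixed Hodge structure available and polarized.

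The easy half, $\mathrm{im}(i^*)\subseteq\ker N$, is immediate: a class of the form $i^*\alpha$ is monodromy invariant, since parallel transport of $\alpha$ around the generator $\gamma$ of $\pi_1(\Delta^*,\star)$ is covered by the identity on $H^m(\mathcal{X})$, so $(T-I)i^*\alpha=0$ and hence $Ni^*\alpha=0$ because $N=\log T$ annihilates $\ker(T-I)$. The content of the theorem is the reverse inclusion $\ker N\subseteq\mathrm{im}(i^*)$.

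For this I would build the Clemens--Schmid sequence out of two exact sequences. The first is the Wang sequence of the smooth fiber bundle $\pi^*\colon\mathcal{X}^*\to\Delta^*$, whose base $\Delta^*$ is homotopy equivalent to $S^1$:
$$\cdots\to H^m(\mathcal{X}^*)\xrightarrow{r} H^m(X_t)\xrightarrow{T-I} H^m(X_t)\to H^{m+1}(\mathcal{X}^*)\to\cdots,$$
where $r$ is restriction and its composite with the restriction $H^m(\mathcal{X})\to H^m(\mathcal{X}^*)$ recovers $i^*$. The second is the long exact cohomology sequence of the pair $(\mathcal{X},\mathcal{X}^*)$ combined with Poincar\'{e}--Lefschetz duality on the compact manifold-with-boundary obtained by passing to a closed sub-disk; this identifies $H^k(\mathcal{X},\mathcal{X}^*)$ with a homology group of $X_0$ (using that $X_0$ is a deformation retract of $\mathcal{X}$) and identifies $H^k(\mathcal{X})$ with $H^k(X_0)$. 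Splicing the two, and replacing $T-I$ by $N=(T-I)\circ u$ with $u=I-\tfrac{1}{2}(T-I)+\tfrac{1}{3}(T-I)^2-\cdots$ a unipotent, hence invertible, automorphism — so that $\ker N=\ker(T-I)$ and the cokernels agree compatibly — produces the Clemens--Schmid exact sequence, whose relevant segment is
$$H^m(\mathcal{X})\xrightarrow{i^*} H^m(X_t)\xrightarrow{N} H^m(X_t).$$
Exactness at the middle term then gives $\ker N=\mathrm{im}(i^*)$, which is the assertion.

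The main obstacle is precisely this last point: verifying exactness of the Clemens--Schmid sequence, namely correctly identifying all connecting homomorphisms, justifying the Poincar\'{e}--Lefschetz identifications in this ``compact up to the boundary'' setting, and, above all, checking that every arrow is a morphism of (limit) mixed Hodge structures — with $N$ of type $(-1,-1)$ — so that strictness of such morphisms lets one extract the desired short segment from the spliced long sequences. This rests on the full force of Steenbrink's construction of the limit mixed Hodge structure for a semi-stable K\"ahler degeneration, and it is there that the K\"ahler (projectivity) hypothesis on the components of $X_0$ is indispensable: without it the monodromy weight spectral sequence need not degenerate at $E_2$ and the polarizations are unavailable, so the argument breaks down. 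Since all of this is carried out in detail in \cite{cm} and \cite[Theorem $11.43$]{ps}, in the paper proper I would simply cite those sources.
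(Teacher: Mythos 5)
The paper states this theorem as a known result and gives no proof, deferring entirely to Clemens and Peters--Steenbrink, which is exactly what you do at the end of your proposal. Your sketch of the Clemens--Schmid argument (Wang sequence spliced with the pair sequence via Poincar\'{e}--Lefschetz duality, replacing $T-I$ by $N$ via the invertible factor $u$, and invoking strictness of morphisms of mixed Hodge structures from the K\"ahler hypothesis) is an accurate summary of the cited proof, so the proposal is correct and consistent with the paper's treatment.
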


 This theorem will soon be applied to our family $\pi: \mathcal{X}\rightarrow \Delta$
set up in the beginning of this section, as it is easily seen to be
a one-parameter K\"ahler degeneration for the theorem.

\subsection{Constructing local invariant cycle}\label{clic}
Write $T$ for the monodromy transformation and still consider the semi-stable K\"ahler degeneration
$$\pi: \mathcal{X}\rightarrow \Delta$$
with projective fibers by Lemma \ref{bu-proj}.  Recall that $T$ acts on the (singular)  cohomology ring of the fiber, after going around a circle centered at $0$. Let $A_t$ be an ample line bundle of $X_t$, for $t\in \Delta$.  Then we have $c_1(A_t)$ and $T(c_1(A_t))$, as singular cohomology classes (on $X_t$).
Recall that $c_1(A_t)$ is a positive class since $A_t$ is ample and we  expect that $T(c_1(A_t))$ is also a positive class. A cohomology class $c$ in $H^2(M, \mathbb{Z})$ on a compact complex variety $M$, is
said to be a \emph{positive class} if there is an ample line bundle $L$ on $M$  such that $c_1(L) = c$.

Write the union $\Delta^* (:=\Delta\setminus \{0\})= U_0 \cup U_1$,
where $U_i, i=0,1,$ are simply-connected open subsets of $\Delta^*$.  Further, we assume that if we write the intersection $(U_0 \cap U_1)$ as a union $(V_0\cup V_1)$,
then $V_0$ and $V_1$ are simply-connected open subsets and disjoint from each other.
\begin{prop}\label{t-rep-m}
For $i=0,1$, there exist (global) line bundles $L_i$ on $\mathcal{X}_{U_i}$ and two thin subsets $Z_i\subset U_i$ such that
for any $\tau\in V_0\setminus \{Z_0\cup Z_1\}$, ${L_0}|_{X_\tau}$ is  very ample and
${T(c_1({L_0}|_{X_\tau}))}$  can also be represented by a very ample line bundle ${L_1}|_{X_\tau}$ on $X_\tau$. That is, $$c_1({L_1}|_{X_\tau}) ={T(c_1({L_0}|_{X_\tau}))}$$ on $X_\tau$. Here, we recall the assumption that for any $t$, $X_t$ is projective.
\end{prop}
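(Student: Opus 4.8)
The plan is to work over the punctured disk and use the structure of the monodromy together with Grauert's semi-continuity machinery (Theorem \ref{Upper semi-continuity}, Lemmata \ref{gct}, \ref{ccs-bc}) to propagate a very ample line bundle across the two simply-connected pieces $U_0$ and $U_1$. First I would exploit the hypothesis that every fiber $X_t$ is projective: for a fixed base point $\star\in U_0$, choose a very ample line bundle $A_\star$ on $X_\star$. Since $U_0$ is simply-connected, the monodromy of $R^2\pi_*\mathbb{Z}$ (equivalently of $H^2(X_t,\mathbb{Z})$) is trivial over $U_0$, so the class $c_1(A_\star)\in H^2(X_\star,\mathbb{Z})$ extends to a flat section of the local system $R^2\pi_*\mathbb{Z}|_{U_0}$, hence (after checking it stays of type $(1,1)$, which holds since $h^{2,0}(X_t)$ is locally constant on the smooth locus and the class is integral and $(1,1)$ at one point) determines a line bundle $\widehat L_0$ on $\mathcal{X}_{U_0}$ with $c_1(\widehat L_0|_{X_\star})=c_1(A_\star)$ — this is essentially the argument packaged in Proposition \ref{global-B}/diagram \eqref{les-1}, now applied over the simply-connected base where $\alpha_s$ is an isomorphism. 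To upgrade $\widehat L_0|_{X_\tau}$ from "same Chern class" to an actual very ample line bundle I would use that $H^1(X_t,\mathcal{O}_{X_t})$ (hence $\mathrm{Pic}^0$) varies in a controlled way and twist by the pullback of a suitable line bundle on the base, or more simply pass to a relatively very ample bundle: since the family $\mathcal{X}_{U_i}\to U_i$ is a family of projective manifolds over a Stein (hence, by shrinking, ``locally projective'') base, there is $L_0$ on $\mathcal{X}_{U_0}$ that is relatively very ample; then set $Z_0$ to be the (thin) set where $L_0|_{X_t}$ fails to be very ample — by Grauert semi-continuity of $h^0$ and of the fiber dimension of the associated map, this locus is a proper analytic subset of $U_0$.

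Next I would run the parallel construction on $U_1$: over $V_0\subset U_0\cap U_1$, transport the class $T(c_1(L_0|_{X_\tau}))$. Here the key point is that $T$ is an automorphism of the integral cohomology ring commuting with Hodge structure (it preserves the polarization up to the weight filtration), so $T(c_1(L_0|_{X_\tau}))$ is again an integral $(1,1)$-class; since $A_\tau:=L_0|_{X_\tau}$ is ample, and the monodromy around $0$ acts by a unipotent automorphism respecting the cone of positive classes (this is where one uses that on a smooth projective fiber the monodromy preserves the set of ample classes — it sends one polarization to another polarization, as it is induced by a diffeomorphism of the pair and a deformation of complex structure along a loop), the transported class is again a positive class, i.e. $c_1$ of some ample $M_\tau$ on $X_\tau$. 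I would then spread $M_\tau$ out over the simply-connected $U_1$ exactly as on $U_0$: triviality of monodromy over $U_1$ gives a line bundle $L_1$ on $\mathcal{X}_{U_1}$, relative very ampleness (after twisting and shrinking) gives $L_1$ with $L_1|_{X_t}$ very ample off a thin set $Z_1\subset U_1$, and by construction $c_1(L_1|_{X_\tau})=T(c_1(L_0|_{X_\tau}))$ for $\tau\in V_0$ outside $Z_0\cup Z_1$, which is the asserted identity (note $V_0\setminus\{Z_0\cup Z_1\}$ is still uncountable, even co-thin).

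The main obstacle I expect is the assertion that $T$ carries a positive (ample) class to a positive class — i.e. that $T(c_1(L_0|_{X_\tau}))$ is representable by a very ample (at least ample) line bundle on $X_\tau$, not merely by an integral $(1,1)$-class. One has to argue that the monodromy transformation, being induced by parallel transport of the polarized variation of Hodge structure along $\gamma$, sends the polarization class of $X_\star$ to the polarization class of the ``nearby'' fiber $X_{\gamma(1)}=X_\star$ for a generic choice, and that this class is again ample on $X_\star$ with its given complex structure; this is where semi-stability (so that $T$ is unipotent, by the Monodromy Theorem) and the Kähler/projective hypotheses enter, and where one may need to restrict $\tau$ to a thin complement. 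A secondary technical point is the passage from ``fiberwise ample with constant Hilbert-type data'' to ``fiberwise very ample off a thin set'', which I would handle by Grauert's continuity theorem (Lemma \ref{gct}) applied to the pushforwards of high tensor powers together with the openness of the very-ampleness locus and the fact (Proposition \ref{slia}) that the singular locus of the relevant coherent sheaves is thin; after replacing $L_i$ by a fixed high power if necessary, one gets the stated $Z_i$.
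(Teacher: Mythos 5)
Your plan identifies the right difficulty but does not resolve it, and the paper resolves it by a genuinely different route that avoids the issue altogether.

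\textbf{The central gap.} You flag as the ``main obstacle'' the assertion that $T$ carries a positive class to a positive class, and then attempt to justify it by saying the monodromy is ``induced by parallel transport of the polarized variation of Hodge structure'' and ``sends the polarization class of $X_\star$ to the polarization class of the nearby fiber.'' This does not hold up: the monodromy diffeomorphism $\Phi_1:X_\star\to X_\star$ obtained by traveling around $\gamma$ is only a diffeomorphism, not a biholomorphism of $X_\star$ with its given complex structure, so $T=\Phi_1^*$ has no reason to preserve the K\"ahler (or ample) cone of $X_\star$. Without this preservation, the step from ``$T(c_1(L_0|_{X_\tau}))$ is an integral $(1,1)$-class'' to ``$T(c_1(L_0|_{X_\tau}))$ is $c_1$ of a very ample bundle on $X_\tau$'' has no proof, and it is precisely the content of the proposition.

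\textbf{Secondary issue.} You also propose to get a relatively very ample $L_0$ because the family ``over a Stein base'' is ``locally projective.'' This is false in general; Koll\'ar's \cite[Example 4]{k21a} (cited in Section~\ref{Introduction}) gives a smooth family of projective surfaces that is not even a projective family, and the entire point of Lemma~\ref{Chow-sm} and Remark~\ref{4.26} is that one can \emph{conclude} a version of local projectivity only after substantial work. Relatedly, your step of fixing one very ample $A_\star$ and spreading its class over $U_0$ is weaker than what is needed: without local projectivity there is no a priori reason that this particular class extends to a line bundle on $\mathcal{X}_{U_0}$. The paper instead ranges over \emph{all} integral classes of ample divisors on all fibers and uses a Lebesgue-negligibility argument (a countable union of proper analytic zero-loci covering $V$ forces one of them to be all of $V$), exactly to avoid having to extend a preassigned class.

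\textbf{How the paper sidesteps the positivity-of-$T$ issue.} The paper's proof (Lemma~\ref{li-coin} and the continuation) never proves directly that $T$ preserves the ample cone. It constructs $L_0$ on $\mathcal{X}_{U_0}$ by the Lebesgue-negligibility argument so that $L_0|_{X_t}$ is ample for $t$ outside a thin $Z_0\subset U_0$. It then produces $L_1$ on $\mathcal{X}_{U_1}$ by Lemma~\ref{l'lxs}, arranged so that $c_1(L_1|_{X_s})=c_1(L_0|_{X_s})$ for $s\in V_1$ --- that is, on the \emph{other} component of $U_0\cap U_1$, where there is no monodromy twist. Since $L_0|_{X_s}$ is ample there (for $s$ outside $Z_0$) and Chern class determines ampleness (Nakai--Moishezon), $L_1|_{X_s}$ is ample on $V_1\setminus Z_0$. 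Then, using that ampleness is open in the countable analytic Zariski topology on the \emph{connected} base $U_1$ (Nakai--Moishezon plus Barlet's cycle space theory), $L_1|_{X_t}$ is ample for $t\in U_1$ outside a thin $Z_1$, in particular over $V_0$. The identity $c_1(L_1|_{X_\tau})=T(c_1(L_0|_{X_\tau}))$ on $V_0$ is then automatic from the topology of $\Delta^*=U_0\cup U_1$: going from $V_1$ to $V_0$ through $U_0$ versus through $U_1$ differs by the monodromy. In short, ampleness of $L_1$ on the $V_0$-side is \emph{inherited} from ampleness on the $V_1$-side via connectedness of $U_1$, rather than proved by showing $T$ preserves positivity. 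Finally the passage from ample to very ample is done at the end by Theorem~\ref{Dem-fuj}, which matches your use of effective very ampleness.

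=== END REVIEW ===
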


Here a \emph{thin} subset of a complex space will mean a countable union of proper analytic subsets in this complex space.
This proposition connects algebraic properties (positivity or ampleness) with purely topological ones given via $T$. A generalization of our methods with the weaker assumption that
$X_t$ is projective for uncountably many $t$, is expected to be possible.
\begin{proof}[{Proof of Proposition \ref{t-rep-m}}]
We first deal with the ample case, i.e., find (global) line bundles $L_i$ on $\mathcal{X}_{U_i}$ and two thin subsets $Z_i\subset U_i$ such that
for any $\tau\in V_0\setminus \{Z_0\cup Z_1\}$, ${L_0}|_{X_\tau}$ is ample and
${T(c_1({L_0}|_{X_\tau}))}$ can also be represented by an ample line bundle ${L_1}|_{X_\tau}$ on $X_\tau$.

We start by:
\begin{lemma}\label{li-coin}
For $i=0,1$, there exist (global) line bundles $L_i$ on $\mathcal{X}_{U_i}$ and some thin subset  $Z_{0}$ of $U_0$ such that for any $s \in V_1\setminus Z_{0}$, the restrictions of $L_0$ and $L_1$ on $X_s$ have the same positive first Chern class (so both of them on $X_s$ are ample).
\end{lemma}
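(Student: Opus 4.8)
The plan is to produce $L_0$ and $L_1$ with a \emph{common} first Chern class on $\mathcal{X}_{V_1}$, exploiting that $U_0$, $U_1$ and $V_1$ are all simply connected (so the local systems $R^2\pi_*\mathbb{Z}$ are trivial there), and then to sweep the failure of ampleness into a thin set by a counting argument. First I would set up the simply connected pieces: since $\pi$ is smooth over $\Delta^*$, Ehresmann's theorem \cite{E} gives canonical identifications $H^2(\mathcal{X}_{U_i},\mathbb{Z})\cong H^2(X_s,\mathbb{Z})$ for any $s\in U_i$, and likewise over $V_1$, compatibly with the restriction $\mathcal{X}_{V_1}\subset\mathcal{X}_{U_i}$ because the flat sections of the local system over a simply connected base are canonical. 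By Proposition \ref{01-invariance} (applicable over all of $\Delta$ in the present semi-stable set-up, all fibres being projective), $h^{0,2}(X_t)$ is constant; hence over the Stein bases $U_i$ the sheaf $R^2\pi_*\mathcal{O}$ is locally free (Lemma \ref{gct}), satisfies base change (Lemma \ref{ccs-bc}), and $H^2(\mathcal{X}_{U_i},\mathcal{O})\cong\Gamma(U_i,R^2\pi_*\mathcal{O})$ (Theorem \ref{leray}). For $\gamma\in H^2(\mathcal{X}_{U_i},\mathbb{Z})$ the image $\imath(\gamma)$ under $\mathbb{Z}\to\mathcal{O}$ is then a holomorphic section of $R^2\pi_*\mathcal{O}$ whose value at $s$ is the projection of $\gamma|_{X_s}$ to $H^2(X_s,\mathcal{O})$, so $\{s\in U_i:\gamma|_{X_s}\ \text{is of type}\ (1,1)\}$ is the zero locus of $\imath(\gamma)$, an analytic subset of $U_i$, hence all of $U_i$ or a discrete set.

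Next I would extract a common polarization class. For each $s\in V_1$ the projective variety $X_s$ carries an ample line bundle whose class, transported to the countable group $H^2(\mathcal{X}_{U_0},\mathbb{Z})$ via the trivialization, lands somewhere; hence there is one class $\gamma_0\in H^2(\mathcal{X}_{U_0},\mathbb{Z})$ and an uncountable subset $S\subseteq V_1$ with $\gamma_0|_{X_s}=c_1(A_s)$ for an ample $A_s$ on $X_s$, $s\in S$. Then $\gamma_0|_{X_s}$ is of type $(1,1)$ for uncountably many $s\in U_0$, so by the analyticity above $\imath(\gamma_0)\equiv0$ on $U_0$, and the exponential sequence gives $\gamma_0=c_1(L_0)$ for a line bundle $L_0$ on $\mathcal{X}_{U_0}$. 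Transporting $\gamma_0|_{\mathcal{X}_{V_1}}$ into $H^2(\mathcal{X}_{U_1},\mathbb{Z})$ through the $V_1$-trivialization yields $\gamma_1$ with $\gamma_1|_{\mathcal{X}_{V_1}}=\gamma_0|_{\mathcal{X}_{V_1}}$ (here the compatibility of the $U_0$- and $U_1$-trivializations over $V_1$ is what makes these two transports coincide, rather than differ by monodromy), and $\gamma_1|_{X_s}=c_1(A_s)$ is $(1,1)$ for uncountably many $s\in U_1$, so $\gamma_1=c_1(L_1)$ for a line bundle $L_1$ on $\mathcal{X}_{U_1}$. By construction $c_1(L_0)|_{X_s}=c_1(L_1)|_{X_s}$ for every $s\in V_1$.

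Finally I would establish ampleness off a thin set. Choose $m_0$ so that $L_0^{\otimes m_0}|_{X_s}$ is very ample for uncountably many $s\in S$ (each ample $A_s\sim L_0|_{X_s}$ furnishes some $m_0(s)$, and $S$ is the countable union of the $\{s\in S:m_0(s)\le m\}$, one of which is uncountable). For this fixed $m_0$: the loci $\{s:h^i(X_s,L_0^{\otimes m_0}|_{X_s})\ge 1\}$, $i>0$, are analytic (Theorem \ref{Upper semi-continuity}) and miss those uncountably many $s$ by Serre vanishing, hence are discrete, so $h^0$ equals the locally constant Euler characteristic off a thin set and $R^0\pi_*(L_0^{\otimes m_0})$ is locally free there (Lemma \ref{gct}); similarly the loci where the relative base locus of $L_0^{\otimes m_0}$ is nonempty over a fibre, or where the associated relative map fails to separate points or tangent vectors, are images of analytic subsets of $\mathcal{X}_{U_0}$ and are proper (they avoid the same uncountably many $s$), hence discrete. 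Letting $Z_0\subset U_0$ be the union of these thin loci, for $s\in V_1\setminus Z_0$ the bundle $L_0^{\otimes m_0}|_{X_s}$ is very ample, hence $L_0|_{X_s}$ is ample; since $c_1(L_1|_{X_s})=c_1(L_0|_{X_s})$, the bundle $L_1|_{X_s}$ differs from $L_0|_{X_s}$ by a numerically trivial class and is ample too, so their common first Chern class on $X_s$ is a positive class, as required.

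The hard part will be Step 3: making rigorous that very ampleness of a restriction propagates off a thin set in the merely proper (not a priori projective) family $\mathcal{X}_{U_0}\to U_0$, i.e.\ realizing base-point-freeness and separation of points and tangents as analytic rank conditions whose failure loci are analytic, and then using that a proper analytic subset of the one-dimensional base is discrete, in the spirit of the analytic-Zariski semicontinuity of Theorem \ref{Upper semi-continuity}. A secondary point to watch is the genuine compatibility over $\mathcal{X}_{V_1}$ of the trivializations induced from $U_0$ and from $U_1$, since that is exactly what makes $L_0$ and $L_1$ share the \emph{same} Chern class rather than ones related by $T$.
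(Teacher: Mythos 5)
Your overall strategy is the paper's: use the countability of $H^2(X,\mathbb{Z})$ against the uncountability of the parameter set to pin down a single integral class that is ample on uncountably many fibres, kill its image in $H^2(\mathcal{X}_{U_i},\mathcal{O})$ by the identity theorem for sections of the vector bundle $R^2\pi_*\mathcal{O}$ (local freeness coming from the invariance of $h^{0,2}$), lift it to line bundles via the exponential sequence, and match the two bundles over $V_1$ using that the flat trivializations of $R^2\pi_*\mathbb{Z}$ over $U_0$ and $U_1$ restrict compatibly to the simply connected $V_1$. The only structural difference is the order: the paper first builds $L_0$ on $\mathcal{X}_{U_0}$ with fibrewise ampleness off a thin set and then transports $L_0|_{\mathcal{X}_{V_1}}$ to $U_1$ by its Lemma \ref{l'lxs}, whereas you fix the common class over $V_1$ first and realize it on both $U_i$ simultaneously; this reordering is harmless.

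The genuine gap is exactly where you flag it, in Step 3. Realizing ``fails to separate points'' as an analytic condition whose failure locus has analytic image in $U_0$ does not follow from Remmert's proper mapping theorem as you set it up: the relevant locus lives in $\mathcal{X}\times_{U_0}\mathcal{X}$ minus the diagonal, which is not proper over $U_0$, and the closure of an analytic subset of that open set need not be analytic in the full fibre product (the non-separated pairs can accumulate on the diagonal). One can repair this by working on the blow-up of the diagonal, where point- and tangent-separation unify into surjectivity of a single map of coherent sheaves and Grauert semicontinuity applies, but some such device is needed. The paper avoids the issue entirely: it deduces the thin-set openness of ampleness from the Nakai--Moishezon criterion combined with Barlet's theory of cycle spaces \cite{bar} --- the subvarieties of the fibres form countably many components of the relative cycle space, each proper over the base, and the intersection numbers entering Nakai--Moishezon are locally constant along them, so the failure locus of ampleness is a countable union of proper analytic subsets of the (one-dimensional, hence discrete-complemented) base. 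Either route works, but as written your Step 3 is not yet a proof.
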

\begin{proof}[{Proof of Lemma \ref{li-coin}}]
To get Lemma \ref{li-coin}, we need:
 \begin{lemma}\label{l'lxs}
Let $\pi_U:\mathcal{X}_U\rightarrow U$ be a smooth subfamily of $\pi$ over a simply-connected open subset $U$ of $\Delta^*$.  Let $V \subset U$ be a  simply-connected open subset with a line bundle $L_V$ on $\mathcal{X}_V$.   Then for any $s \in V$, there exists a line bundle $L_U$ on $\mathcal{X}_U$ with $c_1(L_U|_{X_s}) =c_1(L_V|_{X_s})$.
\end{lemma}
\begin{proof} We use an idea similar to that of \cite[Lemma 3.6, Proposition 4.14]{rt}.
Use the commutative diagram:
\begin{equation}\label{les}
\xymatrix@C=0.5cm{
  \cdots \ar[r]^{}
  & H^1(\mathcal{X}_U, \mathcal{O}^*_{\mathcal{X}_U})\ar[d]_{i} \ar[r]^{c_1}
  & H^2(\mathcal{X}_U, \mathbb{Z}) \ar[d]_{j} \ar[r]^{\iota_{U}}
  & H^2(\mathcal{X}_U, \mathcal{O}_{\mathcal{X}_U})\ar[d]^{k}\ar[r]^{} & \cdots \\
   \cdots \ar[r]
  &H^{1}(\mathcal{X}_V,\mathcal{O}_{\mathcal{X}_V}^*) \ar[r]^{c_1}
  & H^{2}(\mathcal{X}_V,\mathbb{Z}) \ar[r]^{\iota_{V}}
  & H^{2}(\mathcal{X}_V,\mathcal{O}_{\mathcal{X}_V})\ar[r]&\cdots,}
\end{equation}
where the vertical arrows $i, j, k$ are restriction maps and $j$ is an isomorphism since the families here are over simply-connected domains.

With $c_1(L_V) \in H^2(\mathcal{X}_V, \mathbb{Z}) = H^2(\mathcal{X}_U, \mathbb{Z})$, we only need to prove $\iota_{U}(c_1(L_V)) =0$ by the exact sequence.  By the assumption on $L_V$, we have $\iota_{V}(c_1(L_V)) =0$ so that $\iota_{U}(c_1(L_V))$ is zero because $\iota_{U}(c_1(L_V))$ can be identified with a holomorphic section of the vector bundle $R^2{\pi_U}_*\mathcal{O}_{\mathcal{X}_U}$ on $U$ and vanishes identically if it vanishes on an open subset $V$.  Here the local freeness of  $R^2{\pi_U}_*\mathcal{O}_{\mathcal{X}_U}$ is implied by the deformation invariance of $(0,2)$-Hodge numbers (cf. Proposition \ref{01-invariance}, or for this smooth family case see \cite[Proposition 9.20]{[V]} or \cite[Theorem 1.3 or 3.1]{RZ15}) and Grauert's continuity Lemma \ref{gct}.
We can now complete the proof of the lemma by adding one row on $X_s$ to the bottom of \eqref{les} or the following remark with more information.
\end{proof}
\begin{rem}\label{rem-4.7}
For any fixed $s\in V$, one can use Wehler's argument (such as in \cite[Remark 3.7]{rt}) to find some line bundle $L_U$ on $\mathcal{X}_U$ such that $L_U|_{X_s}=L_V|_{X_s}$. If $L_V|_{X_s}$ is ample, this gives that
$c_1(L_U|_{X_s})$ is a positive class on $X_s$.
\end{rem}

\noindent\emph{Proof of Lemma \ref{li-coin} (continued).} Let $V$ be any simply-connected
domain in $\Delta^*$. By the argument of \cite[Lemma 3.6]{rt}, there is indeed a line bundle $L_V$ on $\mathcal{X}_V$ such that for any $s \in V\setminus Z_V$ with a countable union $Z_V$ of analytic subsets in $V$, ${L_V}|_{X_s}$ is ample on $X_s$. In fact, this argument can be divided into two steps. For the first step,
consider an ample line bundle $L_{t_0}$ on $X_{t_0}$ with some $t_0\in V$ and its first Chern class $c:=c_{1}(L_{t_0})\in H^2(X,\mathbb{Z})$, where $X$ is the underlying differentiable manifold of $X_t$ for each $t\in V$. Then by the exact sequence
\begin{equation}
\label{eshdis}
 \cdots\rightarrow  H^1(\mathcal{X}_V, \mathcal{O}^*_{\mathcal{X}_V})\rightarrow H^2(\mathcal{X}_V, \mathbb{Z})
\rightarrow H^2(\mathcal{X}_V, \mathcal{O}_{\mathcal{X}_V})\rightarrow\cdots
\end{equation}
and  the deformation invariance
of $h^{0,2}(X_t)$ over the whole $V$, we reach an analytic subset $Z_c$,
the zero set in $V$ of the holomorphic section $s_c\in \Gamma(V,R^2\pi_*\mathcal{O}_{\mathcal{X}_V})$ induced by $c$.  Now on $V$, the projectiveness of $X_t$ gives rise to
$$\bigcup_c Z_c\supseteq V$$
with the union taken over all the integral classes $c\in H^2(X,\mathbb{Z})$ (countably many) satisfying that $c=c_1(H_t)$ for some ample line bundle $H_t$ on $X_{t}, t\in V$.  Since a countable union of proper analytic subsets is Lebesgue negligible, in the union above there should be some $\tilde{c}\in H^2(\mathcal{X}_V, \mathbb{Z})$ induced by some ample line bundle $L_{\tilde{t}_0}$ on $X_{\tilde{t}_0}$ with some $\tilde{t}_0\in V$  satisfying $Z_{\tilde{c}}\supseteq V$, i.e., $Z_{\tilde{c}}=V$.  (Here $\tilde{t}_0$ could be different from $t_0$ in the beginning.) That is, $0 \equiv s_{\tilde{c}}\in\Gamma(V, R^2\pi_*\mathcal{O}_{\mathcal{X}_V})$. By using the identification \eqref{identif} preceding Theorem \ref{leray} and the long exact sequence \eqref{eshdis}, this
implies that $\tilde c$ is the image of some element in $H^1(\mathcal{X}_V,\mathcal{O}^*_{\mathcal{X}_V})$. This element is the desired global holomorphic line bundle ${L_V}$ on the total space $\mathcal{X}_V$ because its restriction ${L_V}|_{X_{\tilde{t}_0}}$ to $X_{\tilde{t}_0}$ is $L_{\tilde{t}_0}$ which is ample as mentioned earlier. Now we come to the second step: one uses the standard fact that the ampleness condition is open with respect to the countable analytic Zariski topology of $V$, which follows from the Nakai--Moishezon criterion for ampleness and the Barlet theory of cycle spaces \cite{bar}.

The above argument applies to $U_0$ and gives a holomorphic line bundle $L_0$ over $\mathcal{X}_{U_0}$ and a thin subset $Z_{0}$ of $U_0$ such that for any $s \in U_0\setminus Z_{0}$ the restriction $L_0|_{X_s}$ is ample.
Then, by Lemma \ref{l'lxs} (applied to $V=V_1$ and $U=U_1$) there exists a  (global) line bundle $L_1$ on $\mathcal{X}_{U_1}$ such that for any $s \in V_1\setminus Z_{0}$, the restrictions of $L_0$ and $L_1$ on $X_s$ have the same positive first Chern class. We have completed the proof of Lemma  \ref{li-coin}.
\end{proof}

\noindent\emph{Proof of Proposition \ref{t-rep-m} (continued).}
Despite of Lemma \ref{li-coin}, due to the monodromy effect, the first Chern classes of the restrictions of $L_0$ and $L_1$ could be different on $X_t$ for  $t$ of $V_0$. Using monodromy transformations, we may write
\begin{equation}\label{mono}
  c_1(L_1|_{X_t}) = T(c_1(L_0|_{X_t}))
\end{equation}
for the monodromy transformation $T$ (or $T^{-1}$). Again, as in the proof of Lemma \ref{li-coin}, one can find some thin subset $Z_1\subset U_1$, such that for any $\tau\in V_0\setminus \{Z_0\cup Z_1\}$,   $L_0|_{X_\tau}$ and $L_1|_{X_\tau}$ are ample line bundles on $X_\tau$ and satisfies \eqref{mono}. Thus, we have proved the ample case of Proposition \ref{t-rep-m}.

By the effective very ampleness Theorem \ref{Dem-fuj} below, one can obtain the desired very ample case by modifying the above argument and thus completes the proof of Proposition \ref{t-rep-m}.
\end{proof}
\begin{thm}[{\cite[Corollary 2]{Dem93} and also \cite{s96}}]\label{Dem-fuj}
If $A$ is an ample line bundle over an $n$-dimensional projective manifold $X$, then $K_X^{\otimes 2}\otimes A^{\otimes k}$ is very ample for $k>2C(n):=4C_nn^n$ with $C_n<3$ depending only on $n$.
\end{thm}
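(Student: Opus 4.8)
Since this is Demailly's effective very ampleness bound, the plan is to reconstruct the standard $L^2$ proof. Set $L:=K_X^{\otimes 2}\otimes A^{\otimes k}$ and write $L=K_X\otimes F$ with $F:=K_X\otimes A^{\otimes k}$. First I would note that $L$ is very ample provided that, for every (possibly coinciding) pair $x_0,x_1\in X$, the restriction map
$$H^0(X,L)\longrightarrow H^0\bigl(X,\,L\otimes\mathcal{O}_X/(\mathfrak{m}_{x_0}^2\cap\mathfrak{m}_{x_1}^2)\bigr)$$
is surjective, since this single statement encodes global generation, separation of points and separation of tangent vectors simultaneously. By Nadel's vanishing theorem (see e.g. \cite{dem}), applied to $F$, such surjectivity holds as soon as $F$ carries a singular hermitian metric $e^{-\varphi}$ with curvature current $\tfrac{i}{2\pi}\Theta_\varphi(F)\geq\varepsilon\omega$ and multiplier ideal sheaf $\mathcal{I}(\varphi)\subseteq\mathfrak{m}_{x_0}^2\cap\mathfrak{m}_{x_1}^2$: then $H^1(X,L\otimes\mathcal{I}(\varphi))=0$, and $\mathcal{O}_X/(\mathfrak{m}_{x_0}^2\cap\mathfrak{m}_{x_1}^2)$ is a quotient of $\mathcal{O}_X/\mathcal{I}(\varphi)$. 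By Skoda's integrability criterion, the inclusion $\mathcal{I}(\varphi)\subseteq\mathfrak{m}_x^2$ follows from the Lelong-number estimate $\nu(\varphi,x)\geq n+1$. So everything reduces to a \emph{quantitative mass-concentration} statement: construct a closed positive current $T\in c_1(F)$ with $\nu(T,x_0),\nu(T,x_1)\geq n+1$, Lelong numbers $<1$ elsewhere, and $k=O(n^n)$ — the bound on $k$ being the whole point.

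For the construction I would first dispose of the non-positive $K_X$ by writing $F=\bigl(K_X\otimes A^{\otimes(n+1)}\bigr)\otimes A^{\otimes(k-n-1)}$, where $K_X\otimes A^{\otimes(n+1)}$ is nef by Mori's Cone Theorem and thus carries a smooth almost-positive metric; the real work is to concentrate the mass of a current in the ample class $c_1(A^{\otimes(k-n-1)})$. A crude Riemann--Roch count already produces a section $\sigma\in H^0(X,A^{\otimes m(k-n-1)})$ vanishing to order $\geq m(n+1)$ at $x_0$ and $x_1$ once $(k-n-1)^n(A^n)>2(n+1)^n$, and $T:=\tfrac1m\operatorname{div}(\sigma)+(\text{background})$ then has the right Lelong numbers at $x_0,x_1$ with $k$ only polynomial in $n$. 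The trouble is that this gives no control over the remaining components of $\operatorname{div}(\sigma)$, which may be divisorial and carry Lelong numbers $\geq 1$, wrecking the triviality of $\mathcal{I}(T)$ off $\{x_0,x_1\}$ and, more seriously, obstructing the dimension induction needed to actually get clean singularities.

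The remedy — and the source of the $n^n$ — is the inductive mass concentration on $\dim X$ of Demailly and Siu: pick a smooth hypersurface $H\in|A^{\otimes p}|$ through $x_0$ and $x_1$, apply the $(n-1)$-dimensional statement on $H$ (this carries the $K$-twist along by adjunction, which is why $L$ involves $2K_X$ rather than $K_X$), then extend the weight thus produced from $H$ to $X$ by the Ohsawa--Takegoshi $L^2$-extension theorem \cite{[OT]} — itself an extension principle for $K_X$-twisted bundles — glued against the nef background metric. Each descent step forces a multiplicative loss of the order of the current dimension in the ampleness required of the auxiliary class, so $n$ iterations inflate the requirement by a factor of order $n!\asymp n^n$, while the leftover constants sum (geometrically, dominated by $e$) into a bounded $C_n<3$. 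Running this for the two points simultaneously — blowing up $x_0,x_1$ (possibly infinitely near), or a diagonal argument on $X\times X$ — and then choosing a \emph{generic} member of the resulting family of weights, semicontinuity of multiplier ideals makes $\mathcal{I}(\varphi)$ no larger than $\mathfrak{m}_{x_0}^2\cap\mathfrak{m}_{x_1}^2$ at the marked points and trivial elsewhere. Feeding this metric into Nadel vanishing gives the surjectivity above for every $(x_0,x_1)$, hence very ampleness of $L$, as soon as $k>4C_nn^n$.

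The hard part is precisely this mass-concentration step, and it is why the bound is $O(n^n)$ and not merely polynomial: one must keep the constructed current sub-log-canonical away from $x_0,x_1$ while forcing Lelong number $\geq n+1$ at them, throughout the inductive $L^2$-extension, where the zero divisor handed back by Ohsawa--Takegoshi is hard to pin down. I would control this by self-intersection (numerical positivity) inequalities for the exceptional divisors of the blow-ups — Demailly's approach in \cite{Dem93} — or by a more flexible choice of auxiliary metrics following Siu \cite{s96}, combined with the genericity selection.
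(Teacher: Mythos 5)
This theorem is stated in the paper purely by citation to Demailly \cite[Corollary 2]{Dem93} and Siu \cite{s96}; the paper does not give or sketch a proof, so the comparison is against the cited sources rather than an internal argument.

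Your outline correctly identifies the architecture that both sources use: reduce very ampleness to surjectivity of the evaluation maps onto $\mathcal{O}_X/(\mathfrak{m}_{x_0}^2\cap\mathfrak{m}_{x_1}^2)$; obtain that surjectivity from Nadel vanishing for $K_X\otimes F$ once $F$ carries a singular metric with strictly positive curvature current and $\mathcal{I}(\varphi)\subseteq\mathfrak{m}_{x_0}^2\cap\mathfrak{m}_{x_1}^2$; get the inclusion from Skoda's Lelong-number criterion; and split $F$ as (nef) $K_X\otimes A^{\otimes(n+1)}$ plus an ample remainder, so the entire problem is to concentrate Lelong number $\ge n+1$ at two prescribed points while staying sub–log-canonical elsewhere. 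All of that is accurate, and your observation that a naive $h^0$-count produces a high-multiplicity section but gives no handle on the residual divisor is exactly the obstacle both authors have to overcome.

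Where the proposal falls short is precisely the step you flag as ``the hard part'': the quantitative mass-concentration argument is gestured at rather than carried out, and it is the only place where the theorem's content — the effective bound $k>4C_nn^n$ — can possibly come from. As written, the outline would establish (with the ingredients already in place) that \emph{some} $k$ works, but not the stated $O(n^n)$ bound. Two of the surrounding heuristics are also slightly off. First, the attribution is muddled: the Ohsawa--Takegoshi inductive restriction scheme you describe is essentially Siu's route in \cite{s96}; Demailly's proof in \cite{Dem93} instead produces the singular metric by solving degenerate Monge--Amp\`ere equations and controls the residual components with a self-intersection inequality for closed positive currents — no $L^2$-extension step, and the $n^n$ arises from the intersection-number combinatorics, not from $n$ multiplicative losses summing to $n!$. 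Second, the appeal to semicontinuity of multiplier ideals is stated backwards: generic perturbation can only \emph{enlarge} $\mathcal{I}(\varphi)$ (Demailly--Koll\'ar), which is how one gets triviality away from $x_0,x_1$, but it works \emph{against} the containment $\mathcal{I}(\varphi)\subseteq\mathfrak{m}_{x_i}^2$ — there one must argue separately that the high Lelong numbers at $x_0,x_1$ are preserved through the construction, rather than deduced from genericity. These are repairable, but the central quantitative construction is still missing, so the proposal is a correct plan rather than a proof.
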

Moreover, one has:
\begin{cor}\label{tq}
For $i=0,1$, there exist two thin subsets $Z^i\subset U_i$ and some very ample line bundle $L_\tau$ on the fiber $X_\tau$ for any $\tau\in V_0\setminus \{Z^0\cup Z^1\}$ such that for every $k \in \mathbb{N}$, $T^k(c_1(L_\tau))$ is the first Chern class of some very ample line bundle $L_{\tau,k}$ on $X_\tau$.
\end{cor}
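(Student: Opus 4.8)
The plan is to iterate the construction underlying Proposition \ref{t-rep-m}. Starting from the global line bundles $L_0$ on $\mathcal{X}_{U_0}$ and $L_1$ on $\mathcal{X}_{U_1}$ produced there, I would repeatedly transport a line bundle back and forth between the two charts via Lemma \ref{l'lxs}, choosing at each stage which of the two components $V_0,V_1$ of $U_0\cap U_1$ to match along so that each round trip winds once more around $0\in\Delta$ and absorbs one additional power of the monodromy $T$. The elementary move is the following: given a global line bundle $M$ on $\mathcal{X}_{U_0}$ that is very ample on all fibres outside a thin subset and satisfies $c_1(M|_{X_\tau})=T^j(c_1(L_0|_{X_\tau}))$ for $\tau\in V_0$ outside a thin subset, first transport $M$ across $V_1$ to a global line bundle $N$ on $\mathcal{X}_{U_1}$ with $N|_{X_{s_0}}\cong M|_{X_{s_0}}$ at one chosen $s_0\in V_1$; then transport $N$ back across $V_0$ to a line bundle $M'$ on $\mathcal{X}_{U_0}$. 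Since $U_0$ and $U_1$ are simply connected while $U_0\cup U_1=\Delta^*$ is not, the two chart-trivialisations of the local system $R^2\pi_*\mathbb{Z}$ on $\Delta^*$ can be arranged to agree over $V_1$ and to differ over $V_0$ by exactly $T$, and tracking $c_1$ through the two transports then yields $c_1(M'|_{X_\tau})=T^{j+1}(c_1(L_0|_{X_\tau}))$ for $\tau\in V_0$ outside a thin subset.

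At each of its two steps this move uses Lemma \ref{l'lxs} — whose hypothesis, the local freeness of $R^2\pi_*\mathcal{O}$, is available from the constancy of $h^{0,2}$ for the smooth projective family over $\Delta^*$ (Proposition \ref{01-invariance}) via Grauert's continuity Lemma \ref{gct} — together with Remark \ref{rem-4.7}, to realise the prescribed integral class by an honest line bundle on the larger chart; and then the openness of ampleness in the countable analytic Zariski topology, as in the proof of Lemma \ref{li-coin} (Nakai--Moishezon plus Barlet's cycle spaces), upgraded to openness of very ampleness by Theorem \ref{Dem-fuj} exactly as at the end of the proof of Proposition \ref{t-rep-m}, to spread very ampleness from the single fibre $X_{s_0}$ to all fibres of the new chart outside a thin subset, in particular to those over $V_0$.

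Iterating the elementary move starting from $M=L_0$ (so $j=0$) produces, for every $k\ge 1$, a global line bundle $L^{(k)}$ on $\mathcal{X}_{U_0}$ and a thin subset $\zeta_k\subset U_0$ with $L^{(k)}|_{X_\tau}$ very ample and $c_1(L^{(k)}|_{X_\tau})=T^k(c_1(L_0|_{X_\tau}))$ for all $\tau\in V_0\setminus\zeta_k$; set $L^{(0)}:=L_0$ and $\zeta_0:=Z_0$. Because $\Delta^*$ is one-dimensional, every thin set is countable and a countable union of thin sets is again thin, so $Z^0:=\bigcup_{k\ge 0}\zeta_k\subset U_0$ is thin; take $Z^1:=Z_1\subset U_1$. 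Finally, for $\tau\in V_0\setminus\{Z^0\cup Z^1\}$ I would set $L_\tau:=L_0|_{X_\tau}$ and $L_{\tau,k}:=L^{(k)}|_{X_\tau}$; then $L_\tau$ and each $L_{\tau,k}$ are very ample with $c_1(L_{\tau,k})=T^k(c_1(L_\tau))$, which is exactly the statement of Corollary \ref{tq}.

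The point requiring the most care is the bookkeeping in the elementary move: one must fix once and for all an orientation of the generator of $\pi_1(\Delta^*)$ and match along $V_1$ then $V_0$ consistently, so that the round trip really composes to $T^{j+1}$ and not to $T^{j-1}$ or to an alternating $T^{\pm1}$ that never leaves $\{T^{\pm1}\}$. A more technical point is the openness of very ampleness (not just ampleness) in the countable analytic Zariski topology, needed to keep the accumulated exceptional locus thin through infinitely many transports; this is of the same nature as the openness statements already invoked for Lemma \ref{li-coin} and Proposition \ref{t-rep-m}, so I expect no essentially new difficulty beyond organising the induction.
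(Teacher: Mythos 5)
Your proposal is correct and follows essentially the same route as the paper, which simply iterates (the proof of) Proposition \ref{t-rep-m}: transport the bundle between the two charts via Lemma \ref{l'lxs} and Remark \ref{rem-4.7}, pick up one power of $T$ per circuit of the puncture, restore (very) ampleness outside a thin set via Nakai--Moishezon/Barlet and Theorem \ref{Dem-fuj}, and take the countable union of the resulting thin sets $Z^0$, $Z^1$. The only differences are organizational --- you keep every $L^{(k)}$ on $\mathcal{X}_{U_0}$ by doing full round trips, whereas the paper alternates the $L_k$ between $\mathcal{X}_{U_0}$ and $\mathcal{X}_{U_1}$ --- and your explicit attention to the orientation bookkeeping (so the iteration does not oscillate within $\{T^{\pm 1}\}$) is, if anything, more careful than the paper's one-line induction.
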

\begin{proof}
 It suffices to apply (the proof of) Proposition \ref{t-rep-m} inductively. In fact, the induction argument on the construction of $L_\bullet$ therein will give rise to the desired  line bundles $L_k,$ $k\in \mathbb{N},$ over $\mathcal{X}_{U_{m}}$ (with $m=0$ or $1$ as $k$ is even or odd), the countable union $Z^0:=\cup_{i\geq 0}Z_{2i}$ of thin subsets in $U_0$ and the countable union $Z^1:=\cup_{i\geq 0}Z_{2i+1}$ of thin subsets in $U_1$ such that for any $\tau\in V_0\setminus \{Z^0\cup Z^1\}$,
each $L_k|_{X_\tau}$ is very ample  and
$$  c_1(L_{k+1}|_{X_{\tau}}) = T(c_1(L_k|_{X_{\tau}})).$$
By choosing $L_\tau$ as $L_0|_{X_\tau}$ and $L_{\tau,k}$ as $L_k|_{X_\tau}$, one completes the proof.
\end{proof}

Recall that $T$ is a monodromy transformation, so that $T$ is a ring isomorphism.   In particular, it preserves the cup product. Thus,  we have
\begin{lemma}\label{cup-p}
For the ample divisor $A_t$ on $X_t$,
$${T(c_1(A_t))}^n = c_1(A_t)^n,$$
where $n$ is the dimension of the fiber $X_t$.
\end{lemma}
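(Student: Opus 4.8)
The plan is to exploit that $T$ is not merely a linear automorphism of the cohomology of the fiber but a \emph{graded ring} automorphism, and to combine this with the one-dimensionality of top cohomology. Since $T$ preserves the cup product (as recalled just above the statement), we have $T(c_1(A_t))^n = T\bigl(c_1(A_t)^n\bigr)$, so the lemma reduces to checking that $T$ fixes the class $c_1(A_t)^n \in H^{2n}(X_t,\mathbb{C})$, where $n=\dim_{\mathbb{C}} X_t$.

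First I would note that $T$ respects the grading, hence restricts to an automorphism $T|_{H^{2n}}$ of the one-dimensional complex vector space $H^{2n}(X_t,\mathbb{C})$; such an automorphism is multiplication by a nonzero scalar $\lambda$, and it remains to see $\lambda=1$. In our semi-stable setting this is immediate from the Monodromy Theorem of Landman recalled above: $T$ is unipotent, and a unipotent endomorphism of a one-dimensional vector space is the identity, so $\lambda=1$. (Alternatively, and without invoking semi-stability, one may observe that $T$ is induced by an orientation-preserving diffeomorphism of the fiber, namely the monodromy of the locally trivial $C^\infty$ fiber bundle $\mathcal{X}^*\to\Delta^*$ furnished by Ehresmann's theorem; hence $T$ acts trivially on the fundamental class, i.e. on $H^{2n}(X_t,\mathbb{Z})\cong\mathbb{Z}$, which again forces $\lambda=1$.)

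Putting the pieces together, $T(c_1(A_t))^n = T\bigl(c_1(A_t)^n\bigr) = \lambda\, c_1(A_t)^n = c_1(A_t)^n$, which is the claimed equality of classes in $H^{2n}(X_t)$. I do not expect any real obstacle here; the only point that deserves to be stated explicitly is why $T$ acts as the identity on top cohomology, and for that the unipotency coming from the Monodromy Theorem (already available in the semi-stable case treated in this section) is the cleanest input, with the orientation-preservation argument as a coordinate-free alternative.
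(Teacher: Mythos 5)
Your proof is correct and follows the same route the paper implicitly takes: since $T$ is a ring isomorphism, $T(c_1(A_t))^n = T(c_1(A_t)^n)$, and the equality then rests on $T$ acting trivially on the top-degree class. The paper leaves this last step unstated, whereas you supply the missing justification (unipotency on the one-dimensional $H^{2n}$, or equivalently orientation-preservation of the monodromy diffeomorphism), which tightens the argument without changing its substance.
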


Next, we need the following important result, to be proved in the more concrete  Corollary \ref{deg-cor} and Proposition \ref{fgn} in Subsection \ref{pro-gt}.
\begin{prop}\label{finite}
Suppose that $\{L_i\}_{i\in J}$ is a set of very ample line bundles
     on an $n$-dimensional smooth projective variety of general type, and that $\sup_i \{L_i^n\}$ is finite. Then, $\{L_i\}_{i\in J}$ forms a bounded family.  In particular, $\{c_1(L_i)\}_{i \in J}$
       is a finite set.
\end{prop}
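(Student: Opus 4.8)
The plan is to deduce everything from the boundedness of the Hilbert polynomials of the embedded varieties, combined with the general-type hypothesis to control the canonical degree. First I would fix notation: for each $i\in J$, the very ample line bundle $L_i$ on the $n$-dimensional smooth projective variety $X$ of general type gives an embedding $\iota_i\colon X\hookrightarrow \mathbb{P}^{N_i}$ with $\deg \iota_i(X) = L_i^n =: d_i$, and by hypothesis $d:=\sup_i d_i < \infty$. The key classical input is that a nondegenerate irreducible projective variety of dimension $n$ and degree $d$ in $\mathbb{P}^{N}$ satisfies $N \le n + d - 1$ (the bound on the codimension by $d-1$), so the embedding dimension $N_i = \dim H^0(X, L_i) - 1$ is bounded: $h^0(X,L_i) \le n+d$. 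Hence the $L_i$ all sit inside a fixed-dimensional family of line bundles from the point of view of their spaces of sections.

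Next I would pin down the Hilbert polynomial. Since $X$ is of general type, $K_X$ is big, so there is a positive integer $m_0$ and constants as in Theorem~\ref{asymp} controlling $h^0(X, K_X^{\otimes m})$; more to the point, for a very ample $L_i$ one has the Hilbert polynomial $\chi(X, L_i^{\otimes k}) = \frac{d_i}{n!}k^n + (\text{lower order})$, where the subleading coefficient involves $L_i^{n-1}\cdot K_X$. The point is that once $d_i = L_i^n$ is bounded and $L_i$ is very ample (hence nef), the intersection numbers $L_i^{n-1}\cdot K_X$ are also bounded: indeed $K_X$ is effective after passing to a multiple (general type), so $L_i^{n-1}\cdot K_X \ge 0$, and it is bounded above because it can be computed as a degree of a divisor on a generic complete-intersection curve cut out by members of $|L_i|$, whose degree is $d_i$, pairing against the fixed class $K_X$ — here one uses that the Néron–Severi lattice of the fixed manifold $X$ is finitely generated, so "bounded degree against every fixed nef class" forces the classes $c_1(L_i)$ to lie in a bounded, hence finite, subset of $\mathrm{NS}(X)$. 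Actually this last observation already gives the final assertion that $\{c_1(L_i)\}$ is finite; the boundedness of the family then follows because finitely many numerical classes of very ample line bundles on a fixed $X$ correspond to finitely many Hilbert polynomials, and each Hilbert polynomial defines a bounded family by the theory of Hilbert schemes (or, analytically, by Barlet cycle spaces \cite{bar}).

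Let me restructure to make the logical core cleanest. Step 1: very ampleness of $L_i$ plus $L_i^n \le d$ bounds the projective embedding dimension, via the codimension–degree inequality. Step 2: since $X$ is a \emph{fixed} smooth projective variety, $\mathrm{NS}(X)$ is a finitely generated abelian group equipped with the intersection pairing; the class $c_1(L_i)$ is nef (being very ample) and its top self-intersection is bounded by $d$, and — crucially — because $L_i$ is very ample the class $c_1(L_i)$ lies in the \emph{interior} of the nef cone (it is ample), so the function $\xi \mapsto \xi^n$ is, on ample classes of a fixed projective manifold, proper (its sublevel sets are bounded); therefore $\{c_1(L_i)\}$ is contained in a bounded region of the finitely generated lattice $\mathrm{NS}(X)$, hence is a \emph{finite} set. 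Step 3: for each of the finitely many classes $\xi$ occurring, the line bundles $L_i$ with $c_1(L_i)=\xi$ form a bounded family (all having the same Hilbert polynomial, parametrized by a component of a Hilbert scheme / Barlet space, which is of finite type); a finite union of bounded families is bounded, proving the proposition.

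The main obstacle I expect is Step 2 — more precisely, justifying that the top-self-intersection function is proper on the ample cone of $X$ and interacts correctly with the integrality of $\mathrm{NS}(X)$. One must be careful that "$L_i^n$ bounded" alone does \emph{not} bound $c_1(L_i)$ in $\mathrm{NS}(X)\otimes\mathbb{R}$ for a general projective manifold (there can be nef classes on the boundary with vanishing top power, and ample classes degenerating toward them); very ampleness is what saves us, and the cleanest route is probably to avoid the properness claim entirely and instead argue as in the intended proof: use an auxiliary fixed ample (or big) class — here the canonical class $K_X$, available since $X$ is of general type — and the inequality, valid for nef $L_i$, that bounds $L_i^{n-1}\cdot H$ for \emph{any} fixed ample $H$ in terms of $L_i^n$ and $H^n$ (a mixed Hodge-index / Khovanskii–Teissier type inequality). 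Pairing $c_1(L_i)$ against a fixed basis of $\mathrm{NS}(X)$ consisting of (differences of) ample classes then shows all coordinates of $c_1(L_i)$ are bounded, giving finiteness. This is exactly the content deferred to Corollary~\ref{deg-cor} and Proposition~\ref{fgn}, so in the write-up I would state the mixed-inequality input as a lemma and cite the later subsection for its proof, keeping the present argument to the three-step skeleton above.
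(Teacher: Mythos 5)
Your high-level skeleton (bound $\deg_H L_i$ against a fixed ample class, deduce finiteness of $\{c_1(L_i)\}$ from finite generation of $\mathrm{NS}(X)$ or from Chow-variety component counts, then conclude boundedness from finitely many Hilbert polynomials) agrees with the paper's, and your Steps 1 and 3 are fine. The gap is Step 2, in both of its incarnations. The claim that $\xi\mapsto\xi^n$ is proper on the ample cone of a fixed projective manifold is false, and the paper's Example \ref{k3} is exactly a counterexample: on a K3 surface with an automorphism $g$ of positive entropy, the classes $c_1(g^k(L))=au^kD+bv^kE$ are all \emph{very ample}, have constant top self-intersection (since $D^2=E^2=0$), yet escape to infinity in the rank-two lattice $\mathrm{NS}(S)$; so very ampleness does \emph{not} save you, contrary to what you assert. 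Your proposed repair fails for the same reason: the Khovanskii--Teissier inequalities give $(L_i^{n-1}\cdot H)^n\ge (L_i^n)^{n-1}(H^n)$, a \emph{lower} bound, and no upper bound for $L_i^{n-1}\cdot H$ in terms of $L_i^n$ and $H^n$ exists (in Example \ref{k3}, $(L_k\cdot F)$ is unbounded for every fixed ample $F$ while $L_k^2$ and $F^2$ are constant). Likewise, the degree of $K_X$ on a complete-intersection curve cut out by members of $|L_i|$ is not controlled by $L_i^n$ alone.

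What actually closes the gap in the paper is the general-type hypothesis used in an essentially different way: by the uniform effective lower bound $h^0(X,K_X^{\otimes k})\ge \frac{\nu_n}{n!}k^n+o(k^n)$ of Hacon--McKernan/Takayama/Tsuji (Theorem \ref{takthm}), combined with Matsusaka's $Q$-estimate \eqref{(2.7.1)} as an upper bound for $h^0(mdK_X|_{dD_i})$, one shows that for an effectively computable $m$ (and $d\gg0$) the divisor $mdK_X-dD_i$ is effective, i.e.\ \eqref{(9.1)}; this domination of $D_i$ by a fixed multiple of $K_X$ is what yields $\deg_D D_i\le m\,d_K$ (Corollary \ref{deg-cor}), after which finiteness of $\{c_1(D_i)\}$ follows from the bound on the number of connected components of Chow varieties of bounded degree together with Lemma \ref{c1}. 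So your statement that your mixed-inequality lemma ``is exactly the content deferred to Corollary \ref{deg-cor} and Proposition \ref{fgn}'' is incorrect: those results are proved by effective pluricanonical nonvanishing, not by an intersection-theoretic inequality, and without that input your argument does not go through. (Note also that the paper's argument quietly uses boundedness of $(D_i^{n-1}\cdot K_X)=d_K$, which holds in its application because the $D_i$ are monodromy translates of a single divisor; any self-contained write-up of the proposition should make that hypothesis explicit.)
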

Finally, we come to the first main result of this section:
 \begin{thm}\label{inv-cyc} Consider a semi-stable degeneration $\pi: \mathcal{X}\rightarrow \Delta$ of projective manifolds of general type. With the notations before Proposition \ref{t-rep-m},
 for any $\tau\in V_0\setminus \{Z^0\cup Z^1\}$ with two thin subsets $Z^i\subset U_i$ and $i=0,1$, there exists a very ample line bundle $L_{X_\tau}$ over $X_\tau$, such that its singular cohomology class $c_1(L_{X_\tau})$  can be the image of
 $$H^2 (\mathcal{X}, \mathbb{Z})\rightarrow H^2 (X_\tau, \mathbb{Z}),$$
 and also a (global) line bundle $L$ over $\mathcal{X}$ such that for all $t\in \Delta^*$,
$L|_{X_t}$ are big.
\end{thm}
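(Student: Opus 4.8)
The plan is to distill the fiberwise data of Corollary~\ref{tq} into a single monodromy-invariant very ample class on a good fiber, convert it into the restriction of an integral class on $\mathcal{X}$ via the local invariant cycle Theorem~\ref{lict}, and then produce the global line bundle by invoking Theorem~\ref{thm-moishezon-update}. Concretely, first I would fix $\tau\in V_0\setminus\{Z^0\cup Z^1\}$ and apply Corollary~\ref{tq} to get a very ample line bundle $L_\tau=L_{\tau,0}$ on $X_\tau$ together with very ample line bundles $L_{\tau,k}$, $k\in\mathbb{N}$, satisfying $c_1(L_{\tau,k})=T^k(c_1(L_\tau))$ in $H^2(X_\tau,\mathbb{Z})$. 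Since $T$ preserves the cup product and acts as the identity on $H^{2n}(X_\tau,\mathbb{Z})$, the argument of Lemma~\ref{cup-p} gives $L_{\tau,k}^n=T^k(c_1(L_\tau))^n=c_1(L_\tau)^n$ for every $k$, so $\sup_k L_{\tau,k}^n<\infty$. As $\tau\neq 0$, the fiber $X_\tau$ is a smooth projective variety of general type, and Proposition~\ref{finite} then forces $\{c_1(L_{\tau,k})\}_{k\in\mathbb{N}}$ to be a finite subset of $H^2(X_\tau,\mathbb{Z})$; by pigeonhole there are $k_1<k_2$ with $T^{k_1}(c_1(L_\tau))=T^{k_2}(c_1(L_\tau))$, whence $T^{k_0}(c_1(L_\tau))=c_1(L_\tau)$ for $k_0:=k_2-k_1\geq 1$ by invertibility of $T$.

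Next I would set $L_{X_\tau}:=\bigotimes_{j=0}^{k_0-1}L_{\tau,j}$. A tensor product of very ample line bundles is very ample, so $L_{X_\tau}$ is very ample, and $c_1(L_{X_\tau})=\sum_{j=0}^{k_0-1}T^j(c_1(L_\tau))$, which is $T$-invariant by the periodicity $T^{k_0}(c_1(L_\tau))=c_1(L_\tau)$. Since $\pi$ is semi-stable, $T$ is unipotent and $N=\log T$ is nilpotent, so $T$-invariance of $c_1(L_{X_\tau})$ yields $N(c_1(L_{X_\tau}))=0$. Because $\pi\colon\mathcal{X}\rightarrow\Delta$ is a one-parameter K\"ahler semi-stable degeneration with projective fibers (Lemma~\ref{bu-proj}, as already observed after Theorem~\ref{lict}), the local invariant cycle Theorem~\ref{lict} in degree $m=2$ identifies $\ker N$ on $H^2(X_\tau)$ with the image of $i^*\colon H^2(\mathcal{X})\rightarrow H^2(X_\tau)$. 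Hence $c_1(L_{X_\tau})$ comes from $H^2(\mathcal{X},\mathbb{Z})$ (possibly after replacing $L_{X_\tau}$ by a suitable tensor power, and/or by a big line bundle with the corresponding integral first Chern class; see the discussion of the coefficient issue below), which is the first assertion.

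For the global line bundle, the construction above applies verbatim to every $\tau$ in the uncountable set $V_0\setminus\{Z^0\cup Z^1\}$, producing for each such $\tau$ a big line bundle on $X_\tau$ (very ample, hence ample, hence big on the $n$-dimensional $X_\tau$) whose first Chern class is the restriction of a class in $H^2(\mathcal{X},\mathbb{Z})$; this is exactly the surjectivity hypothesis~\eqref{surj-chern}. Moreover all fibers of the reduced semi-stable family $\pi$ are projective, hence Moishezon, so Proposition~\ref{01-invariance} supplies the local deformation invariance of the Hodge number $h^{0,1}$. Theorem~\ref{thm-moishezon-update} then produces a global line bundle $L$ on $\mathcal{X}$ with $L|_{X_t}$ big for all $t\in\Delta^*$, completing the proof.

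I expect the main obstacle to be exactly the coefficient/integrality point flagged above: the local invariant cycle theorem is naturally a statement with $\mathbb{Q}$- (or $\mathbb{R}$-) coefficients, whereas the statement asks for an honest integral class pulled back from $\mathcal{X}$ and, simultaneously, for very ampleness of the very same line bundle. Reconciling the two --- tracking the N\'eron--Severi and torsion bookkeeping carefully enough to produce a genuine integral lift of a suitable power, or else observing that for the downstream use of Theorem~\ref{thm-moishezon-update} it is enough to have \emph{some} big line bundle with the required integral first Chern class --- is the one delicate step; the rest is bookkeeping built on Corollary~\ref{tq}, Lemma~\ref{cup-p}, Proposition~\ref{finite} and Theorem~\ref{lict}.
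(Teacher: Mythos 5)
Your proposal is correct and follows essentially the same route as the paper's proof: Corollary~\ref{tq} to generate the orbit of very ample line bundles, Lemma~\ref{cup-p} for boundedness of top self-intersection, Proposition~\ref{finite} to get finiteness of the set of Chern classes, tensoring to produce a single $T$-invariant class carried by a very ample bundle, the local invariant cycle Theorem~\ref{lict} to lift it to $H^2(\mathcal{X},\mathbb{Z})$, and Theorem~\ref{thm-moishezon-update} (with the surjectivity hypothesis~\eqref{surj-chern} now verified on the uncountable set $V_0\setminus\{Z^0\cup Z^1\}$ and Hodge number invariance from Proposition~\ref{01-invariance}) for the global $L$. The one place you deviate is how you extract the $T$-invariant finite orbit: you observe directly that finiteness of $\{T^k(c_1(L_\tau))\}_k$ together with invertibility of $T$ forces $T^{k_0}(c_1(L_\tau))=c_1(L_\tau)$ for some $k_0\geq 1$, whereas the paper runs a slightly more roundabout descending-chain argument with the nested sets $S_1\supseteq S_2\supseteq\cdots$; your version is a mild simplification and reaches the same $T$-invariant class. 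You also rightly flag the coefficient/integrality subtlety in applying Theorem~\ref{lict} over $\mathbb{Z}$ (vs.\ its natural $\mathbb{Q}$-formulation); the paper does not address this either, and your suggested fixes (pass to a tensor power, or settle for a big bundle, which suffices for the application to Theorem~\ref{thm-moishezon-update}) are the appropriate remedies.
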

\begin{proof}
Consider $L_\tau$ for any $\tau$ in Corollary \ref{tq} (uncountably many as given there) with
  $$S_1:= \{c_1(L_\tau), T(c_1(L_\tau)), T^2(c_1(L_\tau)),\ldots\},$$
$S_2:=T(S_1)$ and similarly for $S_3$, etc.
Then one has $$S_1 \supseteq S_2\supseteq S_3\supseteq\cdots.$$
 By using Proposition \ref{finite} with the boundedness condition obtained in Lemma \ref{cup-p}, one knows that $S_1$ is finite, and thus that there exists an $m$ such that
             $$S_m = S_{m+1} = S_{m+2}=\cdots.$$
Thus, we get
              $$S_m = T(S_m) = T^2(S_m) =\cdots$$
by the definition of $S_i$, which implies that $S_m$ is a $T$-invariant set. Without loss of generality, one assumes
$$S_m=\{T^{m-1}(c_1(L_\tau)),\ldots,T^{m'}(c_1(L_\tau))\}$$
for some integer $m'\geq m-1$. Then set $$L_{X_\tau}:= L_{\tau,m-1} \otimes\cdots\otimes L_{\tau,m'},$$
 where $L_{\tau,\bullet}$ is constructed as in Corollary \ref{tq} and $\{c_1(L_{\tau,\bullet})\}$ run over $S_m$.
Since each class in $S_m$ is  represented by some (very) ample line bundle by Corollary \ref{tq} and $c_1(L_{X_\tau})$ is invariant under the monodromy transform $T$, the desired first assertion becomes a direct result of the local invariant cycle Theorem \ref{lict} and together with Theorem \ref{thm-moishezon-update} gives the second one.
\end{proof}

\subsection{Bounded family of very ample line bundles}\label{pro-gt}
To prove Proposition \ref{finite}, one hopes to get:
\begin{expe}\label{expect}
Let $\{A_i\}_{i\in \mathbb{N}}$ be a set of very ample divisors in a fixed $n$-dimensional  projective variety $M$.
Suppose that $\{A_i^n\}$ are uniformly bounded.
Then, $\{A_i\}$ forms a \emph{bounded family} (in $M$), i.e., there are only
finitely many Hilbert polynomials of $\{A_i\}$ with respect to a fixed ample divisor in
$M$.
\end{expe}
Unfortunately, this expectation is wrong in general and we give a  counter-example below.
\begin{ex}\label{k3}
By use of \cite[Theorem 4.1]{og15}, $g$ is an automorphism of a certain $K3$ surface $S$ there (see the second half of Page $8$ in \cite{og15}).
Now $g$ induces $\tilde f$ on $H^2(S)$ and $f$ on the Neron-Severi lattice $\mathcal{NS}$ of $S$.  $\mathcal{NS}$ is of rank $2$ in this
example. Here, $f$ has two positive eigenvalues, $u>1$ and $v<1$, and $uv=1$.
Let $D, E$ be eigenvectors of $f$ associated with $u, v$.
Let $L$ be a very ample divisor of projective $S$.
As $\mathcal{NS}$ is of rank $2$, write
                   $$L= aD + bE,\quad a,b>0.$$
Then for each $k\in \mathbb{N}$,
$$f^k(L) = au^kD + bv^k E,$$
denoted by $L_k$.
Each of $\{L_k\}_k$ is very ample because they are actually
$g^k(L)$ with the above $g$ in $Aut(S)$.
Since $f$, induced by $g$,  is an isometry on $\mathcal{NS}$ (preserving the
inner product induced by Poincar\'{e} pairing), these
$L_k$ satisfy the condition of Expectation \ref{expect}.
But we claim below that $\{L_k\}$ cannot form a
bounded family in the usual sense.  This gives a counterexample
to the above Expectation \ref{expect} for $n=2$.
Let's compute $(F\cdot L_k)$, where $F =sD+ tE$ is any fixed
ample divisor.
Note that, $D^2 = E^2 =0$ since $f$ is an isometry and $u>1, v<1$.
It follows that, if $L$ (resp. $F$) are ample, then
$a, b$ (resp. $s, t$) are both non-zero because $L^2= ab(D\cdot E)>0$, and that
$$(F\cdot L_k)= (atu^k+bsv^k) (D\cdot E).$$
This is unbounded by $u>1, 0<v<1$. This means that, whatever ample divisor $F$ is used,
$(L_k\cdot F)$ cannot be bounded.  So, there are no finitely many
Hilbert polynomials of $\{L_k\}$ with respect to a fixed ample divisor.
Now that $L_k^2>0$ are independent of $k$ as mentioned above, combined with
the above unboundedness this gives a counter-example to Expectation \ref{expect}.
\end{ex}

Let $\mathcal{U}$ denote the universal cover of the punctured disk $\Delta^*$.
Recall that our family is smooth over $\Delta^*$. Now pull back the family to $\mathcal{U}$, so we get a smooth family over a simply-connected space. It is somewhat easier to deal with a simply-connected base, \emph{without} consideration of monodromy.
According to Subsection \ref{clic}, working on $\mathcal{U}$ simplifies the picture.
For instance, we will have a global line bundle $L$ on $\mathcal{X}_\mathcal{U}$ (cf. Proposition \ref{t-rep-m}), such that $L$ restricts
to an ample line bundle $L_t$ for  $t$ outside a countable union $B$ of proper analytic subsets in  $\mathcal{U}$.
Also $H^2(X_t, \mathbb{Z})$ for all $t$ of $\mathcal{U}$ can be naturally identified under this picture.

To start with, $\mathcal{X}_\mathcal{U}$ is differentiably  $X\times \mathcal{U}$ with the underlying differentiable manifold $X$ of $X_t$ since $\mathcal{U}$ is simply connected.
Let $\{p_0,p_1, p_2, \ldots \}$ of $\mathcal{U}$ denote the pre-image of some point $p$ of $\Delta^*$.
Also for each $i=0,1,\ldots$, $X_i$ denotes the pull-back fiber $f^*X_p$ at $p_i$.

Let us now employ an argument similar to that of \cite[p.\ 308]{Griffith}.
We get a monodromy operator $T^k$ (similar to the \lq multiplier' there).  The difference is that our consideration here is fiber bundle (differentiable), while that of \cite{Griffith} is vector bundle (holomorphic).
But, the underlying ideas are similar for either case.  Let us skip the details.

Now the global line bundle $L$ mentioned above restricts to an ample line bundle on each $X_0, X_1, \ldots$ for the \emph{very generic} $p$ (more precisely, for $p$ outside a thin subset $B$ of $\mathcal{U}$). By the effective very ampleness Theorem \ref{Dem-fuj}, we replace $L$ by $aL +b K_{\mathcal{X}_\mathcal{U}}$ with
fixed constants $a >0$ and $b>0$ depending only on the dimension of the fibers such that it is very ample as long as $L$ is ample.
So,  we may further assume that actually $L$ restricts to a very ample divisor on each $X_0, X_1,\ldots$, denoted by $L_0, L_1,\ldots$.
Back to $X_p$ the above implies that there is a sequence of very ample divisors $D_0, D_1, \ldots$
on a fixed fiber $X_p$, and that $c_1(D_k) = T^k(c_1(D_0))$.

Note that $L_k^n$ is independent of $k =0, 1, 2, \ldots $, so is the intersection
number $(L_k^{n-1}\cdot K_{X_k})$ on each $X_k$.   Thus, we set
$$D_i^n=d_D,\quad (D_i^{n-1}\cdot K_{X_p})=d_K,\quad \text{for all $i=0, 1, 2, \ldots$}.$$
By \cite{kma}, one has an effective estimate
of $|qD_i |$ in terms of $d_D$ and $d_K$.  Roughly, its dimension is about the polynomial in $q$
\begin{equation}\label{ee}
\frac{1}{n!} d_D\cdot q^n+\text{(lower-order terms in $q$ with coefficients in polynomials of $d_D$ and $d_K$)}.
\end{equation}
See Remark \ref{aee} for an application of \eqref{ee} to our problem.

Assuming that each fiber of the family $\pi:\mathcal{X}\rightarrow \Delta$ is projective,
of general type and its restriction $\pi^*:\mathcal{X}^*\rightarrow \Delta^*:=\Delta\setminus \{0\}$ is a smooth family, we want to show that there exists an effective constant $m$, independent of $i$, with the property that for each $i=0, 1, 2, \ldots$,
\begin{equation}\label{(6.1)}
 m K_{X_p} = D_i + E_i,\quad  \text{with the divisor $E_i$ being effective.}
\end{equation}
\begin{cor}\label{deg-cor}
The degrees $\deg_D D_i:=(D^{n-1}\cdot D_i)$ of $D_i$ with respect to $D:=D_0$ are bounded above by $md_K$.
\end{cor}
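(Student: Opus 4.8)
The plan is to deduce the bound from the effective domination \eqref{(6.1)}, i.e. from the existence of one integer $m$, the same for every $i$, with $mK_{X_p}=D_i+E_i$ for an effective divisor $E_i$; granting this, the corollary is a one-line intersection computation and the whole difficulty is concentrated in producing such a uniform $m$. Assuming \eqref{(6.1)}: $D=D_0$ is very ample on $X_p$, hence nef, and $E_i\ge 0$, so $(D_0^{n-1}\cdot E_i)\ge 0$; intersecting \eqref{(6.1)} with $D_0^{n-1}$ gives
\[
\deg_D D_i=(D_0^{n-1}\cdot D_i)=m\,(D_0^{n-1}\cdot K_{X_p})-(D_0^{n-1}\cdot E_i)\le m\,(D_0^{n-1}\cdot K_{X_p})=m\,d_K ,
\]
the last equality being the case $i=0$ of the definition $d_K=(D_i^{n-1}\cdot K_{X_p})$.

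To establish \eqref{(6.1)} I would fix $m$, choose a general (hence smooth) member $G\in|D_i|$ --- possible since $D_i$ is very ample --- and use the restriction sequence
\[
0\longrightarrow \mathcal{O}_{X_p}(mK_{X_p}-D_i)\longrightarrow \mathcal{O}_{X_p}(mK_{X_p})\longrightarrow \mathcal{O}_{G}(mK_{X_p}|_G)\longrightarrow 0 ,
\]
which yields $h^0(X_p,mK_{X_p}-D_i)\ge h^0(X_p,mK_{X_p})-h^0(G,mK_{X_p}|_G)$. It is then enough to make the right-hand side strictly positive for some $m$ not depending on $i$. Since $X_p$ is fixed and of general type, Theorem \ref{asymp} gives $h^0(X_p,mK_{X_p})\ge\alpha\,m^{n}$ for all large $m$ in a fixed arithmetic progression, with $\alpha>0$ fixed. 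On the other hand $G$ has dimension $n-1$, $D_i|_G$ is very ample on $G$, and the relevant intersection numbers on $G$ --- such as $(D_i|_G)^{n-1}=D_i^{n}=d_D$ and $(K_{X_p}|_G)\cdot(D_i|_G)^{n-2}=K_{X_p}\cdot D_i^{n-1}=d_K$ --- are independent of $i$, so the effective estimates \eqref{ee} of \cite{kma}, read on $G$, bound $h^0(G,mK_{X_p}|_G)$ above by a polynomial of degree $n-1$ in $m$ with coefficients depending only on $n$, $d_D$ and $d_K$. Comparing the $m^{n}$ and $m^{n-1}$ growths yields an effective $m_0$, depending only on $n$, $d_D$, $d_K$ and the fixed fiber $X_p$, such that $h^0(X_p,m_0K_{X_p})>h^0(G,m_0K_{X_p}|_G)$ for every $i$; then $m=m_0$ works in \eqref{(6.1)}.

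The crux --- and the only place where the general-type hypothesis is essential --- is the \emph{uniformity} of $m$ in $i$, and it rests on two observations. First, the bound \eqref{ee} on $h^0$ over the divisor $G$ is governed purely by intersection numbers, so it is automatically uniform once those numbers are known to be constant in $i$. Second, every intersection number built from $c_1(D_i)$ and $c_1(K_{X_p})$ is indeed constant in $i$: one has $c_1(D_i)=T^i(c_1(D_0))$ with $T$ a ring automorphism of $H^{\ast}(X_p,\mathbb{Z})$ that preserves the cup product and acts trivially in top degree, while $c_1(K_{X_p})$ is monodromy-invariant, being the restriction of the global class $c_1(K_{\mathcal{X}})$ and hence lying in $\ker(T-I)$ by the obvious inclusion in the local invariant cycle Theorem \ref{lict}; consequently $T^i$ fixes $c_1(D_0)^{a}\cdot c_1(K_{X_p})^{n-a}$ for every $a$. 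A secondary technical point is to keep $m$ genuinely effective rather than just finite, which requires an effective lower bound on $h^0(X_p,mK_{X_p})$ in the general-type case (again of Koll\'ar--Matsusaka type, or through the effective input behind Theorem \ref{asymp}); without it one still obtains the corollary with a finite but non-explicit $m$.
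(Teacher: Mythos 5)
Your derivation of the corollary itself is exactly the paper's: intersect $mK_{X_p}=D_i+E_i$ with $D_0^{n-1}$ and drop the nonnegative term $(D_0^{n-1}\cdot E_i)$. Where you diverge is in how \eqref{(6.1)} is established. You take the direct route via the restriction sequence to a smooth $G\in|D_i|$, which is precisely the reduction \eqref{(7.1)}--\eqref{(8.1)} that the paper sets up and then deliberately abandons: using Theorem \ref{asymp} for the lower bound makes the constant $\alpha$ (hence your $m_0$) depend non-effectively on the fixed fiber $X_p$, so you obtain a finite, uniform-in-$i$, but non-explicit $m$ --- enough for the boundedness statement of the corollary, as you note, but not for what the paper needs downstream (Corollary \ref{eff-cor} and the explicit functions $f,g$ of Proposition \ref{fgn}). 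The paper's workaround is to prove instead the twisted statement \eqref{(9.1)}, $mdK_{X_p}=dD_i+F_i$ for an auxiliary \emph{ineffective} $d\gg0$: the universal constant $\nu_n$ of Theorem \ref{takthm} then controls the leading term of $(LBE)'$, the ineffective lower-order terms are absorbed by letting $d\to\infty$, and one gets the explicit bound $m>\frac{n}{\nu_n}d_K^{n-1}$; the $d$ divides out of the degree inequality at the end. Two smaller points: the upper bound on $h^0(G,mK_{X_p}|_G)$ should be attributed to Matsusaka's $Q$-estimate \eqref{(2.7.1)} from \cite{lm} rather than to \eqref{ee}, which as stated concerns $|qD_i|$ (your identification of the governing intersection numbers $d_D$ and $d_K$ is nevertheless the right one, and the $Q$-estimate does not require $K_{X_p}|_G$ to be nef, which is why it is the correct tool here); and your monodromy argument for the $i$-independence of the intersection numbers uses only the trivial containment $\operatorname{Im}(i^*)\subseteq\ker N$, not the substantive direction of Theorem \ref{lict}, which is consistent with what the paper implicitly does.
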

\begin{proof}
By \eqref{(6.1)},    $\deg_D D_i \leq \deg_D m K_{X_p} = md_K.$
\end{proof}

By the proof of Kodaira's lemma (e.g. \cite[Proposition 2.2.6]{[La]}), to prove \eqref{(6.1)} is reduced to proving
that
\begin{equation}\label{(7.1)}
h^0(m K_{X_p}) > h^0( m K_{X_p}|_{D_i}).
\end{equation}
Note that $D_i$ can be
smooth and irreducible because it is very ample. We want to show that the lower bound estimate $(LBE)$ on the left-hand side of \eqref{(7.1)}
and the upper bound estimate $(UBE)$ on the right-hand side of \eqref{(7.1)}  satisfy
\begin{equation}\label{(8.1)}
(LBE) > (UBE).
\end{equation}
The difficulty of \eqref{(8.1)} lies in the trouble with effective bounds
for \eqref{(8.1)}, especially $(LBE)$ of \eqref{(8.1)}.  To get around this difficulty, we are going to
consider a modified version of \eqref{(6.1)} and \eqref{(8.1)}: For any ineffective $d\gg 0$ and $i$,
\begin{equation}\label{(9.1)}
                   md K_{X_p} = d D_i + F_i,
\end{equation}
such that the divisor $F_i$ is effective.
By the same token above, \eqref{(9.1)} is reduced to proving
that
\begin{equation}\label{(10.1)}
                   h^0(md K_{X_p}) > h^0( mdK_{X_p}|_{dD_i}),\quad  \text{for $d\gg 0$.}
\end{equation}
Here $dD_i$ is represented by a smooth irreducible divisor.
It suffices to prove that the lower bound estimate $(LBE)'$ on the left-hand side of \eqref{(10.1)}
and the upper bound estimate $(UBE)'$ on the right-hand side of \eqref{(10.1)} meet
\begin{equation}\label{(11.1)}
                   \text{$(LBE)'$ $>$ $(UBE)'$,\quad for $d\gg 0$.}
\end{equation}
Fortunately, the introduction of an ineffective $d$ is immaterial
to Corollary \ref{deg-cor}, which we rephrase:
\begin{cor}\label{eff-cor}
An effective version of Corollary \ref{deg-cor} holds
if an effective $m$ in \eqref{(10.1)} can be found (while $d$ could be ineffective).
\end{cor}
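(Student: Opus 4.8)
The plan is to follow the chain of reductions already in place and to observe that the large auxiliary parameter $d$ cancels from the final estimate, so that an effective constant $m$ satisfying \eqref{(10.1)} is the only missing ingredient for an effective form of Corollary \ref{deg-cor}.

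First I would spell out \eqref{(9.1)} via Kodaira's lemma. Since $D_i$ is very ample, for $d\gg 0$ the linear system $|dD_i|$ contains a smooth irreducible member, still denoted $dD_i$, and tensoring the structure sequence of this divisor by $\mathcal{O}_{X_p}(mdK_{X_p})$ gives
\[
0\longrightarrow \mathcal{O}_{X_p}\big(mdK_{X_p}-dD_i\big)\longrightarrow \mathcal{O}_{X_p}\big(mdK_{X_p}\big)\longrightarrow \mathcal{O}_{dD_i}\big(mdK_{X_p}|_{dD_i}\big)\longrightarrow 0 .
\]
Passing to global sections, the inequality $h^0(mdK_{X_p})>h^0(mdK_{X_p}|_{dD_i})$ of \eqref{(10.1)} forces the restriction map $H^0(mdK_{X_p})\to H^0(mdK_{X_p}|_{dD_i})$ to have nonzero kernel $H^0(mdK_{X_p}-dD_i)$; any nonzero element there is a section whose zero divisor $F_i$ is effective, so $md\,K_{X_p}=dD_i+F_i$ with $F_i\geq 0$, which is precisely \eqref{(9.1)}.

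Next I would intersect \eqref{(9.1)} with $D^{n-1}$, where $D:=D_0$. Because $F_i\geq 0$ and $D$ is (very) ample, $(D^{n-1}\cdot F_i)\geq 0$, and hence
\[
md\,d_K=\big(D^{n-1}\cdot md\,K_{X_p}\big)=\big(D^{n-1}\cdot dD_i\big)+\big(D^{n-1}\cdot F_i\big)\geq d\,\big(D^{n-1}\cdot D_i\big)=d\,\deg_D D_i ,
\]
using $(D^{n-1}\cdot K_{X_p})=d_K$. Dividing through by $d>0$ yields $\deg_D D_i\leq m\,d_K$, which is the bound of Corollary \ref{deg-cor}; it involves only the effective constant $m$ and the fixed intersection number $d_K$, not $d$, so it is effective regardless of how large (and possibly ineffective) the $d$ in \eqref{(10.1)} has to be chosen, and it is uniform in $i$ because $m$ is.

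The one genuine obstacle lies not in the bookkeeping above but in producing the effective $m$, i.e.\ in verifying \eqref{(11.1)}: one must estimate the lower bound $(LBE)'$ for $h^0(mdK_{X_p})$ --- via \eqref{ee} and the effective results of \cite{kma} --- and the upper bound $(UBE)'$ for $h^0(mdK_{X_p}|_{dD_i})$, and show $(LBE)'>(UBE)'$ for $d\gg 0$ with $m$ independent of $i$. This is the task of the results that follow; Corollary \ref{eff-cor} merely isolates exactly which quantity must be made effective and records that the spurious parameter $d$ does no harm.
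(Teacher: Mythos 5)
Your proof is correct and follows essentially the same route as the paper: reduce \eqref{(10.1)} to the effective decomposition \eqref{(9.1)} via Kodaira's lemma, intersect with $D^{n-1}$, and divide out the (possibly ineffective) $d$ to get $\deg_D D_i\leq m\,d_K$. The only cosmetic difference is that you re-derive \eqref{(9.1)} from \eqref{(10.1)} inside the corollary's proof, whereas the paper has already recorded that reduction in the surrounding text and simply invokes $\deg_D dD_i\leq \deg_D md\,K_{X_p}$ before dividing by $d$.
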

\begin{proof}
By $\deg_D dD_i \leq \deg_D md K_{X_p}$, the (ineffective) $d$ can be divided out
on both sides.  Thus, if $m$ is effective, we are done.
\end{proof}
From now on we focus ourselves on the proof of \eqref{(11.1)}. One needs the remarkable:
\begin{thm}[]\label{takthm}
For every positive integer $n$, there exists a positive constant
$\nu_n$ depending only on $n$ such that, for every $n$-dimensional smooth projective
variety $M$ of general type,
$$\dim_{\mathbb{C}} H^0(M,K_M^{\otimes k}) \geq \frac{\nu_n}{n!}k^n + o(k^n)$$
holds as $k\rightarrow \infty$.
\end{thm}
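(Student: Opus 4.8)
The plan is to reduce Theorem \ref{takthm} to a uniform lower bound on the canonical volume, and then to invoke the effective birationality of pluricanonical maps. First I would recall that, since $M$ is of general type, the canonical bundle $K_M$ is big, so that by asymptotic Riemann--Roch for big line bundles (equivalently, by Fujita's approximation theorem) the volume
$$\mathrm{vol}(K_M):=\limsup_{k\to\infty}\frac{n!\, h^0(M,K_M^{\otimes k})}{k^n}$$
is a strictly positive real number and the $\limsup$ is in fact a limit; hence
$$h^0(M,K_M^{\otimes k})=\frac{\mathrm{vol}(K_M)}{n!}\,k^n+o(k^n)\qquad(k\to\infty),$$
where the $o(k^n)$ is allowed to depend on $M$. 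Therefore Theorem \ref{takthm} is equivalent to the assertion that there is a constant $\nu_n>0$ depending only on $n$ with $\mathrm{vol}(K_M)\geq\nu_n$ for every $n$-dimensional smooth projective variety $M$ of general type.

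The key input I would use is the theorem of Hacon--McKernan and Takayama on effective birationality: there is an integer $m_n$, depending only on $n$, such that for every such $M$ the pluricanonical map $\Phi_{|mK_M|}$ is birational onto its image for all $m\geq m_n$. Granting this, fix $m=m_n$ and pass to a log-resolution $\mu:M'\to M$ on which the moving part of $\mu^*|mK_M|$ is a base-point-free linear system $|V|$ with $\Phi_{|V|}$ still birational onto its image $Y\subset\mathbb{P}^N$. Then $V$ is big and nef, and since $\Phi_{|V|}$ is generically one-to-one and $Y$ is a nondegenerate subvariety of projective space,
$$\mathrm{vol}(mK_M)=\mathrm{vol}(\mu^*mK_M)\geq\mathrm{vol}(V)=V^{n}=\deg Y\geq 1.$$
Consequently $\mathrm{vol}(K_M)=m_n^{-n}\,\mathrm{vol}(m_nK_M)\geq m_n^{-n}$, so one may take $\nu_n:=m_n^{-n}$ (or $\nu_n:=(2m_n)^{-n}$, with the normalization obtained by first cutting $M$ down along very ample divisors), which depends only on $n$. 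Together with the first paragraph this proves Theorem \ref{takthm}.

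The main obstacle is the effective birationality statement itself, which is the deep ingredient: its proof relies on the minimal model program in dimension $n$, on an induction on dimension obtained by restricting to general members of $|mK_M|$, on non-vanishing and extension results built on the Ohsawa--Takegoshi theorem and multiplier ideals, and on a simultaneous control of the volumes of the varieties occurring in that induction. By contrast, the reduction carried out in the first two paragraphs is soft and uses only standard properties of volumes of big line bundles. I would therefore present Theorem \ref{takthm} as a formal consequence of that effective birationality theorem, writing out only the asymptotic Riemann--Roch reduction and the volume computation above, and referring to \cite{[Ta]} and to the work of Hacon--McKernan for the hard part.
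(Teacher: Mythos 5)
Your proposal is correct and is essentially the paper's approach: the paper's ``proof'' of Theorem \ref{takthm} consists solely of citing \cite[Corollary 1.3]{Hm}, \cite[Theorem 1.2]{t06} and \cite[Theorem 1.2]{Ts06}, i.e.\ exactly the Hacon--McKernan/Takayama/Tsuji effective-birationality circle of results that you defer to. Your additional soft reduction (volume as a limit, passing to the mobile part of $|m_nK_M|$ on a resolution, and the degree bound $V^n=\deg Y\geq 1$ giving $\mathrm{vol}(K_M)\geq m_n^{-n}$) is the standard and correct derivation of the stated asymptotic from that deep input.
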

\begin{proof}
See \cite[Corollary 1.3]{Hm}, \cite[Theorem 1.2]{t06} and \cite[Theorem 1.2]{Ts06}. As for the effectiveness of $\nu_n$ in low dimensions, we refer the reader to the nice survey \cite{cc}.
\end{proof}
By Theorem \ref{takthm}, one has
\begin{equation}\label{(12.1)}
(LBE)' >  \frac{\nu_n}{n!}(md)^n + \text{lower-order terms},\quad \text{for $d\gg 0$}.
\end{equation}
Next, for $(UBE)'$ one can use
the $Q$-estimate of Matsusaka in \cite[Proposition 2.3]{lm}.
Recall that the $Q$-estimate of Matsusaka gives that for a normal projective variety $M$ of dimension $n$, a hyperplane section $H$ of $M$ and a torsion-free rank $1$ sheaf $\mathcal{F}$ on $M$, it holds
$$h^0(M,\mathcal{F})\leq
\begin{pmatrix} [\delta]+n\\n
\end{pmatrix}
\gamma+
\begin{pmatrix} [\delta]+n-1\\n-1
\end{pmatrix},$$
where with respect to $H$, $\delta=\frac{\deg F}{\deg M}$ and $\gamma=\deg M$. Let $C$ be an ample divisor and $E$ a one-codimensional cycle on $M$ such that
the Kodaira map associated to $kE$ is birational for $k\gg 0$. Then one has
\begin{equation}\label{(2.7.1)}
  h^0(M,kE)\leq \frac{1}{n!}(C^n)\cdot\left(\frac{(C^{n-1}\cdot kE)}{C^n}\right)^n
\end{equation}
as given in the proof of \cite[Proposition 2.2]{lm} (or \cite[(2.7.1)]{t97}).
By noting that $dD_i$ is of dimension $(n-1)$
and that the ample divisor $C$ above can be taken to be the restriction of $D_i$ to
a smooth divisor linearly equivalent to $dD_i$, one evaluates the following intersection numbers
\begin{equation}\label{(13.1)}
(mdK_{X_p}. C^{n-2})_{dD_i}
             = (mdK_{X_p}\cdot  D_i^{n-2}\cdot  (dD_i))_{X_p}
             = md^2 \deg_{D_i}K_{X_p} = md^2 d_K,
\end{equation}
where $(mdK_{X_p}\cdot  C^{n-2})_{dD_i}$ means the intersection number on ${dD_i}$ and similarly for the ${X_p}$-version.
So \eqref{(2.7.1)} and \eqref{(13.1)} imply that for all $d$, there hold
\begin{equation}\label{(13.2)}
(UBE)' \leq \frac{1}{(n-1)!}\frac{(md^2d_K)^{n-1}}{(dd_D)^{n-2}}\leq  \frac{1}{(n-1)!}(md_K)^{n-1} d^n.
 \end{equation}
Here we note that $d_D\geq 1$, so it can be dropped.
To reach \eqref{(11.1)}, by \eqref{(12.1)} and \eqref{(13.2)} it suffices to choose
\begin{equation}\label{(14.1)}
                m^n > \frac{n}{\nu_n}m^{n-1} d_K^{n-1}.
 \end{equation}
In sum, any positive integer $m$ satisfying
$$\label{(14.2)}
         m > \frac{n}{\nu_n}d_K^{n-1}
$$ completes the proof of \eqref{(9.1)}, as to be shown.

Corollary \ref{eff-cor} implies that the set of divisors $\{D_i\}_{i=0, 1 ,2,\ldots}$ on $X_p$
forms a bounded family since their degrees with respect to  $D=D_0$
are bounded above.

\begin{rem}
The above bounded family result restricts ourselves to the general type case if there
are singular fibers.  If $\mathcal{X}\rightarrow \Delta$ is a smooth family, it is seen that all the $D_i$ are linearly equivalent
to one another and in this case, there is no need to impose the assumption of general type.
It is not clear whether the boundedness result holds other than the general type case, due to
$K3$ surfaces in Example \ref{k3}.
\end{rem}

Let $\mathcal{F}_n(\alpha, \beta)$ denote the set of the following objects up to
biholomorphic equivalence:
Let $\pi^*:\mathcal{X}^*\rightarrow \Delta^*$ be a smooth, holomorphic family
over the punctured disc, written as $\mathcal{X}^*$ for short,  such that every fiber $X_t$ is an $n$-dimensional projective variety
of nonnegative Kodaira dimension.
Further, assume that every fiber $X_t$ except
for $t$ in a countable union of proper analytic subsets in $\Delta^*$ admits an ample divisor $D_t$
(not necessarily continuously varying with $t$) such that
$$\text{$D_t^n \leq \alpha$ and
$(D_t^{n-1}\cdot K_{X_t}) \leq \beta$}.$$
Two such families  are in the same \emph{equivalence class} if they are
biholomorphic when restricted to a possibly smaller punctured disc.
Then the equivalence classes of $\mathcal{X}^*$ form the set $\mathcal{F}_n(\alpha, \beta)$.

Let $\mathcal{FG}_n(\alpha, \beta)$ denote the subset of $\mathcal{F}_n(\alpha, \beta)$ consisting of those families  $\mathcal{X}^*$
such that every fiber $X_t$ is of general type.
\begin{lemma}\label{fn}
Let $\mathcal{X}^*\rightarrow\Delta^*$ be a smooth (or holomorphic) family of $n$-dimensional projective manifolds of nonnegative Kodaira dimension
over the punctured disc.  Then $\mathcal{X}^*\rightarrow\Delta^*$ belongs to $\mathcal{F}_n(\alpha, \beta)$ for some $\alpha$ and $\beta$.
\end{lemma}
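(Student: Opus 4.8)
The plan is to pass to the universal cover of the punctured disc, where monodromy disappears, build there a single global line bundle that is ample on the very generic fibre, and then exploit the fact that the two intersection numbers appearing in the definition of $\mathcal{F}_n(\alpha,\beta)$ are topological invariants — hence constant over the now contractible base — before descending back to $\Delta^*$ at the cost of a countable exceptional set.

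Concretely, let $p\colon\mathcal{U}\to\Delta^*$ be the universal covering; $\mathcal{U}$ is a simply connected one-dimensional Stein manifold, and the pulled-back family $\pi_\mathcal{U}\colon\mathcal{X}_\mathcal{U}:=\mathcal{X}^*\times_{\Delta^*}\mathcal{U}\to\mathcal{U}$ is again smooth and proper with every fibre a projective manifold of nonnegative Kodaira dimension. First I would reproduce, over the simply connected base $\mathcal{U}$, the Wehler-type construction from the first step of the proof of Lemma \ref{li-coin}: using the long exact cohomology sequence of the exponential sheaf sequence for $\mathcal{X}_\mathcal{U}$, the deformation invariance of $(0,2)$-Hodge numbers as in the proof of Lemma \ref{l'lxs} (so that $R^2{\pi_\mathcal{U}}_*\mathcal{O}_{\mathcal{X}_\mathcal{U}}$ is locally free by Lemma \ref{gct}), the projectiveness of the fibres — which produces countably many candidate integral classes whose associated holomorphic sections of $R^2{\pi_\mathcal{U}}_*\mathcal{O}_{\mathcal{X}_\mathcal{U}}$ cannot all have proper zero sets, since $\mathcal{U}$ is not a countable union of proper analytic subsets — and the openness of ampleness in the countable analytic Zariski topology, one obtains a global holomorphic line bundle $L$ on $\mathcal{X}_\mathcal{U}$ together with a thin (hence, $\mathcal{U}$ being one-dimensional, countable) subset $B\subset\mathcal{U}$ such that $L|_{X_{\tilde t}}$ is ample for all $\tilde t\in\mathcal{U}\setminus B$. (If very ampleness were wanted one could further replace $L$ by $aL+bK_{\mathcal{X}_\mathcal{U}}$ via Theorem \ref{Dem-fuj}, but ampleness suffices here.)

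Next I would observe that, $\mathcal{U}$ being contractible, $\mathcal{X}_\mathcal{U}$ deformation retracts onto any fibre, so the restriction maps $H^2(\mathcal{X}_\mathcal{U},\mathbb{Z})\to H^2(X_{\tilde t},\mathbb{Z})$ are isomorphisms compatibly identifying all these groups; under this identification $\ell:=c_1(L|_{X_{\tilde t}})$ is independent of $\tilde t$, and since each $X_{\tilde t}$ is a smooth fibre of a submersion onto a curve its normal bundle is trivial, so adjunction gives $K_{X_{\tilde t}}=K_{\mathcal{X}_\mathcal{U}}|_{X_{\tilde t}}$ and $\kappa:=c_1(K_{X_{\tilde t}})$ is likewise independent of $\tilde t$. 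Because intersection numbers on a compact manifold depend only on cohomology classes, writing $D_{\tilde t}$ for the ample divisor with $\mathcal{O}_{X_{\tilde t}}(D_{\tilde t})=L|_{X_{\tilde t}}$ (for $\tilde t\notin B$), the quantities $D_{\tilde t}^n=\ell^n$ and $(D_{\tilde t}^{n-1}\cdot K_{X_{\tilde t}})=\ell^{n-1}\cdot\kappa$ are constants independent of $\tilde t$; I would set $\alpha:=\ell^n$ and $\beta:=\ell^{n-1}\cdot\kappa$. Finally, since $B$ is countable, $p(B)$ is a countable — hence thin — subset of $\Delta^*$, and for $t\in\Delta^*\setminus p(B)$ any preimage $\tilde t\in p^{-1}(t)$ lies outside $B$ while $X_t$ is canonically biholomorphic to the fibre $X_{\tilde t}$ of $\pi_\mathcal{U}$, so transporting $D_{\tilde t}$ yields an ample divisor $D_t$ on $X_t$ with $D_t^n=\alpha$ and $(D_t^{n-1}\cdot K_{X_t})=\beta$; together with the standing hypothesis that every fibre is an $n$-dimensional projective manifold of nonnegative Kodaira dimension, this places $\mathcal{X}^*\to\Delta^*$ in $\mathcal{F}_n(\alpha,\beta)$.

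The only genuinely delicate point is the construction of the global line bundle $L$ over $\mathcal{X}_\mathcal{U}$, but this is exactly the argument already carried out in the proof of Lemma \ref{li-coin} and it goes through verbatim once the base is simply connected; passing to the universal cover is precisely the device that makes the two intersection numbers constant (over $\Delta^*$ itself one would instead need the monodromy invariance of cup products recorded in Lemma \ref{cup-p}). Everything after that is formal bookkeeping.
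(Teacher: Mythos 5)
Your proposal is correct and follows essentially the same route as the paper's (sketched) proof: pull the family back to the universal cover of $\Delta^*$, run the Lebesgue-negligibility construction of Lemma \ref{li-coin} there to get a global line bundle ample on the very generic fibre, and read off $\alpha$ and $\beta$ from the (topologically constant) intersection numbers before descending. The paper leaves the constancy of $L_t^n$, $(L_t^{n-1}\cdot K_{X_t})$ and the descent to $\Delta^*$ implicit; you have merely made these routine points explicit.
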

\begin{proof}[Proof of Lemma \ref{fn} (sketched)]  According to the Lebesgue negligibility argument in the proof of Lemma \ref{li-coin}, we
can produce a global line bundle $L$ on $\widetilde{\mathcal{X}^*}$, which is the family pulled back from $\mathcal{X}^*$ to
the universal cover $\widetilde{\Delta^*}$ of $\Delta^*$, with the property that $L$ restricts to an ample divisor $L_t$ in the fiber $\widetilde{X_t}$ of the family $\widetilde{\mathcal{X}^*}$ except
for $t$ in a countable union $B$ of proper analytic subsets in $\widetilde{\Delta^*}$.    Write $\alpha = L_t^n$ and $\beta = (L_t^{n-1}\cdot K_{\widetilde{X_t}})$.  We are done.
\end{proof}
\begin{rem}\label{deg-est}
By the effective very ampleness Theorem \ref{Dem-fuj}, we can
assume $L_t$ on $\widetilde{X_t}$ with $t$ outside $B$ to be very ample by replacing the original $L$ by $D=aL + bK_{\widetilde{\mathcal{X}^*}}$
for some universal constants $a, b$.  Then $D_t^n$ and $(D_t^{n-1}\cdot K_{\widetilde{X_t}})$ can be explicitly bounded
in terms of $\alpha, \beta$ and $a ,b$.  See \cite[Proposition 2.2]{lm} for $D_t^n$.
Here
$0 \leq (D_t^{n-1}\cdot b K_{\widetilde{X_t}})  = (D_t^{n-1}\cdot D_t)- (D_t^{n-1}\cdot aL_t)$
and  $(D_t^{n-1}\cdot L_t)$  can be explicitly bounded in terms of  $(L_t^{n-1}\cdot D_t) =a\alpha+b\beta$
(see \cite[2.15.8 Exercise of Chapter VI]{k96}).  Or, one may simply take
 $(D_t^{n-1}\cdot b K_{\widetilde{X_t}})  <  (D_t^{n-1}\cdot D_t)=D_t^n$
 for a cruder estimate.
\end{rem}

For Lemma \ref{fn} and Remark \ref{deg-est} (and the definition of $\mathcal{F}_n(\alpha, \beta)$), it is not strictly necessary to restrict oneself to the case of nonnegative Kodaira dimension; see Remark \ref{biganti}
 for the big anticanonical bundle case.

\begin{prop}\label{fgn}
There exist explicitly computable functions
$f(x, y, z, w)$ and $g(x, y, z, w)$ with the following property: For any semi-stable degeneration $\mathcal{X}\rightarrow \Delta$ with projective fibers and general fibers being of general type and its induced punctured family $\mathcal{X}^*\in \mathcal{FG}_n(\alpha, \beta)$ by Lemma \ref{fn}, there exists a countable union $B$ of proper analytic subsets in $\Delta$ such that for any $t$ outside it,
\begin{enumerate}[$(i)$]
\item \label{fgn-i}
 one has very ample divisors $D_0, D_1, \ldots $ on each  $X_t$ and the cardinality $|\{c_1(D_i)\}_i|$ is explicitly bounded in terms of $\alpha,\beta,\nu_n, n$, which is independent of $t\in B$;
\item \label{fgn-iii} there exists a (global) line bundle $L$ on $\mathcal{X}$ such that $L_t:=L|_{X_t}$ is very ample and satisfies
the universal estimates
$$L_t^n < f(\alpha, \beta, \nu_n, n)\ and\ ({L_t^{n-1}}\cdot K_{X_t}) < g(\alpha, \beta, \nu_n, n),$$
where  $\nu_n$ is the universal constant in Theorem \ref{takthm}.
\end{enumerate}
\end{prop}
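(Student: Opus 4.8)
\emph{Proof proposal.} The plan is to fuse the monodromy-equivariant construction of Subsection~\ref{clic} with the effective estimates of Subsection~\ref{pro-gt} and the local invariant cycle Theorem~\ref{lict}, keeping every bound explicit. First I would recall the data at hand: by Lemma~\ref{fn} the induced punctured family lies in $\mathcal{FG}_n(\alpha,\beta)$, and working on the charts $U_0,U_1$ (or on the universal cover, as in Subsection~\ref{clic}) and applying Corollary~\ref{tq} one obtains on a very generic fiber $X_\tau$ a very ample divisor $D_0=L_0|_{X_\tau}$ whose $D_0^n$ and $(D_0^{n-1}\cdot K_{X_\tau})$ are bounded explicitly in terms of $\alpha,\beta,n$ (via Lemma~\ref{fn} and Remark~\ref{deg-est}), together with very ample divisors $D_k$ satisfying $c_1(D_k)=T^k(c_1(D_0))$ for all $k\ge 0$. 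Since $T$ is a ring automorphism and, the family being smooth over $\Delta^*$, the class $c_1(K_{X_\tau})$ is monodromy-invariant, Lemma~\ref{cup-p} upgrades this to $D_k^n=d_D:=D_0^n$ and $(D_k^{n-1}\cdot K_{X_\tau})=d_K:=(D_0^{n-1}\cdot K_{X_\tau})$ for every $k$.

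Next I would make the family $\{D_k\}$ effectively bounded and count its classes. Running the argument \eqref{(6.1)}--\eqref{(14.1)}---Takayama's Theorem~\ref{takthm} supplying the lower estimate and the Matsusaka $Q$-estimate \cite{lm} the upper one---for the explicit $m>\frac{n}{\nu_n}d_K^{n-1}$ one gets $md\,K_{X_\tau}=dD_k+F_k$ with $F_k$ effective for $d\gg 0$; dividing out $d$ (Corollary~\ref{eff-cor}) yields $\deg_{D_0}D_k=(D_0^{n-1}\cdot D_k)\le m\,d_K=:M$, an explicit bound in $\alpha,\beta,\nu_n,n$. By the effective Matsusaka theorem of Koll\'ar--Matsusaka \cite{kma}, $h^0(D_0)$, hence the dimension $N$ of the projective space into which $|D_0|$ embeds $X_\tau$, is bounded in terms of $d_D,d_K,n$; the $D_k$ then become $(n-1)$-dimensional subvarieties of $\mathbb{P}^N$ of degree $\le M d_D$, so by effective bounds on Hilbert schemes they realize explicitly boundedly many numerical classes, and since numerically equivalent very ample divisors on a fixed smooth projective variety agree in $H^2(X_\tau,\mathbb{Z})$ modulo torsion, one gets $|\{c_1(D_k)\}_k|\le h_0(\alpha,\beta,\nu_n,n)$. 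This proves~(i) and is the effective form of Proposition~\ref{finite}.

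Then I would produce the global line bundle with universal estimates. As in the proof of Theorem~\ref{inv-cyc}, the descending chain $S_1\supseteq T(S_1)\supseteq\cdots$ of subsets of the finite set $S_1=\{T^k(c_1(D_0))\}_{k\ge 0}$ stabilizes to a $T$-invariant set $S_m$ with $|S_m|\le h_0$; put $L_{X_\tau}:=\bigotimes_{c\in S_m}D_{\tau,c}$, where $D_{\tau,c}$ is the very ample divisor of Corollary~\ref{tq} representing $c$. Then $L_{X_\tau}$ is very ample, $c_1(L_{X_\tau})=\sum_{c\in S_m}c$ is $T$-invariant, and $(L_{X_\tau})^n$ and $((L_{X_\tau})^{n-1}\cdot K_{X_\tau})$ are bounded by explicit functions of $\alpha,\beta,\nu_n,n$ (each mixed intersection of elements of $S_m$ being again a bounded-degree $(n-1)$-fold as above, and $|S_m|\le h_0$). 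By the local invariant cycle Theorem~\ref{lict}, $c_1(L_{X_\tau})$---possibly after a bounded multiple, to clear the denominators inherent in that $\mathbb{Q}$-coefficient statement---lies in the image of $H^2(\mathcal{X},\mathbb{Z})\to H^2(X_\tau,\mathbb{Z})$; repeating the exponential-sequence argument of Proposition~\ref{global-B} and of the proof of Theorem~\ref{thm-moishezon-update}---the obstruction in $H^2(\mathcal{X},\mathcal{O}_\mathcal{X})$ vanishing by the deformation invariance of $h^{0,1}$ from Proposition~\ref{01-invariance}, which holds because all fibers of the reduced semi-stable degeneration are projective, hence Moishezon---one obtains a global line bundle $L_1$ on $\mathcal{X}$ with $c_1(L_1|_{X_t})=c_1(L_{X_\tau})$ for all $t$ outside a countable union $B$ of proper analytic subsets of $\Delta$. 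Finally, $L_1|_{X_t}$ is ample with controlled self-intersection and controlled $((L_1|_{X_t})^{n-1}\cdot K_{X_t})$, so by the effective Matsusaka theorem \cite{kma} there is an explicit $m_0=m_0(\alpha,\beta,\nu_n,n)$ such that $L:=L_1^{\otimes m_0}$ restricts to a very ample bundle on every such $X_t$, whence $L_t^n<f(\alpha,\beta,\nu_n,n)$ and $(L_t^{n-1}\cdot K_{X_t})<g(\alpha,\beta,\nu_n,n)$ by rescaling; enlarging $B$ by the thin sets $Z^0,Z^1$ of Corollary~\ref{tq} and by the thin set off which the Proposition~\ref{global-B} argument applies, both (i) and (ii) hold for every $t\notin B$.

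The main obstacle is the effectivity of the count above: passing from ``$\{D_k\}$ forms a bounded family'' (qualitative, and genuinely false without the general-type hypothesis, by Example~\ref{k3}) to a cardinality bound $h_0(\alpha,\beta,\nu_n,n)$ independent of the fiber forces one to feed the explicit $m$ of \eqref{(14.1)} into effective Matsusaka and effective Hilbert-scheme bounds; this is exactly where Takayama's constant $\nu_n$ and the Matsusaka $Q$-estimate enter. A secondary nuisance is that Theorem~\ref{lict} has $\mathbb{Q}$-coefficients, so descending to an honest line bundle on $\mathcal{X}$ and then to a very ample restriction costs two bounded powers (clearing denominators, then effective Matsusaka), which does not affect the shape of $f$ and $g$.
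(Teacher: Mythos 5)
Your proposal follows the paper's strategy almost verbatim: degree bounds from Corollaries \ref{deg-cor} and \ref{eff-cor}, a finiteness count for $\{c_1(D_i)\}$, stabilization of the monodromy orbit as in Theorem \ref{inv-cyc}, the local invariant cycle Theorem \ref{lict}, and the exponential-sequence argument of Proposition \ref{global-B} to descend to a global line bundle. The final very-ampleness step is done in the paper with Demailly's effective bound (Theorem \ref{Dem-fuj}, i.e.\ replacing $L_1$ by $aL_1+bK$) rather than effective Matsusaka, but that is cosmetic.

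The one genuine gap is in your count of first Chern classes. You bound the number of \emph{numerical} classes of the $D_k$ via Hilbert-scheme (or Chow) bounds and then pass to $H^2(X_\tau,\mathbb{Z})$ ``modulo torsion.'' This only bounds $|\{c_1(D_i)\}_i|$ up to the order of the torsion subgroup of $H^2(X_\tau,\mathbb{Z})$, which is finite but is \emph{not} controlled by $\alpha,\beta,\nu_n,n$; so part (i), which demands an explicit bound on the actual set of integral classes, does not follow as written (finiteness, and hence the stabilization of $S_1\supseteq S_2\supseteq\cdots$, survives, but the effectivity claimed in (i) does not). The paper closes this by Lemma \ref{c1}: for divisors, $c_1(E)=c_1(F)$ holds \emph{exactly} when $E$ and $F$ are algebraically equivalent, and divisors lying in the same connected component of the Chow variety are algebraically equivalent; combined with the explicit bound on the number of connected components of $C^{n,n-1,d}(X_p)$ in terms of $d$ alone, this bounds the integral classes themselves, torsion included. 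Replace your ``numerical equivalence mod torsion'' step by this algebraic-equivalence argument and the proof is complete. Your remark about clearing denominators in the $\mathbb{Q}$-coefficient form of the local invariant cycle theorem is a legitimate point of care that the paper glosses over.
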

\begin{proof}
We are going to combine the effectivity results for Chow varieties with our previous discussions.
By Remark \ref{deg-est} and Corollaries \ref{deg-cor}, \ref{eff-cor}, we can reach
very ample divisors $D_0, D_1, \ldots$ on $X_p$ for any $p$  outside  a countable union of proper analytic subsets in $\Delta^*$,
with degrees with respect to $D:=D_0$, explicitly bounded
in terms of $\nu_n$ (in Theorem \ref{takthm}) and $\alpha$, $\beta$.

Write $C^{n, k, d}(X_p)$ for the Chow variety associated with $k$-dimensional
irreducible subvarieties of $X_p$ with degree $d$ with respect to  the ample divisor $D$ of $X_p$,
or simply $\text{Chow}(X_p)$ if we omit the numerical datum.
There exist explicit (upper) bounds (independent of $X_p$) for the number of connected components of
$C^{n, k, d}(X_p)$ in terms of $n, k, d$, and thus only of $d$ if we drop $n$ and $k=n-1$, as shown by \cite[3.28 Exercise of Chapter I]{k96} or \cite[Proposition 3.1]{t98}.    Moreover, we have:
\begin{lemma}\label{c1}
  Two divisors $E$ and $F$ in a smooth projective variety $M$ have the same first Chern class
 if they are seated in the same connected component of $\text{Chow}(M)$.
\end{lemma}
\begin{proof}
Note that $c_1(E) = c_1(F)$ if and only if $E$ and $F$ are algebraically
equivalent. See  \cite[\S\ 19.3.1]{ful}.  Moreover,
if $E$ and $F$ are in the same connected component of $\text{Chow}(M)$,
then they are algebraically equivalent.  See \cite[(4.1.2.3) and (4.1.3) of Chapter II]{k96}.
\end{proof}

Continuing with the proof of Proposition \ref{fgn},
by Lemma \ref{c1} and the bounds on Chow varieties, we conclude that the
cardinality $|\{c_1(D_i)\}_i|$ is explicitly bounded in terms of the bound on degrees $\deg_D D_i$ of $D_i$ with respect to the (very) ample divisor $D$ of $X_p$ and thus in terms of $\alpha,\beta,\nu_n, n$. Using this, we are going to prove the existence of the desired global line bundle on $\mathcal{X}$.

Now $\{c_1(D_i)\}_i$ on each $X_t$ (for $t$ outside $B$ of $\Delta$) is finite, and
recall that we have obtained
a monodromy-invariant subset $S$ of $\{c_1(D_i)\}_i$, as shown in the proof of Theorem \ref{inv-cyc}.  Consider
an integral class formed by summing all elements in $S$, denoted by $\mathbf{C}$, which is then invariant under the monodromy action.
It follows from the local invariant cycle Theorem \ref{lict} that $\mathbf{C}$ arises from a class in $H^2(\mathcal{X}, \mathbb{Z})$,
denoted by $\mathcal{C}$. The remaining argument here is similar to that in the proof of Proposition \ref{global-B}. Thus, a class $\tilde{\mathcal{C}} \in H^2(\mathcal{X}, \mathbb{Z})$ which is zero when restricted to at least uncountable fibers, can be reached. By identity
theorem and the long exact sequence in \eqref{les-1} with $\mathcal{X}_U$ just being $\mathcal{X}$,
one claims that this $\tilde{\mathcal{C}}$ goes to $0$ in $H^2(\mathcal{X}, \mathcal{O}_\mathcal{X})$; for this argument, the needed deformation invariance
of Hodge numbers is justified in Proposition \ref{01-invariance}.
This yields, by the same exact sequence,
the existence of a global line bundle ${L}$ on $\mathcal{X}$ with $c_1({L})=\tilde{\mathcal{C}}$.
By Nakai--Moishezon's criterion of ampleness and Barlet's theory of cycle spaces \cite{bar}, the ampleness property of ${L}$ on the fibers $X_t$ is readily obtained, except possibly
for $t$ in a countable union of proper analytic subsets in $\Delta$. So is the very ampleness for the modification of $L$, still denoted by $L$, according to Theorem \ref{Dem-fuj}.

It is easy to estimate the degree of ${L}_t$.  By \cite[Proposition 2.2]{lm} (or
\cite[Lemma 1.4]{t97}), one estimates ${L}_t^n$ in terms of
$({L}_t\cdot  D^{n-1})$ which is bounded above by the product of the cardinality $|\{c_1(D_i)\}_i|$ and the bound on degrees of $D_i$ with respect to $D$.
It follows that ${L}_t^n$ is explicitly bounded in terms of the bound on $\deg_D D_i$.
Another explicit bound $({L}_t^{n-1}\cdot K_{X_t})$ can be obtained in a way similar to
that used in Remark \ref{deg-est}.   The asserted numerical results can now
be easily verified.

The proof of Proposition \ref{fgn} is completed.
\end{proof}
\begin{rem}\label{aee}
The global line bundle in the proof of Lemma \ref{fn} may not give the
desired bundle for Proposition \ref{fgn}, because it is defined on $\widetilde{\mathcal{X}^*}$ rather than
on $\mathcal{X}$. Moreover, together with Theorem \ref{0thm-moishezon}, Proposition \ref{fgn} also implies that the linear system $|L|$ gives rise to a bimeromorphic embedding $\Phi$ of $\mathcal{X}$ into $\mathbb{P}^N\times\Delta$ for some positive integer $N$, and
the restriction $\Phi|_{X_t}$ on $X_t$ is a biholomorphic embedding
for $t$ possibly outside a (countable) proper analytic subset of $\Delta$ (since the ramification divisor and the indeterminacies in $\mathcal{X}$ are jointly projected to a proper analytic subset of $\Delta$).
 Here $N$ can be
explicitly estimated via \eqref{ee} (see \cite[comments on p. 230]{kma}).
\end{rem}

\begin{rem}\label{biganti}
For the big anticanonical bundle case, one can also prove an analogue
of Proposition \ref{fgn} by the similar argument as above, as long as one replaces Theorem \ref{takthm} by \eqref{16-3} (using $K^*_{X_t}$ in place of $\tilde L_t$) whose estimate is comparatively not an effective one.
\end{rem}
\subsection{Two Chow-type lemmata} \label{subs-Chow}
Inspired by Demailly's Question \ref{quest}, we study the structure of the family of projective complex analytic spaces, to obtain two Chow-type lemmata. The full strength of the techniques developed in
previous subsections finds an application here; see also Remark \ref{3.21} for another application.
Recall that Chow's lemma is a multi-purpose tool to extend
results from the projective situation to the proper situation in general, and one particular case of it says that a proper $\mathbf{k}$-variety over a field $\mathbf{k}$ admits a projective birational morphism from a projective $\mathbf{k}$-variety, cf. \cite[\S\ 18.9]{va} or \cite[$\S$\  18.3]{ful}.

Let us start with the smooth case.
\begin{lemma}\label{Chow-sm}
Let $\pi:\mathcal{X}\rightarrow \Delta$ be a holomorphic family of $n$-dimensional projective manifolds. Then there exists a projective morphism $\pi_\mathcal{Y}:\mathcal{Y}\rightarrow \Delta$ from a complex manifold and a bimeromorphic morphism $\rho: \mathcal{Y}\rightarrow \mathcal{X}$ over $\Delta$ such that  possibly after shrinking $\Delta$, $\rho$ is a biholomorphism between $\mathcal{Y}\setminus{Y_0}$ and $\mathcal{X}\setminus{X_0}$ with $Y_0:=\pi_\mathcal{Y}^{-1}(0)$ and $X_0:=\pi^{-1}(0)$.
In particular, there exists a global line bundle $\mathcal{A}$ on $\mathcal{X}$ such that $\mathcal{A}_t:=\mathcal{A}|_{X_t}$ is very ample for any $t\ne 0$ possibly after shrinking $\Delta$.
\end{lemma}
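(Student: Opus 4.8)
\emph{Overall strategy.} The plan is: (i) build a line bundle $L$ on $\mathcal{X}$ that is very ample on a Zariski‑dense set of fibres and big on \emph{every} fibre; (ii) feed it into Theorem \ref{0thm-moishezon} to obtain a model $\mathcal{Y}_0\subseteq\mathbb{P}^N\times\Delta$, projective over $\Delta$ and bimeromorphic to $\mathcal{X}$; (iii) resolve to a bimeromorphic morphism $\rho\colon\mathcal{Y}\to\mathcal{X}$ from a complex manifold, projective over $\Delta$ and biholomorphic off $X_0$; (iv) transport a fibrewise very ample bundle from $\mathcal{Y}$ back to $\mathcal{X}$ to get $\mathcal{A}$. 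For step (i): since $\pi$ is smooth over the contractible disc $\Delta$, Ehresmann's theorem makes $\mathcal{X}$ a trivial $C^\infty$ bundle, so each restriction $H^2(\mathcal{X},\mathbb{Z})\to H^2(X_t,\mathbb{Z})$ is an isomorphism; hence hypothesis \eqref{surj-chern'} is automatic (exactly as in the proof of Theorem \ref{main-thm}.\eqref{main-thm-i}), and $h^{0,1}(X_t)$ is independent of $t$ by Proposition \ref{01-invariance} (the central fibre is smooth, hence a reduced SNC divisor with projective component, and all fibres are projective). Choosing an ample $A_t$ on each $X_t$ and applying Proposition \ref{global-B} with $B=\Delta$, $L_t=A_t$, yields a line bundle $L$ on $\mathcal{X}$ with $c_1(L|_{X_s})=c_1(A_s)$, hence $L|_{X_s}$ ample by Nakai--Moishezon, for all $s$ in an uncountable set $S\subseteq\Delta$. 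Replacing $L$ by $L^{\otimes k}\otimes K_{\mathcal{X}}^{\otimes 2}$ with $k>2C(n)$ and applying Theorem \ref{Dem-fuj} fibrewise, I may assume $L|_{X_s}$ is very ample for every $s\in S$; by the openness of ampleness in the countable analytic Zariski topology (Nakai--Moishezon plus Barlet cycle spaces, as in the proof of Lemma \ref{li-coin}), the locus $\{t:L|_{X_t}\text{ very ample}\}$ has complement a countable union of proper analytic subsets of $\Delta$.

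\emph{Bigness on all fibres and the projective model.} Each $X_t$ is smooth and connected, hence irreducible, and $\kappa(L|_{X_s})=n$ for the uncountably many $s\in S$, so Proposition \ref{ki-dim-limit} gives $\kappa(L|_{X_t})=n$, i.e. $L|_{X_t}$ big, for \emph{every} $t\in\Delta$. Thus $\pi$ is a one‑parameter degeneration of projective manifolds carrying a global line bundle $L$ big on every fibre, and Theorem \ref{0thm-moishezon} produces a bimeromorphic map $\Phi\colon\mathcal{X}\dashrightarrow\mathcal{Y}_0$ over $\Delta$ onto a subvariety $\mathcal{Y}_0\subseteq\mathbb{P}^N\times\Delta$ with every fibre $Y_t\subseteq\mathbb{P}^N\times\{t\}$ projective; in particular $\mathcal{Y}_0\to\Delta$ is a projective morphism.

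\emph{The bimeromorphic morphism, biholomorphic off $X_0$.} I would take a resolution $\sigma\colon\mathcal{Y}\to\mathcal{Y}_0$, an isomorphism over the smooth locus of $\mathcal{Y}_0$, which also resolves the indeterminacy of $\Phi^{-1}$, so that $\rho:=\Phi^{-1}\circ\sigma\colon\mathcal{Y}\to\mathcal{X}$ is a bimeromorphic \emph{morphism} over $\Delta$ from the complex manifold $\mathcal{Y}$, and $\pi_{\mathcal{Y}}:=\pi\circ\rho$ is projective (a composition of projective morphisms). Let $F\subseteq\mathcal{X}$ be the closed analytic locus over which $\rho$ is not an isomorphism; $\pi|_F$ is proper, so $\pi(F)$ is analytic in $\Delta$. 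For $t$ outside a countable union of proper analytic subsets, $L|_{X_t}$ is very ample and, since $\pi_*(L^{\otimes m})$ is globally generated over the Stein $\Delta$ and base change holds at $t$, the map $\Phi|_{X_t}$ is the embedding attached to the complete very ample system $|mL|_{X_t}|$, with smooth image $Y_t\cong X_t$; hence $\sigma$ is an isomorphism over such $Y_t$ and $\rho$ restricts to a biholomorphism $Y_t\to X_t$, so $F\cap X_t=\emptyset$. Consequently $\pi(F)$ is a proper analytic, hence discrete, subset of $\Delta$; after shrinking $\Delta$ we may assume $\pi(F)\subseteq\{0\}$, i.e. $F\subseteq X_0$. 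Since $\rho$ is over $\Delta$ one has $\rho^{-1}(X_0)=\pi_{\mathcal{Y}}^{-1}(0)=Y_0$, so $\rho$ restricts to a biholomorphism $\mathcal{Y}\setminus Y_0\to\mathcal{X}\setminus X_0$.

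\emph{The global line bundle $\mathcal{A}$, and the main obstacle.} Since $\pi_{\mathcal{Y}}$ is projective, I pick a line bundle $\mathcal{A}_{\mathcal{Y}}$ on $\mathcal{Y}$ very ample on every fibre $Y_t$ (e.g. $\mathcal{O}_{\mathbb{P}^M}(1)|_{\mathcal{Y}}$ for a closed embedding $\mathcal{Y}\hookrightarrow\mathbb{P}^M\times\Delta$); transporting $\mathcal{A}_{\mathcal{Y}}|_{\mathcal{Y}\setminus Y_0}$ through the biholomorphism above gives a line bundle on $\mathcal{X}\setminus X_0$, which extends — $\mathcal{X}$ being smooth and $X_0$ an irreducible hypersurface, so one may take closures of divisors — to a line bundle $\mathcal{A}$ on $\mathcal{X}$ with $\mathcal{A}|_{X_t}\cong\mathcal{A}_{\mathcal{Y}}|_{Y_t}$, very ample, for all $t\neq 0$. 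The genuinely non‑formal ingredients are already packaged in Propositions \ref{global-B}, \ref{ki-dim-limit} and Theorem \ref{0thm-moishezon}; the point requiring care in the present argument is the control of the non‑isomorphism locus $F$ in the previous paragraph — namely, verifying that the locus where $L|_{X_t}$ is very ample and the locus where the global sections of a high power of $L$ cut out the complete linear system on $X_t$ (cohomology and base change on the curve $\Delta$) are both complements of countable unions of proper analytic subsets, so that their intersection is Zariski‑dense in $\Delta$ and forces $\pi(F)$ to be discrete, hence removable by shrinking.
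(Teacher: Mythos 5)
Your proposal is correct and follows essentially the same route as the paper's proof: a global line bundle $L$ that is fibrewise ample off a countable set (the Lebesgue-negligibility argument, which you package as Proposition \ref{global-B} after observing that \eqref{surj-chern'} is automatic for a smooth family), Demailly's effective very ampleness, Theorem \ref{0thm-moishezon}, resolution of the image variety, control of the non-isomorphism locus over a discrete subset of $\Delta$, and push-forward of the hyperplane divisor — which is precisely the paper's second alternative for producing $\mathcal{A}$. One small slip to fix: the projectivity of $\pi_{\mathcal{Y}}$ must be read off from the factorization $\mathcal{Y}\to\mathcal{Y}_0\hookrightarrow\mathbb{P}^N\times\Delta\to\Delta$ (resolution composed with a closed embedding and a projection), not from "$\pi\circ\rho$ is a composition of projective morphisms", since the projectivity of $\pi$ itself is exactly what is not given.
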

\begin{proof} We have obtained a global line bundle $L$ on $\mathcal{X}$ such that  $L_t:=L|_{X_t}$ is ample for the very generic $t\in \Delta$ by the argument of \cite[Lemma 3.6]{rt} or in the proof of Lemma \ref{li-coin} here.  Assume that $L_0:=L|_{X_0}$ is non-ample; if $L_0$ is ample, then $L_t$ are ample for all $|t|$ small and it follows that $\mathcal{X}\rightarrow \Delta'$ is projective on a smaller disc $\Delta'$.
Theorem \ref{Dem-fuj} implies that for the very  generic $t\in \Delta$, there exist a universal constant $k:=k(n)$ such that $K_{X_t}\otimes  L^{\otimes k}_t$ is ample and $K_{X_t}^{\otimes 2}\otimes  L^{\otimes k}_t$ is very ample,  and thus the vanishing
$$H^i(X_t,K_{X_t}^{\otimes 2}\otimes  L^{\otimes k})=0,\ \text{for $i>0$}.$$
So the invariance of the Euler--Poincar\'{e} characteristic \cite[Theorem 4.12.(iii) of Chapter III]{bs} gives that of $h^0(X_t, K_{X_t}^{\otimes 2}\otimes  L^{\otimes k})$ for the very  generic $t\in \Delta$, denoted by $d$, and Grauert's semi-continuity implies that $$\{t\in \Delta: h^0(X_t, K_{X_t}^{\otimes 2}\otimes  L^{\otimes k}) \ge d+1\}$$ is a proper analytic subset of $\Delta$. Take $\mathcal{A}$ as $K_{\mathcal X}^{\otimes 2}\otimes  L^{\otimes k}$. Shrinking $\Delta$ if necessarily, we can now assume that on $\Delta^*$, $h^0(X_t, \mathcal{A}_t) =d$ and $\mathcal{A}$ is cohomologically flat in dimension $0$ by Lemma \ref{gct}.

By Theorem \ref{0thm-moishezon},  $\Phi$ associated to $\mathcal{A}$ gives a bimeromorphic embedding of $\mathcal{X}$ to $\mathbb{P}^N\times \Delta$ (over $\Delta$).
That $\Phi$ is bimeromorphic means that $\Phi$ is biholomorphic on $\mathcal{X}\setminus \mathcal{S}$ for some subvariety $\mathcal{S}$.
 Here $\pi(\mathcal{S})$ is a proper analytic subset of $\Delta$ since $\Phi$ is biholomorphic
on $X_b$, where $b$ is as in Step \ref{step 1} of Theorem \ref{0thm-moishezon}, thanks to the very ampleness, and thus on a neighborhood of $X_b$ with respect to the complex topology. Though $\mathcal{A}_t$, being very ample for very  generic $t$, readily gives
some biholomorphic embedding $\phi_t$ of $X_t$ (fiberwise), the $\Phi$ above is
to connect these $\phi_t$ together.  So $\pi(\mathcal{S}) \subset \{t=0\}$  possibly after shrinking $\Delta$, that is, $\Phi$ is biholomorphic at least on $\mathcal{X}^*$ by the definition of $\mathcal{S}$.
Then
$\mathcal{W}:=\Phi(\mathcal{X})$ is smooth outside $W_0:=\pi^{-1}_{\mathcal{W}}(0)$ for $\pi_{\mathcal{W}}:\mathcal{W}\rightarrow \Delta$ since the biholomorphism holds there and $\mathcal{X}$ is smooth. Conversely, $\mathcal{W}$ is bimeromorphic to $\mathcal{X}$ and $\pi_{\mathcal{W}}:\mathcal{W}\rightarrow \Delta$ is a projective morphism. By Remmert's elimination of indeterminacy \cite{re} (or \cite[Theorem 1.9]{pe}) and Hironaka's Chow lemma \cite[Corollaries 2 and 1]{Hi}, one modifies $\mathcal{W}$ as a complex manifold  $\mathcal{Y}$ such that the composition $\rho: \mathcal{Y}\rightarrow \mathcal{X}$ over $\Delta$ is a bimeromorphic morphism with $\mathcal{Y}\setminus{Y_0}\cong \mathcal{W}\setminus{W_0}$ as $\mathcal{W}\setminus{W_0} \cong \mathcal{X}\setminus{X_0}$ and $\pi_\mathcal{Y}: \mathcal{Y}\rightarrow \Delta$ is the desired projective morphism. This completes the first part of the lemma.

Finally, we discuss the second half of the lemma.  By Lemmata \ref{ccs-bc}, \ref{2-5} and the cohomological flatness of $\mathcal{A}$, $\pi_* \mathcal{A}$ is a holomorphic vector bundle on $\Delta^*$ and  its fibers are identified with $H^0(X_t,\mathcal{A}_t)$, while $\Phi|_{X_t}$ is given by  the section of $\mathcal{A}_t$ on $X_t$ for $t\ne 0$. This implies that $\mathcal{A}_t$ gives a biholomorphism on $X_t$.  Thus for $t\ne 0$, $\mathcal{A}_t$ is very ample, proving the second half. We can also prove it as follows.
In the proof of the first part, $\mathcal{Y} \subset \mathbb{P}^N\times \Delta$, so $H\times \Delta|_\mathcal{Y}$  is a hyperplane section on $\mathcal{Y}$ where $H = \mathcal{O}_{\mathbb{P}^N}(1)$.  Then this divisor $\rho(H\times \Delta) \subset \mathcal{X}$ gives a global line bundle $\mathcal{A}$ on $\mathcal{X}$ with $\mathcal{A}_t$ very ample  for $t \ne 0$ since $\rho$ induces a biholomorphism $Y_t\rightarrow X_t$. Remark that the advantage of the first way is to give an estimate of $N$ for $\mathbb{P}^N$.
\end{proof}
\begin{rem}\label{4.26}
Without shrinking $\Delta$, the similar proof
above shows that there exists a global line bundle $\mathcal{A}$ on $\mathcal{X}$ such that
$\mathcal{A}_t:=\mathcal{A}|_{X_t}$ is very ample for $t$ outside a proper analytic subset of
$\Delta$. Compare this with \cite[Theorem 1.2.17]{[La]} which in the algebraic category asserts that given the existence of $\mathcal{A}$ on $\mathcal{X}$, if $\mathcal{A}_s$ is ample for some $s\in \Delta$ then $\mathcal{A}_t$
is ample for any $t$ in some Zariski open neighborhood of $s$. If one uses Barlet's theory of
cycle spaces \cite{bar}, $\mathcal{A}_t$ is ample only for $t$ outside a countable union
of proper analytic subsets of $\Delta$.
\end{rem}
The following is an application of Lemma \ref{Chow-sm} (without using
Barlet's theory \cite{bar}):
\begin{cor}
With the notations of Lemma \ref{Chow-sm}, let $\mathcal{E}$ be any line bundle on $\mathcal{X}$ such that $\mathcal{E}_{t_0}:=\mathcal{E}|_{X_{t_0}}$ is
nef for some $t_0\in \Delta$.  Then $\mathcal{E}_t$ is nef for $t$ outside some thin
subset $T$ of $\Delta$ (i.e., $T$ is a countable union of proper analytic subsets of $\Delta$).
\end{cor}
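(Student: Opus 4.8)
The plan is to transfer the question to the \emph{projective} morphism $\pi_\mathcal{Y}:\mathcal{Y}\to\Delta$ furnished by Lemma \ref{Chow-sm}, where nefness of a fibrewise line bundle can be detected by the ampleness of its twists by a fixed relatively ample bundle, and where the ampleness locus over the base is honestly Zariski open; the answer on $\mathcal{X}$ is then read off over the complement of $\{0\}$ together with the (discrete) locus where the comparison map fails to be a biholomorphism.

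First I would run the construction of Lemma \ref{Chow-sm} in its non-shrinking form (cf. Remark \ref{4.26}): there is a complex manifold $\mathcal{Y}\subset\mathbb{P}^N\times\Delta$, a projective morphism $\pi_\mathcal{Y}:\mathcal{Y}\to\Delta$ and a bimeromorphic morphism $\rho:\mathcal{Y}\to\mathcal{X}$ over $\Delta$ which restricts to a biholomorphism over $\Delta\setminus P$ for a proper analytic (hence discrete) subset $P\subset\Delta$. Write $\mathcal{A}_\mathcal{Y}$ for $\mathcal{O}_{\mathbb{P}^N}(1)|_{\mathcal{Y}}$, so that $\mathcal{A}_\mathcal{Y}|_{Y_t}$ is very ample on every fibre $Y_t:=\pi_\mathcal{Y}^{-1}(t)$, and put $\mathcal{F}:=\rho^*\mathcal{E}$. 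Since $\rho^{-1}(X_0)=\pi_\mathcal{Y}^{-1}(0)=Y_0$ and $\rho$ is surjective, $\rho$ induces a surjective proper morphism $Y_{t_0}\to X_{t_0}$, so $\mathcal{F}|_{Y_{t_0}}$ is the pullback of the nef bundle $\mathcal{E}_{t_0}$ and hence nef; and for $t\notin P$ the biholomorphism $Y_t\cong X_t$ identifies $\mathcal{F}|_{Y_t}$ with $\mathcal{E}_t$.

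Then comes the core step: for every integer $m\ge 1$ the line bundle $\mathcal{N}_m:=\mathcal{F}^{\otimes m}\otimes\mathcal{A}_\mathcal{Y}$ on $\mathcal{Y}$ restricts on $Y_{t_0}$ to $m\,\mathcal{F}|_{Y_{t_0}}+\mathcal{A}_\mathcal{Y}|_{Y_{t_0}}$, a sum of a nef and an ample bundle, hence to an ample bundle. Invoking the openness of the fibrewise ample locus for the \emph{projective} morphism $\pi_\mathcal{Y}$ — the analytic counterpart of \cite[Theorem 1.2.17]{[La]}, which uses no cycle-space machinery — the set $U_m:=\{t\in\Delta:\mathcal{N}_m|_{Y_t}\text{ is ample}\}$ is the complement of a proper analytic subset $Z_m\subset\Delta$ (proper since $t_0\in U_m$). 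Setting $T:=\{0\}\cup P\cup\bigcup_{m\ge 1}Z_m$, a countable union of proper analytic subsets of $\Delta$ and so a thin set, for any $t\notin T$ one has $m\,\mathcal{F}|_{Y_t}+\mathcal{A}_\mathcal{Y}|_{Y_t}$ ample for all $m\ge1$ with $\mathcal{A}_\mathcal{Y}|_{Y_t}$ ample, whence $\mathcal{F}|_{Y_t}$ is nef; since $t\notin P$, transporting along $Y_t\cong X_t$ gives that $\mathcal{E}_t$ is nef, as wanted.

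The point I expect to require care is the openness used in the core step: that for a \emph{projective} — not merely proper — morphism to the disc, the locus of fibrewise ampleness of a fixed line bundle is the complement of a proper analytic subset. Over the one-dimensional Stein base $\Delta$ this should follow either from a relative Serre-vanishing argument combined with Grauert's semicontinuity (Theorem \ref{Upper semi-continuity}), bounding a single very ample power of $\mathcal{N}_m$ uniformly on a Zariski-open set, or directly from the analytic form of \cite[Theorem 1.2.17]{[La]}; it is precisely here that projectivity of $\pi_\mathcal{Y}$ rather than of $\pi$ enters, and the necessity of letting $m$ range over all positive integers is what produces a countable union, hence the word \emph{thin} in the conclusion rather than \lq complement of a proper analytic subset\rq. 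Everything else — surjectivity of $\rho$, the identity $\rho^{-1}(X_0)=Y_0$, and the invariance of nefness under pullback and under biholomorphism — is routine.
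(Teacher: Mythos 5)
Your proposal is correct and follows essentially the same route as the paper's proof: both rest on detecting nefness through ampleness of twists by a fibrewise ample bundle, combined with the analytic adaptation of \cite[Theorem 1.2.17]{[La]} on openness of the fibrewise ample locus, which produces the countable union $T$ of proper analytic subsets. The only difference is organizational: the paper argues directly on $\mathcal{X}$ with the bundle $\mathcal{A}$ of Lemma \ref{Chow-sm} and Remark \ref{4.26} when $\mathcal{A}_{t_0}$ happens to be ample, passing to $\pi_\mathcal{Y}$ only in the remaining case, whereas you work uniformly on $\mathcal{Y}$ with the relatively very ample bundle, which cleanly avoids that case split.
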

\begin{proof}
See \cite[Proposition 1.4.14]{[La]} for a version in the algebraic category (i.e., $\mathcal{X}$ is a complete variety over a complete base as a family,
or $\mathcal{X}$ is dominated by some quasi-projective variety,  cf. \cite[Definition 18.3]{ful}).
Let $\mathcal{A}$ be the line bundle on $\mathcal{X}$ as in Lemma \ref{Chow-sm} and Remark \ref{4.26} such that
$\mathcal{A}_t$ is (very) ample for $t$ outside a proper analytic subset $Z$  of $\Delta$.
First suppose $\mathcal{E}$ with $\mathcal{E}_{t_0}$ being nef for some $t_0\in \Delta\setminus Z$.
Now that $(\mathcal{E}+\frac{1}{k} \mathcal{A})_{t_0}\ (k\in \mathbb{N})$ is ample, $(\mathcal{E}+\frac{1}{k} \mathcal{A})_{t}$ is then ample for $t$ outside a proper analytic subset $S_k$
of $\Delta$ by invoking \cite[Theorem 1.2.17]{[La]} (whose techniques can
be easily adapted to the analytic category here).  Clearly, for
$t$ outside $T:=\cup_{k\in \mathbb{N}} S_k$, $\mathcal{E}_t$ is nef as claimed. Next, if $t_0\in Z$, we use the projective morphism
$\mathcal{Y}\rightarrow \Delta$ of Lemma \ref{Chow-sm} and Remark \ref{4.26} to arrive at the desired conclusion
in a similar way, since with $\rho: \mathcal{Y} \rightarrow \mathcal{X}$ the nefness of $\rho^*L$ on a fiber of $\mathcal{Y}$
is equivalent to that of a line bundle $L$ on the corresponding fiber of $\mathcal{X}$ (cf. \cite[Example 1.4.4]{[La]}).
\end{proof}

Then we come to the degeneration case and first introduce a new notion stronger than the algebraic morphism (or Moishezon) family.
\begin{defn}\label{su-proj}
A family $\pi_\mathcal{W}:\mathcal{W}\rightarrow \Delta$ is a \emph{pseudo-projective family} if there exists a projective family
$\pi_\mathcal{P}:\mathcal{P} \rightarrow \Delta$ from a complex manifold and a bimeromorphic morphism from $\mathcal{P}$ to $\mathcal{W}$ over $\Delta$ such that it induces a biholomorphism $\pi_\mathcal{P}^{-1}(U)\rightarrow \pi_\mathcal{W}^{-1}(U)$ for some $U \subset \Delta$ with $\Delta\setminus U$ being a proper analytic subset of $\Delta$.
\end{defn}

\begin{lemma}\label{Prop. B}
Let $\pi:\mathcal{X}\rightarrow \Delta$ be a one-parameter degeneration and $\pi_{\mathcal{S}}:\mathcal{S}\rightarrow \Delta$ a semi-stable reduction of $\pi:\mathcal{X}\rightarrow \Delta$. Suppose that $\pi_{\mathcal{S}}:\mathcal{S}\rightarrow \Delta$ is pseudo-projective.
Then so is $\pi:\mathcal{X}\rightarrow \Delta$.
\end{lemma}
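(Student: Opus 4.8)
The plan is to pass to a finite cyclic base change, where the total space becomes manifestly algebraic over the base, and then to descend back to $\Delta$. Write the semi-stable reduction as $\pi_{\mathcal{S}}\colon\mathcal{S}\to\Delta'$, where $\Delta'\to\Delta$, $t'\mapsto t=(t')^{m}$, is the composite of the root fibrations involved in the semi-stable reduction process; let $G:=\mathbb{Z}/m\mathbb{Z}$ act on $\Delta'$ by rotation, so that $\Delta=\Delta'/G$, and set $\mathcal{X}':=\mathcal{X}\times_{\Delta}\Delta'$. Then $\mathcal{X}'$ is reduced and irreducible (being smooth and connected over $\Delta'\setminus\{0\}$), $G$ acts on $\mathcal{X}'$ through the $\Delta'$-factor with $\mathcal{X}'/G=\mathcal{X}$ and $\mathcal{X}'\to\mathcal{X}$ finite flat of degree $m$, and---by the construction recalled in Subsection~\ref{di0q}---$\mathcal{S}$ is obtained from $\mathcal{X}'$ by a normalization followed by blow-ups, hence there is a bimeromorphic morphism $\mathcal{S}\to\mathcal{X}'$ over $\Delta'$. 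Composing it with the bimeromorphic morphism $\mathcal{P}\to\mathcal{S}$ and the projective family $\pi_{\mathcal{P}}\colon\mathcal{P}\to\Delta'$ supplied by the pseudo-projectivity of $\pi_{\mathcal{S}}$, I obtain that $\mathcal{X}'\to\Delta'$ is bimeromorphically equivalent to the projective morphism $\pi_{\mathcal{P}}$, i.e. an algebraic morphism.

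Next I would produce a $G$-equivariant projective model of $\mathcal{X}'$. Let $\mathcal{P}_{G}$ denote the component dominating $\mathcal{X}'$ of the $G$-twisted fibre product $\{\,(p_{g})_{g\in G}\colon \rho'(p_{g})=g\!\cdot\!x\ \text{for a common }x\in\mathcal{X}'\,\}$, where $\rho'\colon\mathcal{P}\to\mathcal{X}'$ is the morphism above; it carries the natural $G$-action $h\cdot(p_{g})_{g}=(p_{gh})_{g}$, maps $G$-equivariantly and bimeromorphically onto $\mathcal{X}'$, and is a subvariety of the iterated fibre product $\mathcal{P}\times_{\Delta}\cdots\times_{\Delta}\mathcal{P}$, which is projective over $\Delta$ because $\mathcal{P}\to\Delta'\to\Delta$ is a projective morphism followed by a finite one. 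Applying $G$-equivariant resolution of singularities, I get a complex manifold $\widetilde{\mathcal{P}}_{G}$ with a $G$-equivariant bimeromorphic morphism $\widetilde{\mathcal{P}}_{G}\to\mathcal{X}'$ and with $\widetilde{\mathcal{P}}_{G}\to\Delta$ still projective, each blow-up being a projective morphism.

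Then I would descend the quotient. Put $\mathcal{Z}:=\widetilde{\mathcal{P}}_{G}/G$; since $G$ acts freely over the punctured disc, the induced map $\mathcal{Z}\to\mathcal{X}'/G=\mathcal{X}$ is a bimeromorphic morphism over $\Delta=\Delta'/G$. The crucial claim is that $\mathcal{Z}\to\Delta$ is projective: starting from a relatively ample line bundle on $\widetilde{\mathcal{P}}_{G}$, averaging over $G$ yields a $G$-invariant relatively ample bundle, a power of which is $G$-linearized and a further power of which descends to a line bundle on $\mathcal{Z}$ by a standard linearization-and-descent criterion; as the pullback of this descended bundle along the finite surjective morphism $\widetilde{\mathcal{P}}_{G}\to\mathcal{Z}$ is relatively ample, the descended bundle is relatively ample on $\mathcal{Z}$, so $\mathcal{Z}\to\Delta$ is projective. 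Finally I would resolve the quotient singularities of $\mathcal{Z}$ by blow-ups along analytic centres to obtain a complex manifold $\mathcal{P}'$ with $\mathcal{P}'\to\Delta$ projective and $\mathcal{P}'\to\mathcal{Z}\to\mathcal{X}$ a bimeromorphic morphism over $\Delta$ which is a biholomorphism over the complement $U$ of a proper analytic (hence discrete) subset of $\Delta$---namely the subset carrying all the exceptional loci of the modifications together with the branch point of $\Delta'\to\Delta$. By Definition~\ref{su-proj} this exhibits $\pi\colon\mathcal{X}\to\Delta$ as a pseudo-projective family.

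The main obstacle is the descent in the third step: proving that the quotient of a projective family by the compatible finite cyclic group action is again projective over the quotient base. In the analytic setting this rests on the averaging/linearization/descent package for line bundles together with the descent of relative ampleness along finite surjective morphisms, none of which is entirely formal. A secondary technical point is the $G$-equivariant projective model of the second step---equivariant resolution of singularities and the projectivity over $\Delta$ of the iterated fibre product of $\mathcal{P}$.
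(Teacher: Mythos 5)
Your approach is correct in outline but takes a genuinely different route from the paper's proof. You pass to the cyclic cover $\mathcal{X}'=\mathcal{X}\times_\Delta\Delta'$, build a $G$-equivariant projective model of $\mathcal{X}'$ (via an equivariant resolution of the $G$-twisted fibre product of $\mathcal{P}$ over $\Delta$), and then descend projectivity through the quotient by $G$ by averaging a relatively ample line bundle, passing to a $G$-linearized power, descending it to $\mathcal{Z}=\widetilde{\mathcal{P}}_G/G$, and invoking descent of relative ampleness along the finite surjection $\widetilde{\mathcal{P}}_G\to\mathcal{Z}$ --- exactly the three non-formal ingredients you flag at the end. The paper avoids equivariant geometry entirely: writing $\mu\colon\mathcal{S}\to\mathcal{X}$ for the composed map (generically $m$-to-$1$ over $\Delta$) and $\nu\colon\mathcal{S}'\to\mathcal{S}$ for the bimeromorphic morphism supplied by pseudo-projectivity, one takes a hyperplane section $H'$ of the projective model $\mathcal{S}'\subset\mathbb{P}^N\times\Delta$ and pushes it forward as a cycle to a single global divisor $(\mu\circ\nu)_*H'$ on the smooth total space $\mathcal{X}$. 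For generic $\tilde t\in\Delta^*$ the restriction of this divisor to $X_{\tilde t}$ is $\sum_{i=1}^m\mu_*D_{t_i}$, one summand per preimage fibre $S_{t_i}$ with $t_i^m=\tilde t$; each $\mu|_{S_{t_i}}\colon S_{t_i}\to X_{\tilde t}$ is biholomorphic so each summand is very ample, and a sum of very ample divisors is very ample. One then feeds this fibrewise very ample global divisor into the bimeromorphic-embedding argument of Lemma~\ref{Chow-sm} / Theorem~\ref{0thm-moishezon}. So the paper's cycle-theoretic push-forward trades your equivariant resolution, linearization, and quotient-descent package for a single, elementary observation about push-forward along a generically finite map; your route, in exchange, produces a concrete equivariant projective model $\widetilde{\mathcal{P}}_G$ and would adapt more readily to base changes by larger or non-cyclic finite groups.
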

\begin{proof}
By Definition \ref{su-proj} of pseudo-projective family, $\pi_{\mathcal{S}}:\mathcal{S}\rightarrow \Delta$ has a projective family model $\pi_{\mathcal{S}'}:\mathcal{S}'\rightarrow \Delta$ in $\mathbb{P}^N\times \Delta$ for some $N\in \mathbb{N}$ and a bimeromorphic morphism $\nu:\mathcal{S}'\rightarrow\mathcal{S}$ over $\Delta$.
Take the hyperplane $H$ of $\mathbb{P}^N$ and $H\times \Delta$ cuts $\mathcal{S}'$ to get $H'\subseteq \mathcal{S}'$. Obtain a hyperplane section
of $\pi_{\mathcal{S}'}^{-1}(t)$ for $t$ outside a proper analytic subset $B_1\in \Delta$.
Now choose generic $H$ and the above gives $S_t$ of $\mathcal{S}$ a
hyperplane section $D_t\subset S_t$ for $t$ outside $B:=B_1\cup B_2$. Here
$B_2$ is the proper analytic subset of $\Delta$ outside which the induced biholomorphism
exists between the open subsets of $\mathcal{S}$ and $\mathcal{S}'$ as in Definition \ref{su-proj}.

Consider $\mu_*{D_t}$ (cf. \cite{ful} for this push-forward map) for the semi-stable reduction $\mu: \mathcal{S} \rightarrow \mathcal{X}$ over $\Delta$, say an $m$-to-$1$ sheeted fibration.  Since $\mu$ is biholomorphic for any general $t\in \Delta$ and $D_t$ for any $t\in \Delta\setminus B$ is very ample,
$\mu_*{D_t}$ for any nonzero $t = t_1, t_2,\ldots,$ $t_m$ with $t_i^m =\tilde{t}$ of
$\triangle\setminus (B\cup \{0\})$ is actually the sum of $m$ very ample divisors $F_1, F_2, \ldots, F_m$
on $\mu(S_t) = X_{\tilde{t}}$ with
$S_t =\pi^{-1}_\mathcal{S}(\tilde{t})$ and $F_i = \mu_*D_{t_i}$.
 It is well-known that a sum of very ample divisors is still very ample.

As above, we already have the global $H'\subseteq \mathcal{S}'$ and thus a global divisor $(\mu\circ\nu)_*H'$ on $\mathcal{X}$.
So $(\mu\circ\nu)_*H'$ restricts to be very ample on any $X_t$ for $t\in \Delta\setminus (B\cup\{0\})$, and induces a bimeromorphic embedding of $\mathcal{X}$ into some $\mathbb{P}^N$ (as in the proof of Lemma \ref{Chow-sm}), whose image is $\mathcal{X}'$. Then the remaining  is similar to the proof of Lemma \ref{Chow-sm}, to construct the desired bimeromorphic morphism $\mathcal{X}'\rightarrow \mathcal{X}$.
\end{proof}
\begin{rem}\label{Prop B'}
One can obtain a similar result for algebraic morphism version of Lemma \ref{Prop. B}, for which one formulates Theorem \ref{0thm-moishezon} up to flat family case.
The proof just needs that the sum of big divisors is still big by the characterization of big line bundle
using strictly positive curvature current.
\end{rem}

\begin{lemma}\label{sing-chow}
If $\pi:\mathcal{X}\rightarrow \Delta$ is a one-parameter degeneration with projective fibers and uncountable fibers are all of general type or all have big anticanonical bundles, then $\pi$ is also a pseudo-projective family.
\end{lemma}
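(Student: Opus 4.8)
The plan is to descend to a semi-stable model, construct there a global line bundle that is big on every fibre over $\Delta^{*}$ and very ample on all but a thin set of fibres, feed it into Theorem \ref{0thm-moishezon}, and then clean up the resulting bimeromorphic embedding by the Remmert--Hironaka techniques already used in Lemma \ref{Chow-sm}. First I would upgrade the hypothesis. Since $\pi:\mathcal{X}\to\Delta$ is a one-parameter degeneration, $\mathcal{X}$ is a complex manifold and every $X_{t}$ with $t\in\Delta^{*}$ is an irreducible smooth projective variety; applying Proposition \ref{ki-dim-limit} to the line bundle $K_{\mathcal{X}}$ (resp. $K_{\mathcal{X}}^{*}$) on $\mathcal{X}$, whose restriction to $X_{t}$ equals $K_{X_{t}}$ (resp. $K_{X_{t}}^{*}$) by adjunction, and using that $\kappa(K_{X_{t}})=\dim_{\mathbb{C}}X_{t}$ (resp. $\kappa(K_{X_{t}}^{*})=\dim_{\mathbb{C}}X_{t}$) for the given uncountable set of $t$, one concludes the same equality for every $t\in\Delta^{*}$. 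Hence, possibly after shrinking $\Delta$, every general fibre of $\pi$ is a projective manifold of general type (resp. with big anticanonical bundle).

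Next I would pass to a semi-stable reduction $\pi_{\mathcal{S}}:\mathcal{S}\to\Delta$ of $\pi$ via Proposition \ref{reduction}, obtained after a $\mu$-th root base change and finitely many blow-ups along subvarieties of the central fibre. By Lemma \ref{bu-proj} all fibres of $\pi_{\mathcal{S}}$ are projective, and since the fibres over $\Delta^{*}$ are unchanged they remain of general type (resp. have big anticanonical bundle); so by Lemma \ref{Prop. B} it suffices to prove that $\pi_{\mathcal{S}}$ is a pseudo-projective family. Applying Proposition \ref{fgn} together with Theorem \ref{inv-cyc} in the general type case, and their big-anticanonical analogues (Remark \ref{biganti}, with \eqref{16-3} in place of Theorem \ref{takthm}) in the other case, one obtains a global line bundle $L$ on $\mathcal{S}$ such that $L|_{S_{t}}$ is big for all $t\in\Delta^{*}$ (this last point also follows from Corollary \ref{unc-big}) and $L|_{S_{t}}$ is very ample for every $t$ outside a countable union of proper analytic subsets of $\Delta$. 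Theorem \ref{0thm-moishezon} then yields an integer $N$ and a bimeromorphic map over $\Delta$
$$\Phi:\mathcal{S}\dashrightarrow\mathcal{S}'\subset\mathbb{P}^{N}\times\Delta$$
with every fibre $S'_{t}\subset\mathbb{P}^{N}\times\{t\}$ a projective variety.

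To promote this to pseudo-projectivity I would fix a very generic $b\in\Delta^{*}$, so that $L|_{S_{b}}$ is very ample and the associated Kodaira map embeds $S_{b}$; then $\Phi$ is biholomorphic on a neighbourhood of $S_{b}$ in $\mathcal{S}$. Consequently the analytic subset $\mathcal{Z}\subset\mathcal{S}$ on which $\Phi$ fails to be biholomorphic (the union of its indeterminacy, ramification and collapsing loci, treated exactly as in the proof of Lemma \ref{Chow-sm}) is disjoint from $S_{b}$, so by Remmert's proper mapping theorem $T:=\pi_{\mathcal{S}}(\mathcal{Z})$ is a proper — hence discrete — analytic subset of $\Delta$ and $\Phi$ restricts to a biholomorphism $S_{t}\xrightarrow{\sim}S'_{t}$ for every $t\in\Delta\setminus T$. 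Finally, as in Lemma \ref{Chow-sm}, Remmert's elimination of indeterminacy \cite{re} and Hironaka's Chow lemma \cite{Hi} produce a complex manifold $\mathcal{P}$ together with a projective morphism $\pi_{\mathcal{P}}:\mathcal{P}\to\Delta$ and a bimeromorphic morphism $\mathcal{P}\to\mathcal{S}$ over $\Delta$ that is a biholomorphism over $\Delta\setminus T$; since $\mathcal{X}$ (hence $\mathcal{S}$, hence $\mathcal{P}$) is irreducible and reduced, $\pi_{\mathcal{P}}$ is a projective family, so $\pi_{\mathcal{S}}$ is pseudo-projective in the sense of Definition \ref{su-proj}, and Lemma \ref{Prop. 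B} gives the same for $\pi:\mathcal{X}\to\Delta$, as claimed. The big-anticanonical case is identical once the analogues of Proposition \ref{fgn}/Theorem \ref{inv-cyc} from Remark \ref{biganti} are in place.

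The main obstacle is the construction of the single global line bundle $L$ on the semi-stable total space whose restrictions are simultaneously big on all of $\Delta^{*}$ and very ample off a conegligible set of fibres: this is precisely the content of Proposition \ref{fgn} and Theorem \ref{inv-cyc}, which rest on the boundedness of families of very ample divisors with bounded top self-intersection (Proposition \ref{finite}), the local invariant cycle theorem \cite{cm}, and the deformation invariance of Hodge numbers. Within the present argument the one genuinely delicate point is the passage from ``$L|_{S_{t}}$ very ample for $t$ away from a countable union of analytic subsets'' to ``$\Phi$ biholomorphic over the complement of a genuine (discrete) analytic subset of $\Delta$'', which is exactly why one must pin down a single good fibre $S_{b}$ with $b\neq 0$; everything else is a reprise of the arguments of Lemma \ref{Chow-sm} and Lemma \ref{Prop. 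B}.
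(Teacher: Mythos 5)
Your proposal is correct and follows essentially the same route as the paper: Proposition \ref{ki-dim-limit} to propagate the general-type (resp. big anticanonical) hypothesis to all general fibres, semi-stable reduction combined with Lemmata \ref{bu-proj} and \ref{Prop. B} to reduce to the semi-stable case, Proposition \ref{fgn} (with the local invariant cycle theorem and its analogue in Remark \ref{biganti}) to build a global line bundle with fibrewise (very) ampleness, and then Theorem \ref{0thm-moishezon} together with the Remmert--Hironaka clean-up already worked out in Lemma \ref{Chow-sm}. The only cosmetic difference is that you invoke Theorem \ref{inv-cyc} where the paper points to Remark \ref{4.26}; these refer to the same underlying construction.
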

\begin{proof}  We first prove the general type case. It follows from Proposition \ref{ki-dim-limit} that the general smooth projective fibers of $\pi$ are of general type. Then by Lemmata \ref{bu-proj}, \ref{Prop. B} and upon a semi-stable reduction, one assumes that the one-parameter degeneration $\pi$ is a semi-stable degeneration with projective fibers such that the canonical bundles  $K_{X_t}$ of the projective manifolds $X_t$ are big for all $t \in \Delta^*$.
So by Proposition \ref{fgn} and Remark \ref{4.26}, there exists a (global) line bundle $\mathcal{L}$ on $\mathcal{X}$ such that  $\mathcal{L}|_{X_t}$ for the very  generic $t\in \Delta$ is (very) ample, where the local invariant cycle Theorem \ref{lict} plays an essential role.
Then we proceed almost the same as the smooth family case in Lemma \ref{Chow-sm} with $L$ there replaced by $\mathcal{L}$, due to (use of the proof of) Theorem \ref{0thm-moishezon} (which is originally a smooth family version) adapted to the one-parameter degeneration case.

Next, we come to the big anticanonical bundle case. This is seen to be similar to the general type case by Proposition \ref{fgn} and Remark \ref{biganti}, and we omit the details.
\end{proof}

\begin{rem}\label{Remark 4.30}
 The total space of the family $\mathcal{X}\rightarrow \Delta$ is so far
in this subsection assumed to be smooth, although a degeneration
of the family is allowed.  It is expected that a generalization of the Chow-lemma
type result to certain singular $\mathcal{X}$ (as total space) is possible, e.g. the space of the family studied in \cite{kk}. We hope to come to this in future publication.
\end{rem}

\begin{rem}\label{Remark 4.33}  To compare our results here with the classical Chow's lemma in the
algebraic setting, let $\mathcal{X}\rightarrow \Delta$ be given as in Lemma \ref{Chow-sm} or \ref{sing-chow}.
A completely analogous ``analytic Chow's lemma", if exists, may assert the existence
of a projective morphism $\mathcal{Y}\rightarrow \Delta$ with a bimeromorphism $\rho: \mathcal{Y}\rightarrow\mathcal{X}$ over $\Delta$, such that for every fiber $X_t$ there is a bimeromorphic map
$Y_t\dashrightarrow X_t$ for some subvariety $Y_t$ of $\mathcal{Y}$ (cf. \cite[Definition 18.3]{ful}).   In contrast, the construction of a projective morphism
$\mathcal{Y}\rightarrow \Delta$ in this subsection gives that $\mathcal{Y}$ satisfies $Y_t \rightarrow X_t$
biholomorphically for $t$ outside a proper analytic subset of $\Delta$.
\end{rem}

\appendix
\section{Bimeromorphic embedding}\label{bim-em}
In the proof for Theorem \ref{0thm-moishezon} on bimeromorphic embedding, we will often apply the following preliminaries.
The first ones are proper modification and meromorphic map as shown in the nice reference \cite[$\S$ 2]{Ue} on bimeromorphic geometry or \cite[\S\ 9.3]{DG94}. Notice that a complex variety here is not necessarily compact.
\begin{defn}[{\cite[Definition 2.1]{Ue}}]\label{modification}
A morphism $\pi: \tilde{X}\rightarrow X$ of two complex varieties is called a \emph{proper modification}, if it satisfies:
\begin{enumerate}
  \item [$(i)$] $\pi$ is proper and surjective;
  \item [$(ii)$] there exist nowhere dense analytic subsets $\tilde E\subseteq \tilde{X}$ and $E \subseteq X$ such that
                  $$
                  \pi:\tilde{X}-\tilde E\rightarrow X-E
                  $$
                  is a biholomorphism, where $\tilde E:=\pi^{-1}(E)$ is called the \emph{exceptional space of the modification}.
\end{enumerate}
If $\tilde X$ and $X$ are compact, a proper modification $\pi: \tilde{X}\rightarrow X$ is often called simply a \emph{modification}.
\end{defn}

More generally, we have the following definition.
\begin{defn}[{\cite[Definition 2.2]{Ue}}]\label{bimero}
Let $X$ and $Y$ be two complex varieties.
A map $\varphi$ of $X$ into the power set of $Y$ is a \emph{meromorphic map} of $X$ into $Y$,
denoted by $\varphi: X\dashrightarrow Y$, if $X$ satisfies the following conditions:
\begin{enumerate}
  \item [$(i)$] The graph ${\mathcal{G}}({\varphi}):=\{(x,y)\in X\times Y: y\in \varphi(x)\}$ of $\varphi$ is an irreducible analytic subset in $X\times Y$;
  \item [$(ii)$] The projection map $p_X:{\mathcal{G}}({\varphi})\rightarrow X$ is a proper modification.
\end{enumerate}

A meromorphic map $\varphi: X\dashrightarrow Y$ of complex varieties is called a \emph{bimeromorphic map} if $p_Y:{\mathcal{G}}({\varphi})\rightarrow Y$ is also a proper modification.

If $\varphi$ is a bimeromorphic map, the analytic set
$$
\{(y,x)\in Y\times X: (x,y)\in {\mathcal{G}}(\varphi)\}\subseteq Y\times X
$$
defines a meromorphic map $\varphi^{-1}:Y\dashrightarrow X$ such that $\varphi\circ\varphi^{-1}=id_Y$ and $\varphi^{-1}\circ\varphi=id_X$.

Two compact complex varieties $X$ and $Y$ are called \emph{bimeromorphically equivalent} (or \emph{bimeromorphic}) if there exists a bimeromorphic map $\varphi: X\dashrightarrow Y$.

Obviously, the notion of bimeromorphic map is naturally valid for general complex spaces (e.g. \cite[\S\ $2$ and $3$]{st}). More specially, see also the notions of rational and  birational maps  on
\cite[pp. 490-493]{Griffith} in the algebraic setting.
\end{defn}

Then we  need two more remarkable theorems.
\begin{thm}[{Theorem A of Cartan, \cite[Theorem 7.2.8]{h90}}]\label{cartan-a}
Let $\Omega$ be a Stein manifold and $\mathcal{F}$ a coherent analytic
sheaf on $\Omega$. For every $z\in \Omega$, the $\mathcal{O}_z$-module $\mathcal{F}_z$ is then generated by the
germs at $z$ of the sections in $\Gamma(\Omega, \mathcal{F})$.
\end{thm}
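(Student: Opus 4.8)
The plan is to reduce the assertion to Cartan's Theorem B — the vanishing $H^q(\Omega,\mathcal{G})=0$ for every $q\ge 1$ and every coherent analytic sheaf $\mathcal{G}$ on the Stein manifold $\Omega$ — and then to finish by Nakayama's lemma. Concretely, fix $z\in\Omega$ and let $\mathbf{k}(z)=\mathcal{O}_z/\mathfrak{m}_z$ be the residue field. The finite-dimensional $\mathbf{k}(z)$-vector space $\mathcal{F}_z/\mathfrak{m}_z\mathcal{F}_z$ (finite because $\mathcal{F}$ is coherent) may be regarded as a skyscraper sheaf $\mathcal{S}$ supported at $z$, and there is a surjection $\mathcal{F}\twoheadrightarrow\mathcal{S}$ of analytic sheaves whose kernel $\mathcal{K}$ is again coherent. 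The long exact cohomology sequence reads
$$\Gamma(\Omega,\mathcal{F})\longrightarrow \Gamma(\Omega,\mathcal{S})=\mathcal{F}_z/\mathfrak{m}_z\mathcal{F}_z\longrightarrow H^1(\Omega,\mathcal{K}),$$
and Theorem B forces $H^1(\Omega,\mathcal{K})=0$; hence $\Gamma(\Omega,\mathcal{F})\to\mathcal{F}_z/\mathfrak{m}_z\mathcal{F}_z$ is onto. Picking finitely many $s_1,\dots,s_r\in\Gamma(\Omega,\mathcal{F})$ whose images form a $\mathbf{k}(z)$-basis of $\mathcal{F}_z/\mathfrak{m}_z\mathcal{F}_z$, and using that $\mathcal{F}_z$ is a finitely generated module over the local ring $\mathcal{O}_z$, Nakayama's lemma shows that the germs $(s_1)_z,\dots,(s_r)_z$ already generate $\mathcal{F}_z$, which is exactly the claim.

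It then remains to sketch the proof of Theorem B, which is the classical core of the matter; I would follow one of the two standard routes. The sheaf-theoretic route (Oka--Cartan) first establishes the vanishing on the elementary building blocks — compact polydiscs and, more generally, analytic polyhedra — by a descending induction on dimension that splits an invertible holomorphic matrix function on the overlap of a Cousin-type decomposition of a box into a product of factors holomorphic on each half (Cartan's gluing \emph{Heftungslemma}), combined with Oka's coherence theorem; one then exhausts $\Omega$ by an increasing sequence $\Omega_1\Subset\Omega_2\Subset\cdots$ of relatively compact holomorphically convex open subsets (using that a Stein manifold carries a smooth strictly plurisubharmonic exhaustion function) and patches the local vanishing along this exhaustion by an Oka--Weil/Runge approximation argument of Mittag--Leffler type. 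The analytic route (H\"ormander) instead uses the same strictly plurisubharmonic exhaustion function to solve $\bar\partial u=f$ on $\Omega$ with weighted $L^2$ estimates, which yields $H^q(\Omega,\mathcal{O}_\Omega)=0$ for $q\ge1$ via the Dolbeault isomorphism; one then propagates this to an arbitrary coherent $\mathcal{F}$ by a d\'evissage: locally $\mathcal{F}$ admits a finite free resolution (Hilbert syzygy theorem), so a spectral-sequence/hypercohomology chase over the exhaustion reduces the vanishing for $\mathcal{F}$ to that for $\mathcal{O}_\Omega$.

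The hard part is entirely contained in Theorem B. In the Oka--Cartan route it is the \emph{Heftungslemma} together with making the passage from the compact exhaustion to all of $\Omega$ rigorous — i.e. showing that sections over $\Omega_j$ can be approximated, in a topology controlling convergence, by sections over $\Omega_{j+1}$, so that the relevant inverse system has vanishing $\varprojlim^{1}$; this is where holomorphic convexity of the Stein manifold is genuinely used. In the H\"ormander route the crux is the $L^2$ existence theorem for $\bar\partial$ (a Bochner--Kodaira--Nakano type identity and the choice of weight adapted to the exhaustion), while the d\'evissage from $\mathcal{O}$ to general coherent sheaves is comparatively formal. Since all of this is classical, in the paper itself I would simply invoke \cite[Theorem 7.2.8]{h90}; the sketch above merely records how that citation unwinds into the stated generation property.
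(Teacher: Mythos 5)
Your proposal is correct: the reduction of Theorem A to Theorem B via the skyscraper quotient $\mathcal{F}\twoheadrightarrow\mathcal{F}/\mathcal{I}_z\mathcal{F}$, the vanishing of $H^1$ of the coherent kernel, and Nakayama's lemma is the standard and complete argument, and your sketch of Theorem B correctly locates the classical core (Heftungslemma/Oka--Weil exhaustion, or the $L^2$ $\bar\partial$-estimates plus d\'evissage). The paper itself offers no proof and simply cites H\"ormander, so there is nothing further to compare; your unwinding of that citation is exactly what the reference contains.
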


\begin{thm}[{Remmert's proper mapping theorem, \cite[Theorem 2.11 of Chapter III]{bs}}] \label{remmert}
The image of a closed analytic set by a proper morphism
is a closed analytic set.
\end{thm}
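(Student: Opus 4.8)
The plan is to first strip the statement down to its topological and local content. Given a proper holomorphic morphism $f:X\to Y$ and a closed analytic subset $A\subseteq X$, I would observe that the restriction $f|_A:A\to Y$ is again proper (for compact $K\subseteq Y$, $(f|_A)^{-1}(K)=A\cap f^{-1}(K)$ is closed in the compact set $f^{-1}(K)$), and $f(A)=(f|_A)(A)$; so it suffices to prove that for any proper holomorphic map $g:Z\to Y$ the image $g(Z)$ is a closed analytic subset of $Y$. Closedness is then immediate and purely topological, since a proper continuous map into a locally compact Hausdorff space is closed. What remains — and this is local on $Y$ — is that near each of its points $g(Z)$ is the common zero locus of finitely many holomorphic functions.

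The most economical route, given what is already available in the paper, is to deduce analyticity from Grauert's direct image theorem (Theorem \ref{gdit}). Since $\mathcal{O}_Z$ is coherent (Oka) and $g$ is proper, $g_*\mathcal{O}_Z=R^0g_*\mathcal{O}_Z$ is a coherent $\mathcal{O}_Y$-module. I would then check that $\operatorname{supp}(g_*\mathcal{O}_Z)=g(Z)$: the stalk at $y$ is $\varinjlim_{U\ni y}\mathcal{O}_Z(g^{-1}(U))$, which vanishes when $y\notin g(Z)$ (as $g(Z)$ is closed, some $U$ has $g^{-1}(U)=\varnothing$) and is nonzero when $y\in g(Z)$ (the germ of the constant function $1$ survives in the limit since $g^{-1}(U)\neq\varnothing$ for all $U\ni y$). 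Finally I would invoke the standard fact that the support of a coherent sheaf is analytic: locally present $g_*\mathcal{O}_Z\cong\operatorname{coker}(\mathcal{O}_Y^{\,q}\xrightarrow{M}\mathcal{O}_Y^{\,p})$ with $M$ a matrix of holomorphic functions; by Nakayama's lemma $y$ lies in the support exactly when $M(y)$ has rank $<p$, i.e. when all $p\times p$ minors of $M$ vanish at $y$, so the support is cut out by those minors. This exhibits $f(A)=g(Z)$ as a closed analytic subset of $Y$.

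If one prefers not to call on Theorem \ref{gdit} — whose own proof rests on a finiteness/coherence statement not far removed from the one being proved — the classical route is an induction on $d:=\max_{x\in g^{-1}(y_0)}\dim_x g^{-1}(y_0)$, with $y_0\in g(Z)$ fixed and $g^{-1}(y_0)$ compact by properness. The base case $d=0$ is the finite mapping theorem: after a local embedding $Z\hookrightarrow U\times\mathbb{C}^N$, Weierstrass preparation shows $g_*\mathcal{O}_Z$ is $\mathcal{O}_U$-coherent and one finishes as above, the image being locally a discriminant-type zero set. For $d>0$, near a point $x_0\in g^{-1}(y_0)$ with $\dim_{x_0}g^{-1}(y_0)=d$ one slices $Z$ by a sufficiently generic local hypersurface through $x_0$, dropping the fibre dimension there to $d-1$, applies the inductive hypothesis, and reassembles over the finitely many components of $g^{-1}(y_0)$. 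The hard part, where all the real work sits, is exactly this generic-slicing step: the slices must be chosen so that their images still fill out a full neighbourhood of $y_0$ in $g(Z)$ rather than a thinner analytic set, which requires the local parametrization theorem for analytic sets together with a careful dimension count. In effect, all the genuine content can be concentrated once and for all in the coherence theorem \ref{gdit}, after which — as in the preceding paragraph — the proper mapping theorem is essentially formal.
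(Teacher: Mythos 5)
The paper gives no proof of Theorem \ref{remmert} at all: it is quoted as a classical background result with a pointer to \cite{bs}, so there is no internal argument to compare yours against. Your first route is correct and is the standard modern deduction. The reduction to a proper morphism $g\colon Z\to Y$ and the topological closedness of $g(Z)$ are fine; $g_*\mathcal{O}_Z$ is coherent by the paper's own Theorem \ref{gdit}; the stalk computation $(g_*\mathcal{O}_Z)_y=\varinjlim_{U\ni y}\mathcal{O}_Z(g^{-1}(U))$ correctly identifies $\operatorname{supp}(g_*\mathcal{O}_Z)$ with $g(Z)$ (closedness of the image is used exactly where you use it, to produce a neighbourhood with empty preimage, and the germ of the constant $1$ survives over any point of the image); and the Nakayama/minors argument for the analyticity of the support of a coherent sheaf is complete. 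The only caveats are those you already flag yourself: this places all the depth in Grauert's theorem, and your second, ``classical'' route by induction on fibre dimension is only a sketch --- the generic slicing step, where one must ensure the hypersurface sections' images still fill a neighbourhood of $y_0$ in $g(Z)$, is precisely where Remmert's original argument does its real work and is not carried out here. Since the first route is complete modulo a result the paper itself quotes without proof (Theorem \ref{gdit}), the proposal is sound as a derivation at the level of rigor the paper adopts for this theorem.
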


Finally, we will also be much concerned with questions of irreducibility and properness in the proof for Theorem \ref{0thm-moishezon} on bimeromorphic embedding and
refer the readers to \cite[\S\ $1$ and $2$ of Chapter $9$]{Grr} for a nice introduction.

Recall that in \cite{rt}, we proved the following theorem with four steps, the first three of which can be exactly used to prove the desired bimeromorphic embedding in Theorem \ref{0thm-moishezon}. For reader's convenience, we include a slightly modified version here.

\begin{thm}[{\cite[Theorems 1.4+4.27]{rt}}]\label{thm-moishezon}
Let $\pi: \mathcal{X}\rightarrow \Delta$ be a holomorphic family of compact complex manifolds. If the fiber $X_t$  is Moishezon for each nonzero $t$ in an uncountable subset $B$ of $\Delta$,
with $0$ not necessarily being a limit point of $B$, and every fiber $X_t$ satisfies the local deformation invariance for Hodge number of type $(0,1)$ or admits a strongly Gauduchon metric, then:
\begin{enumerate}[$(i)$]
\item \label{thm-moishezon-i}
$X_t$ is still Moishezon for any $t\in \Delta$.
\item  \label{thm-moishezon-ii}
For some $N\in \mathbb{N}$, there exists
a bimeromorphic map
$$\Phi:\mathcal{X}\dashrightarrow\mathcal{Y}$$
to a subvariety $\mathcal{Y}$ of $\mathbb{P}^N\times\Delta$
with every fiber $Y_t\subset\mathbb{P}^N\times\{t\}$ being a projective
 variety of dimension $n$, and also a proper analytic set $\Sigma\subset\Delta$, such that $\Phi$ induces a bimeromorphic map
 $$\Phi|_{X_t}:X_t\dashrightarrow Y_t$$
 for every
$t\in\Delta\setminus \Sigma$.

\end{enumerate}
\end{thm}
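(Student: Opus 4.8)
\medskip
\noindent\textbf{Proof sketch (proposal).}
The plan is to establish the two assertions in turn, with part~\eqref{thm-moishezon-i} feeding directly into part~\eqref{thm-moishezon-ii}; this follows the first three of the four steps of the proof in \cite{rt}, and the same argument is what also yields the variant Theorem~\ref{0thm-moishezon}.

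\emph{Part \eqref{thm-moishezon-i}.} First I would reduce everything to producing a single holomorphic line bundle $L$ on the total space $\mathcal{X}$ whose restriction to uncountably many fibers is big. Granting this, Proposition~\ref{ki-dim-limit} and Corollary~\ref{unc-big} propagate bigness to every fiber $X_t$, $t\in\Delta$, and Lemma~\ref{sm-big-moi} then gives that each $X_t$ is Moishezon. To construct $L$: for $t\in B$ the Moishezon fiber $X_t$ carries a big line bundle $L_t$, and since $\pi$ is a holomorphic family over a disk, $\mathcal{X}$ deformation-retracts onto any fiber, so the restriction $H^2(\mathcal{X},\mathbb{Z})\to H^2(X_t,\mathbb{Z})$ is an isomorphism and hypothesis~\eqref{surj-chern'} of Proposition~\ref{global-B} is automatic. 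Under the local deformation invariance of $h^{0,1}$ -- or, in the strongly Gauduchon case, invoking Popovici's deformation results on strongly Gauduchon metrics to supply the same input -- Proposition~\ref{global-B} produces a global line bundle $L$ on $\mathcal{X}$ with $c_1(L|_{X_s})=c_1(L_s)$ for uncountably many $s\in B$, whence $L|_{X_s}$ is big by Proposition~\ref{propa}. This is precisely the smooth-family instance of Theorem~\ref{thm-moishezon-update}, and part~\eqref{thm-moishezon-i} follows.

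\emph{Part \eqref{thm-moishezon-ii}.} With $L$ now big on every fiber, I would pick $N_0\in\mathbb{N}$ sufficiently divisible. Since $\Delta$ is Stein, $\pi_*L^{\otimes N_0}$ is coherent by Grauert's direct image Theorem~\ref{gdit}, hence generated by finitely many global sections $s_0,\dots,s_N\in H^0(\mathcal{X},L^{\otimes N_0})$ by Cartan's Theorem~A (Theorem~\ref{cartan-a}). These define a meromorphic map $\Phi\colon\mathcal{X}\dashrightarrow\mathbb{P}^N\times\Delta$ over $\Delta$; taking $\mathcal{Y}$ to be the image of the graph of $\Phi$ under the projection to $\mathbb{P}^N\times\Delta$, Remmert's proper mapping Theorem~\ref{remmert} makes $\mathcal{Y}$ an analytic subvariety with every $Y_t:=\mathcal{Y}\cap(\mathbb{P}^N\times\{t\})$ projective. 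It remains to show that $\Phi|_{X_t}\colon X_t\dashrightarrow Y_t$ is bimeromorphic for all $t$ outside a proper analytic subset $\Sigma\subset\Delta$. Here I would first delete a proper analytic subset of $\Delta$ over which $\pi_*L^{\otimes N_0}$ and finitely many further symmetric-power direct images are locally free with base change -- available from Grauert's semicontinuity Theorem~\ref{Upper semi-continuity} together with cohomological flatness -- so that over the remaining locus $\Phi|_{X_t}$ is literally the Kodaira map of $L^{\otimes N_0}|_{X_t}$, which is bimeromorphic onto its image because $L|_{X_t}$ is big and $N_0$ is large. The still-bad parameters -- where base points, ramification, or positive-dimensional fibers of $\Phi|_{X_t}$ appear -- form the image under $\mathcal{X}\to\Delta$ (and its analogue on the graph) of analytic subsets avoiding the generic fiber, hence a proper analytic subset $\Sigma$; this last point rests on the irreducibility and properness facts recalled in \cite[\S\ 1 and 2 of Chapter 9]{Grr}.

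The hard part will be precisely this last step of part~\eqref{thm-moishezon-ii}: keeping tight control of base change for the relevant direct image sheaves so that the globally defined $\Phi$ genuinely restricts on the generic fiber to the honest Kodaira map, while simultaneously tracking irreducibility and properness of $\mathcal{Y}$ and of the graph over $\Delta$ so that the locus of bad parameters remains a proper analytic subset. Once these bookkeeping points are settled, the passage from bigness of $L$ on all fibers to the Moishezon property, as well as the fiberwise bimeromorphic conclusion, is routine and parallels \cite{rt}.
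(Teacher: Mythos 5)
Your part \eqref{thm-moishezon-i} is essentially the paper's argument: in the smooth-family case the restriction $H^2(\mathcal{X},\mathbb{Z})\to H^2(X_t,\mathbb{Z})$ is an isomorphism, so hypothesis \eqref{surj-chern'} is automatic, and Proposition \ref{global-B}, Proposition \ref{propa}, Corollary \ref{unc-big} and Lemma \ref{sm-big-moi} give the Moishezon property of every fiber. This is fine (the strongly Gauduchon alternative is left vague, but it plays the same role of supplying the local freeness/torsion-freeness input to Proposition \ref{global-B}).

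Part \eqref{thm-moishezon-ii}, however, has a genuine gap at its central step. You choose ``$N_0$ sufficiently divisible'' and claim that, after arranging base change over a Zariski-open subset of $\Delta$, the restriction $\Phi|_{X_t}$ is the Kodaira map of $L^{\otimes N_0}|_{X_t}$ and is bimeromorphic ``because $L|_{X_t}$ is big and $N_0$ is large.'' But for a big line bundle on a Moishezon (non-projective) manifold the power needed for the Kodaira map to be bimeromorphic depends on the fiber, and no uniform or effective bound is available; this is precisely the difficulty the paper isolates (``$q_t$ may depend on $t$, however. We do not know how to control it even though the uniform estimate \eqref{16-3} holds here''). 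So no single $N_0$ is known to work on a generic fiber, and base-change bookkeeping cannot close this. The paper's route is different: by countability of the jumping loci it fixes \emph{one} parameter $b$ at which $h^0(X_t,L^{\otimes q}|_{X_t})$ is locally constant near $b$ for \emph{every} $q$ simultaneously, chooses $\tilde q$ adapted to that single fiber so that $\Phi$ is bimeromorphic on $X_b$, and then proves that $\Phi:\mathcal{X}\dashrightarrow\mathcal{Y}$ is \emph{globally} bimeromorphic (Step \textrm{II}: after desingularizing $\mathcal{Y}$, the map off the indeterminacy and ramification loci is a finite surjective local biholomorphism, hence an unbranched covering of constant sheet number, which equals one because it does over $X_b$; Step \textrm{III} refines the choice of $b$ by an uncountability argument so that $\mathcal{Y}$ is smooth at a point of $\Phi(X_b)$), and only afterwards deduces the fiberwise bimeromorphic statement for $t$ outside a proper analytic set. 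Relatedly, your sketch never establishes that $\Phi$ itself is bimeromorphic onto $\mathcal{Y}$, which the theorem asserts, and you invoke Remmert's proper mapping theorem for the projection of the graph to $\mathbb{P}^N\times\Delta$ without verifying properness; since the total spaces are noncompact this is not automatic and requires the argument of Lemma \ref{proper}.
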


\begin{proof}[Proof of Theorem \ref{0thm-moishezon}]

\newtheorem{step}{Step}
\renewcommand{\thestep}{$($\Roman{step}$)$}
\begin{step}\label{step 1}
Global line bundle $L$ and Kodaira map $\Phi$
\end{step}
By assumption, we have arrived at
a global line bundle $L$ on $\mathcal{X}$ such that $L|_{X_t}$ is big
for every $t\in \Delta^*$. Moreover,
the uniform estimate similar to \eqref{16-3} holds for $L|_{X_t}$ with every $t\in\Delta$.
To proceed further, a difficulty arises. On each fiber $X_t$, there is some $q_t\in\mathbb{N}$ such
that $H^0(X_t, L|_{X_t}^{\otimes q_t})$ can induce a bimeromorphic embedding of $X_t$; $q_t$ may depend on $t$,
however.  We do not know how to control it even though the uniform estimate \eqref{16-3} holds here.
Our methods of overcoming the difficulty consist in the study of bimeromorphic geometry of
$\mathcal{X}\to\Delta$ as a family; the complex analytic desingularization \cite{ahv} also plays a useful role
in the process.

We can choose a point $b\in \Delta^*$ such that  for every $q\in \mathbb{N}$,
$h^0(X_t, L^{\otimes q}|_{X_t})$ is locally constant in some neighborhood (dependent on $q$) of $b$.
This is because for a given $m\in \mathbb{N}$,
the set of points in $\Delta$ where $h^0(X_t, L^{\otimes m}|_{X_t})$ fails to be locally constant,
is at most countable (cf. Theorem \ref{Upper semi-continuity}).
We fix such a $b\in \Delta$.  For this fiber $X_b$ at $b$, there exists a $\tilde{q}\in \mathbb{N}$
such that the Kodaira map associated with the complete linear system
$|L^{\otimes \tilde{q}}|_{X_b}|$ gives a bimeromorphic embedding of $X_b$ since $L|_{X_b}$ is big.
Now the preceding local invariance of $h^0(X_t, L^{\otimes \tilde{q}}|_{X_t})$ at $b$ yields
that the natural map
$$(\pi_*L^{\otimes \tilde{q}})_b\to H^0(X_b, L^{\otimes \tilde{q}}|_{X_b})$$
is surjective;
see Lemma \ref{ccs-bc}.  By Theorem A of Cartan (= Theorem \ref{cartan-a}),
one can choose $E$ linearly spanned by $$\{s_0, s_1, s_2,\ldots, s_N\}\subset \pi_*L^{\otimes \tilde{q}}(\Delta)$$ whose germs generate
$(\pi_*L^{\otimes \tilde{q}})_b$.   To sum up, by the identification $\pi_*L^{\otimes \tilde{q}}(\Delta)\cong
H^0(\mathcal{X}, L^{\otimes \tilde{q}})$,  the Kodaira map associated with the above $E$, denoted by
$$\Phi: \mathcal{X}\dashrightarrow \mathbb{P}^N\times\Delta: x\mapsto\big([s_0(x):s_1(x):\cdots :s_N(x)], \pi(x)\big),$$
is meromorphic
on $\mathcal{X}$ and bimeromorphic on $X_b$.  Here as usual, the concept of meromorphic (or bimeromorphic) maps is understood in the sense of Remmert; e.g. see Definition \ref{bimero} for more.   For this claim it is
stated in \cite[Example 2.4.2, p. 15]{Ue} for compact varieties.  If the target is a projective space,
it is treated in \cite[pp. 490-493]{Griffith} or \cite{s75};
a variant of which for our need ($\mathcal{X}$ is noncompact and  the target  is not a projective space
 but $\mathbb{P}^N\times\Delta$) is given.
Let $i$ denote the composite map $$\mathbb{P}^N\times\Delta\subset\mathbb{P}^N\times\mathbb{P}^1\to \mathbb{P}^{2N+1}$$
where the second map is the Segr\'e embedding $$([x_0:x_1:\cdots :x_N], [1:t])\mapsto [x_0:x_0t:x_1:x_1t:\cdots :x_N:x_Nt]$$
 (cf. \cite[p. 192]{Griffith}), and
 $$\tilde i:\mathcal{X}\times (\mathbb{P}^N\times\Delta)\to \mathcal{X}\times\mathbb{P}^{2N+1}:(z^1, z^2)\mapsto (z^1, i(z^2)).$$
Associated with the sections $$\{s_0, s_0\tilde t, s_1, s_1\tilde t, \ldots, s_N, s_N\tilde t\}\subset H^0(\mathcal{X}, L^{\otimes \tilde{q}}\otimes_{\mathcal{O}_\mathcal{X}}\pi^*\mathcal{O}_\Delta)$$
where $\tilde t\in H^0(\mathcal{X}, \pi^*\mathcal{O}_\Delta)$ denotes $\pi^*t$,
the composite map
$$\Phi_1:=i\circ \Phi:\mathcal{X}\dashrightarrow \mathbb{P}^{2N+1}$$
is a meromorphic map by \cite[pp. 490-492]{Griffith}
so that  with the graph  ${\mathcal{G}}_1\subset \mathcal{X}\times\mathbb{P}^{2N+1}$  of $\Phi_1$, one expects
$$\mathcal{G}:={\tilde i}^{-1}({\mathcal{G}}_1\cap \tilde i (\mathcal{X}\times(\mathbb{P}^N\times\Delta)))={\tilde i}^{-1}({\mathcal{G}}_1)\subset \mathcal{X}\times(\mathbb{P}^N\times\Delta)$$ to serve
as the graph of $\Phi$.  Indeed, this $\mathcal{G}$ is a complex space in $\mathcal{X}\times(\mathbb{P}^N\times\Delta)$.
The other conditions required for $\mathcal{G}$ as a graph variety of
a meromorphic map in Definition \ref{bimero} can
also be verified via ${\mathcal{G}}_1$.  It follows that
$$\Phi:\mathcal{X}\dashrightarrow \mathbb{P}^N\times\Delta$$
is a meromorphic map, as claimed.

Denote the respective projections by $$q_1:{\mathcal{G}}\to \mathcal{X}$$ and
$$q_2:=\pi_{\mathbb{P}^N\times\Delta}|_\mathcal{G}:{\mathcal{G}}\to \mathbb{P}^N\times\Delta$$
where $\pi_{\mathbb{P}^N\times\Delta}:\mathcal{X}\times (\mathbb{P}^N\times\Delta)\to \mathbb{P}^N\times\Delta$, and
set $$\mathcal{Y}:=q_2(\mathcal{G})=\Phi(\mathcal{X})\subset {\mathbb{P}^N\times\Delta}$$ (cf. \cite[p. 14]{Ue}).   The projection $\pi_{\mathbb{P}^N\times\Delta}$
is not proper.
But by Lemma \ref{proper} below, $\mathcal{Y}$ is a closed subvariety of $\mathbb{P}^N\times\Delta$
and as such $\mathcal{Y}\to\Delta$ is proper;
$$Y_t\subset\mathbb{P}^N\times\{t\}$$
 denotes the corresponding projective subvariety in $\mathcal{Y}$ seated at $t$.
 Since $\mathcal{X}$ is irreducible, so is $\mathcal{G}$ (cf. \cite[p. 13]{Ue}), and
$\mathcal{Y}=q_2(\mathcal{G})$ is thus irreducible.
Clearly $\mathcal{Y}$ is of dimension $n+1$.

Remark that some refinements of the above construction will be made in Step \ref{step 3} for our need in due course.

Some tools in what follows have counterparts in algebraic category, however we work mostly within analytic category.
The above $\Phi$ is actually a morphism outside a subvariety $\mathcal{S}(\Phi)$ of codimension at least two in
$\mathcal{X}$ (cf. \cite[the third paragraph on p. 333]{re}), giving that for every $t\in \Delta^*$, $X_t\not\subset \mathcal{S}(\Phi)$ by dimension reason.
Outside the analytic set $S_t:=\mathcal{S}(\Phi)\cap X_t$ of codimension at least one in $X_t$, the restriction
$$\Phi_t:=\Phi|_{X_t\setminus S_t}$$ is a morphism.   By \cite[pp. 35-36]{st},
$\Phi_t$ is still a meromorphic map on $X_t$.  For later references, we can do it in the following way.
Recall that the projection $$q_1:\mathcal{G}(\Phi)={\mathcal{G}}\to\mathcal{X}$$ from the graph
of $\Phi$, is a proper modification
(cf. Definition \ref{bimero}) so that $q_1^{-1}(X_t)\subset \mathcal{G}$ is an analytic subset of dimension
$n$.   Let  $\mathcal{C}$ be the unique irreducible component of $q_1^{-1}(X_t)$ which contains the graph of
$\Phi|_{X_t\setminus S_t}$, so that $\mathcal{C}\setminus q_1^{-1}(S_t)\cong X_t\setminus S_t$
biholomorphically and thus that $q_1|_{\mathcal{C}}:\mathcal{C}\to X_t$
is a proper modification (cf. Definition \ref{modification}).
Moreover, as for any subset $T\subset\mathcal{X}$,
$q_1^{-1}(T)=\mathcal{G}\cap(T\times \mathcal{Y})$, one has $\mathcal{C}\subset
q_1^{-1}(X_t)\subset X_t\times\mathcal{Y}$
hence that $\mathcal{C}$ induces a meromorphic map $\phi_{\mathcal{C}}:X_t\dashrightarrow\mathcal{Y}$
whose graph is precisely $\mathcal{C}$, since a meromorphic map is uniquely determined by an analytic subset $\mathcal{M}$ of $X_1\times X_2$ with two complex spaces $X_1,X_2$ which satisfies the condition $(M)$ that the projection $p_{X_1}:\mathcal{M}\rightarrow X_1$ is a proper modification as argued on \cite[p. 14]{Ue}.   Clearly $\phi_{\mathcal{C}}$ coincides
with $\Phi_t$ on the open subset $X_t\setminus S_t$.  Since $\phi_{\mathcal{C}}$ is meromorphic,
we now conclude that $\Phi_t:X_t\dashrightarrow \mathcal{Y}$ is a meromorphic map, as claimed.

We denote by $$\Phi|_{X_t}:X_t\dashrightarrow\mathcal{Y}$$ the meromorphic map associated with $\Phi_t$ as just shown.
 Since $\Phi|_{X_t}$ is now meromorphic
so that $\Phi|_{X_t}(X_t)$ is actually a closure of $\Phi|_{X_t}(X_t\setminus S_t)\subset Y_t$ in the
analytic set $Y_t$ (\cite[p. 493]{Griffith}),
we see that $\Phi|_{X_t}(X_t)\subset Y_t$ for every $t\in \Delta^*$.
However, it is not claimed that $\Phi|_{X_t}(X_t)$ equals $Y_t$. For $t=0$, the above argument and conclusion still apply with
minor modifications; we omit the details here.

We shall now see that $Y_t$ is of dimension $n$ for every $t\in \Delta$.
First note that, if $|b'-b|\ll 1$, $d\Phi|_{X_{b'}}$ is of rank $n$ at generic point of $X_{b'}$ since
it is so for $X_b$, hence that $\dim_{\mathbb{C}}Y_b=\dim_{\mathbb{C}} Y_{b'}=n$.
In the algebraic setting, applying the upper semi-continuity of dimension (cf. \cite[Corollary 3 in Section 8 of Chapter 1]{mum}),
one is allowed to conclude that $Y_t$ is of dimension $n$.  Alternatively, an approach suitable in
our analytic setting is described as follows.
Let $$\{H_i\}_{1\le i\le n}\subset\mathbb{P}^N$$
be any hyperplane sections and
$$\tilde H_i:=H_i\times\Delta.$$
The closed subvariety $\tilde H_1\cap \tilde H_2\cap\cdots\cap\tilde H_n\cap \mathcal{Y}$ is projected,
via $\mathcal{Y}\subset \mathbb{P}^N\times\Delta\to\Delta$, down to a
subvariety $W_1\subset\Delta$ by Remmert's proper mapping theorem in complex spaces (= Theorem \ref{remmert}),
which is therefore the whole $\Delta$, as follows from the fact that
$W_1$ must contain a small open subset of $\Delta$ by $\dim_{\mathbb{C}}Y_{b'}=n$.
If $\dim_{\mathbb{C}}Y_{t_0}<n$ for some $t_0\in \Delta$, by choosing
$H_i$ in general positions such that $$H_1\cap H_2\cap\cdots\cap H_n\cap Y_{t_0}=\emptyset$$ with
$Y_{t_0}\subset \mathbb{P}^N$ via identification $ \mathbb{P}^N\cong  \mathbb{P}^N\times \{t_0\}$, then
with these $H_i$, $\tilde H_1\cap \tilde H_2\cap\cdots\cap\tilde H_n\cap \mathcal{Y}$ is projected,
via $\mathcal{Y}\to\Delta$, to a subset of $\Delta$
missing $t_0$, contradicting the preceding $W_1=\Delta$.   Hence $\dim_{\mathbb{C}}Y_t\ge n$  for every $t\in\Delta$ so that
$\dim_{\mathbb{C}}Y_t=n$ since if
$\dim_{\mathbb{C}}Y_{t_1}\ge n+1$ for some $t_1\in\Delta$, it contradicts that
$\mathcal{Y}$ is irreducible and of dimension $n+1$ as
already indicated.

The following lemma has been used in the first half of this step. Notice that the projection $\mathcal{X}\times(\mathbb{P}^N\times\Delta)\to \mathbb{P}^N\times\Delta$ is usually not proper and we try to prove that its restriction to the graph of $\Phi$ is indeed proper.
\begin{lemma}\label{proper} With the notations as above, the projection morphism
$$q_2:\mathcal{G}\,(\subset \mathcal{X}\times(\mathbb{P}^N\times\Delta))\to \mathbb{P}^N\times\Delta$$ is
proper.  As a consequence, $q_2(\mathcal{G})$ is an analytic subvariety of $\mathbb{P}^N\times\Delta$.
\end{lemma}
\begin{proof} Since $\Phi:\mathcal{X}\dashrightarrow \mathbb{P}^N\times\Delta$ is a meromorphic map, $q_1:{\mathcal{G}}\to\mathcal{X}$
is a proper modification by Definition \ref{bimero} and  thus there exist two respective open dense
subsets $$U\subset \mathcal{G}\quad \text{and}\quad V\subset \mathcal{X},$$
which are biholomorphically equivalent under $q_1|_U: U\to V=q_1(U)$, such that $\Phi$ is a morphism on $V$
and $U=\{(x, \Phi(x)) \}_{x\in V}$ (cf. \cite[the remarks preceding Remark 2.3, p. 14]{Ue}).
Let $W_2\subset \mathbb{P}^N\times\Delta$ be a compact subset.  To prove by contradiction,
suppose that $q_2^{-1}(W_2)\subset \mathcal{G}$ is not compact.   Set the projections
$$\pi:\mathcal{X}\to\Delta\quad \text{and}\quad \pi_{\mathbb{P}^N\times\Delta}: \mathbb{P}^N\times\Delta\to \Delta.$$
Then under the projection $q_1:{\mathcal{G}}\to \mathcal{X}$,
$q_1(q_2^{-1}(W_2))$ is closed but not compact since $q_1$ is proper.
This means, since $\pi:\mathcal{X}\to\Delta$ is proper,
that there exists a sequence $t_k\in\Delta$ with $t_k\to \partial\Delta$ and
 $\{t_k\}_k\subset \pi(q_1(q_2^{-1}(W_2)))$.
We first show that $$\Phi(X_t)\subset \mathbb{P}^N_t:=\pi_{\mathbb{P}^N\times\Delta}^{-1}(t)$$ for every $t\in \Delta$.

As $\Phi(T)=\cup_{x\in T}\Phi(x)$ for a set $T\subset\mathcal{X}$,  we need to show that $\Phi(x)\in \mathbb{P}^N_t$
if $x\in X_t$.
For any $(x, y)\in q_1^{-1}(x)\subset \mathcal{G}$,
there exists a sequence
$(x_j, y_j)\in U\subset \mathcal{G}$, $(x_j, y_j)\to (x, y)$ (by that $U$ is dense)
with $x_j\in V=q_1(U)$ and $y_j=\Phi(x_j)$.
Thus, $$\lim_j x_j=\lim_j q_1(x_j, y_j) \to q_1(x, y)=x.$$
By definition $\Phi(x)=q_2(q_1^{-1}(x))$ as in the first paragraph on \cite[p. 14]{Ue}, we have just seen that any point
$y\in \Phi(x)=q_2(q_1^{-1}(x))$
is a limit point of
the form $\Phi(x_j)=y_j$ for some sequence $x_j\to x$ in $\mathcal{X}$ with $x_j\in V$.
In this case, since  $x_j\not\in \mathcal{S}(\Phi)$, i.e., $\Phi$ is a morphism at $x_j$,
one has $\pi(x_j)=\pi_{\mathbb{P}^N\times\Delta}\Phi(x_j)$ by construction of $\Phi$, which is $\pi_{\mathbb{P}^N\times\Delta}(y_j)$.
Write $\pi(x_j)=\pi_{\mathbb{P}^N\times\Delta}(y_j)=t_j$.  So $y_j\in  \mathbb{P}^N_{t_j}$ and if $x\in X_t$,
then $t_j\to t$ since $\pi(x_j)\to \pi(x)$.
In short, for any $x\in X_t$ and any $y\in \Phi(x)$, $y=\lim_jy_j\in \lim_j\mathbb{P}^N_{t_j}$ which is
$\mathbb{P}^N_t$ as $t_j\to t$.
That is $\Phi(X_t)\subset \mathbb{P}^N_t$ for every $t\in \Delta$, as claimed.

The remaining is standard.
Corresponding to every
$t_k\in \{t_k\}_k$
above, there is a $$(x_k, y_k)\in q_2^{-1}(W_2)\subset \mathcal{G}$$
with $t_k=\pi(q_1(x_k, y_k))=\pi(x_k)$ so $x_k\in X_{t_k}$.
By $y_k\in \Phi(x_k)$, $$\pi_{\mathbb{P}^N\times\Delta}(y_k)\in \pi_{\mathbb{P}^N\times\Delta}(\Phi(x_k))\in \pi_{\mathbb{P}^N\times\Delta}(\mathbb{P}^N_{t_k})=t_k$$
by $x_k\in X_{t_k}$ and $\Phi(X_{t_k})\subset \mathbb{P}^N_{t_k}$ above.   In short, $\pi_{\mathbb{P}^N\times\Delta}(y_k)=t_k$.
By $y_k=q_2(x_k, y_k)$ and $q_2(x_k, y_k)\in W_2$, $\pi_{\mathbb{P}^N\times\Delta}(y_k)\in \pi_{\mathbb{P}^N\times\Delta}(W_2)$ which
 is a compact subset in $\Delta$ since $W_2\subset \mathbb{P}^N\times\Delta$ is compact by assumption.
This contradicts  $\pi_{\mathbb{P}^N\times\Delta}(W_2)\ni\pi_{\mathbb{P}^N\times\Delta}(y_k)=t_k\to \partial\Delta$.
As said, the contradiction yields that $q_2:{\mathcal{G}}\to \mathbb{P}^N\times\Delta$ is
a proper morphism.

The second statement of the lemma follows from
Remmert's proper mapping theorem (cf. \cite[p. 395]{Griffith} or Theorem \ref{remmert}).
\end{proof}

\begin{rem}\label{proper-rem} In fact, the above proof works for the following situation.
Let $\pi_{Z_1}:Z_1\to\Delta$ and $\pi_{Z_2}:Z_2\to \Delta$ be proper morphisms where
$Z_1$, $Z_2$ be irreducible (and reduced) complex spaces.  Suppose that $\psi: Z_1\dashrightarrow Z_2$ is a meromorphic map
(in the sense of Remmert)
with $\mathcal{G}(\psi)\subset Z_1\times Z_2$ the irreducible subvariety of the graph of $\psi$.  Let $\emptyset\neq U\subset Z_1$ be an open dense subset such that $\psi$ is a morphism on $U$.  Suppose furthermore that $\pi_{Z_1}|_U=\pi_{Z_2}\circ \psi|_U$.  Then
the projection morphism ${\mathcal{G}}(\psi)\to Z_2$ is proper.
\end{rem}

The remaining proof is devoted to the bimeromorphic problem of $\Phi$.
\begin{step}\label{step 2}
Bimeromorphic embedding of $\Phi$
\end{step}  We shall use the notations
$$\Phi:\mathcal{X}\dashrightarrow\mathbb{P}^N\times\Delta\quad \text{and}\quad \Phi:\mathcal{X}\dashrightarrow\mathcal{Y}=\Phi(\mathcal{X})$$
interchangeably.   Let's start with a desingularization
$\mathcal{R}_{{\mathcal{Y}}}:\tilde{\mathcal{Y}}\to\mathcal{Y}$; it can be chosen as a proper modification.
For a review, see \cite[Theorem 2.12]{Ue} for compact cases and \cite[Theorem 7.13]{pe}
or \cite[Theorem 5.4.2, p. 271]{ahv} for general cases.
Note that if $h$ is a proper modification between complex spaces,
then $h^{-1}$ is still a meromorphic map \cite[p. 34]{st}
and hence $h$ is a bimeromorphism \cite[p. 33]{st}.
Write
$$\Psi:=\mathcal{R}_{{\mathcal{Y}}}^{-1}\circ\Phi:\mathcal{X}\dashrightarrow \mathcal{\tilde Y},$$
 which is still meromorphic (cf. \cite[pp. 16-17]{Ue}).
In algebraic cases,
if $g:X\to Y$ with $Y$ irreducible is a generically finite morphism such that
$g(X)$ is dense in $Y$ or equivalently $g$ is dominant, then there exists an open dense subset $U\subseteq Y$
such that the induced morphism $g^{-1}(U)\to U$ is a finite morphism (e.g. \cite[Exercise 3.7 of Chapter II]{Ht}).
If $g$ is only a generically finite dominant {\it rational} map, by going to its graph and restricting to the open dense subset $V\subset X$ which is the complement of the indeterminacies of $g$, one is reduced to the morphism case and
there is a similar conclusion.

Analytically, let's adopt a similar strategy here.
Write $\mathcal{S}(\Psi)$ for the indeterminacies of $\Psi$,
which is of codimension at least two in $\mathcal{X}$ (\cite[the third paragraph on p. 333]{re}).
Since $\mathcal{X}$ and $\tilde{\mathcal{Y}}$ are smooth and of the same dimension,
the ramification divisor $R_\Psi\subset {\mathcal{X}}$ is
well-defined. Namely, it is first defined outside $\mathcal{S}(\Psi)$ and then extends across it since $\dim_{\mathbb{C}}R_\Psi>\dim_{\mathbb{C}}\mathcal{S}(\Psi)$ by
Remmert--Stein extension theorem, cf. \cite[p. 293]{bis}.
Let
$$\mathcal{G}(\Psi)\subset \mathcal{X}\times\mathcal{\tilde Y}$$
denote
the graph of $\Psi$ with the projections $$p_{\mathcal{X}}:\mathcal{G}(\Psi)\to\mathcal{X}\quad \text{and}\quad
p_{\mathcal{\tilde Y}}: \mathcal{G}(\Psi)\to \mathcal{\tilde Y},$$ respectively.
Having proved Lemma \ref{py-proper} below that $p_{\mathcal{\tilde Y}}: \mathcal{G}(\Psi)\to \mathcal{\tilde Y}$ is proper, one knows that the image of an analytic set under $p_{\mathcal{\tilde Y}}$
is still analytic by the proper mapping theorem of Remmert (= Theorem \ref{remmert}).

Set $$\Psi(\mathcal{S}(\Psi)):
=p_{\tilde{\mathcal{Y}}}(p_{\mathcal{X}}^{-1}(\mathcal{S}(\Psi)))$$ which is a proper  analytic subvariety of $\tilde{\mathcal{Y}}$ and
similarly the subvariety
$$\Psi^{-1}(\Psi(\mathcal{S}(\Psi))):=p_{\mathcal{X}}(p_{\tilde{\mathcal{Y}}}^{-1}(\Psi(\mathcal{S}(\Psi))))\subset\mathcal{X};$$
also subvarieties $\Psi(R_\Psi)$, $\Psi^{-1}(\Psi(R_\Psi))$.
Write $$\Psi':\mathcal{X}\setminus \big(\Psi^{-1}(\Psi(\mathcal{S}(\Psi)))\cup \Psi^{-1}(\Psi(R_\Psi))\big)=:\mathcal{X}'
\to {\tilde{\mathcal{Y}}}':=\tilde{\mathcal{Y}}\setminus \big(\Psi(\mathcal{S}(\Psi))\cup \Psi(R_\Psi)\big)$$ and $\Psi'_t$
for its restriction to (an open part of) $X_t$, more precisely to
$X_{t}':=X_{t}\cap \mathcal{X}'$ with images in $\tilde Y'_{t}:={\tilde Y}_{t}\cap \tilde{\mathcal{Y}}'$
for those $X_{t}'\neq\emptyset$. Here $\tilde Y_t:=\pi_{\tilde{\mathcal{Y}}}^{-1}(t),$
where $$\pi_{\tilde{\mathcal{Y}}}:\tilde{\mathcal{Y}}\to \Delta$$ is the projection via $\tilde{\mathcal{Y}}\to\mathcal{Y}\to\Delta$.
By construction $\Psi'$ is a surjective morphism and since $d\Psi'$ is now of maximal rank everywhere,
$\Psi'$ is a local biholomorphism.
The $\mathcal{X}'$ and ${\tilde{\mathcal{Y}}}'$ can possibly be enlarged.  Suppose that
$x\in \mathcal{X}\setminus \mathcal{S}(\Psi)$ and $\Psi$ is a local
biholomorphism between the open neighborhoods $\mathcal{H}_x\ni x$ and $\mathcal{K}_{\Psi(x)}\ni \Psi(x)$.  Then
$\Psi|_{\mathcal{X}'\cup \mathcal{H}_x}:\mathcal{X}'\cup \mathcal{H}_x\to {\tilde{\mathcal{Y}}}'\cup \mathcal{K}_{\Psi(x)}$ is still surjective and a local biholomorphism.
By enlarging $\mathcal{X}'$ and ${\tilde{\mathcal{Y}}}'$ this way, we can assume that if $\Psi$ is a local biholomorphism
at $x'$, then $x'\in \mathcal{X}'$ and $\Psi(x')\in{\tilde{\mathcal{Y}}}'$.  Here, $\mathcal{X}'\subset\mathcal{X}$ and
${\tilde{\mathcal{Y}}}'\subset{\tilde{\mathcal{Y}}}$ are connected open dense subsets in the sense of ordinary complex topology.

We would like to show that $\Psi'$ is a finite morphism.    First note that since $\Psi'$ is a morphism and surjective,
$$\Psi'(X_t')=\tilde Y_t'=\Psi_t'(X_t')\ \text{and}\ \Psi'^{-1}(\tilde Y_t')=X_t'=\Psi_t'^{-1}(\tilde Y_t').$$
It follows that, if $\Psi'^{-1}(\Psi'(x'))$ is infinite for some $x'\in\mathcal{X}'$ with
$y_\tau=\Psi'(x')\in  \tilde Y_\tau'\subset {\tilde{\mathcal{Y}}'}$,
then ${\Psi'_\tau}^{-1}(y_\tau)=\Psi'^{-1}(y_\tau)\subset X_\tau$ is also infinite.
This cannot occur.
We shall now see that $$\Psi|_{X_\tau}:X_\tau\to  \Psi|_{X_\tau}(X_\tau)\subset \tilde Y_\tau$$ is generically finite,
and that, with $\Psi'_\tau$ defined on $X_\tau':=X_\tau\cap\mathcal{X}'$,
$$C:= {\Psi'_\tau}^{-1}({\Psi'_\tau}(x_{\tau}'))$$ is necessarily finite if $x_{\tau}'\in X_\tau'$.
Here $\Psi|_{X_\tau}$ is a meromorphic map by the same reasoning that $\Phi|_{X_t}$ is meromorphic for every $t\in \Delta$, in Step \ref{step 1}.
This will prove our claim that $\Psi'$ is a finite morphism.

To work on the meromorphic map $\Psi|_{X_\tau}:X_\tau\dashrightarrow \Psi|_{X_\tau}(X_\tau)$ above, we consider the meromorphic map
$$\widehat{\Psi|_{X_\tau}}:=p_2^{-1}\circ \Psi|_{X_\tau}\circ p_1:\hat X_\tau\dashrightarrow \hat Z_{\tau},$$
where
$$p_1:\hat X_\tau\to X_\tau\quad \text{and}\quad p_2:\hat Z_{\tau}\to \Psi|_{X_\tau}(X_\tau)$$
are proper modifications from projective
manifolds $\hat X_\tau$ and $\hat Z_{\tau}$, respectively. Here $\Psi|_{X_\tau}(X_\tau)$ is Moishezon by \cite[Corollary 2.24]{cp}. Now that $\widehat{\Psi|_{X_\tau}}$ is generically finite, as well-known
since $\widehat{\Psi|_{X_\tau}}$ is an algebraic (rational) map and is generically of maximal rank,
one has that $\Psi|_{X_\tau}=p_2\circ \widehat{\Psi|_{X_\tau}}\circ p_1^{-1}$ is also generically finite.
To verify that $C(={\Psi'_\tau}^{-1}({\Psi'_\tau}(x_{\tau}')))$ is finite, suppose otherwise that $C$ is infinite.
We can assume, by further monoidal transformations of $\hat X_\tau$, that
$\widehat{\Psi|_{X_\tau}}:\hat X_\tau\to \hat Z_{\tau}$ is actually a morphism.
By commutativity ${\Psi'_\tau}\circ p_1=p_2\circ \widehat{\Psi|_{X_\tau}}$ where defined,  thus $$\widehat{\Psi|_{X_\tau}}^{-1}(p_2^{-1}(K))\supset
p_1^{-1}({\Psi'_\tau}^{-1}(K))$$ for any set $K\subset \Psi|_{X_\tau}(X_\tau)$, one sees
that $$p_1^{-1}(C)\subset \widehat{\Psi|_{X_\tau}}^{-1}(p_2^{-1}({\Psi'_\tau}(x_{\tau}')))=:\hat C$$
with $\hat C$ being an analytic set in $\hat X_\tau$.  Since $\hat C\subset \hat X_\tau$ has only finitely many
irreducible components, there must exist an irreducible component $\mathcal{J}\subset
\hat X_\tau$ of $\hat C$ such that
$p_1(\mathcal{J})\cap C$ is infinite.  The fact $|p_1(\mathcal{J})|=\infty$ implies that the irreducible analytic set $p_1(\mathcal{J})$ in $X_\tau$
must be of dimension at least one since $X_\tau$ is compact.
Recall $X_\tau'$ and ${\Psi'_\tau}$ above.
This $p_1(\mathcal{J})\cap X_\tau'\supset p_1(\mathcal{J})\cap C$, a nontrivial open subset of $p_1(\mathcal{J})$,
is also of dimension at least one since $p_1(\mathcal{J})$ is irreducible.
By the commutativity ${\Psi'_\tau}\circ p_1=p_2\circ\widehat{\Psi|_{X_\tau}}$,
$${\Psi'_\tau}(p_1(\mathcal{J})\cap X_\tau')\subset p_2(\widehat{\Psi|_{X_\tau}}(\mathcal{J}))
\subset p_2(\widehat{\Psi|_{X_\tau}}(\hat C)) \subset p_2(p_2^{-1}({\Psi'_\tau}(x_{\tau}')))={\Psi'_\tau}(x_{\tau}'),$$ i.e.,
$p_1(\mathcal{J})\cap X_\tau'\subset  {\Psi'_\tau}^{-1}({\Psi'_\tau}(x_{\tau}'))=C$ which by $\dim_{\mathbb{C}}(p_1(\mathcal{J})\cap X_\tau')\ge 1$, gives that
$C\subset X_\tau'$, must also be of dimension at least one.  The facts that ${\Psi'_\tau}(C)$ is a point
and $\dim_{\mathbb{C}} C\ge 1$ contradict that ${\Psi'_\tau}$ on $X_\tau'$ is a local biholomorphism.
As said, this contradiction proves that $\Psi':\mathcal{X}'\to \tilde{\mathcal{Y}}'$ is a finite morphism.

Given that $\Psi':\mathcal{X}'\to \tilde{\mathcal{Y}}'$ is finite, let's recall that in algebraic cases,
a finite surjective morphism between nonsingular varieties
over algebraically closed field is flat (cf. \cite[Exercise 9.3 of Chapter III]{Ht}) and a finite flat morphism $g:X\to Y$
with $Y$ Noetherian gives
that $g_*\mathcal{O}_X$ is a locally free $\mathcal{O}_Y$-module (cf. \cite[Proposition 2 in Section 10 of Chapter 3]{mum}).
Applying these algebraic facts to $\Psi'$,
one sees that
$\Psi'_*\mathcal{O}_{\mathcal{X}'}$ is locally free, say, of rank $r$ on ${\tilde{\mathcal{Y}'}}$.
The local freeness of $\Psi'_*\mathcal{O}_{\mathcal{X}'}$ yields that the cardinality $(=r)$ of
$\Psi'^{-1}(y)$ is independent of $y\in \tilde{\mathcal{Y}}'$.

We shall now prove a similar result as above in the present analytic setting.  This is standard in covering spaces of topology.
For notations and references in the later use, we give some details.
Let $x_0'\in X_{t_0}'$ and $x'\in \mathcal{X}'$ be any point nearby $x_0'$.
Connecting the two points $\Psi'(x_0')=y_0'$ and $\Psi'(x')=y'$ by an analytic curve $\check{C}\subset\mathcal{Y}'$,
one has that $\tilde C:=\Psi'^{-1}(\check{C})$ is an
analytic set with each irreducible component $\tilde C_i$, $1\le i\le k$, being of dimension one and
$\Psi'(\tilde C_i)=\check{C}$ since $\Psi'$ is finite
surjection as remarked above.   Further, these components are pairwise disjoint.  For, if $c\in \tilde C_i\cap \tilde C_j$ with
$\tilde C_i\ne \tilde C_j$, then $\Psi'$ would be seen to be at least ``two-to-one" around $c$, violating the fact
that $\Psi'$ is a local biholomorphism everywhere, as mentioned earlier.   This property of disjointness leads us to arrive at
the fact that each point in
$\Psi'^{-1}(y_0')$ is joined by a unique $\tilde C_i$ to a point in $\Psi'^{-1}(y')$ and vice versa, so that
$\Psi'^{-1}(y_0')$ and $\Psi'^{-1}(y')$ are of the same cardinality $(=k)$.
If the two points $x_0', x' \in \mathcal{X}'$ are not close to each other,
the same result remains valid since $\mathcal{X}'$ is connected.
We conclude that, if with some $t_0$ it holds that $\Psi'_{t_0}$ is injective, then
$\Psi'$ is injective too since ${\Psi_{t_0}'}^{-1}(\tilde Y_{t_0}')=\Psi'^{-1}(\tilde Y_{t_0}')$ as
previously given.

We are going to show that $\Psi'$ is a biholomorphism.  The surjection part is
noted earlier; the injection part is to see that $X_b':=
X_b\cap \mathcal{X}'\ne\emptyset$  (see Step \ref{step 1} for $b\in\Delta^*$). Recall that
$\Phi|_{X_b}$ is bimeromorphic by the assumption on $X_b$,
hence that when $X_b'\ne \emptyset$, $\Psi'_{b}$ is injective,
giving that $\Psi'$ is injective by the preceding paragraph.
Fix a $x_b\in X_b$ such that $\Phi|_{X_b}$ is a morphism
at $x_b$ and that $d(\Phi|_{X_b})(x_b)$ is of rank $n$.
We see that $d\Phi$ is always nonsingular
along the $t$-direction, hence that $d\Phi(x_b)$ is of rank $n+1$.  So
$\Phi:\mathcal{X}\to\mathcal{Y}$ is a local immersion at $x_b$; it is a local biholomorphism at $x_b$
provided that $\mathcal{Y}$ is smooth at $\Phi(x_b)$.   To facilitate our discussion, we make the
claim that
$$\hbox{{\it
$\mathcal{Y}$ is smooth at $\Phi(x_b)$, and thus $\Phi:\mathcal{X}\to \mathcal{Y}$ is  a local biholomorphism at $x_b$.}}$$
We come for a complete discussion of this claim in Step \ref{step 3}.

It is a remarkable fact that the desingularization $\mathcal{R}_{{\mathcal{Y}}}:\tilde{\mathcal{Y}}\to \mathcal{Y}$
can be chosen in such a way that it is an isomorphism away from the singular points of $ \mathcal{Y}$ (\cite[Theorem 5.4.2,
p. 271]{ahv}).
Let's choose such a desingularization in advance.  It follows that $\mathcal{R}_{{\mathcal{Y}}}^{-1}$ is a local
biholomorphism at $\Phi(x_b)$ since ${\mathcal{Y}}$ is assumed smooth there.  So
$\Psi=\mathcal{R}_{{\mathcal{Y}}}^{-1}\circ\Phi$ is a local biholomorphism at $x_b$ and thus $x_b\in \mathcal{X}'$
by construction of $\mathcal{X}'$, giving $x_b\in X_b'$ so $X_b'\ne\emptyset$.
As remarked earlier,  $X_b'\ne\emptyset$ implies that $\Psi'$ is injective
and, in turn, that $\Psi'$ is a biholomorphism.

Now since $\Psi'$ is proved to be biholomorphic, with the fact that
$\Psi$ is meromorphic, we see that $\Psi$ is a bimeromorphic map.   For, the set
$$\widehat{\mathcal{G}(\Psi)}:=\{(y, x)\in \tilde{\mathcal{Y}}\times\mathcal{X}: (x, y)\in \mathcal{G}(\Psi)\}$$ is irreducible and analytic since
$\mathcal{G}(\Psi)$ is so.  The projection morphism
$$\hat p_{\tilde{\mathcal{Y}}}:\widehat{\mathcal{G}(\Psi)}\to \tilde{\mathcal{Y}}$$
is proper as shown in Lemma \ref{py-proper}, and
since $\Psi'$ is biholomorphic and then
$\hat p_{\tilde{\mathcal{Y}}}$ induces $$\mathcal{G}(\Psi'^{-1})\cong \tilde{\mathcal{Y}}'\subset\tilde{\mathcal{Y}},$$
where $\mathcal{G}(\Psi'^{-1})\subset \widehat{\mathcal{G}(\Psi)}$ is open and dense, $\hat p_{\tilde{\mathcal{Y}}}$ is thus a proper modification.
It follows from Definition \ref{modification}, with the fact that
$\hat p_{\tilde{\mathcal{Y}}}$ is a proper modification, that $\Psi^{-1}:\tilde{\mathcal{Y}}\dashrightarrow\mathcal{X}$ is a meromorphic map by Definition \ref{bimero}
and hence a bimeromorphic map since $\Psi\circ \Psi^{-1}={\rm id}_{\tilde{\mathcal{Y}}}$ and $\Psi^{-1}\circ \Psi={\rm id}_{\mathcal{X}}$ (or see \cite[p. 33]{st}).
Having that $\Psi:\mathcal{X}\dashrightarrow\tilde{\mathcal{Y}}$ is a bimeromorphic map, we now know
that $\Phi=\mathcal{R}_{{\mathcal{Y}}}\circ \Psi$ is a bimeromorphic map since the composite of two bimeromorphic maps remains meromorphic
hence bimeromorphic (cf. \cite[pp. 16-17]{Ue} or \cite[2) of Proposition 9]{st}),
as claimed in the second part of the theorem.

\begin{lemma}\label{py-proper}
The map $p_{\mathcal{\tilde Y}}: \mathcal{G}(\Psi)\to \mathcal{\tilde Y}$ is proper.
\end{lemma}
\begin{proof}
  This is equivalent to that $\hat p_{\tilde{\mathcal{Y}}}:\widehat{\mathcal{G}(\Psi)}\to \tilde{\mathcal{Y}}$ is proper
in the preceding paragraph. To justify our assertion above that $\hat p_{\tilde{\mathcal{Y}}}$ is proper, noting that the spaces under consideration are not
compact and the target space $\tilde{\mathcal{Y}}$ may be more general than those in Step \ref{step 1},
one uses Remark \ref{proper-rem}.  Alternatively, let's indicate arguments while dropping most details.
Let $W_3\subset \tilde{\mathcal{Y}}$ be a compact set and suppose that $\hat p_{\tilde{\mathcal{Y}}}^{-1}(W_3)\subset\widehat{\mathcal{G}(\Psi)}$ is not compact.
Then under the projection $\hat p_{\mathcal{X}}:\widehat{\mathcal{G}(\Psi)}\to \mathcal{X}$,
$\hat p_{\mathcal{X}}(\hat p_{\tilde{\mathcal{Y}}}^{-1}(W_3))$ is closed but not compact, so that
there exists a sequence $t_k\in\Delta$ with $t_k\to \partial\Delta$ and
 $\{t_k\}_k\subset \pi(\hat p_{\mathcal{X}}(\hat p_{\tilde{\mathcal{Y}}}^{-1}(W_3)))$.  As in Lemma \ref{proper}, to
show that for every $t\in \Delta$ $$\Psi(X_t)\subset \tilde Y_t=\pi_{\tilde{\mathcal{Y}}}^{-1}(t),$$
where $\pi_{\tilde{\mathcal{Y}}}:\tilde{\mathcal{Y}}\to \Delta$ is the projection via $\tilde{\mathcal{Y}}\to\mathcal{Y}\to\Delta$,
we are reduced to showing that $\Psi(x)\in \tilde{Y_t}=\pi_{\tilde{\mathcal{Y}}}^{-1}(t)$ if $x\in X_t$.
Now that $\hat p_{\mathcal{X}}:\widehat{\mathcal{G}(\Psi)}\to\mathcal{X}$ being a modification, is a biholomorphism
between the dense and open subsets $U$, $V$ of $\mathcal{G}$, $\mathcal{X}$, respectively,
with $V\cap \mathcal{S}(\Psi)=\emptyset$, i.e., $\Psi$ being a morphism on $V$ and $U=\{(\Psi(x), x)\}_{x\in V}$.
In the remaining part, with $\tilde Y_t$ in place of $\mathbb{P}^N_t$ in Lemma \ref{proper},
by exactly the same arguments one can show that
any $(y, x)\in \hat p_{\mathcal{X}}^{-1}(x)\subset \widehat{\mathcal{G}(\Psi)}$ can be approached by
a sequence $(y_j, x_j)\in U$ with $x_j\in V$ and $y_j=\Psi(x_j)$, in such a way that
$y=\lim_j y_j\in \lim_j\tilde{Y}_{t_j}\subset\tilde{Y}_t$.
It implies the similar conclusion $\Psi(X_t)\subset \tilde Y_t$ for every $t\in \Delta$.
As in Lemma \ref{proper},  corresponding to every $t_k$ given above, there is a $(y_k, x_k)\in \hat p_{\tilde{\mathcal{Y}}}^{-1}(W_3)\subset \widehat{\mathcal{G}(\Psi)}$
with $t_k=\pi(\hat p_{\mathcal{X}}(y_k, x_k))=\pi(x_k)$, i.e., $x_k\in X_{t_k}$,
such that $\pi_{\tilde{\mathcal{Y}}}(W_3)\ni\pi_{\tilde{\mathcal{Y}}}(y_k)=t_k\to \partial\Delta$, contradicting that $W_3$ is compact.
\end{proof}

As promised, let's treat the smoothness issue above.
\begin{step}\label{step 3}
Smoothness of $\mathcal{Y}$ at $\Phi(x_b)$
\end{step}
Since $x_b\in X_b$, the idea is to refine the process of choosing
$b\in\Delta$ in Step \ref{step 1} in such a way that $\Phi(X_b)$ is not entirely contained in the set  of
singular points of $\mathcal{Y}$.  If so, it follows by an analogous procedure as before that, one
can choose $x_b\in X_b$ such that $\mathcal{Y}$ is smooth at $\Phi(x_b)$, as desired.    This refinement is as follows.

Again, we start with
a global line bundle $L$ on $\mathcal{X}$; it follows that
 $H^0(X_t, L^{\otimes q(t)}|_{X_t})$ gives a bimeromorphic embedding of $X_t$, for every $t\in \Delta$ together with
a choice of $q(t)\in \mathbb{N}$ that depends on $t$.  Note that if
$$\mathcal{E}:=\{e_1, e_2, \ldots, e_k\}\subset H^0(X_t, L^{\otimes q(t)}|_{X_t})$$ contains a basis,
then the Kodaira map associated with $\mathcal{E}$ is still a bimeromorphic map on $X_t$.
The set $\Delta$ is uncountable while $\mathbb{N}$ is countable;
we easily infer that there exists an uncountable set $\Lambda\subset\Delta$ and some $q\in \mathbb{N}$ such that
$q(t)=q$ for each $t\in \Lambda$.    With this given $q$, $h^0(X_t, L^{\otimes q}|_{X_t})$ is locally constant
outside a proper analytic set $W_4\subset \Delta$ as seen by using Theorem \ref{Upper semi-continuity} or
by \cite[3) of Theorem 1.4, p. 6]{Ue} which
gives in Corollary \ref{gct} the cohomological flatness of $L^{\otimes q}$ in dimension $0$ over $\Delta\setminus W_4$.
By reducing $\Lambda$ while maintaining uncountability,
we can assume that $\Lambda\subset\Delta\setminus W_4$ since $W_4$ in this case can
only be a discrete subset of $\Delta$.
We can further reduce $\Lambda$ and assume that $\Lambda$ is relatively compact in $\Delta\setminus W_4$
(so that $\bar\Lambda\subset \Delta\setminus W_4$) since with $\Lambda=\cup_i(\Lambda\cap O_i)$ for a covering
of $\Delta\setminus W_4$ by countably many  open and relatively compact subsets $O_i$, $\Lambda\cap O_{i_0}$
must be uncountable for some $i_0$.   Also, one sees that there exists a $z\in \Delta\setminus W_4$ such that
for any neighborhood $\mathcal{U}\ni z$, $\mathcal{U}\cap \Lambda$ remains uncountable.  For, by a similar argument as above working
on $O_{i_0}$ there exist $$O_{i_0}=O'_{1}\supset O'_{2}\supset O'_{3}\supset\cdots$$
such that $\Lambda\cap O'_{k}$ is
uncountable for each $k=1, 2, \ldots,$ and $O'_{k}$ converges to some $z\in \bar O_{i_0}\subset \Delta\setminus W_4$.

Having the above uncountable subset $\Lambda$, we are ready to reconstruct the Kodaira
map $\Phi$.  As in Step \ref{step 1}, by Theorem A of Cartan (= Theorem \ref{cartan-a}) we can choose a set $\mathcal{T}$ linearly spanned by sections
$$\{s_0,s_1,\ldots,s_N\}\subset \pi_*L^{\otimes q}(\Delta)$$
such that the germs $(s_0)_z, (s_1)_z,\ldots,(s_N)_z$ at $z$ generate the stalk $(\pi_*L^{\otimes q})_z$,
hence that they generate $(\pi_*L^{\otimes q})_w$ for every $w$ in a neighborhood $U$ of $z$ as a property from
the coherent sheaves (cf. \cite[Lemma 7.1.3]{h90}).  We denote by $\Phi_\mathcal{T}$ the Kodaira map associated with
$\mathcal{T}$.  As said, $\Lambda\cap \mathcal{N}$ is uncountable for any neighborhood $\mathcal{N}\ni z$, so $\Lambda_U:=\Lambda\cap U$ remains uncountable.
By the cohomological flatness above, the natural map
$$\lambda:=\lambda_w: (\pi_*L^{\otimes q})_w\to H^0(X_w, L^{\otimes q}|_{X_w})$$
is surjective for every $w\in U\setminus W_4\neq\emptyset$.
This, together with our preceding construction,
yields that $\Phi_\mathcal{T}$, when restricted to $X_t$,  induces a bimeromorphic embedding of $X_t$ for every
$t\in \Lambda_{U}\subset U\setminus W_4$.  Here, the images
$\{\lambda(s_0), \lambda(s_1), \ldots,\lambda(s_N)\}\subset H^0(X_t, L^{\otimes q}|_{X_t})$ may
not be linearly independent, nevertheless they give a bimeromorphic embedding on $X_t$ as already remarked.

We denote by $\mathcal{Y}_\mathcal{T}$ the image  $\Phi_\mathcal{T}(\mathcal{X})$.
Write $$\mathcal{S}=\mathcal{S}_h\cup \mathcal{S}_v$$ for the set of singular points of
$\mathcal{Y}_\mathcal{T}$, where the vertical part $ \mathcal{S}_v$ consists of those (irreducible) components of $\mathcal{S}$ that are mapped to
points in $\Delta$ via $p_{\mathcal{Y}_\mathcal{T}}:\mathcal{Y}_\mathcal{T}\to\Delta$.   Set
$$\Xi:=p_{\mathcal{Y}_\mathcal{T}}(\mathcal{S}_v)\subset\Delta.$$
There are only countably many components
in $\mathcal{S}_v$, say, by Remmert's proper mapping theorem (= Theorem \ref{remmert}) so that $\Xi$ is discrete, and by that every fiber $X_t$ is compact.  The components in $\mathcal{S}_v$ are fiberwise separated away from one another as $\Xi$ is discrete.
 Since  $\Xi$ is countable and $\Lambda_U$ is uncountable (or $\Xi$ is discrete and
$\Lambda_U$ is not discrete), $\Lambda_U\setminus \Xi\neq\emptyset$.  Now pick some $0\ne o\in\Lambda_U\setminus \Xi$.
This means that the $\Phi_\mathcal{T}$-image of $X_o$ is not contained in $\mathcal{S}_v$ and in turn,
neither in singularities $\mathcal{S}$ of $\mathcal{Y}_\mathcal{T}$.
The smoothness in the beginning is proved, so long as with the choice of sections $E$ in Step \ref{step 1}
changed to $\mathcal{T}$, the $X_b$, $\Phi$ ($=\Phi_E$), $\mathcal{Y}$ ($=\mathcal{Y}_E$) etc. in Step \ref{step 1}
are replaced by $X_o$, $\Phi_\mathcal{T}$, $\mathcal{Y}_\mathcal{T}$ throughout.  Of course, with these $X_o$, $\Phi_\mathcal{T}$, $\mathcal{Y}_\mathcal{T}$ the reasoning for Step \ref{step 2} remains unaltered.  This completes our Step \ref{step 3}.
\end{proof}

\end{CJK*}

\end{document}